\documentclass[a4paper,reqno,12pt]{amsart}
\usepackage{fancyhdr}
\usepackage{amsmath}
\usepackage{dsfont}
\usepackage{hyperref}
\usepackage[mathscr]{eucal}
\usepackage[cp1251]{inputenc}
\usepackage[english]{babel}
\usepackage{cite,enumerate,float,indentfirst}
\usepackage{graphicx}
\usepackage{xcolor}
\usepackage{latexsym,a4,mathrsfs,amsthm,amsmath,amssymb,url}
\usepackage{amsfonts}
\usepackage{amssymb}
\usepackage{epstopdf}

\textwidth157mm
\setlength{\evensidemargin}{4mm}
\setlength{\oddsidemargin} {4mm}

\newcommand\NoBlackBoxes{\global\overfullrule0pt}
\NoBlackBoxes
\parindent 15 pt

\numberwithin{equation}{section}

\newtheorem{theorem}{Theorem}[section]
\newtheorem{lemma}[theorem]{Lemma}

\theoremstyle{remark}
\newtheorem*{remark}{Remark}

\newcommand{\R}{\mathbb{R}}
\newcommand{\C}{\mathbb{C}}
\newcommand{\T}{\mathbb{T}}

\newcommand{\J}{\mathbb{J}}

\newcommand{\Cond}{{\bf (C0)}}
\newcommand{\CondTwo}{{\bf (C1)}}

\newcommand{\X}{{\bf X}}
\newcommand{\U}{{\bf U}}
\newcommand{\A}{{\bf A}}

\newcommand{\B}{{\bf B}}

\newcommand{\I}{{\bf I}}
\newcommand{\RR}{{\bf R}}
\newcommand{\V}{{\bf V}}

\newcommand{\W}{{\bf W}}

\newcommand{\uu}{{\bf u}}

\DeclareMathOperator{\Tr}{Tr}
\DeclareMathOperator{\Rank}{Rank}

\DeclareMathOperator{\E}{\mathbb{E}}

\DeclareMathOperator{\Pb}{\mathbb{P}}
\DeclareMathOperator{\one}{\mathds{1}}
\DeclareMathOperator{\re}{Re}
\DeclareMathOperator{\imag}{Im}

\begin{document}

\vspace{1in}

\title[Local Semicircle Law]{\bf Local Semicircle law Under Moment conditions. \\ Part I: The Stieltjes transform}

\author[F. G{\"o}tze]{F. G{\"o}tze}
\address{Friedrich G{\"o}tze\\
 Faculty of Mathematics\\
 Bielefeld University \\
 Bielefeld, Germany
}
\email{goetze@math.uni-bielefeld.de}

\author[A. Naumov]{A. Naumov}
\address{Alexey A. Naumov\\
 Faculty of Computational Mathematics and Cybernetics\\
 Lomonosov Moscow State University \\
 Moscow, Russia \\
 and Institute for Information Transmission Problems of the Russian Academy of Sciences (Kharkevich Institute), Moscow, Russia
 }
\email{anaumov@cs.msu.su}

\author[A. N. Tikhomirov]{A. N. Tikhomirov}
\address{Alexander N. Tikhomirov\\
 Department of Mathematics\\
 Komi Research Center of Ural Division of RAS \\
 Syktyvkar, Russia
 }
\email{tikhomirov@dm.komisc.ru}

\thanks{All authors were supported by CRC 701 “Spectral Structures and Topological Methods in
Mathematics”. A. Tikhomirov wast supported by RFBR N~14-01-00500 and by Program of UD RAS, project No 15-16-1-3. A.~Naumov was supported by  RFBR N~16-31-00005 and President's of Russian Federation Grant for young scientists N~4596.2016.1.  }

\keywords{Random matrices, Local semicircle law, Stieltjes transform}

\date{\today}

\begin{abstract}
We consider a random symmetric matrix $\X = [X_{jk}]_{j,k=1}^n$ in which the upper triangular entries are independent identically distributed random variables with mean zero and unit variance. We additionally suppose that
$\E |X_{11}|^{4 + \delta} =: \mu_{4+\delta} < \infty$ for some $\delta > 0$. Under these conditions we show that the typical distance between the Stieltjes transform of the empirical spectral distribution (ESD) of the matrix $n^{-\frac{1}{2}} \X$ and Wigner's semicircle law is of order $(nv)^{-1}$, where $v$ is the distance in the complex plane to the real line. Furthermore we  outline applications which are deferred to a subsequent paper, such as the rate of convergence in probability of the ESD to the distribution function of the semicircle law, rigidity of the eigenvalues and  eigenvector delocalization.
\end{abstract}

\maketitle

\section{Introduction and main result}
We consider a random symmetric matrix $\X = [X_{jk}]_{j,k=1}^n$ where the upper triangular entries are independent random variables with mean zero and unit variance. We will be mostly interested in  limiting laws for the
eigenvalues and eigenvectors of large $n\times n$ symmetric random matrices in the asymptotic limit as $n$ goes to infinity.

For the symmetric matrix $\W: = \frac{1}{\sqrt n} \X$ we denote its $n$ eigenvalues in the increasing order as
$$
\lambda_1(\W) \le ... \le \lambda_n(\W)
$$
and introduce the eigenvalue counting function
$$
N_I(\W):= |\{1 \le k \le n: \lambda_k(\W) \in I\}|
$$
for any interval $I \subset \R$, where $|A|$ denotes the number of elements in the set $A$. Note that  we shall sometimes  omit $\W$ from the notation of $\lambda_j(\W)$.

It is well known since the pioneering work of E. Wigner~\cite{Wigner1958} that for any interval $I \subset \R$ of fixed length and independent of $n$
\begin{equation}\label{eq: wigner's semicircle law global regime}
\lim_{n \rightarrow \infty} \frac{1}{n}\E N_I(\W) = \int_I g_{sc}(\lambda) \, d\lambda,
\end{equation}
where
$$
g_{sc}(\lambda) := \frac{1}{2\pi} \sqrt{4 - \lambda^2} \one[|\lambda| \le 2]
$$
is the density function of Wigner's semicircle law.  Here and in what follows we denote by $\one[A]$ the indicator function of the set $A$. Wigner considered the special case when all $X_{jk}$ take only two values $\pm 1$ with equal probabilities. To prove~\eqref{eq: wigner's semicircle law global regime} he used the moment method which may be described as follows. Since $g_{sc}$ is compactly supported it is uniquely determined by the sequence of its moments given by
$$
\beta_k = \begin{cases}
\frac{1}{m+1} \binom{2m}{m}, &k = 2m,\\
0, &k = 2m + 1. \\
\end{cases}, \qquad k \geq 1.
$$
We remark here that $\beta_{2m}, m \geq 1$, are Catalan numbers. To establish the convergence~\eqref{eq: wigner's semicircle law global regime} one needs to show that
$$
\lim_{n \rightarrow \infty } \int_{-\infty}^{\infty} \lambda^k \, d F_n(\lambda) = \int_{-2}^{2} \lambda^k g_{sc}(\lambda) \, d\lambda,
$$
where $F_n(\lambda) : = \frac{1}{n} N_{(-\infty, \lambda]}(\W)$ is the empirical spectral distribution function. Further details may be found in~\cite{BaiSilv2010}.

Wigner's semicircle law has been extended in various aspects. For example, L. Arnold in~\cite{Arnold1967} proved almost sure (a.s.) convergence of $F_n$ to the semicircle law but under additional moment assumptions on the matrix entries. More general conditions of convergence to Wigner's semicircle law were established by L.~Pastur in~\cite{Pastur1973}. For all $\tau > 0$ we define Lindeberg's ratio for the random matrix $\X$ by the relation
\begin{equation}\label{lind condition}
L_n(\tau) := \frac{1}{n^2} \sum_{j,k=1}^n \E X_{jk}^2 \one[|X_{jk}| \geq \tau \sqrt n] .
\end{equation}
Pastur has shown that the convergence of Lindeberg's ratio to zero is sufficient for the convergence in probability to the semicircle law. V.~Girko in~\cite{Girko1990},~\cite{Girko1985uspexi} extended Pastur's result to a.s. convergence and stated that~\eqref{lind condition} is also necessary condition. This result, in particular, implies that if $X_{jk}, 1 \le j \le k \le n$, are independent identically
distributed (i.i.d.) random variables and have zero mean and unit variance, then $F_n$
converges a.s. to Wigner's semicircle law. We remark that all these results were established for symmetric random matrices with independent, not necessary identically distributed entries, but assuming that $\E X_{jk}^2 = 1$ for all $1 \le j \le k \le n$. This limitation has been overcome in a sequence of papers, see, for example,~\cite{Shly1996},~\cite{Naumov2013gaussiancase} and~\cite{GotNauTikh2012prob}. In  the last years there has been increasing interest in random matrices with dependent entries. For some models it has been shown that Wigner's semicircle law holds as well for the matrices with  dependent entries, see, for example,~\cite{GotTikh2006},\cite{GotNauTikh2012prob}, ~\cite{BannaMerlPeligrad2015}.

All these results hold for intervals $I$ of fixed length, independent of $n$, which typically contain a  macroscopically large number of eigenvalues, which means a number of order $n$.
Unfortunately for  smaller intervals where the number of eigenvalues cease to be macroscopically large  the moment method does not apply
and one needs the Stieltjes transform of the empirical spectral distribution function $F_n$, which is is given by
$$
m_n(z) := \int_{-\infty}^\infty \frac{d F_n(\lambda)}{\lambda - z}   = \frac{1}{n}\Tr (\W - z \I)^{-1} = \frac{1}{n} \sum_{j=1}^n \frac{1}{\lambda_j(\W) - z},
$$
where $z = u + i v, v \geq 0$. The Stieltjes transform is an appropriate tool to  study
spectral densities, since
taking the imaginary part of $m_n(z)$ we get
$$
\imag m_n(u + i v) = \int_{-\infty}^\infty \frac{v}{(\lambda - u)^2 + v^2} \, d F_n(\lambda) = \frac{1}{v}\int_{-\infty}^\infty K\left(\frac{u-\lambda}{v}\right) \, d F_n(\lambda)
$$
which is the kernel density estimator with kernel $K$ and bandwidth $v$. For a meaningful estimator of the spectral density we cannot allow the distance $v$ to the real line, that is the bandwidth  of the kernel density estimator, to be smaller than the typical $\frac{1}{n}$ -distance between eigenvalues.
Hence, in what follows we shall be mostly interested in the situations when $v \gg \frac{1}{n}$.

Under rather general conditions, like convergence  of Lindeberg's ratio~\eqref{lind condition}, to zero for fixed $v > 0$ one may establish the convergence  of  $m_n(z)$ to the the Stieltjes transform of Wigner's semicircle law which is given by
$$
s(z) = \int_{-\infty}^\infty \frac{g_{sc}(\lambda)\, d\lambda}{\lambda-z}
$$
and may be calculated explicitly by
\begin{equation}\label{eq: Stieltjes transform of Wigner's semicircle law}
s(z) = -\frac{z}{2} + \sqrt{\frac{z^2}{4} - 1},
\end{equation}
see, for example,~\cite{BaiSilv2010} for a simple  explanation.

It is much more difficult  to establish the convergence in the region $1 \gg v \gg \frac{1}{n}$. Significant progress in that direction was recently made in a series of results by L.~Erd{\"o}s, B.~Schlein, H.-T.~Yau and et al., \cite{ErdosSchleinYau2009},~\cite{ErdosSchleinYau2009b} ,~\cite{ErdosSchleinYau2010},~\cite{ErdKnowYauYin2013}, showing
that with high probability uniformly in $u \in \R$
\begin{equation}\label{fluctuations of m_n around s}
|m_n(u+iv) - s(u+iv)| \le \frac{\log^\beta n}{nv}, \quad \beta > 0,
\end{equation}
which they called  {\it local semicircle law}. It means  that the fluctuations of $m_n(z)$ around $s(z)$ are of order $(nv)^{-1}$ (up to a logarithmic factor). The value of $\beta$ may depend on $n$, to be exact $\beta: = \beta_n = c\log \log n$, , where $c > 0$ denotes some constant. To prove~\eqref{fluctuations of m_n around s} in those papers ~\cite{ErdosSchleinYau2009},~\cite{ErdosSchleinYau2009b},~\cite{ErdosSchleinYau2010}
it was assumed that the distribution of $X_{jk}$ for all $1 \le j, k \le n$ has sub-exponential tails. Moreover in~\cite{ErdKnowYauYin2013} this assumption had been relaxed to requiring $\E |X_{jk}|^p \le \mu_p$ for all $p \geq 1$, where $\mu_p$ are some constants. Since there is meanwhile an extensive literature on the local semicircle law we refrain from  providing a complete list here and refer the  reader to the surveys of L.~Erd{\"o}s~\cite{Erdos2011} and T. Tao, V. Vu,~\cite{TaoVu2011surv}.

Combining the results and arguments  of the papers~\cite{ErdKnowYauYin2013a},~\cite{ErdKnowYauYin2012}
	with the more recent results of~\cite{LeeYin2014} it follows  that ~\eqref{fluctuations of m_n around s} holds under the condition that $\E |X_{jk}|^{4 + \delta} =: \mu_{4 + \delta} < \infty$. (L.~Erd\"os and H.-T.~Yau private communication). To explain it in more details,
	 assume that  $1 \le j,k \le n$ $|X_{jk}| \le n^\frac12 q^{-1}$ with probability larger than $1 - e^{-n c}$ for some $c > 0$. Here, $q$ may depend on $n$ and usually $n^\phi \le q \le n^{1/2}\log^{-1} n$ for some $\phi > 0$. This means that all $X_{jk}$ are bounded in absolute value by some quantity from $\log n$ to $n^{1/2 - \phi}$.  Define the following region in the complex plane
$$
\mathbb S(C) = \{z = u + i v \in \C:  |u| \le 5, 10 \ge v \ge \varphi^C n^{-1} \},
$$
where $\varphi: =(\log n)^{\log \log n}$ and $C>0$ and let
\begin{equation}\label{eq: gamma definition}
\gamma: = \gamma(u):=  ||u| - 2|.
\end{equation}
In~\cite{ErdKnowYauYin2013a}[Theorem~2.8] it is shown that there exist positive constants $C$ and $c$ such that for all $q \geq \varphi^C$
\begin{align}\label{eq: compare results}
&\Pb \left(\bigcap_{z \in \mathbb S(C)}  \left\{|m_n(z) - s(z)| \le \varphi^C \left(\min\left(\frac{1}{q^2\sqrt{\gamma + v}}, \frac{1}{q}\right) + \frac{1}{nv}\right) \right\}    \right)\nonumber\\ &\qquad\qquad\qquad\qquad\qquad\qquad\qquad\qquad\qquad\qquad\qquad\qquad\qquad\qquad\geq 1 - n^{C} e^{-c \varphi^3}.
\end{align}
Assume now that $\E |X_{jk}|^{4 + \delta} < C$. In~\cite{ErdKnowYauYin2012} (Lemma~7.6 and~7.7) the initial matrix has been replaced by a matrix $\tilde \X$ matching the first moments of $\X$ but having sub-exponential decaying tails. In the recent paper~\cite{LeeYin2014}[Lemma~5.1] it has been shown in particular  that there exist such a matrix $\tilde \X$ such that
$\E X_{jk}^s = \E \tilde X_{jk}^s$ for $s = 1,...,4$ and $|\tilde X_{jk}| \le  C \log n$ (this means that $q = O(n^{1/2} \log^{-1} n$)). Finally, it remains to estimate the difference $m_n(z) - s(z)$ in terms of $\tilde m_n(z) - s(z)$, where $\tilde m_n(z)$ is the Stieltjes transform corresponding to $ \tilde \X$.

The aim of  this paper is to give self-contained proof of~\eqref{fluctuations of m_n around s} assuming  that $\E |X_{jk}|^{4 + \delta} =: \mu_{4 + \delta} < \infty$.  We apply different techniques, which allow to reduce the power of $\log n$ (see the definition of $\varphi$ above) from $\beta = c\log \log n$ to some constant small constant independent of $n$.  We also extend the recent results of F.~G{\"o}tze and A.~Tikhomirov in~\cite{GotTikh2015} and~\cite{GotzeTikh2014rateofconv}, with   $\delta = 4$, where  we required $8$ moments
	together with $u$ lying in the support of the semicircle law. Our work and many details of the proof were motivated by a recent paper of C.~Cacciapuoti, A.~Maltsev and B.~Schlein,~\cite{Schlein2014}, where the authors improved the log-factor dependence in~\eqref{fluctuations of m_n around s} in the sub-Gaussian case. We mention that in the latter case one has $\E |X_{jk}|^p \le (C\sqrt{p})^p$ for all $p \geq 1$.

To control the distance between $m_n(z)$ and $s(z)$ one may estimate $\E |m_n(z) - s(z)|^p$. Instead of a combinatorial approach to deal with the last quantity we apply the method developed in~\cite{GotTikh2015},~\cite{GotzeTikh2014rateofconv} which is a Stein type method. In addition we apply the  descent method for the estimation of the moments of diagonal entries of the resolvent using a few {\it multiplicative steps }  introduced in~\cite{Schlein2014}. In earlier work of L.~Erd{\"o}s, B.~Schlein, H.-T.~Yau and et al. mentioned above a  larger number of {\it additive steps} of descent had been used. For the details of our technique see Section~\ref{Sketch of the proof}.

\subsection{Main result}

We consider a random symmetric matrix $\X = [X_{jk}]_{j,k=1}^n$ in which $X_{jk}, 1 \le j \le k \le n$, are independent identically distributed random variables with
$\E X_{11} = 0$ and $\E X_{11}^2 = 1$. We additionally suppose that
$$
\E|X_{11}|^{4+\delta} < C.
$$
for some $\delta > 0$ and some constant $C>0$. In this case we say that the matrix $\X$ satisfies the conditions $\Cond$.

We introduce the following quantity depending on $\delta$
$$
\alpha: = \alpha(\delta) = \frac{2}{4+\delta},
$$
which will control the level of truncation of the matrix entries.

Without loss of generality we may assume that $\delta \le 4$ since otherwise all bounds will be independent of $\alpha$. This means that we may assume that
$$
\frac{1}{4} \le \alpha < \frac{1}{2}.
$$
The following theorem
about approximation of Stieltjes transforms is the main result of this paper.
\begin{theorem}\label{th:main}
Assume that the conditions $\Cond$ hold and let $V > 0$ be some constant. \\
$(i)$ There exist positive constants $A_0, A_1$ and $C$ depending on $\alpha$ and $V$  such that
$$
\E |m_n(z) - s(z)|^p \le \left(\frac{Cp^2}{nv}\right)^p,
$$
for all $1 \le p \le A_1 (nv)^{\frac{1-2\alpha}{2}}$, $V \geq v \geq A_0 n^{-1}$ and $|u| \le 2+v$. \\

\noindent
$(ii)$ For any $u_0 > 0$ there exist positive constants $A_0, A_1$ and $C$ depending on $\alpha, u_0$ and $V$
such that
$$
\E |\imag m_n(z) - \imag s(z)|^p \le \left(\frac{Cp^2}{nv}\right)^p,
$$
for all $1 \le p \le A_1 (nv)^{\frac{1-2\alpha}{2}}$, $V \geq v \geq A_0 n^{-1}$ and $|u| \le u_0$.
\end{theorem}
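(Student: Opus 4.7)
The plan is to combine the Stein-type approach of Götze and Tikhomirov~\cite{GotTikh2015, GotzeTikh2014rateofconv} with the multiplicative descent of Cacciapuoti, Maltsev and Schlein~\cite{Schlein2014}, adapting both to the $(4+\delta)$-moment setting. The first step is a truncation: replace $X_{jk}$ by $\hat X_{jk} := X_{jk} \one[|X_{jk}| \le n^{1/2-\alpha}]$, then recentre and rescale. The condition $\Cond$ guarantees that the original and truncated matrices coincide with high probability, that $\E \hat X_{jk}^2 = 1 + O(n^{-1-\alpha\delta})$, and that the higher moments $\E|\hat X_{jk}|^{2k}$ grow only as a controlled power of the truncation level. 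These growth rates are what eventually restrict $p$ to the range $p \le A_1 (nv)^{(1-2\alpha)/2}$ in the Rosenthal-type estimates below.

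The core of the argument is a self-consistent equation for $m_n(z)$ obtained via Schur complements. Letting $\RR = (\W - z\I)^{-1}$ and writing $\RR^{(j)}$ for the resolvent of the principal minor formed by deleting row and column $j$, one has
\begin{equation*}
\RR_{jj} = -\bigl(z + m_n(z) + \varepsilon_j\bigr)^{-1},
\end{equation*}
where $\varepsilon_j$ collects the diagonal contribution $X_{jj}/\sqrt n$, the centred sum $\tfrac{1}{n}\sum_k (X_{jk}^2 - 1)\RR^{(j)}_{kk}$, the off-diagonal quadratic form $\tfrac{1}{n}\sum_{k\ne l} X_{jk}\RR^{(j)}_{kl} X_{jl}$, and a rank-one correction $m_n - m_n^{(j)}$ of size $O(1/(nv))$. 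Averaging over $j$ and exploiting the identity $s(z) + 1/(z + s(z)) = 0$ together with the stability bound $\bigl|1 - 1/((z+m_n)(z+s))\bigr| \ge c > 0$ (valid under a bootstrap assumption for $|u| \le 2+v$), one converts an $L^p$ bound on a weighted average of the $\varepsilon_j$ into an $L^p$ bound on $m_n - s$.

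The key probabilistic input is a Rosenthal-type estimate of the form $\E|\varepsilon_j|^p \le (Cp)^p\bigl((\imag m_n^{(j)})/(nv)\bigr)^{p/2} + (Cp/(nv))^p$, in which the Ward identity $\sum_l |\RR^{(j)}_{kl}|^2 = v^{-1}\imag \RR^{(j)}_{kk}$ has been used to rewrite the Hilbert--Schmidt norm of $\RR^{(j)}$. Because the right-hand side still depends on high moments of the diagonal resolvent entries, this creates a circular dependence, which I would break by the multiplicative descent of~\cite{Schlein2014}: starting from the trivial bound $|\RR_{jj}| \le 1/v$, one iterates the inequality so that each step multiplies the previous bound on $\E|\RR_{jj}|^p$ by a fixed factor strictly less than $1$; a number of iterations independent of $n$ then reduces it to the optimal $\E|\RR_{jj}|^p \le C^p$. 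Feeding this back into the self-consistent equation and performing a second-order Taylor expansion (which generates a quadratic term in $m_n - s$ and forces it to absorb an additional power of $p$) is what produces the $p^2$-factor in the final bound $(Cp^2/(nv))^p$.

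The principal obstacle, in my view, is closing the descent in the range $v \approx A_0/n$ while keeping $p$ as large as $A_1(nv)^{(1-2\alpha)/2}$: the Rosenthal constants $\E|\hat X_{jk}|^{2p}$ saturate precisely at this threshold, so any further extension of the range of $p$ would require either stronger moments (as in~\cite{ErdKnowYauYin2013a}) or a sharper truncation scheme. A secondary difficulty appears in part (ii): for $|u|$ close to or exceeding $2$, the stability factor $|s(z) + m_n(z)|$ can degenerate, and one must instead control $\imag m_n - \imag s$ directly, exploiting the edge scaling $\imag s(u+iv) \asymp v/\sqrt{\gamma + v}$ with $\gamma$ as in~\eqref{eq: gamma definition}, which restores a quantitative spectral gap whenever $|u| \le u_0$.
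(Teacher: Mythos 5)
Your high-level plan matches the paper's strategy -- Schur complement self-consistent equation, Rosenthal/Ward bounds on the $\varepsilon_j$, multiplicative descent to establish $\E|\RR_{jj}|^p \le C_0^p$, and then conversion to a bound on $\Lambda_n$ via the identity $T_n = b_n \Lambda_n$ -- so the ingredients are right. But several of the specifics are off in ways that would block the argument.

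First, the truncation level $n^{1/2-\alpha}$ is wrong. With $\alpha = 2/(4+\delta)$ one has $1/2 - \alpha = \delta/(2(4+\delta))$, which tends to $0$ as $\delta \to 0$; your cutoff would then be $O(1)$ and the truncated variance $\E X^2\one[|X| > n^{1/2-\alpha}]$ would be of order $1$, not $o(1)$. The correct level is $Dn^{\alpha}$: then $\E X^2\one[|X| > Dn^\alpha] \le \mu_{4+\delta} n^{-\alpha(2+\delta)}$ with $\alpha(2+\delta) \ge 1$, so the variance shift is $O(n^{-1})$ and the rank of the perturbation is $O_p(1)$ (Lemmas~\ref{appendix: lemma trunc 1}--\ref{appendix: lemma trunc 2}). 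Second, the descent does not multiply the bound by a factor strictly less than $1$, and the number of steps is not independent of $n$: Lemma~\ref{lemma step for resolvent} propagates the \emph{same} bound $C_0^q$ from scale $s_0 v$ to scale $v$ while consuming a fixed fraction of the admissible $p$-range, and one needs $L = [-\log_{s_0}v_0]+1 \asymp \log n$ iterations to reach $v_0 = A_0/n$; the delicate point is that the cumulative loss in the range of $p$ is exactly what produces the threshold $p \lesssim (nv)^{(1-2\alpha)/2}$. Third, the $p^2$ in the final bound does not come from a second-order Taylor expansion of the self-consistent equation. It comes from the sharp estimate on the imaginary part of the diagonal entries, Lemma~\ref{lemma: imag part of R_jj}, namely $\E\imag^p \RR_{jj} \le H_0^p\big(\imag s(z) + p^2/(nv)\big)^p$, fed into the Stein-type bound of Theorem~\ref{th: general bound} (via the functional $\varphi(T_n) = \overline{T}_n|T_n|^{p-2}$, which you do not mention and which is central to controlling $\E|T_n|^p$ without combinatorics); without the refined $\Psi(z)$-bound one only gets $\mathcal A(\kappa p) \le C_0$, which is insufficient to extract the $(nv)^{-1}$ rate. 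Finally, for part (ii) you invoke $\imag s(u+iv) \asymp v/\sqrt{\gamma+v}$, but this is the edge/outside scaling ($|u|\ge 2$) used in Theorem~\ref{th: stronger bound for imag part}; inside the bulk one uses $\imag s \asymp \sqrt{\gamma+v} \asymp |b(z)|$, and Lemma~\ref{appendix inequality for lambda} then gives the needed dichotomy $|\imag \Lambda_n| \le C\min\{|T_n|/|b(z)|, \sqrt{|T_n|}\}$ whose two branches are chosen according to the size of $|b(z)|$ relative to $p/(nv)$.
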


\begin{remark}
Let us complement the results stated above by the following remarks.

\noindent  1. The methods used in our proof (see section \ref{Sketch of the proof} below) differ from those used in \cite{ErdKnowYauYin2013a},~\cite{ErdKnowYauYin2012} and~\cite{LeeYin2014} which are outlined above. In particular we may also rewrite our result in terms of probability bounds. Indeed, applying Markov's inequality we may rewrite, for example, the first estimate in the following form
\begin{equation}\label{main result part 1 1 probability}
\Pb \left( |m_n(z) - s(z)| \geq \frac{K}{nv}\right) \le \left(\frac{Cp^2}{K}\right)^p,
\end{equation}
for all $1 \le p \le A_1 (nv)^{\frac{1-2\alpha}{2}}$, $V \geq v \geq A_0 n^{-1}$ and $|u| \le 2+v$. For application we are interested in the range of $v$, such that~\eqref{main result part 1 1 probability} is valid for fixed $p$. It is clear that $V \geq v \geq C p^\frac{2}{1-2\alpha} n^{-1}$. Since we are interested in polynomial estimates we need to take $p$ of order $\log n$, which implies that $V \geq v \geq C n^{-1} \log^\frac{2}{1-2\alpha} n$. At the same time $K$ in~\eqref{main result part 1 1 probability} should be of order $\log^2 n$. Comparing with~\eqref{fluctuations of m_n around s} we get $\beta = 2$. If we would like to have the bound $n^{-c \log \log n}$ we should take $\beta = 3$. 

\noindent 2.  We conjecture that the result of Theorem~\ref{th:main} holds with $\delta = 0$ which corresponds to the case of finite fourth moment only.

\noindent 3.  The case of non-identically distributed $X_{jk}$,  can be dealt with by our methods, but the details are more involved and we omit its proof here.

\noindent 4. Note that it is possible to reduce the power $\frac{1-2\alpha}{2}$ assuming that at least eight moments of the matrix entries are finite. This situation corresponds to $\delta = 4$ {\it and} $\alpha = \frac{1}{4}$.

\noindent 5. On the right hand side of the estimates in (i) and (ii) the  dependence on the power $p$  is given sharpened to $p^p$ compared to $p^{p^2}$ in the main theorem of~\cite{Schlein2014}.

\end{remark}

Applications of Theorem~\ref{th:main} outside the limit spectral interval, that is for $u \geq 2$,  require  stronger bounds on $\imag \Lambda_n$.
We say that the set of conditions $\CondTwo$ holds if $\Cond$ are satisfied and $|X_{jk}| \le D n^\alpha, 1 \le j,k \le n$, where $D: =D(\alpha)$ is some positive constant depending on $\alpha$ only.
\begin{theorem}\label{th: stronger bound for imag part}
Assume that the conditions $\CondTwo$ hold and $u_0 > 2$ and $V > 0$. There exist positive constants $A_0, A_1$ and $C$ depending on $\alpha, u_0$ and $V$ such that
$$
\E |\imag m_n(z) - \imag s(z)|^p \le \frac{C^p p^p }{n^p (\gamma + v)^p} +  \frac{C^p p^{3p}}{(nv)^{2p} (\gamma + v)^\frac{p}{2}}  + \frac{C^p}{n^p v^\frac{p}{2} (\gamma + v)^\frac{p}{2}} + \frac{C^p p^p}{(nv)^\frac{3p}{2} (\gamma + v)^\frac{p}{4} },
$$
for all $1 \le p \le A_1 (nv)^{\frac{1-2\alpha}{2}}$, $V \geq v \geq A_0 n^{-1}$ and $2 \le |u| \le u_0$.
\end{theorem}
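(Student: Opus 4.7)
The plan is to refine the strategy used for Theorem~\ref{th:main}(ii), exploiting the two additional inputs: (a) the deterministic truncation $|X_{jk}|\le Dn^\alpha$ from $\CondTwo$, which allows concentration-type bounds that are polynomial in $p$ rather than in logarithmic factors, and (b) the sharper behavior of the limiting object outside the bulk, namely $|s(z)+z/2|\asymp\sqrt{\gamma+v}$ and $\imag s(z)\asymp v/\sqrt{\gamma+v}$ for $|u|\ge 2$. The starting point, as in Section~\ref{Sketch of the proof}, is the Schur complement identity
\[
R_{jj}=-\Bigl(z+\tfrac{1}{\sqrt n}X_{jj}-\tfrac{1}{n}\sum_{k,l\neq j}X_{jk}X_{jl}R^{(j)}_{kl}\Bigr)^{-1},
\]
averaged in $j$ to obtain $m_n+(z+m_n)^{-1}=T_n$, where $T_n$ gathers (i) the diagonal term $\frac{1}{\sqrt n}X_{jj}R_{jj}$, (ii) a martingale piece $\frac{1}{n}\sum_{k\neq l}X_{jk}X_{jl}R^{(j)}_{kl}$, and (iii) a self-averaging piece $\frac{1}{n}\sum_k(X_{jk}^2-1)R^{(j)}_{kk}$, plus the interlacing correction between $R^{(j)}$ and $R$.

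Next I would linearize the self-consistent equation around $s(z)$ on the event that $|m_n+z/2|\gtrsim\sqrt{\gamma+v}$, an a priori bound that follows from Theorem~\ref{th:main}(ii) combined with the explicit form of $s(z)$ near the edge. This yields $|m_n-s|\lesssim|T_n|/\sqrt{\gamma+v}$, so that passing to imaginary parts and using $\imag R_{jj}^{(j)}\lesssim v/(\gamma+v)^{1/2}+|\imag\Lambda_n|$ transfers the small factor $v/(\gamma+v)^{1/2}$ into every quadratic-form estimate. Concretely, for the martingale piece I would apply Burkholder/Rosenthal at exponent $p$, using $|X_{jk}|\le Dn^\alpha$ to control the jump sizes, which produces a factor $p^{1/2}/(nv)^{1/2}$ per application and accounts for the $p^p$ and $p^{3p}$ powers in the four terms. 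The self-averaging piece is handled by a Hanson-Wright type inequality combined with the moment matching $\E X_{jk}^s=\E\tilde X_{jk}^s$ for $s\le 4$; under $\CondTwo$ the fourth moment condition ensures the variance bound gives the term $n^{-p}v^{-p/2}(\gamma+v)^{-p/2}$, which is exactly the third summand on the right-hand side.

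To reach the four summands appearing in the statement, I would then run the multiplicative descent of Cacciapuoti--Maltsev--Schlein adapted to the imaginary part: bound
\[
\E|\imag\Lambda_n|^p\le\sum_{k}\E|\imag\Lambda_n|^{p-k}\cdot(\text{error}_k)^k,
\]
where at each step one of the four error mechanisms above is applied, dividing by $\sqrt{\gamma+v}$ from the linearization and picking up the appropriate power of $v/(\gamma+v)^{1/2}$. Iterating until one hits the constant term and solving the resulting polynomial inequality in $\E|\imag\Lambda_n|^p$ separates the contributions: the term $n^{-p}(\gamma+v)^{-p}$ corresponds to the diagonal $X_{jj}$ plus interlacing corrections (which gain a full factor of $1/(\gamma+v)$ from two divisions by the stability constant), the term $(nv)^{-2p}(\gamma+v)^{-p/2}p^{3p}$ to the martingale part after one round of descent, the third term to the self-averaging part, and the fourth term to the mixed contribution between the off-diagonal martingale and its imaginary gain.

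The main obstacle will be the bootstrap of the a priori bound $|m_n+z/2|\gtrsim\sqrt{\gamma+v}$ together with $\imag m_n\lesssim v/(\gamma+v)^{1/2}$: both are needed to justify the linearization and the imaginary-part gain, yet they are essentially what the theorem is trying to prove. This circularity must be resolved by a Markov-chain / stopping-time argument on the grid of $z$-values, proving the bound first at $v$ of order~$1$ (where Theorem~\ref{th:main}(ii) applies directly) and then propagating it down to $v\gtrsim n^{-1}$ by controlling the Lipschitz constant of $\imag m_n-\imag s$ in $v$ via $|\partial_v(\imag m_n-\imag s)|\le v^{-1}(\imag m_n+\imag s)$. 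Careful bookkeeping of the powers of $p$ accumulated through the Burkholder inequalities at each descent step, so as not to exceed $p^{3p}$ in the dominant term, is the second technical pinch point.
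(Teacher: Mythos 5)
Your plan and the paper diverge significantly in structure: the paper's proof of this theorem is a short corollary of machinery it has already established, whereas you are proposing to re-derive most of that machinery from scratch in the edge regime. Concretely, the paper simply (a) invokes Theorem~\ref{th: general bound}, which bounds $\E|T_n|^p$ by the quantity $\mathcal E_p$ depending on $\mathcal A(\kappa p)$ and $|b(z)|$; (b) invokes Lemma~\ref{lemma: imag part of R_jj}, which gives $\mathcal A^p(\kappa p)\le H_0^p\imag^p s(z)+H_0^p p^{2p}(nv)^{-p}$ uniformly in $\mathbb D$; (c) passes to $\imag\Lambda_n$ via the deterministic bound $|\imag\Lambda_n|\le C|T_n|/|b(z)|$ from Lemma~\ref{appendix inequality for lambda}; and (d) substitutes the explicit edge asymptotics $|b(z)|\asymp\sqrt{\gamma+v}$, $\imag s(z)\asymp v/\sqrt{\gamma+v}$ for $2\le|u|\le u_0$, which produce exactly the four summands by direct algebra. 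The circularity you worry about in the a priori stability bound is real, but it is resolved inside Lemmas~\ref{main lemma} and~\ref{lem: imag RR jj recur} via the multiplicative descent carrying bounds for all $|\J|\le K$, and is not re-encountered in the proof of the present theorem, which just cites those lemmas as black boxes.

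Two of your technical choices also run contrary to the paper. You propose a Hanson--Wright inequality for the quadratic forms, whereas the paper explicitly uses the Gin\'e--Lata{\l}a--Zinn moment inequality (Theorem~\ref{appendix lemma for quadratic forms}) and even remarks that Hanson--Wright is what \emph{other} authors (Schlein et al.) use; this matters because Gin\'e--Lata{\l}a--Zinn gives moment bounds that keep track of $\mu_p$ in a form compatible with the truncation level $n^\alpha$. You also invoke moment matching $\E X_{jk}^s=\E\tilde X_{jk}^s$ for $s\le 4$ to handle truncation; that is the Erd{\H o}s--Knowles--Yau--Yin/Lee--Yin device described in the introduction but not used here, and in fact under $\CondTwo$ no truncation argument is needed at all because the deterministic bound $|X_{jk}|\le Dn^\alpha$ is part of the hypotheses. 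Finally, your proposed recursion $\E|\imag\Lambda_n|^p\le\sum_k\E|\imag\Lambda_n|^{p-k}(\mathrm{error}_k)^k$ is in spirit the paper's Lemma~\ref{appendix eq: inequality for x power p 2}, but applying it directly to $\imag\Lambda_n$ obscures the fact that the correct object to iterate on is $\E|T_n|^p$ (with the descent acting on $\RR_{jj}$ and $\imag\RR_{jj}$ separately), which is what makes the $p^p$ and $p^{3p}$ bookkeeping close. In short: your ingredients are sound and would likely yield the result, but the route is far longer than necessary, partially mismatched to the paper's toolkit, and risks getting lost in the bootstrap that the paper has already packaged into earlier lemmas.
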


\subsection{Applications of the main result}

In a subsequent paper we shall apply Theorem~\ref{th:main} to prove a series of results which we will formulate and discuss now. We start with the rate of convergence in probability. Let us denote by
$$
\Delta_n^{*}: = \sup_{x \in \R} |F_n(x) - G_{sc}(x)|,
$$
where $G_{sc}(x): = \int_{-\infty}^{x} g_{sc}(\lambda)\, d\lambda$. It was proved by F. G{\"o}tze and A. Tikhomirov in~\cite{GotTikh2003}
that assuming $\max_{1 \le j,k \le n} \E |X_{jk}|^{12} = : \mu_{12} < \infty$, one may obtain the following estimate
$$
\E \Delta_n^{*} \le \mu_{12}^\frac16 n^{-\frac12}.
$$
In particular this estimate implies by Markov's inequality that
\begin{equation}\label{eq: rate of convergence}
\Pb \left( \Delta_n^{*} \geq K \right ) \le \frac{\mu_{12}^\frac16}{ K n^{\frac12}}.
\end{equation}
It is easy to see from the previous bound that one may take $K \gg n^{-\frac{1}{2}}$. This result has been extended by Bai  and et al., see~\cite{BaiHuPanZhou2011}. showing that instead of the twelfth finite moments it suffices to require finiteness of six moments.  Applying Theorem~\ref{th:main} we may obtain the following stronger bound. Namely, assuming the conditions $\Cond$ for $1 \le p \le c(\alpha) \log n$ we get an error bound of next order
\begin{equation*}
\E^\frac{1}{p} [\Delta_n^{*}]^p \le n^{-1} \log^\frac{2}{1-2\alpha} n .
\end{equation*}
Similarly to~\eqref{eq: rate of convergence} this inequality implies that
\begin{equation}\label{eq: rate of convergence optimal}
\Pb \left( \Delta_n^{*} \geq K  \right) \le \frac{C^p\log^\frac{2p}{1-2\alpha} n }{K^p n^p},
\end{equation}
which is optimal up to a power of  logarithm since on may take $K \gg n^{-1}$ (see also~\cite{GotzeTikh2011rateofconv},~\cite{TaoVu2013} and~\cite{GotzeTikh2014rateofconv} using additional assumptions). A direct corollary of the last bound is the following estimate
$$
\Pb \left (\left|\frac{N[x - \frac{\xi}{2 n}; x +  \frac{\xi}{2 n}  ]}{\xi} - g_{sc}(x) \right | \geq \frac{ K}{\xi}\right) \le \frac{C^p\log^\frac{2p}{1-2\alpha} n }{K^p n^p},
$$
valid for all $\xi > 0$, where $N[I]: = N_I(\W)$ with $I: = [E - \frac{\xi}{2 n}; E +  \frac{\xi}{2 n}  ]$. This means the semicircle law holds on a short scale as well.

Another application of Theorem~\ref{th:main} is the following result which shows the rigidity of the eigenvalues. Let us define the  quantile  position of the $j$-th eigenvalue by
$$
\gamma_j: \quad \int_{-\infty}^{\gamma_j} g_{sc}(\lambda) \, d\lambda = \frac{j}{n}, \quad 1 \le j \le N.
$$
We will show that under conditions $\Cond$ with high probability for all $1 \le j \le n$  the following inequality holds
\begin{equation}\label{eq: rigidity}
|\lambda_j - \gamma_j| \leq K  [\min(j, n-j+1)]^{-\frac{1}{3}}n^{-\frac{2}{3}},
\end{equation}
where $K$ is of logarithmic order. It is easy to see that up to a logarithmic factor the distance between $\lambda_j$ and $\gamma_j$ is of order $n^{-1}$ in the bulk of spectrum and of order $n^{-\frac23}$ at the edges. To prove~\eqref{eq: rigidity} we shall apply Theorem~\ref{th: stronger bound for imag part} as well. For previous results we refer the interested reader to~\cite{Gustavsson2005},~\cite{ErdKnowYauYin2013}[Theorem~7.6],~\cite{ErdKnowYauYin2013a}[Theorem~2.13],~\cite{GotzeTikh2011rateofconv}[Remark~1.2],~\cite{LeeYin2014}[Theorem~3.6] and~\cite{Schlein2014}[Theorem~4].

We may also show delocalization of  eigenvectors of $\W$. Let us denote by $\uu_j: = (u_{j1}, ... , u_{jn})$ the eigenvectors of $\W$ corresponding to the eigenvalue $\lambda_j$.  Assuming condition $\Cond$ we have that with high probability
$$
\max_{1 \le j,k \le n} |u_{jk}|^2 \leq \frac{K }{n}.
$$
For previous and related results we refer the interested reader to the corresponding theorems in~\cite{ErdosSchleinYau2009}~\cite{GotzeTikh2011rateofconv},~\cite{ErdKnowYauYin2013a} and~\cite{ErdKnowYauYin2012}.
\subsection{Sketch of the proof}\label{Sketch of the proof}
Let $\Lambda_n(z): = m_n(z) - s(z)$. Applying Lemma~\ref{appendix inequality for lambda} (see \cite{Schlein2014}[Proposition~2.2]) it is shown in Section~\ref{proof of the main result} that one may estimate $\E |\Lambda_n(z)|^p$ and $\E |\imag \Lambda_n(z)|^p$ via $\E |T_n(z)|^p$ (see definition~\eqref{definition of T}) choosing one of the bounds depending on whether $z$ is near the edge of the spectrum or away from it.

To estimate $\E |T_n(z)|^p$ we extend the methods developed in~\cite{GotzeTikh2014rateofconv}[Theorem~1.2] and~\cite{GotTikh2015}. The bound for $\E |T_n(z)|^p$ is given in Theorem~\ref{th: general bound}. The crucial step in~\cite{GotTikh2015},~\cite{GotzeTikh2014rateofconv} was to show that finiteness of  eight moments suffices to show that $\max_{1\le j \le n}\E |\RR_{jj}(z)|^p \le C_0^p$ for all $1 \le p \le C(nv)^\frac14$ and $v \geq v_0$. The proof of this fact was based on the descent method developed in~\cite{Schlein2014}[Lemma~3.4] (see Lemma~\ref{main lemma} below), but used in proving bounds for moments of the diagonal entries $\RR_{jj}(z)$ only. In this paper we develop this approach to estimate the off-diagonal entries of the resolvent as well assuming $\CondTwo$ (see Lemma~\ref{lemma off diagonal entries assumption} below).
This provides improved bounds for $\E|\varepsilon_{j2}|^q$, where $\varepsilon_{j2}$ is the quadratic form defined in~\eqref {eq: R_jj representation}. We would like to emphasize that
this estimate is crucial for
the proof of Theorem~\ref{th:main}, assuming conditions $\Cond$. Similarly we establish a bound $\max_{1\le j \le n} \E |\RR_{jj}(z)|^p \le C_0^p$, which is  valid on the whole real line rather then on the support of the semicircle law only as in ~\cite{GotTikh2015},~\cite{GotzeTikh2014rateofconv}. The details may be found in Section~\ref{sec: diagonal entries}, Lemma~\ref{main lemma}.

Note that $\E |T_n(z)|^p$ is bounded in terms of $\E \imag^p \RR_{jj}$. In Section~\ref{sec: imagionary part}, Lemma~\ref{lemma: imag part of R_jj} we show that $\max_{1\le j \le n} \E \imag^q \RR_{jj}(z)$ may be estimated by $\imag^q s(z)$ with some additional correction term (see definition~\eqref{eq: Psi definition} of $\Psi(z)$). Since we can derive explicit bounds for $\imag s(z)$ inside as well as outside the limit  spectrum  we
	are able to control the size for $\E |T_n(z)|^q$ as well as $\E |\imag \Lambda_n(z)|^p$  on the whole real line  in terms of the quantity  $\gamma$ (see~\eqref{eq: gamma definition}). This is another key fact for the proof of Theorem~\ref{th: stronger bound for imag part}.

\subsection{Notations} Throughout the paper we will use the following notations. We assume that all random variables are defined on common probability space $(\Omega, \mathcal F, \Pb)$ and denote by $\E$ the mathematical expectation with respect to $\Pb$.

We denote by $\R$ and $\C$ the set of all real and complex numbers. We also define $\C^{+}: = \{z \in \C: \imag  z \geq 0\}$. Let $\T = [1, ... , n]$ denotes the set of the first $n$ positive integers. For any $\J \subset \T$ introduce $\T_{\J}: = \T \setminus \J$.

We shall systematically use for any matrix $\W$ together with its resolvent $\RR$ and Stieltjes transform $m_n$
the corresponding quantities $\W^{(\J)}, \RR^{(\J)}, m_n^{(\J)}$ for the corresponding sub matrix with entries $X_{jk}, j,k \in \T\setminus \J$.

By $C$ and $c$ we denote some positive constants, which may depend on $\alpha, u$ and $V$, but not on $n$.

For an arbitrary matrix $\A$ taking values in $\C^{n \times n}$ we define the operator norm by $\|\A\|: = \sup_{x \in \R^n: \|x\| = 1} \|\A x\|_2$, where $\|x\|_2 : = \sum_{j = 1}^n |x_j|^2$. We also define the Hilbert-Schmidt norm by $\|\A\|_2: = \Tr \A \A^{*} = \sum_{j,k = 1}^n |\A_{jk}|^2$.

\subsection{Acknowledgment} We would like to thank L.~Erd{\"o}s and H.-T.~Yau for drawing our attention to  relevant previous results and papers in connection with the results of this paper, in particular, ~\cite{ErdKnowYauYin2012},~\cite{ErdKnowYauYin2013},~\cite{ErdKnowYauYin2013a} and~\cite{LeeYin2014}.

\section{Proof of the main result}\label{proof of the main result}
We start this section with the recursive representation of the diagonal entries $\RR_{jj}(z) = (\W - z \I)^{-1}$ of the resolvent.  As noted before we shall systematically use for any matrix $\W$ together with its resolvent $\RR$, Stieltjes transform $m_n$ and etc. the corresponding quantities $\W^{(\J)}, \RR^{(\J)}, m_n^{(\J)}$ and etc. for the corresponding sub matrix with entries $X_{jk}, j,k \in \T\setminus \J$. We will often omit the argument $z$ from $\RR(z)$ and write $\RR$ instead. We may express $\RR_{jj}$ in the following way
\begin{equation}\label{eq: R_jj representation 0}
\RR_{jj} = \frac{1}{-z + \frac{X_{jj}}{\sqrt n} - \frac{1}{n}\sum_{l,k \in T_j} X_{jk} X_{jl} \RR_{kl}^{(j)}}.
\end{equation}
Let $\varepsilon_j : = \varepsilon_{1j} + \varepsilon_{2j}+\varepsilon_{3j}+\varepsilon_{4j}$, where
\begin{align*}
&\varepsilon_{1j} =  \frac{1}{\sqrt n}X_{jj}, \quad \varepsilon_{2j} = -\frac{1}{n}\sum_{l \ne k \in T_j} X_{jk} X_{jl} \RR_{kl}^{(j)}, \quad \varepsilon_{3j} = -\frac{1}{n}\sum_{k \in T_j} (X_{jk}^2 -1) \RR_{kk}^{(j)}, \\
&\varepsilon_{4j}= \frac{1}{n} (\Tr \RR - \Tr \RR^{(j)}).
\end{align*}
Using these notations we may rewrite~\eqref{eq: R_jj representation 0} as follows
\begin{equation}\label{eq: R_jj representation}
\RR_{jj} = -\frac{1}{z + m_n(z)} + \frac{1}{z + m_n(z)} \varepsilon_j \RR_{jj}.
\end{equation}
Introduce
\begin{equation}\label{eq:Lambda_b_bn}	
\Lambda_n: = m_n(z) - s(z), \quad b(z): = z + 2 s(z), \quad b_n(z) = b(z) + \Lambda_n,
\end{equation}
and
\begin{equation}\label{definition of T}
T_n: = \frac{1}{n} \sum_{j=1}^n \varepsilon_j \RR_{jj},
\end{equation}
Applying~\eqref{eq: R_jj representation}  we arrive at  the following representation for $\Lambda_n$ in terms of $T_n$ and $b_n$
$$
\Lambda_n = \frac{T_n}{z + m_n(z) + s(z)} = \frac{T_n}{b_n(z)}.
$$
From Lemma~\ref{appendix inequality for lambda} of the Appendix it follows that for all $v > 0$ and $u \le 2+v$
(using the quantities \eqref{eq:Lambda_b_bn})
\begin{equation}\label{eq: abs value lambda main result section}
|\Lambda_n| \le C \min\left\{\frac{|T_n|}{|b(z)|}, \sqrt{|T_n|}\right\}.
\end{equation}
Moreover, for all $v>0$ and $|u| \le u_0$
\begin{equation}\label{eq: abs imag lambda main result section}
|\imag\Lambda_n| \le C \min\left\{\frac{|T_n|}{|b(z)|}, \sqrt{|T_n|}\right\}.
\end{equation}
This means that in order to bound $\E |\Lambda_n|^p$ (or $\E|\imag \Lambda|^p$ respectively) it is enough to estimate $\E |T_n|^p$.

Let us introduce the following region in the complex plane:
\begin{equation}\label{eq: definition reginon D}
\mathbb D: = \{z = u+iv \in \C: |u| \le u_0, V \geq v \geq v_0: = A_0 n^{-1} \},
\end{equation}
where $u_0, V$ are arbitrary fixed positive real numbers and $A_0$ is some large constant defined below.

The following theorem provides a a general bound for $\E |T_n|^p$ for all $z \in \mathbb D$ in terms of diagonal resolvent entries. To formulate the result of the theorem we need to introduce additional notations. Let
\begin{equation}\label{definition of A}
\mathcal A(q): = \max_{|\J| \le 1} \max_{j \in \T_\J} \E^{\frac{1}{q}} \imag^q \RR^{(\J)}_{jj},
\end{equation}
where $\J$ may be an empty set or one point set. We also denote
\begin{equation}\label{eq: definition of E}
\mathcal E_p: = \frac{p^p\mathcal A^p(\kappa p)}{(nv)^p}   +  \frac{ p^{2p}}{(nv)^{2p}}  + \frac{ |b(z)|^\frac{p}{2}\mathcal A^\frac{p}{2}(\kappa p)}{(nv)^p },
\end{equation}
where $\kappa = \frac{16}{1 - 2\alpha}$.
\begin{theorem}\label{th: general bound}
Assume that the conditions $\CondTwo$ hold and $u_0 > 2$ and $V > 0$. There exist  positive constants $A_0, A_1$ and $C$
depending on  $\alpha, u_0$ and $V$ such that for all $z \in \mathbb D$ we have
\begin{equation}\label{eq: general bound}
\E|T_n|^p \le C^p \mathcal E_p,
\end{equation}
where $1 \le p \le A_1 (nv)^{\frac{1-2\alpha}{2}}$.
\end{theorem}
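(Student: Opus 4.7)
\textbf{Proof plan for Theorem \ref{th: general bound}.}
The starting point is the additive decomposition $T_n = T_{1n}+T_{2n}+T_{3n}+T_{4n}$, where $T_{kn} := \frac1n\sum_{j=1}^n \varepsilon_{kj}\RR_{jj}$. I would treat each piece separately and combine them via Minkowski's inequality in $L^p$. The two ``trivial'' pieces are $T_{1n}$ (diagonal entries) and $T_{4n}$ (trace difference): for $T_{4n}$ the resolvent interlacing identity gives the deterministic bound $|\varepsilon_{4j}|\le \frac{1}{nv}$, contributing $O((nv)^{-p})$ to $\E|T_n|^p$; for $T_{1n}$, the assumption $|X_{jj}|\le Dn^\alpha$ from \CondTwo{} together with $|\RR_{jj}|\le 1/v$ and the centering $\E X_{jj}=0$ give a bound of the same order after a standard Stein/leave-one-out expansion to decouple $X_{jj}$ from $\RR_{jj}$. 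The piece $T_{3n}$, which is a sum of centered squares times the diagonal entries of $\RR^{(j)}$, is handled by a martingale/Burkholder argument row-by-row: conditional on row $j$, $\RR^{(j)}$ is fixed, and a Rosenthal-type bound with truncation level $n^\alpha$ produces a contribution that is absorbed by the first two terms of $\mathcal E_p$.

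The heart of the proof is the estimate of $T_{2n}$, the quadratic-form term. I would write
\begin{equation*}
\E|T_n|^{2p} \;=\; \E\Bigl(T_n\cdot \bar T_n^{\,p-1}T_n^{\,p-1}\bar T_n\Bigr)
\;=\;\frac{1}{n}\sum_{j=1}^n \E\bigl(\varepsilon_j\RR_{jj}\,|T_n|^{2p-2}\bar T_n\bigr),
\end{equation*}
and bound each term by Hölder, keeping the full power of $|T_n|^{2p-2}$ on one side. For the ``main'' contribution coming from $\varepsilon_{2j}$ I would condition on the $\sigma$-algebra generated by all rows other than the $j$-th: since $\RR^{(j)}$ is independent of row $j$ and $\varepsilon_{2j}$ is then a centered quadratic form in the independent variables $\{X_{jk}\}_{k\neq j}$, a Hanson--Wright-type inequality valid for bounded variables (using $|X_{jk}|\le Dn^\alpha$) together with the Ward identity
\begin{equation*}
\frac{1}{n^2}\sum_{k,l\in\T_j}\bigl|\RR^{(j)}_{kl}\bigr|^2 \;=\; \frac{1}{n^2v}\sum_{k\in\T_j}\imag\RR^{(j)}_{kk}
\end{equation*}
yields $\E^{1/q}|\varepsilon_{2j}|^q \le Cq^{1/2}(\mathcal A(q)/(nv))^{1/2} + Cq\,n^{\alpha-1}\mathcal A^{1/2}(q)$, with the exponent $\kappa=16/(1-2\alpha)$ in \eqref{eq: definition of E} arising from the requirement $p\le A_1(nv)^{(1-2\alpha)/2}$ needed to absorb the second, ``bounded-entry'' correction. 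Combining with $|\RR_{jj}|\le \mathcal A(q)^{1/?}$ by Lemma~\ref{main lemma} (the multiplicative descent lemma inherited from \cite{Schlein2014}) and summing in $j$ produces the first two summands of $\mathcal E_p$.

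The third summand $|b(z)|^{p/2}\mathcal A^{p/2}(\kappa p)/(nv)^p$ is the ``Stein residue'' obtained after the integration-by-parts expansion of the quadratic form around its mean; after one expansion step there is a leftover term proportional to $b_n(z)$ that we replace by $b(z)+\Lambda_n$, and the $\Lambda_n$ piece is reinserted into the estimate by means of \eqref{eq: abs value lambda main result section}, which on squaring and taking $p$-th moments accounts precisely for the factor $|b(z)|^{p/2}$ and the $\mathcal A^{p/2}$. The restriction $p\le A_1(nv)^{(1-2\alpha)/2}$ is what lets the resulting self-consistency inequality for $\E|T_n|^p$ be closed, and the constants $A_0,A_1$ are chosen so that the coefficient of $\E|T_n|^p$ on the right-hand side is strictly less than one.

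The hardest single step is the quadratic-form bound for $\varepsilon_{2j}$: we need the $q$-dependence to be $q^q\mathcal A^q/(nv)^q$ (i.e., only one power of $q$ in the exponent, compared with the $q^{2q}$ that a crude Marcinkiewicz--Zygmund argument would give), and we must express everything in terms of $\mathcal A(\kappa p)$ rather than the uncontrolled moments of $\RR^{(j)}_{jj}$ themselves. This is precisely where the ``multiplicative descent'' bounds for off-diagonal resolvent entries from the companion lemma (Lemma~\ref{lemma off diagonal entries assumption}) enter, and where the truncation threshold $n^\alpha$ dictates the admissible range of $p$.
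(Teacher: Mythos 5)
There is a genuine gap, and it appears twice in the proposal. In your first paragraph you propose to decompose $T_n = T_{1n}+\dots+T_{4n}$ and estimate each piece individually via Minkowski in $L^p$. This cannot yield the sharp bound $\mathcal E_p$: for the dominant piece $T_{2n} = \frac1n\sum_j\varepsilon_{2j}\RR_{jj}$ the conditional moment bound of Lemma~\ref{appendix lemma varepsilon_2 4 moment} gives $\E^{1/q}|\varepsilon_{2j}|^q \sim (\imag m_n/(nv))^{1/2}$, so term-by-term estimation (even with Rosenthal for the $j$-sum) produces at best something of order $(\mathcal A/(nv))^{p/2}$, not the required $(p\mathcal A/(nv))^{p}$. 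The extra factor of $(nv)^{-p/2}$ comes only from exploiting the cancellation across $j$, i.e.\ the conditional centering $\E(\varepsilon_{\nu j}\,|\,\mathfrak M^{(j)})=0$ for $\nu=2,3$. The paper achieves this by the Stein-type identity $\E|T_n|^p = \E T_n\varphi(T_n)$, $\varphi(z)=\bar z|z|^{p-2}$, followed by the crucial substitution $\varphi(T_n)\mapsto\varphi(T_n)-\varphi(\widetilde T_n^{(j)})$ with $\widetilde T_n^{(j)}:=\E(T_n\,|\,\mathfrak M^{(j)})$ (valid precisely because $\varepsilon_{\nu j}a_n^{(j)}$ is conditionally centered), and then a Newton--Leibniz expansion of $\varphi(T_n)-\varphi(\widetilde T_n^{(j)})$ in powers of $T_n-\widetilde T_n^{(j)}$. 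Your second paragraph abandons Minkowski for a Stein-type identity of this sort, which is the right direction, but it then proposes to ``bound each term by H\"older, keeping the full power of $|T_n|^{2p-2}$ on one side.'' That again fails to exploit the centering, because $|T_n|^{2p-2}\bar T_n$ is not $\mathfrak M^{(j)}$-measurable: simply applying H\"older to separate $\varepsilon_{2j}$ from $\varphi(T_n)$ returns you to the $(nv)^{-p/2}$ bound. The missing idea is precisely the $\varphi(T_n)-\varphi(\widetilde T_n^{(j)})$ device and the resulting need to estimate $|T_n-\widetilde T_n^{(j)}|$ in terms of $|\Lambda_n-\Lambda_n^{(j)}|$ (via~\eqref{T - T tilde inequality},~\eqref{eq: difference between Lambda and Lambda j}, and the $\eta_{\nu j}$ decomposition); this is where the bulk of the paper's proof lives, and the proposal never reaches it.

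Some secondary points. The paper does not bound $T_{4n}$ by $|\varepsilon_{4j}|\le(nv)^{-1}$ and $|\RR_{jj}|\le1/v$ (the latter would cost an extra $1/v$); it uses the exact identity $\sum_j\varepsilon_{4j}\RR_{jj}=m_n'(z)$ and bounds $m_n'(z)$ by $\imag m_n(z)/v$, which produces exactly the factor $\mathcal A(p)/(nv)$ needed to close the self-consistent inequality~\eqref{eq: T_n^p first step}. The paper also uses the Gin\'e--Lata{\l}a--Zinn inequality (Theorem~\ref{appendix lemma for quadratic forms}) rather than Hanson--Wright for the quadratic form moments, though this is a matter of choice. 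Finally, the paper further splits each $\mathcal A_\nu$ into $\mathcal A_{\nu1}$ (with $a_n^{(j)}$) and $\mathcal A_{\nu2}$ (with $\RR_{jj}+a_n^{(j)}=\hat\varepsilon_j a_n^{(j)}\RR_{jj}$); the second piece $\mathcal A_{\nu2}$ is a quadratic expression in the $\hat\varepsilon$'s and is treated by a separate argument — a step not present in the proposal at all. So while the proposal correctly identifies the tools (Ward identity, quadratic-form moment inequalities, the multiplicative descent for resolvent moments, the constraint $p\le A_1(nv)^{(1-2\alpha)/2}$), it is missing the central cancellation mechanism that makes the estimate close.
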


The proof of Theorem~\ref{th: general bound} is one of the crucial steps in the proof of the main result and will be given in the next section. Since \eqref{eq: general bound} is an estimate in terms of the imaginary part of diagonal resolvent entries we refer to it as the main 'general' bound. We finish this section with the proof of Theorems~\ref{th:main} and~\ref{th: stronger bound for imag part}.

\begin{proof}[Proof of Theorem~\ref{th:main}] From Lemmas~\ref{appendix: lemma trunc 1}--~\ref{appendix: lemma trunc 2} of the Appendix it follows that we may assume that
$$
|X_{jk}| \le D n^\alpha \text{ for all }  1 \le j,k \le n
$$
and some $D: = D(\alpha) > 0$.
		
Applying Theorem~\ref{th: general bound} we will show in the section~\ref{sec: imagionary part}, Lemma~\ref{lemma: imag part of R_jj}, that there exist constants $H_0$ depending on $u_0, V$ and $A_0, A_1$ depending on $\alpha$ and $H_0$ such that
\begin{equation}\label{eq: bound for A (alpha p)}
\mathcal A^{p}(\kappa p) \le H_0^p \imag^p s(z) + \frac{H_0^p p^{2p}}{(nv)^p}.
\end{equation}
for all $1 \le p \le A_1(nv)^\frac{1-2\alpha}{2}$ and $z \in \mathbb D$.  This inequality and Theorem~\ref{th: general bound} together imply that
\begin{align}\label{eq: T n 5 step}
\E|T_n|^p &\le \frac{C^p p^p \imag^p s(z)}{(nv)^p} +  \frac{C^p p^{3p}}{(nv)^{2p}}  + \frac{C^p |b(z)|^\frac{p}{2} \imag^\frac{p}{2}s(z)}{(nv)^p } + \frac{C^p |b(z)|^\frac{p}{2} p^p}{(nv)^\frac{3p}{2} } .
\end{align}
with some new constant $C$ which depends on $H_0$. To estimate $\E|\imag \Lambda_n|^p$ we may choose one of the bounds~\eqref{eq: abs imag lambda main result section},
depending on whether $z$ is near the edge of the spectrum or away from it. If $|b(z)|^p \geq \frac{C^p p^p}{(nv)^p}$ then we may take the bound
$$
\E|\imag \Lambda_n|^p \le \frac{C^p \E |T_n|^p}{|b(z)|^p}.
$$
The r.h.s. of the last inequality may be estimated applying~\eqref{eq: T n 5 step}.  We get
\begin{align*}
\E|\imag \Lambda_n|^p \le\frac{C^p p^p \imag^p s(z)}{(nv)^p |b(z)|^p} +  \frac{C^p p^{3p}}{(nv)^{2p}|b(z)|^p}  + \frac{C^p  \imag^\frac{p}{2}s(z)}{(nv)^p |b(z)|^\frac{p}{2}} + \frac{C^p p^p}{(nv)^\frac{3p}{2} |b(z)|^\frac{p}{2} }.
\end{align*}
Since $|b(z)|^p \geq \frac{C^p p^p}{(nv)^p}$ the last inequality may be rewritten in the following way
\begin{align*}
\E|\imag \Lambda_n|^p \le\frac{C^p p^p \imag^p s(z)}{(nv)^p |b(z)|^p}  + \frac{C^p  \imag^\frac{p}{2}s(z)}{(nv)^p |b(z)|^\frac{p}{2}} + \frac{C^p p^{2p} }{(nv)^p}.
\end{align*}
It remains to estimate the imaginary part of $s(z)$. Since
$$
\imag^p s(z) \le c^p|b(z)|^p \text { for } |u| \le 2  \text { and }  \imag^p s(z) \le \frac{c^p v^p}{|b(z)|^p} \text{ otherwise}
$$
both inequalities combined yield
\begin{equation}\label{eq: bound for Lambda}
\E|\imag \Lambda_n|^p \le \left(\frac{Cp^2}{nv}\right)^p,
\end{equation}
where we have used as well the fact that $c\sqrt{\gamma + v} \le |b(z)| \le C\sqrt{\gamma + v} $ for all $|u| \le u_0$, $0 < v \le v_1$.
If $|b(z)|^p \le  \frac{C^p p^p}{(nv)^p}$ and $\imag^p s(z) \geq \frac{C^p p^{2p}}{(nv)^p}$ then we may repeat the calculations above and get the bound~\eqref{eq: bound for Lambda}. In the case $\imag^p s(z) \leq \frac{C^p p^{2p}}{(nv)^p}$ we take the bound proportional to $|T_n|^\frac12$ and obtain the following inequality
$$
\E |\imag \Lambda_n|^p \le \E|T_n|^\frac{p}{2} \le \left(\frac{C p^2}{nv}\right)^p.
$$
Similar arguments are applicable to  $\E|\Lambda_n|^p$.
\end{proof}

\begin{proof}[Proof of Theorem~\ref{th: stronger bound for imag part}] From Theorem~\ref{th: general bound} we may  conclude that
\begin{align}\label{eq: T n 6 step}
\E|T_n|^p &\le \frac{C^p p^p \imag^p s(z)}{(nv)^p} +  \frac{C^p p^{3p}}{(nv)^{3p}}  + \frac{C^p |b(z)|^\frac{p}{2} \imag^\frac{p}{2}s(z)}{(nv)^p }
+ \frac{C^p |b(z)|^\frac{p}{2} p^p}{(nv)^\frac{3p}{2} } .
\end{align}
Applying Lemma~\ref{appendix inequality for lambda} we get
$$
\E|\imag \Lambda_n|^p \le \frac{\E |T_n|^p}{|b(z)|^p}.
$$
This inequality together with~\eqref{eq: T n 5 step} leads to
\begin{align}\label{eq: Lambda_n stronger bound}
\E|\imag \Lambda_n|^p \le\frac{C^p p^p \imag^p s(z)}{(nv)^p |b(z)|^p} +  \frac{C^p p^{3p}}{(nv)^{2p}|b(z)|^p}  + \frac{C^p  \imag^\frac{p}{2}s(z)}{(nv)^p |b(z)|^\frac{p}{2}} + \frac{C^p p^p}{(nv)^\frac{3p}{2} |b(z)|^\frac{p}{2} }.
\end{align}
Since $c\sqrt{\gamma + v} \le |b(z)| \le C\sqrt{\gamma + v} $ for all $|u| \le u_0$, $0 < v \le v_1$ and
$$
\frac{c v}{\sqrt{\gamma + v}} \le \imag s(z) \le \frac{c v}{\sqrt{\gamma + v}} \quad \text{ for all } 2 \le |u| \le u_0, 0 < v \le v_1,
$$
we finally get
\begin{align}\label{eq: Lambda_n stronger bound 2}
\E|\imag \Lambda_n|^p \le \frac{C^p p^p }{n^p (\gamma + v)^p} +  \frac{C^p p^{3p}}{(nv)^{2p} (\gamma + v)^\frac{p}{2}}  + \frac{C^p}{n^p v^\frac{p}{2} (\gamma + v)^\frac{p}{2}} + \frac{C^p p^p}{(nv)^\frac{3p}{2} (\gamma + v)^\frac{p}{4} }.
\end{align}
This bound concludes the proof of the theorem.
\end{proof}

\section{Proof of the general bound of Theorem \ref{th: general bound} }

In the next section we will show that there exist positive  constants $C_0, A_0$ and $A_1$ (explicit dependence on $\alpha, u$ and $\V$ will be given later) such that for all $z \in \mathbb D$ and $1 \le p \le A_1 (nv)^{\frac{1-2\alpha}{2}}$ the following bound holds
\begin{equation}\label{eq: main assumption about diagonal entries of resolvent}
\max_{j \in \T}\E |\RR_{jj}(z)|^p \le C_0^p.
\end{equation}
Similarly we prove that
\begin{equation}\label{eq: main assumption about diagonal entries of resolvent 2}
\E \frac{1}{|z+m_n(z)|^p} \le C_0^p.
\end{equation}
The proof is given in Lemma~\ref{main lemma}. In the current section we will assume
that~\eqref{eq: main assumption about diagonal entries of resolvent} and~\eqref{eq: main assumption about diagonal entries of resolvent 2}  hold. The rest of this section is devoted to the proof of Theorem~\ref{th: general bound}.

\begin{proof}[Proof of Theorem~\ref{th: general bound}]
For the proof we will apply the techniques developed in~\cite{GotzeTikh2014rateofconv} and~\cite{GotTikh2015}. Recalling the definition of $T_n$ (see~\eqref{definition of T}) we may rewrite it in the following way
$$
T_n = \frac{1}{n} \sum_{j=1}^n \varepsilon_{4j} \RR_{jj} + \frac{1}{n} \sum_{\nu = 1}^3 \sum_{j=1}^n \varepsilon_{\nu j} \RR_{jj}.
$$
Since
\begin{equation}\label{eq: derivative mn}
\sum_{j=1}^n \varepsilon_{4j} \RR_{jj} = \frac{1}{n}\Tr \RR^2 = m_n'(z)
\end{equation}
we get that
$$
T_n = \frac{m_n'(z)}{n}  + \frac{1}{n} \sum_{\nu = 1}^3 \sum_{j=1}^n \varepsilon_{\nu j} \RR_{jj} = \frac{m_n'(z)}{n} + \widehat T_n,
$$
where we denoted
$$
\widehat T_n:= \frac{1}{n} \sum_{\nu = 1}^3 \sum_{j=1}^n \varepsilon_{\nu j} \RR_{jj}.
$$
Let us introduce the function $\varphi(z) = \overline{z} |z|^{p-2}$. Then
\begin{align*}
\E|T_n|^p = \E T_n \varphi(T_n)  = \frac{1}{n}\E m_n'(z) \varphi(T_n) + \E \widehat T_n\varphi(T_n).
\end{align*}
Applying the result of Lemma~\ref{appendix lemma resolvent square inequalities}
we estimate the first term on the r.h.s. of the previous equation via
\begin{align}\label{eq: T_n^p bound 1}
\frac{1}{n}\big|\E m_n'(z) \varphi(T_n)\big | &\le \frac{1}{nv} \E^{\frac{1}{p}} \imag^p m_n(z) \E^{\frac{p-1}{p}} |T_n|^p \le \frac{\mathcal A(p)}{nv} \E^{\frac{p-1}{p}} |T_n|^p .
\end{align}
It follows  that
\begin{align}\label{eq: T_n^p first step}
\E|T_n|^p \le |\E \widehat T_n\varphi(T_n)| + \frac{\mathcal A(p)}{nv} \E^{\frac{p-1}{p}} |T_n|^p.
\end{align}
To simplify all calculations we shall systematically apply
the recursion inequality of Lemma~\ref{appendix eq: inequality for x power p 2} which states that if $0 < q_1 \le q_2 \le ... \le q_k < p$ and $c_j, j = 0, ... , k$ are positive numbers such that
$$
x^p \le c_0 + c_1 x^{q_1} + c_2 x^{q_2} + ... + c_k x^{q_k}
$$
then
$$
x^p \le \beta \left[ c_0 + c_1^{\frac{p}{p-q_1}} + c_2^{\frac{p}{p-q_2}} + ... + c_k^{\frac{p}{p-q_k}} \right],
$$
where
$$
\beta: = \prod_{\nu=1}^{k} 2^{\frac{p}{p-q_\nu}} \le 2^{\frac{kp}{p-q_k}}.
$$
We now apply Lemma~\ref{appendix eq: inequality for x power p 2} to inequality~\eqref{eq: T_n^p first step} with $c_0 = |\E\widehat T_n\varphi(T_n)|$, $c_1=\frac{\mathcal A(p)}{nv}$ and   $q_1 = p-1$, obtaining
\begin{equation}\label{eq: T_n^p 2 step}
\E |T_n|^p \le C^p |\E \widehat T_n\varphi(T_n)|+  \frac{C^p \mathcal A^p(p)}{(nv)^p},
\end{equation}
with some absolute constant $C>0$.
Now we consider the term $\E \widehat T_n \varphi(T_n)$. We split it into three parts with respect to $\varepsilon_{\nu j}, \nu = 1,2,3$, obtaining
$$
\E \widehat T_n \varphi(T_n) = \frac{1}{n} \sum_{\nu = 1}^3 \sum_{j=1}^n \E\varepsilon_{\nu j} \RR_{jj}\varphi(T_n) = \mathcal A_1 + \mathcal A_2 + \mathcal A_3.
$$
We may rewrite $\mathcal A_{\nu}$ as a sum of two terms of the general form
\begin{align*}
&\mathcal A_{\nu 1}: = -\frac{1}{n}\E \sum_{j=1}^n \varepsilon_{\nu j} a_n^{(j)} \varphi(T_n), \\
&\mathcal A_{\nu 2}: = \frac{1}{n} \sum_{j=1}^n \E \varepsilon_{\nu j}\left[\RR_{jj} + a_n^{(j)} \right] \varphi(T_n),
\end{align*}
where
$$
a_n(z) = \frac{1}{z + m_n(z)} \, \text{ and } \, a_n^{(j)}(z) = \frac{1}{z + m_n^{(j)}(z)}.
$$
\subsection{Bound for \texorpdfstring{$\mathcal A_{\nu 1}, \nu = 1, 2, 3$}{Bound for the first term}}
For $\nu = 1$ the bound is a direct application of Rosenthal's inequality. The estimates for $\nu = 2, 3$ are more involved.
\subsubsection{Bound for $\mathcal A_{1 1}$}
We decompose $\mathcal A_{11}$ into a sum of two terms
\begin{align*}
&\mathcal B_{11}: = -\frac{1}{n}\E \sum_{j=1}^n \varepsilon_{1 j} a_n \varphi(T_n),\\
&\mathcal B_{12}: = \frac{1}{n}\E \sum_{j=1}^n \varepsilon_{ 1 j} [a_n - a_n^{(j)}] \varphi(T_n),
\end{align*}
From H{\"o}lder's inequality and Lemma~\ref{appendix lemma sum of varepsilon_1} with $q = 1$ it follows that
\begin{align}\label{eq: A_11}
&|\mathcal B_{11}| \le C\E^{\frac{1}{2p}} \left|\frac{1}{n} \sum_{j=1}^n \varepsilon_{1j} \right|^{2p}  \E^{\frac{p-1}{p}}|T_n|^p \le
\frac{Cp}{n} \E^{\frac{p-1}{p}}|T_n|^p.
\end{align}
To estimate $\mathcal B_{12}$ we first need to bound $[a_n - a_n^{(j)}]$. Applying the Schur complement formula (see, for example,~\cite{GotzeTikh2014rateofconv}[Lemma~7.23] or~\cite{GotTikh2003}[Lemma~3.3]) we get
\begin{equation}\label{eq: distance between traces}
\Tr \RR - \Tr \RR^{(j)} = \left( 1 + \frac{1}{n} \sum_{k,l \in \T_j} X_{jl} X_{jk} [(\RR^{(j)})^2]_{kl} \right )\RR_{jj} = \RR_{jj}^{-1} \frac{ d \RR_{jj}}{dz}.
\end{equation}
This equation and Lemma~\ref{appendix lemma resolvent inequalities 1}[Inequality~\eqref{appendix lemma resolvent inequality 2}] yield that
$$
|a_n - a_n^{(j)}| \le |m_n - m_n^{(j)}| |a_n a_n^{(j)}| \le \frac{1}{nv} \imag \RR_{jj} |\RR_{jj}|^{-1} |a_n a_n^{(j)}|.
$$
We have applying H{\"o}lder's inequality and Lemma~\ref{appendix lemma sum of varepsilon_1} with $q = 2$
\begin{align*}
|\mathcal B_{12}| &\le \frac{1}{nv}\E \left| \frac{1}{n} \sum_{j=1}^n \varepsilon_{ 1 j}^2\right|^{\frac{1}{2}}  \left|\frac{1}{n}  \sum_{j=1}^n  \imag^2 \RR_{jj} |\RR_{jj}|^{-2} |a_n a_n^{(j)}|^2  \right|^{\frac{1}{2}} |T_n|^{p-1} \\
& \le \frac{C p^{\frac{1}{2}} }{n^{\frac32 }v} \E^{\frac{p-1}{p}} |T_n|^p \E^{\frac{1}{2p} }\left|\frac{1}{n}  \sum_{j=1}^n  \imag^2 \RR_{jj} |\RR_{jj}|^{-2} |a_n a_n^{(j)}|^2  \right|^{p}.
\end{align*}
In view of the definition of $\mathcal A(q)$ (see~\eqref{definition of A})  and again H{\"o}lder's inequality we obtain
$$
|\mathcal B_{12}| \le \frac{C p^{\frac{1}{2}} }{n^{\frac32 }v}  \mathcal A(4p)  \E^{\frac{p-1}{p}} |T_n|^p.
$$
Finally
\begin{equation}\label{eq: bound for A 11}
\mathcal A_{1 1} \le \left[\frac{Cp}{n}   + \frac{C p^{\frac{1}{2}} }{n^{\frac32 }v}  \mathcal A(4p)  \right]   \E^{\frac{p-1}{p}}|T_n|^p .
\end{equation}
\subsubsection{Bound for $\mathcal A_{2 1}$ and $\mathcal A_{3 1}$}
Let us introduce the following notation
$$
\widetilde T_n^{(j)} : = \E ( T_n \big| \mathfrak M^{(j)}),
$$
where $\mathfrak M^{(j)}: = \sigma\{X_{lk}, l, k \in \T_j\}$.
Since $\E (\varepsilon_{\nu j} \big| \mathfrak M^{(j)}) = 0$ for $\nu = 2, 3$ it is easy to see that
\begin{align*}
\mathcal A_{\nu 1} = \frac{1}{n} \sum_{j=1}^n \E\varepsilon_{\nu j} a_n^{(j)}[\varphi(T_n) - \varphi(\widetilde T_n^{(j)})].
\end{align*}
Applying the Newton-Leibniz formula (see Lemma~\ref{appendix Taylor formula} for details)  and the simple inequality $(x+y)^q \le e x^q + (q+1)^q y^q, x, y > 0, q \geq 1$ we get with $q=p-2$
$$
|\mathcal A_{\nu 1}| \le \mathcal B_{\nu 1} + \mathcal B_{\nu 2},
$$
where
\begin{align}\label{eq: mathcal B nu 1}
&\mathcal B_{\nu 1}: =  \frac{e p}{n} \sum_{j=1}^n \E|\varepsilon_{\nu j} a_n^{(j)}| |T_n - \widetilde T_n^{(j)} ||\widetilde T_n^{(j)} |^{p-2}, \\
\label{eq: mathcal B nu 2}
&\mathcal B_{\nu 2}: =  \frac{p^{p-2}}{n} \sum_{j=1}^n \E|\varepsilon_{\nu j} a_n^{(j)}| |T_n - \widetilde T_n^{(j)}|^{p-1}.
\end{align}
Since the derivation of estimates of these terms are rather involved we need to split them into several subsections.

\vspace{0.1in}
\noindent
{\it Representation of $T_n - \widetilde T_n^{(j)}$}.  By definition we may write the following representation
$$
T_n - T_n^{(j)} = (\Lambda_n - \Lambda_n^{(j)})(b(z) + 2\Lambda_n^{(j)}) + (\Lambda_n - \Lambda_n^{(j)})^2,
$$
where in our notations $T_n^{(j)}$ is $T_n$ with the matrix $\X$ replaced by its corresponding submatrix.
Let us denote
$$
K^{(j)}: = K^{(j)}(z): = b(z) + 2\Lambda_n^{(j)}.
$$
Since
\begin{equation*}
T_n - \widetilde T_n^{(j)} = T_n - T_n^{(j)} - \E (T_n - T_n^{(j)}\big|\mathfrak M^{(j)})
\end{equation*}
we obtain the inequality
\begin{align}\label{T - T tilde inequality}
|T_n - \widetilde T_n^{(j)}| \le |K^{(j)}| |\Lambda_n - \widetilde \Lambda_n^{(j)}| + |\Lambda_n - \Lambda_n^{(j)}|^2+  \E( |\Lambda_n - \Lambda_n^{(j)}|^2 \big|\mathfrak M^{(j)}),
\end{align}
where $\widetilde \Lambda_n^{(j)}: = \E (\Lambda_n \big|\mathfrak M^{(j)})$.
The equation~\eqref{eq: distance between traces} and Lemma~\ref{appendix lemma resolvent inequalities 1}[Inequality~\eqref{appendix lemma resolvent inequality 2}] yield that
\begin{equation}\label{eq: difference between Lambda and Lambda j}
|\Lambda_n - \Lambda_n^{(j)}| \le \frac{1}{nv} \frac{\imag \RR_{jj}}{|\RR_{jj}|}.
\end{equation}
For simplicity we denote the quadratic form in~\eqref{eq: distance between traces} by
\begin{equation}\label{eq: definition eta}
\eta_j: = \frac{1}{n} \sum_{k,l \in \T_j} X_{jl} X_{jk} [(\RR^{(j)})^2]_{kl}
\end{equation}
and rewrite it as a sum of the three terms
$$
\eta_j = \eta_{0j} + \eta_{1j} + \eta_{2j},
$$
where
\begin{align}\label{eq: definition eta0}
&\eta_{0j}: = \frac{1}{n} \sum_{k \in T_j} [(\RR^{(j)})^2]_{kk} = (m_n^{(j)}(z))', \quad \eta_{1j}: = \frac{1}{n} \sum_{k \neq l \in \T_j} X_{jl} X_{jk} [(\RR^{(j)})^2]_{kl}, \\
&\eta_{2j}: = \frac{1}{n} \sum_{k \in \T_j} [X_{jk}^2 - 1] [(\RR^{(j)})^2]_{kk}.
\end{align}
It follows from~\eqref{eq: distance between traces} and $\Lambda_n - \widetilde \Lambda_n^{(j)} = \Lambda_n - \Lambda_n^{(j)} - \E(\Lambda_n - \Lambda_n^{(j)} \big| \mathfrak M^{(j)})$ that
\begin{align*}
\Lambda_n - \widetilde \Lambda_n^{(j)} &= \frac{1 + \eta_{j0}}{n} [\RR_{jj} - \E(\RR_{jj}\big|\mathfrak M^{(j)})] + \frac{\eta_{1j} +\eta_{2j}}{n}\RR_{jj}
- \frac{1}{n}\E((\eta_{j1} + \eta_{j2})\RR_{jj}\big|\mathfrak M^{(j)}).
\end{align*}
It is easy to see that
$$
|\RR_{jj} - \E(\RR_{jj}\big|\mathfrak M^{(j)})| \le |a_n^{(j)}|( |\hat \varepsilon_j \RR_{jj}| + \E(|\hat \varepsilon_j \RR_{jj}|\big|\mathfrak M^{(j)})),
$$
where $\hat \varepsilon_j= \sum_1^3 \ \varepsilon_{\nu j}$. Applying this inequality and Lemma~\ref{appendix lemma resolvent square inequalities} we may write
\begin{align*}
|\Lambda_n - \widetilde \Lambda_n^{(j)}| &\le \frac{1 + v^{-1}\imag m_n^{(j)}(z)}{n} |a_n^{(j)}|( |\hat \varepsilon_j \RR_{jj}| + \E(|\hat \varepsilon_j \RR_{jj}|\big|\mathfrak M^{(j)})) \\
&+ \frac{|\eta_{1j} +\eta_{2j}|}{n}|\RR_{jj}|
+ \frac{1}{n}\E(|\eta_{j1} + \eta_{j2}||\RR_{jj}|\big|\mathfrak M^{(j)}).
\end{align*}
Let us introduce the following quantity
\begin{equation} \label{definition beta}
\beta : = \frac{1}{2\alpha},
\end{equation}
which will be used many times during the proof. It is easy to see that $\beta > 1$.
Denote by $\zeta$ an arbitrary random variable such that $\E |\zeta|^\frac{4\beta}{\beta-1}$ exists. Then
$$
\E(\varepsilon_{\nu j} |T_n - \widetilde T_n^{(j)}||\zeta|\big|\mathfrak M^{(j)}) \le
|K^{(j)}| [B_1 + ... + B_6] + B_7 + B_8,
$$
where
\begin{align*}
&B_1: = \frac{1 + v^{-1}\imag m_n^{(j)}(z)}{n} |a_n^{(j)}| \E( |\varepsilon_{\nu j} \hat \varepsilon_j \RR_{jj} \zeta|\big|\mathfrak M^{(j)}), \\
&B_2: = \frac{1 + v^{-1}\imag m_n^{(j)}(z)}{n} |a_n^{(j)}| \E(|\hat \varepsilon_j \RR_{jj}|\big|\mathfrak M^{(j)})) \E( |\varepsilon_{\nu j} \zeta|\big|\mathfrak M^{(j)}),\\
&B_3: = \frac{1}{n}\E(|\varepsilon_{\nu j} \eta_{1j}| |\RR_{jj} \zeta| \big|\mathfrak M^{(j)}),\\
&B_4: = \frac{1}{n}\E(|\varepsilon_{\nu j} \eta_{2j}| |\RR_{jj} \zeta| \big|\mathfrak M^{(j)}),\\
&B_5: = \frac{1}{n}\E(|\eta_{1j}| |\RR_{jj}| |\mathfrak M^{(j)}) \E(|\varepsilon_{\nu j} \zeta| \big|\mathfrak M^{(j)}),\\
&B_6: = \frac{1}{n}\E(|\eta_{2j}| |\RR_{jj}| \big|\mathfrak M^{(j)}) \E(|\varepsilon_{\nu j} \zeta| \big|\mathfrak M^{(j)}), \\
&B_7: = \frac{1}{n^2 v^2} \E(|\varepsilon_{\nu j}|\imag^2 \RR_{jj} |\RR_{jj}|^{-2} |\zeta| \big|\mathfrak M^{(j)}),\\
&B_8: = \frac{1}{n^2 v^2} \E(|\varepsilon_{\nu j}\zeta|\big|\mathfrak M^{(j)})\E(|\imag^2 \RR_{jj} |\RR_{jj}|^{-2} \big|\mathfrak M^{(j)}),
\end{align*}
where $B_7$ and $B_8$ are the result of an application of~\eqref{eq: difference between Lambda and Lambda j}.
Let us consider the first term $B_1$. By definition $B_1 \le B_{11} + B_{12} + B_{13}$, where
\begin{align*}
B_{1\mu} : = \frac{1 + v^{-1}\imag m_n^{(j)}(z)}{n} |a_n^{(j)}| \E( |\varepsilon_{\mu j}|^2 |\RR_{jj} \zeta|\big|\mathfrak M^{(j)}), \quad \mu = 1, 2, 3.
\end{align*}
For $\mu = 1$ we may apply
H{\"o}lder's   inequality, Lemma~\ref{appendix lemma varepsilon_1} with $p = 4$ and obtain
\begin{equation} \label{eq: inequality for B11}
B_{11} \le \frac{1 + v^{-1}\imag m_n^{(j)}(z)}{n^2} |a_n^{(j)}| \E^\frac12(|\RR_{jj} \zeta|^2\big|\mathfrak M^{(j)}).
\end{equation}
For $\mu = 2$ we may proceed in a similar way and apply Lemma~\ref{appendix lemma varepsilon_2 4 moment} to get
\begin{equation} \label{eq: inequality for B12}
B_{12} \le \frac{\imag m_n^{(j)} + v^{-1}\imag^2 m_n^{(j)}(z)}{n^2 v} |a_n^{(j)}| \E^\frac12(|\RR_{jj} \zeta|^2\big|\mathfrak M^{(j)}).
\end{equation}
Applying Lemma~\ref{appendix lemma varepsilon_3 small p} for $\mu = 3$ we obtain
\begin{equation} \label{eq: inequality for B13}
B_{13} \le \frac{1 + v^{-1}\imag m_n^{(j)}(z)}{n^2} \left(\frac{1}{n} \sum_{k \in \T_j} |\RR_{kk}^{(j)}|^{2\beta}\right)^{\frac{1}{\beta}} |a_n^{(j)}| \E^\frac{\beta-1}{\beta}(|\RR_{jj} \zeta|^{\frac{\beta}{\beta-1}}\big|\mathfrak M^{(j)}).
\end{equation}
Combining inequalities~\eqref{eq: inequality for B11}--~\eqref{eq: inequality for B13} we get the following bound for $B_1$
\begin{align*}
B_1 &\le \frac{1 + v^{-1}\imag m_n^{(j)}(z)}{n^2} \left(1 + \frac{\imag m_n^{(j)}(z)}{v}   + \left(\frac{1}{n} \sum_{k \in \T_j} |\RR_{kk}^{(j)}|^{2\beta}\right)^{\frac{1}{\beta}} \right)\\
&\qquad\qquad\times|a_n^{(j)}| \E^\frac{\beta-1}{2\beta}(|\RR_{jj}|^\frac{2\beta}{\beta-1}\big|\mathfrak M^{(j)}) \E^\frac{\beta-1}{2\beta}(|\zeta|^\frac{2\beta}{\beta-1}\big|\mathfrak M^{(j)}).
\end{align*}
The term $B_2$ may be estimated by the same arguments as $B_1$. We write
\begin{align*}
B_2 &\le \frac{1 + v^{-1}\imag m_n^{(j)}(z)}{n^2} \left(1 + \frac{\imag m_n^{(j)}(z)}{v} +  \left(\frac{1}{n} \sum_{k \in \T_j} |\RR_{kk}^{(j)}|^{2\beta}\right)^{\frac{1}{\beta}} \right)\\
&\qquad\qquad\times|a_n^{(j)}| \E^\frac{\beta-1}{2\beta}(|\RR_{jj}|^\frac{2\beta}{\beta-1}\big|\mathfrak M^{(j)}) \E^\frac{\beta-1}{2\beta}(|\zeta|^\frac{2\beta}{\beta-1}\big|\mathfrak M^{(j)}).
\end{align*}
We now consider the term $B_3$. If $\nu = 2$ we apply H{\"o}lder's  inequality, Lemmas~\ref{appendix lemma eta_1},~\ref{appendix lemma varepsilon_2 4 moment}, obtaining
\begin{align*}
B_3 &\le \frac{1}{n} \E^\frac14 (\varepsilon_{2j}^4 \big|\mathfrak M^{(j)}) \E^\frac12 (\eta_{1j}^2 \big|\mathfrak M^{(j)}) \E^\frac14 (|\RR_{jj} \zeta|^4 \big|\mathfrak M^{(j)}) \\
&\le \frac{ \imag^\frac12 m_n^{(j)} }{n^2 v^{\frac{1}{2}} } \left(\frac{1}{n} \Tr |\RR^{(j)}|^4 \right)^\frac12  \E^\frac18 (|\RR_{jj}|^8 \big|\mathfrak M^{(j)}) \E^\frac18 (|\zeta|^8 \big|\mathfrak M^{(j)}).
\end{align*}
By the same reasoning an application of Lemma~\ref{appendix lemma varepsilon_3 small p} with $\nu = 3$ will lead us to
\begin{align*}
B_3 &\le \frac{1}{n^2} \left(\frac{1}{n} \Tr |\RR^{(j)}|^4 \right)^\frac12  \left(\frac{1}{n} \sum_{k \in \T_j} |\RR_{kk}^{(j)}|^{2\beta}\right)^{\frac{1}{2\beta}}
\E^\frac{\beta-1}{4\beta}(|\RR_{jj}|^\frac{4\beta}{\beta-1}\big|\mathfrak M^{(j)}) \E^\frac{\beta-1}{4\beta}(|\zeta|^\frac{4\beta}{\beta-1}\big|\mathfrak M^{(j)}).
\end{align*}
Combining the last two inequalities we get the following general estimate for $B_3$
\begin{align*}
B_3 &\le \frac{1}{n^2} \left(\frac{1}{n} \Tr |\RR^{(j)}|^4 \right)^\frac12 \left [\left(\frac{1}{n} \sum_{k \in \T_j} |\RR_{kk}^{(j)}|^{2\beta}\right)^{\frac{1}{2\beta}} +\frac{\imag^\frac12 m_n^{(j)}(z)}{v^{\frac12}} \right]\\
&\qquad\qquad\times\E^\frac{\beta-1}{4\beta}(|\RR_{jj}|^\frac{4\beta}{\beta-1}\big|\mathfrak M^{(j)}) \E^\frac{\beta-1}{4\beta}(|\zeta|^\frac{4\beta}{\beta-1}\big|\mathfrak M^{(j)}).
\end{align*}
Let us consider the term $B_4$. For $\nu = 2$ we apply Lemmas~\ref{appendix lemma eta_3},~\ref{appendix lemma varepsilon_2 4 moment} and obtain
\begin{align*}
B_4 &\le \frac{1}{n} \E^\frac14 (\varepsilon_{2j}^4 \big|\mathfrak M^{(j)}) \E^\frac12 (\eta_{2j}^2 \big|\mathfrak M^{(j)}) \E^\frac14 (|\RR_{jj} \zeta|^4 \big|\mathfrak M^{(j)}) \\
&\le \frac{ \imag^\frac12 m_n^{(j)} }{n^2 v^\frac32 } \left(\frac{1}{n} \sum_{k\in \T_j} \imag^2 \RR_{kk}^{(j)} \right)^\frac12  \E^\frac18 (|\RR_{jj}|^8 \big|\mathfrak M^{(j)}) \E^\frac18 (|\zeta|^8 \big|\mathfrak M^{(j)}).
\end{align*}
By the same arguments as before we get the following estimate for the case $\nu = 3$
\begin{align*}
B_4 &\le \frac{1}{n^2 v} \left(\frac{1}{n} \sum_{k\in \T_j} \imag^2 \RR_{kk}^{(j)} \right)^\frac12   \left(\frac{1}{n} \sum_{k \in \T_j} |\RR_{kk}^{(j)}|^{2\beta}\right)^{\frac{1}{2\beta}}  \E^\frac{\beta-1}{4\beta} (|\RR_{jj}|^\frac{4\beta}{\beta-1} \big|\mathfrak M^{(j)}) \E^\frac{\beta-1}{4\beta} (|\zeta|^\frac{4\beta}{\beta-1}\big|\mathfrak M^{(j)}).
\end{align*}
Finally, the  bound for $B_4$ has the form
\begin{align*}
B_4 &\le \frac{1}{n^2 v}\left(\frac{1}{n} \sum_{k\in \T_j} \imag^2 \RR_{kk}^{(j)} \right)^\frac12  \left[ \frac{\imag^\frac12 m_n^{(j)} }{ v^\frac12 }   +\left(\frac{1}{n} \sum_{k \in \T_j} |\RR_{kk}^{(j)}|^{2\beta}\right)^{\frac{1}{2\beta}}  \right ]\\
&\qquad\qquad \times \E^\frac{\beta-1}{4\beta} (|\RR_{jj}|^\frac{4\beta}{\beta-1} \big|\mathfrak M^{(j)}) \E^\frac{\beta-1}{4\beta} (|\zeta|^\frac{4\beta}{\beta-1}\big|\mathfrak M^{(j)}).
\end{align*}
Obviously, the estimates of $B_5$ and $B_6$ are similar to
those  for $B_3$ and $B_4$ respectively. The same arguments yield that  $B_7$ may be estimated as follows
\begin{align*}
B_7 &\le \frac{1}{n^\frac52 v^2} \left [\left(\frac{1}{n} \sum_{k \in \T_j} |\RR_{kk}^{(j)}|^{2\beta}\right)^{\frac{1}{2\beta}}  +\frac{\imag^\frac12 m_n^{(j)}(z)}{v^{\frac12}} \right ] \\
&\qquad\qquad\times\E^\frac14 (\imag^8 \RR_{jj} \big|\mathfrak M^{(j)}) \E^\frac18 (|\RR_{jj}|^{-16} \big|\mathfrak M^{(j)}) \E^\frac18 (|\zeta|^8 \big|\mathfrak M^{(j)}).
\end{align*}
Since  bound for $B_8$ will be an analog of  $B_7$, it is omitted. We finally collect all bounds for $B_\mu, \mu = 1, ..., 7$ and write
\begin{align}\label{eq: representation for difference in general case}
\E(\varepsilon_{\nu j} |T_n - \widetilde T_n^{(j)}||\zeta|\big|\mathfrak M^{(j)}) & \le \left [ |K^{(j)}|[|a_n^{(j)}|  \Gamma_1 + \Gamma_2] \E^\frac{\beta-1}{4\beta} (|\RR_{jj}|^\frac{4\beta}{\beta-1} \big|\mathfrak M^{(j)})  \right. \nonumber \\
&\qquad\qquad \left. + \Gamma_3  \E^\frac18 (|\RR_{jj}|^{-16} \big|\mathfrak M^{(j)}) \right ] \E^\frac{\beta-1}{4\beta} (|\zeta|^\frac{4\beta}{\beta-1}\big|\mathfrak M^{(j)}),
\end{align}
where we denoted
\begin{align*}
&\Gamma_1: =  \frac{1 + v^{-1}\imag m_n^{(j)}(z)}{n^2} \left[1 + \frac{\imag m_n^{(j)}(z)}{v} +  \left(\frac{1}{n} \sum_{k \in \T_j} |\RR_{kk}^{(j)}|^{2\beta}\right)^{\frac{1}{\beta}}  \right], \\
&\Gamma_2: = \frac{1}{n^2} \left[\left(\frac{1}{n} \Tr |\RR^{(j)}|^4 \right)^\frac12 + \left(\frac{1}{n} \sum_{k\in \T_j} \imag^2 \RR_{kk}^{(j)} \right)^\frac12 \right] \left [\left(\frac{1}{n} \sum_{k \in \T_j} |\RR_{kk}^{(j)}|^{2\beta}\right)^{\frac{1}{2\beta}}  +\frac{\imag^\frac12 m_n^{(j)}(z)}{v^{\frac12}} \right],\\
&\Gamma_3: = \frac{1}{n^\frac52 v^2} \left[ \left(\frac{1}{n} \sum_{k \in \T_j} |\RR_{kk}^{(j)}|^{2\beta}\right)^{\frac{1}{2\beta}} +\frac{\imag^\frac12 m_n^{(j)} }{ v^\frac12 } \right ] \E^\frac14 (\imag^8 \RR_{jj} \big|\mathfrak M^{(j)}).
\end{align*}
We may now  estimate the terms $\mathcal B_{\nu 1}$  and $\mathcal B_{\nu 2}$, defined in~\eqref{eq: mathcal B nu 1} and~\eqref{eq: mathcal B nu 2}, by applying the representation~\eqref{eq: representation for difference in general case} and choosing appropriate random variables $\zeta$.

\vspace{0.1in}
\noindent
{\it Bound for $\mathcal B_{\nu 1}$}.
Recall that
$$
\mathcal B_{\nu 1}: =  \frac{e p}{n} \sum_{j=1}^n \E|\varepsilon_{\nu j} a_n^{(j)}| |T_n - \widetilde T_n^{(j)} ||\widetilde T_n^{(j)} |^{p-2}.
$$
Taking $\zeta = 1$ in~\eqref{eq: representation for difference in general case} we get
\begin{align*}
\mathcal B_{\nu 1} &=  \frac{e p}{n} \sum_{j=1}^n \E|\widetilde T_n^{(j)} |^{p-2} \E(|\varepsilon_{\nu j}| |T_n - \widetilde T_n^{(j)} |\big|\mathfrak M^{(j)}) \\
&\le \frac{e p}{n} \sum_{j=1}^n \E|\widetilde T_n^{(j)} |^{p-2}  |K^{(j)}| \Gamma_1  \E^\frac{\beta-1}{4\beta} ( |a_n^{(j)}|^\frac{8\beta}{\beta-1}   |\RR_{jj}|^\frac{4\beta}{\beta-1} \big|\mathfrak M^{(j)})\\
& + \frac{e p}{n} \sum_{j=1}^n \E|\widetilde T_n^{(j)} |^{p-2}  |K^{(j)}| \Gamma_2  \E^\frac{\beta-1}{4\beta} ( |a_n^{(j)}|^\frac{4\beta}{\beta-1}   |\RR_{jj}|^\frac{4\beta}{\beta-1} \big|\mathfrak M^{(j)})\\
&+ \frac{e p}{n} \sum_{j=1}^n \E|\widetilde T_n^{(j)} |^{p-2}  \Gamma_3    \E^\frac{\beta-1}{4\beta} ( |a_n^{(j)}|^\frac{4\beta}{\beta-1}   |\RR_{jj}|^{-\frac{8\beta}{\beta-1}} \big|\mathfrak M^{(j)})  =: \mathcal T_1 + \mathcal T_2 + \mathcal T_3.
\end{align*}
Applying  H{\"o}lder's inequality we obtain
\begin{align}\label{eq: T 1}
\mathcal T_1 \le \frac{e p}{n} \sum_{j=1}^n \E^{\frac{p-2}{p}} |T_n|^{p}  \E^\frac1p  \Gamma_1^p \E^\frac{1}{2p} |K^{(j)}|^{2p}.
\end{align}
To finish the estimate of $\mathcal T_1$ it remains to bound $\E^\frac{1}{2p} |K^{(j)}|^{2p}$ and $\E^\frac1p  \Gamma_1^p$. Recall that
$$
K^{(j)} = b(z) + 2 \Lambda_n^{(j)} = b(z) + 2(\Lambda_n^{(j)} - \Lambda_n) + 2 \Lambda_n.
$$
We claim that
\begin{equation}\label{eq: inequality for K}
\E^\frac{1}{2p} |K^{(j)}|^{2p} \le C|b(z)| + C\E^{\frac{1}{2p}}|T_n|^p + \frac{C\mathcal A(4p)}{nv}.
\end{equation}
Indeed, applying~\eqref{eq: difference between Lambda and Lambda j} we obtain
$$
\E^\frac{1}{2p} |K^{(j)}|^{2p} \le 2 \E^\frac{1}{2p}|b(z) +  2 \Lambda_n|^{2p} + \frac{2 \mathcal A(4p)}{nv}.
$$
If $|b(z)| \geq |\Lambda_n|/2$ then~\eqref{eq: inequality for K} is obvious. On the other hand, if the opposite inequality holds,  we find
$$
|\Lambda_n| \le \frac{|\sqrt{b^2(z) + 4 T_n} - b(z)|}{2} \le |b(z)| + |T_n|^\frac12.
$$
Consequently, $|\Lambda_n| \le 2|T_n|^\frac{1}{2}$ and we conclude~\eqref{eq: inequality for K}. Calculating the $p$-th moment of $\Gamma_1$ we get
\begin{align}\label{eq: gamma 1}
\E^\frac1p  \Gamma_1^p &\le C\left(\frac{1}{n^2} + \frac{\E^\frac1p \imag^{2p} m_n^{(j)}(z) }{(nv)^2} +  \frac{\E^\frac{1}{2p} \imag^{2p} m_n^{(j)}(z) }{n^2 v} \right ) \nonumber \\
&\le C \left(\frac{1}{n^2} + \frac{\mathcal A^2(2p)}{(nv)^2}  +\frac{\mathcal A(2p)}{n^2 v}\right) \le C \left(\frac{1}{n^2} + \frac{\mathcal A^2(2p)}{(nv)^2}\right),
\end{align}
where we have applied the well known Young inequality valid for all $a, b \geq 0$ and positive $s,t$ such that $\frac{1}{s} + \frac{1}{t} = 1$,
\begin{equation}\label{eq: Young inequality}
a b \le \frac{a^s}{s} + \frac{b^t}{t}.
\end{equation}
The estimate~\eqref{eq: gamma 1} may be simplified further. Indeed, from Lemma~\ref{appendix lemma inequality v le  imag s RR jj} we conclude that $v \le \imag \RR_{jj} |\RR_{jj}|^{-2}$ for all $j \in \T$ and arrive at the following inequality
\begin{equation}\label{eq: inequality for v}
v \le C \E^\frac{1}{2p} \imag^{2p} \RR_{jj} \le C \mathcal A(2p).
\end{equation}
The inequalities~\eqref{eq: inequality for K}, ~\eqref{eq: gamma 1} and~\eqref{eq: inequality for v} together imply that
\begin{align}
\mathcal T_1 &\le C\E^\frac{p-2}{p} |T_n|^p \left[ |b(z)| + \E^{\frac{1}{2p}}|T_n|^p + \frac{\mathcal A(4p)}{nv}\right ] \frac{\mathcal A^2(2p)}{(nv)^2}.
\end{align}
Analogously to~\eqref{eq: T 1} we derive a bound for the term $\mathcal T_2$
\begin{align}\label{eq: gamma 2}
\mathcal T_2 \le \frac{e p}{n} \sum_{j=1}^n \E^{\frac{p-2}{p}} |T_n|^{p}  \E^\frac1p  \Gamma_2^p \E^\frac{1}{2p} |K^{(j)}|^{2p}.
\end{align}
Applying~\eqref{eq: Young inequality} and~\eqref{eq: inequality for v} the reader will have no difficulty in showing that
\begin{align*}
\E^\frac1p  \Gamma_2^p &\le C\left(\frac{\mathcal A^\frac12(2p)}{n^2 v^\frac32} + \frac{\mathcal A(2p)}{n^2 v^2} + \frac{\mathcal A(2p)}{n^2 v} + \frac{\mathcal A^\frac32(2p)}{n^2 v^\frac32}\right) \le  \frac{C \mathcal A(2p)}{n^2 v^2}.
\end{align*}
The last inequality and~\eqref{eq: gamma 2} imply the following bound
\begin{align*}
\mathcal T_2 &\le C \E^\frac{p-2}{p} |T_n|^p \left[ |b(z)| + \E^{\frac{1}{2p}}|T_n|^p + \frac{\mathcal A(4p)}{nv}\right ] \frac{\mathcal A(2p)}{n^2 v^2} .
\end{align*}
By the same reasoning as before
\begin{align*}
\mathcal T_3 \le \E^\frac{p-2}{p} |T_n|^p \left [ \frac{\mathcal A^2(4p)}{n^\frac52 v^2} + \frac{\mathcal A^\frac52 (4p)}{(nv)^\frac52} \right] \le   \frac{C \mathcal A^2(4p) \E^\frac{p-2}{p} |T_n|^p }{n^2 v^2} .
\end{align*}

\vspace{0.1in}
\noindent
{\it Bound for $\mathcal B_{\nu 2}$}.
Recall that
$$
\mathcal B_{\nu 2}: =  \frac{p^{p-2}}{n} \sum_{j=1}^n \E|\varepsilon_{\nu j} a_n^{(j)}| |T_n - \widetilde T_n^{(j)}|^{p-1}.
$$
Taking in~\eqref{eq: representation for difference in general case} $\zeta = |T_n - \widetilde T_n^{(j)}|^{p-2}$ we get
\begin{align*}
\mathcal B_{\nu 2} &= \frac{p^{p-2}}{n} \sum_{j=1}^n \E |a_n^{(j)}| \E(|\varepsilon_{\nu j}  |T_n - \widetilde T_n^{(j)}|^{p-1} \big|\mathfrak M^{(j)}) \\ &\le \frac{p^{p-2}}{n} \sum_{j=1}^n \E |K^{(j)}|  \Gamma_1 \E^\frac{\beta-1}{4\beta} (|a_n^{(j)}|^\frac{8\beta}{\beta-1}|\RR_{jj}|^\frac{4\beta}{\beta-1} \big|\mathfrak M^{(j)}) \E^\frac{\beta-1}{4\beta} (|T_n - \widetilde T_n^{(j)}|^\frac{4\beta(p-2)}{\beta-1}\big|\mathfrak M^{(j)})\\
&+\frac{p^{p-2}}{n} \sum_{j=1}^n \E |K^{(j)}|  \Gamma_2 \E^\frac{\beta-1}{4\beta} (|a_n^{(j)}|^\frac{4\beta}{\beta-1}|\RR_{jj}|^\frac{4\beta}{\beta-1} \big|\mathfrak M^{(j)}) \E^\frac{\beta-1}{4\beta} (|T_n - \widetilde T_n^{(j)}|^\frac{4\beta(p-2)}{\beta-1}\big|\mathfrak M^{(j)}) \\
&+\frac{p^{p-2}}{n} \sum_{j=1}^n \E \Gamma_3  \E^\frac{\beta-1}{4\beta} ( |a_n^{(j)}|^\frac{4\beta}{\beta-1}   |\RR_{jj}|^{-\frac{8\beta}{\beta-1}} \big|\mathfrak M^{(j)}) \E^\frac{\beta-1}{4\beta} (|T_n - \widetilde T_n^{(j)}|^\frac{4\beta(p-2)}{\beta-1}\big|\mathfrak M^{(j)})\\
& =: \mathcal T_4 + \mathcal T_5 + \mathcal T_6.
\end{align*}
It follows from~\eqref{T - T tilde inequality} that
\begin{align*}
\E^\frac{\beta-1}{4\beta} (|T_n - \widetilde T_n^{(j)}|^\frac{4\beta(p-2)}{\beta-1}\big|\mathfrak M^{(j)}) &\le \frac{C^{p} |K^{(j)}|^{p-2}}{(nv)^{p-2}} \E^\frac{\beta-1}{8\beta} (\imag^\frac{8\beta(p-2)}{\beta-1} \RR_{jj}\big|\mathfrak M^{(j)})
\E^\frac{\beta-1}{8\beta} (\RR_{jj}^\frac{-8\beta(p-2)}{\beta-1}\big|\mathfrak M^{(j)})\\
& + \frac{C^p}{(nv)^{2(p-2)}} \E^\frac{\beta-1}{8\beta} (\imag^\frac{16\beta(p-2)}{\beta-1} \RR_{jj}\big|\mathfrak M^{(j)})
\E^\frac{\beta-1}{8\beta} (\RR_{jj}^\frac{-16\beta(p-2)}{\beta-1}\big|\mathfrak M^{(j)}).
\end{align*}
Hence we get
\begin{align*}
\mathcal T_4 &\le \frac{C^p p^{p-2}}{(nv)^{p-2}n} \sum_{j=1}^n \E^\frac12 |K^{(j)}|^{2(p-1)} \E^\frac14  \Gamma_1^4  \E^\frac{\beta-1}{8\beta} \imag^\frac{8\beta(p-2)}{\beta-1} \RR_{jj}  \\
&+\frac{C^p p^{p-2}}{(nv)^{2(p-2)}n} \sum_{j=1}^n \E^\frac12 |K^{(j)}|^{2} \E^\frac14  \Gamma_1^4  \E^\frac{\beta-1}{8\beta} \imag^\frac{16\beta(p-2)}{\beta-1} \RR_{jj} \\
&\le \frac{C^p p^{p-2}}{(nv)^{p-2} n } \sum_{j=1}^n \E^\frac12 |K^{(j)}|^{2(p-1)} \E^\frac14  \Gamma_1^4  \mathcal A^{2(p-2)}(\kappa p) \\
&+ \frac{C^p p^{p-2}}{(nv)^{2(p-2)} n } \sum_{j=1}^n \E^\frac12 |K^{(j)}|^{2} \E^\frac14  \Gamma_1^4  \mathcal A^{2(p-2)}(\kappa p) ,
\end{align*}
where (as introcuced in \eqref{eq: definition of E})
$$
\kappa = \frac{16\beta}{\beta-1}.
$$
Similarly as in the previous bounds of $\mathcal T_1$ we get
\begin{align*}
\mathcal T_4 &\le \frac{C^p p^{p-2} \mathcal A^2(2p) \mathcal A^{p-2}(\kappa p) }{(nv)^2 (nv)^{p-2} } \left[|b(z)|^{p-1} + \E^\frac{p-1}{2p}|T_n|^p + \frac{\mathcal A^{p-1}(4p)}{(nv)^{p-1}}  \right ] \\
&+ \frac{C^p p^{p-2} \mathcal A^2(2p) \mathcal A^{2(p-2)}(\kappa p) }{(nv)^2 (nv)^{2(p-2)} } \left[|b(z)| + \E^\frac{1}{2p}|T_n|^p + \frac{\mathcal A(4p)}{nv}  \right ].
\end{align*}
We may now apply  inequality~\eqref{eq: Young inequality} and obtain
\begin{align*}
\mathcal T_4 &\le \frac{C^p p^{p} |b(z)|^{p} \mathcal A^{p}(\kappa p)}{(nv)^p} +  \frac{C^p |b(z)|^{\frac{p}{2}} \mathcal A^p(2p)}{(nv)^p}   + \frac{C^p p^{p} \mathcal A^{p}(\kappa p)}{(nv)^p} \E^\frac12 |T_n|^p\\
& +  \frac{C^p \mathcal A^p(2p)}{(nv)^p}  \E^\frac14 |T_n|^p
+ \frac{C^p p^{p} \mathcal A^{2p}(\kappa p)}{(nv)^{2p}}  +   \frac{C^p \mathcal A^\frac{3p}{2}(2p)}{(nv)^\frac{3p}{2}} .
\end{align*}
The last inequality may be simplified as follows
\begin{align*}
\mathcal T_4 \le \frac{C^p p^{p} \mathcal A^{p}(\kappa p)}{(nv)^p}  + \frac{C^p p^{p} \mathcal A^{p}(\kappa p)}{(nv)^p} \E^\frac12 |T_n|^p +   \frac{C^p \mathcal A^p(2p)}{(nv)^p} \E^\frac14 |T_n|^p.
\end{align*}
By the same argument we obtain the estimate for $\mathcal T_5$
\begin{align*}
\mathcal T_5 &\le \frac{C^p p^{p} |b(z)|^{p} \mathcal A^{p}(\kappa p)}{(nv)^p} +   \frac{C^p |b(z)|^{\frac{p}{2}} \mathcal A^\frac{p}{2}(2p)}{n^p v^p}
+ \frac{C^p p^{p} \mathcal A^{p}(\kappa p)}{(nv)^p} \E^\frac12 |T_n|^p\\
& +   \frac{C^p \mathcal A^\frac{p}{2}(2p)}{n^p v^p} \E^\frac14 |T_n|^p
+ \frac{C^p p^{p} \mathcal A^{2p}(\kappa p)}{(nv)^{2p}}   + \frac{C^p \mathcal A^p(2p)}{(nv)^\frac{3p}{2}  }.
\end{align*}
Finally, the routine check that $\mathcal T_6$ may be estimated as follows is left to the reader
\begin{align*}
\mathcal T_6 \le \frac{C^p p^{p} \mathcal A^{p}(\kappa p)}{(nv)^p} + \left [ \frac{\mathcal A^p(4p)}{n^\frac{5p}{4} v^p} + \frac{\mathcal A^\frac{5p}{4} (4p)}{(nv)^\frac{5p}{4}} \right] \le \frac{C^p p^{p} \mathcal A^{p}(\kappa p)}{(nv)^p}.
\end{align*}

\subsubsection{Combining  bounds for $\mathcal A_{\nu 1}$}
We may now collect all bounds for $\mathcal T_\nu, \nu =1, ..., 6$ and~\eqref{eq: bound for A 11} and insert them into~\eqref{eq: T_n^p 2 step} and apply Lemma~\ref{appendix eq: inequality for x power p 2}  to conclude
\begin{align}\label{eq: T n 3 step}
\E|T_n|^p &\le \frac{C^p p^p\mathcal A^p(\kappa p)}{(nv)^p}  +  \frac{C^p p^{2p}}{(nv)^{2p}} + \frac{C^p |b(z)|^\frac{p}{2}\mathcal A^\frac{p}{2}(\kappa p)}{(nv)^p }  +C^p\sum_{\nu =1}^{3}\mathcal A_{\nu 2}.
\end{align}
To finish the proof of the theorem it remains to estimate $\sum_{\nu =1}^{3}\mathcal A_{\nu 2}$.

\subsection{Bound for \texorpdfstring{$\mathcal A_{\nu 2}, \nu = 1, 2, 3$}{Bound for the second term}}. Recall that
$$
\mathcal A_{\nu 2}: = \frac{1}{n} \sum_{j=1}^n \E \varepsilon_{\nu j}\left[\RR_{jj} + a_n^{(j)} \right] \varphi(T_n).
$$
From the representation  $\RR_{jj} + a_n^{(j)} = \hat \varepsilon_j a_n^{(j)} \RR_{jj}$ (see~\eqref{eq: R_jj representation} with $a_n, \varepsilon_j$ replaced by $a_n^{(j)}$ and $\hat \varepsilon_j$ respectively) it follows that
$$
\mathcal A_{\nu 2} = \frac{1}{n} \sum_{j=1}^n \E \varepsilon_{\nu j} \hat \varepsilon_j a_n^{(j)} \RR_{jj} \varphi(T_n).
$$
Using the  obvious inequality $2 \varepsilon_{\nu j} \hat \varepsilon_j \le \varepsilon_{1j}^2 + \varepsilon_{2j}^2 + \varepsilon_{3j}^2, \nu = 1,2,3$,  we may bound $\mathcal A_{\nu 2}, \nu = 1, 2, 3$, by the sum of two terms (up to some constant) $\mathcal N_{\nu, 1}$ and $\mathcal N_{\nu, 2}, \nu = 1,2,3$, where
\begin{align*}
&\mathcal N_{\nu 1}: =  \frac{e}{n} \sum_{j=1}^n \E |\varepsilon_{\nu j}|^2 |a_n^{(j)} \RR_{jj}| |\widetilde T_n^{(j)}|^{p-1}, \\
&\mathcal N_{\nu 2}: =  \frac{p^{p-1}}{n} \sum_{j=1}^n \E |\varepsilon_{\nu j}|^2 |a_n^{(j)} \RR_{jj}|  |T_n - \widetilde T_n^{(j)}|^{p-1}.
\end{align*}
Let us consider $\mathcal N_{\nu 1}$.
Applying  H{\"o}lder's inequality  we obtain that
$$
\mathcal N_{\nu 1} \le \frac{C}{n} \sum_{j=1}^n \E^{\frac{p-1}{p}} |\widetilde T_n^{(j)} |^p  \E^{\frac{1}{2p}} \E^{\frac{2p}{\beta}}(|\varepsilon_{\nu j}|^{2\beta} \big|\mathfrak M^{(j)} ) \le \frac{1}{n} \sum_{j=1}^n   \E^{\frac{1}{2p}} \E^{\frac{2p}{\beta }}(|\varepsilon_{\nu j}|^{2\beta}  \big|\mathfrak M^{(j)} ) \E^{\frac{p-1}{p}} |T_n |^p.
$$
Calculating  conditional expectations and applying~\eqref{eq: inequality for v} we conclude
\begin{align*}
\sum_{\nu=1}^{3}\mathcal N_{\nu 1} &\le \frac{C}{n}\left [1+ \frac{\max_j \E^{\frac{1}{2p}} \imag^{2p} m_n^{(j)}(z) }{v} \right] \E^{\frac{p-1}{p}} |T_n |^p \\
&\le \frac{C}{n} \E^{\frac{p-1}{p}} |T_n |^p + \frac{\mathcal A(2p)}{nv} \E^{\frac{p-1}{p}}|T_n|^p \le \frac{C \mathcal A(2p)}{nv} \E^{\frac{p-1}{p}}|T_n|^p.
\end{align*}
We now turn our attention to the second term $\mathcal N_{\nu 2}$. Applying~\eqref{eq: difference between Lambda and Lambda j} and~\eqref{T - T tilde inequality}  we may write
\begin{align}\label{eq: T n - T tilde one more time}
|T_n -  \widetilde T_n^{(j)}| &\le \frac{|K^{(j)}|}{nv}\frac{\imag \RR_{jj}}{ |\RR_{jj}|} + \frac{C}{n^2 v^2}\frac{\imag^2 \RR_{jj}}{ |\RR_{jj}|^2} \nonumber \\
&+ \frac{|K^{(j)}|}{nv}\E\left(\frac{\imag \RR_{jj}}{ |\RR_{jj}|}\big| \mathfrak M^{(j)} \right) + \frac{1}{n^2 v^2}\E\left(\frac{\imag^2 \RR_{jj}}{ |\RR_{jj}|^2}\big| \mathfrak M^{(j)} \right).
\end{align}
Substitution of this inequality in $\mathcal N_{\nu 2}$ will give us
\begin{align*}
\mathcal N_{\nu 2} &\le  \frac{C^p p^{p-1} }{(nv)^{p-1} n} \sum_{j=1}^n \E |K^{(j)}|^{p-1}  |\varepsilon_{\nu j}|^2 \imag^{p-1} \RR_{jj} |a_n^{(j)}| |\RR_{jj}|^{2-p} \\
& + \frac{C^p p^{p-1} }{(nv)^{2(p-1)} n} \sum_{j=1}^n \E |\varepsilon_{\nu j}|^2 \imag^{2(p-1)} \RR_{jj} |a_n^{(j)}| |\RR_{jj}|^{3-2p} \\
&+ \frac{C^p p^{p-1} }{(nv)^{p-1} n} \sum_{j=1}^n \E |K^{(j)}|^{p-1}  |\varepsilon_{\nu j}|^2 \E(\imag^{p-1} \RR_{jj} |\RR_{jj}|^{1-p}\big| \mathfrak M^{(j)}) |a_n^{(j)}| |\RR_{jj}| \\
&+ \frac{C^p p^{p-1} }{(nv)^{2(p-1)} n} \sum_{j=1}^n \E |\varepsilon_{\nu j}|^2 \E(\imag^{2(p-1)} \RR_{jj} |\RR_{jj}|^{2(p-1)} \big |\mathfrak M^{(j)}) |a_n^{(j)}| |\RR_{jj}|\\
&= : \mathcal L_1 + \mathcal L_2 + \mathcal L_3 + \mathcal L_4.
\end{align*}
Let us consider the term $\mathcal L_1$.  We get
\begin{align*}
\mathcal L_1 &\le \frac{C^p p^{p-1} }{(nv)^{p-1} n} \sum_{j=1}^n \E |K^{(j)}|^{p-1}  \E^{\frac{1}{\beta}}(|\varepsilon_{\nu j}|^{2\beta} \big|\mathfrak M^{(j)}) \E^{\frac{\beta-1}{\beta}}(\imag^{p-1} \RR_{jj} |a_n^{(j)}| |\RR_{jj}|^{2-p} \big|\mathfrak M^{(j)}) \\
&\le \frac{C^p p^{p-1} }{(nv)^{p-1} n} \sum_{j=1}^n \E^{\frac{1}{2}} |K^{(j)}|^{2(p-1)} \E^{\frac14} \E^{\frac{4}{\beta}}(|\varepsilon_{\nu j}|^{2\beta} \big|\mathfrak M^{(j)})     \E^{\frac18} \E^{\frac{4(\beta-1)}{\beta}} (\imag^{\frac{2\beta (p-1)}{\beta-1}} \RR_{jj}\big|\mathfrak M^{(j)}).
\end{align*}
We distinguish two cases. If $4(\beta-1)\beta^{-1} \le 1$, then applying Lyapunov's inequality we obtain
$$
\E^{\frac18} \E^{\frac{4(\beta-1)}{\beta}} (\imag^{\frac{2\beta (p-1)}{\beta-1}} \RR_{jj}\big|\mathfrak M^{(j)}) \le \E^{\frac{(\beta-1)}{2\beta}}\imag^{\frac{2\beta (p-1)}{\beta-1}} \RR_{jj}.
$$
In the opposite case use Jensen's inequality to get the following estimate
$$
\E^{\frac18} \E^{\frac{4(\beta-1)}{\beta}} (\imag^{\frac{2\beta (p-1)}{\beta-1}} \RR_{jj}\big|\mathfrak M^{(j)}) \le \E^{\frac18} \imag^{8(p-1)} \RR_{jj}.
$$
Both inequalities lead to the bound
$$
\E^{\frac18} \E^{\frac{4(\beta-1)}{\beta}} (\imag^{\frac{2\beta (p-1)}{\beta-1}} \RR_{jj}\big|\mathfrak M^{(j)}) \le \mathcal A^{p-1}(\kappa p).
$$
Applying this inequality we arrive at a bound for $\mathcal L_1$
\begin{align*}
\mathcal L_1 &\le \frac{C^p p^{p-1} }{(nv)^{p-1} n} \sum_{j=1}^n \E^{\frac{1}{2}} |K^{(j)}|^{2(p-1)} \E^{\frac14} \E^{\frac{4}{\beta}}(|\varepsilon_{\nu j}|^{2\beta} \big|\mathfrak M^{(j)}) \mathcal A^{p-1}(\kappa p).
\end{align*}
Lemmas~\ref{appendix lemma varepsilon_2 4 moment} and~\ref{appendix lemma varepsilon_3 small p} together imply that $\mathcal L_1$ is bounded by the sum of the following terms
\begin{align*}
&\mathcal L_{11}: = \frac{C^p p^{p-1} }{n^p v^{p-1} n} \sum_{j=1}^n \E^{\frac{1}{2}} |K^{(j)}|^{2(p-1)}  \mathcal A^{p-1}(\kappa p), \\
&\mathcal L_{12}: = \frac{C^p p^{p-1} }{(nv)^p n} \sum_{j=1}^n \E^{\frac{1}{2}} |K^{(j)}|^{2(p-1)}  \mathcal A^{p}(\kappa p).
\end{align*}
Since (see~\eqref{eq: inequality for K} for details)
$$
\E^{\frac{1}{2}} |K^{(j)}|^{2(p-1)} \le C\E^{\frac{1}{2}} |T_n|^{p-1} + |b(z)|^{p-1} + \frac{1}{(nv)^{p-1}}
$$
we get
\begin{align*}
\mathcal L_{11} \le \frac{C^p p^{p-1} }{n^p v^{p-1}}  \E^{\frac{p-1}{2p}} |T_n|^{p}  \mathcal A^{p-1}(\kappa p) + \frac{C^p p^{p-1} |b(z)|^{p-1} }{n^p v^{p-1}}    \mathcal A^{p-1}(\kappa p) + \frac{C^p p^{p-1} }{n^{2p-1} v^{2(p-1)}}.
\end{align*}
It remains to apply~\eqref{eq: Young inequality} and~\eqref{eq: inequality for v}. Finally we obtain
\begin{align*}
\mathcal L_{11} &\le \frac{C^p p^{p} \mathcal A^{p}(\kappa p) }{(nv)^{p}} \E^{\frac{1}{2}} |T_n|^p +\frac{C^p p^{p} \mathcal A^{p}(\kappa p) }{(nv)^{p}} .
\end{align*}
The term $\mathcal L_{12}$ is estimated as follows
\begin{align*}
\mathcal L_{12} &\le \frac{C^p p^{p} \mathcal A^{p}(\kappa p) }{(nv)^{p}} \E^{\frac{1}{2}} |T_n|^p +\frac{C^p p^{p} |b(z)|^{p} \mathcal A^{p}(\kappa p) }{(nv)^{p}} +\frac{C^p p^{p} \mathcal A^{p}(\kappa p)}{(nv)^{2p}} + \frac{C^p p^{p} \mathcal A^{p}(\kappa p)}{(nv)^{p}} \\
&\le \frac{C^p p^{p} \mathcal A^{p}(\kappa p) }{(nv)^{p}} \E^{\frac{1}{2}} |T_n|^p +\frac{C^p p^{p} \mathcal A^{p}(\kappa p) }{(nv)^{p}}.
\end{align*}
The bound for $\mathcal L_3$ may be derived in a similar way. It remains to estimate the  terms $\mathcal L_2$ and $\mathcal L_3$. Similarly  as before we obtain
\begin{align*}
\mathcal L_2 &\le \frac{C^p p^{p} \mathcal A^p(\kappa p)}{(nv)^{2p}} + \frac{C^p}{n^p} + \frac{C^p p^{p} \mathcal A^{p}(\kappa p)}{(nv)^{p}} \le \frac{C^p p^{p} \mathcal A^{p}(\kappa p)}{(nv)^{p}}.
\end{align*}
The same estimate holds for $\mathcal L_4$. Finally we arrive at the following inequality for the sum of $\mathcal A_{\nu 2}, \nu = 1, 2, 3$
\begin{align}\label{eq: bound for A alpha 2}
\sum_{\nu = 1}^{3} \mathcal A_{\nu 2} &\le \frac{\mathcal A(2p)}{nv} \E^{\frac{p-1}{p}}|T_n|^p + \frac{C^p p^{p} \mathcal A^{p}(\kappa p) }{(nv)^{p}} \E^{\frac{1}{2}} |T_n|^p +\frac{C^p p^{p}  \mathcal A^{p}(\kappa p) }{(nv)^{p}}.
\end{align}
\subsection{Combining  bounds} We now combine the bounds~\eqref{eq: T n 3 step},~\eqref{eq: bound for A alpha 2} and apply Lemma~\ref{appendix eq: inequality for x power p 2}, obtaining
\begin{align*}
\E|T_n|^p &\le \frac{C^p p^p\mathcal A^p(\kappa p)}{(nv)^p}  +  \frac{C^p p^{2p}}{(nv)^{2p}} + \frac{C^p |b(z)|^\frac{p}{2}\mathcal A^\frac{p}{2}(\kappa p)}{(nv)^p }.
\end{align*}
In view of the definition of $\mathcal E_p$ this concludes the proof of the theorem.
\end{proof}

\section{Bounds for moments of diagonal entries of the resolvent}\label{sec: diagonal entries}
The main result of this section is the following lemma which provides a bound for moments of the diagonal entries of the resolvent.
Recall that (see the definition~\eqref{eq: definition reginon D})
\begin{equation*}
\mathbb D: = \{z = u+iv \in \C: |u| \le u_0, V \geq v \geq v_0: = A_0 n^{-1} \},
\end{equation*}
where $u_0, V > 0 $ are any fixed real numbers and $A_0$ is some large constant determined below.
\begin{lemma}\label{main lemma}
Assuming the conditions $\CondTwo$ there exist a positive constant $C_0$ depending on $u_0, V$ and positive constants $A_0, A_1$ depending on $C_0, \alpha$
such that for all $z \in \mathbb D$ and $1 \le p \le A_1(nv)^{\frac{1-2\alpha}{2}}$ we have
\begin{equation}\label{eq: main lemma first statement}
\max_{j \in \T} \E|\RR_{jj}(z)|^p \le C_0^p
\end{equation}
and
\begin{equation}\label{eq: main lemma second statement}
\E \frac{1}{|z+m_n(z)|^p} \le C_0^p.
\end{equation}
\end{lemma}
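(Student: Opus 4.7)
The plan is to carry out a descent argument in the spirit of~\cite{Schlein2014}, starting from the trivial a priori bound $|\RR_{jj}| \le v^{-1}$ and iteratively sharpening it via the identity
\begin{equation*}
\RR_{jj} = -\frac{1}{z+m_n(z) - \varepsilon_j}, \qquad (z+m_n)\RR_{jj} = -1 + \varepsilon_j\RR_{jj},
\end{equation*}
derived from~\eqref{eq: R_jj representation}. Raising to the $p$-th power one obtains $|\RR_{jj}|^p|z+m_n|^p \le 2^p(1+|\varepsilon_j|^p|\RR_{jj}|^p)$. The self-consistent equation $s(z+s)=-1$ provides a deterministic lower bound on $|z+s(z)|$ throughout $\mathbb D$, so on the favourable event $\{|\Lambda_n| \le |z+s|/2\}$ the factor $|z+m_n|^{-1}$ is benignly bounded. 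The truncation $|X_{jk}| \le Dn^\alpha$ furnished by $\CondTwo$ is precisely what allows the appendix moment lemmas for $\varepsilon_{1j}, \varepsilon_{2j}, \varepsilon_{3j}$ to be applied across the full moment range $1 \le p \le A_1 (nv)^{(1-2\alpha)/2}$.

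The descent proceeds as follows. Assume inductively that $M_k := \max_{|\J|\le 1}\max_{j \in \T_\J} \E^{1/p}|\RR^{(\J)}_{jj}|^p \le M$. Applying Lemmas~\ref{appendix lemma varepsilon_1}, \ref{appendix lemma varepsilon_2 4 moment}, \ref{appendix lemma varepsilon_3 small p} conditionally on $\mathfrak M^{(j)}$, and using the current bound $M$ to control the conditional resolvent quantities appearing on their right-hand sides, one obtains $\E^{1/q}|\varepsilon_j|^q \le C M^{c}(nv)^{-1/2} p^{\beta_0}$ for suitable absolute exponents $c,\beta_0$. Substituting into the recursion, and treating the bad event $\{|\Lambda_n| > |z+s|/2\}$ separately via the trivial bound $|z+m_n|^{-1} \le v^{-1}$ together with a preliminary Markov tail bound for $|T_n|$ coming from Theorem~\ref{th: general bound} applied with the current value $M$, gives an improved estimate of the form $M_{k+1} \le C_0 + C \sqrt{M_k}\,(nv)^{-(1-2\alpha)/4} p^{\beta_0}$. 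Iterating this contraction $O(\log\log n)$ times reduces the initial value $M_0 = v^{-1}$ to an absolute constant, establishing~\eqref{eq: main lemma first statement}.

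The bound~\eqref{eq: main lemma second statement} is obtained similarly by decomposing $z+m_n = (z+s) + \Lambda_n$ and performing the same dichotomy: on $\{|\Lambda_n| \le |z+s|/2\}$ the quantity $|z+m_n|^{-1} \le 2|z+s|^{-1}$ is bounded by a constant, while on the complement one uses $|z+m_n|^{-1} \le v^{-1}$ together with the polynomial tail bound $\Pb(|\Lambda_n|>t) \le t^{-2p}(Cp^2/(nv))^{2p}$, itself a consequence of~\eqref{eq: abs value lambda main result section} and Theorem~\ref{th: general bound}, which is now applicable since~\eqref{eq: main lemma first statement} has just been established. The main obstacle is arranging the descent so that each step is genuinely \emph{multiplicative}, of the form $M_{k+1} \le C\sqrt{M_k} + C_0$, rather than merely additive; this is what keeps the number of iterations at $O(\log\log n)$ and what ties the admissible moment order $p \le A_1(nv)^{(1-2\alpha)/2}$ to the truncation exponent $\alpha$, since each Rosenthal-type application to the truncated entries produces a factor $n^\alpha$ that must be absorbed against one of the gained powers of $(nv)^{-1/2}$.
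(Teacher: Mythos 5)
Your proposal identifies the right pieces (the representation $\RR_{jj} = -1/(z+m_n-\varepsilon_j)$, the truncation $|X_{jk}|\le Dn^\alpha$, the recursion inequality in the spirit of Lemma~\ref{lemma: recurence relation for R_jj}), but the iteration scheme you build around them has two genuine gaps.

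\textbf{The iteration does not close at a fixed $v$.} You propose to iterate a contraction $M_{k+1}\le C_0 + C\sqrt{M_k}(nv)^{-(1-2\alpha)/4}p^{\beta_0}$ starting from $M_0=v^{-1}$, staying at one spectral parameter $z$. But after taking $p$-th powers in the recursion
$|\RR_{jj}^{(\J)}| \le C_1(1+|T_n^{(\J)}|^{1/2}|\RR_{jj}^{(\J)}|+|\varepsilon_j^{(\J)}||\RR_{jj}^{(\J)}|)$
and applying Cauchy--Schwarz, the right-hand side involves $\E^{1/2}|\RR_{jj}^{(\J)}(v)|^{2p}$, i.e.\ a \emph{higher} moment at the \emph{same} $v$. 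Nothing in a fixed-$v$ iteration supplies a good bound on this $2p$-th moment; the a priori input $|\RR_{jj}|\le v^{-1}$ just reproduces itself. The paper resolves this precisely by a genuine descent in $v$: Lemma~\ref{appendix lemma resolvent relations on different v} gives $|\RR_{jj}^{(\J)}(v)|\le s_0|\RR_{jj}^{(\J)}(s_0 v)|$, and Lemma~\ref{lemma step for resolvent} uses the already-established bound at the larger scale $s_0 v$ (with $2q$ moments) to deduce the bound at $v$ (with $q$ moments), where $s_0 = 2^{4/(1-2\alpha)}$. Starting from $v=1$, where $\|\RR^{(\J)}\|\le 1$ trivially, one needs $L=[-\log_{s_0}v_0]+1 = O(\log n)$ multiplicative steps to reach $v_0 = A_0/n$, not $O(\log\log n)$. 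Concomitantly, the budget of removed indices $|\J|$ must start at $2L=O(\log n)$ and decrease by two at each step, a bookkeeping constraint your scheme does not track. The term ``multiplicative step'' in this context refers to $v\mapsto v/s_0$ (as opposed to the additive steps $v\mapsto v-\delta$ of earlier local-law proofs), not to the square-root gain $M\mapsto\sqrt M$ you describe.

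\textbf{Invoking Theorem~\ref{th: general bound} is circular.} You use Theorem~\ref{th: general bound} both for the preliminary Markov tail bound on $|T_n|$ in part (i) and for the polynomial tail bound on $|\Lambda_n|$ in part (ii). But Theorem~\ref{th: general bound} is proved under the standing assumptions~\eqref{eq: main assumption about diagonal entries of resolvent} and~\eqref{eq: main assumption about diagonal entries of resolvent 2}, which are exactly the two conclusions of Lemma~\ref{main lemma}. The paper's proof of Lemma~\ref{main lemma} (Section~\ref{sec: diagonal entries}) is deliberately self-contained: the needed smallness of $|T_n^{(\J)}|$ enters only through the elementary Cauchy--Schwarz estimate $\E|T_n^{(\J)}|^q \le (\frac1n\sum_j\E|\varepsilon_j^{(\J)}|^{2q})^{1/2}(\frac1n\sum_j\E|\RR_{jj}^{(\J)}|^{2q})^{1/2}$, combined with the $\varepsilon$-moment lemmas adapted to the whole admissible moment range (Lemmas~\ref{lemma off diagonal entries assumption}--\ref{lemma varepsilon_3 assumption}; note that Lemmas~\ref{appendix lemma varepsilon_2 4 moment} and~\ref{appendix lemma varepsilon_3 small p}, which you cite, only cover moments $q\le 4$ and $q\le 1/\alpha$, respectively, and so are insufficient here). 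The $\Lambda_n$-dichotomy you set up for~\eqref{eq: main lemma second statement} is likewise handled in the paper via the purely algebraic recursion~\eqref{inequality for 1/(z+m_n(z))} and the same $v$-descent, without any appeal to the Stein-type machinery of Section~3.

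==========
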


The proof of Lemma~\ref{main lemma}  is based on several auxiliary results and will be given at the end of this section. In this proof  will shall  use  ideas from~\cite{GotzeTikh2014rateofconv} and~\cite{GotTikh2015}. One of main ingredients of the proof is the descent method for $\RR_{jj}$ which is based on Lemma~\ref{lemma: recurence relation for R_jj} below and Lemma~\ref{appendix lemma resolvent relations on different v} in the Appendix, which in this form appeared in~\cite{Schlein2014}.

Since $u$ is fixed and $|u| \le u_0$ we shall omit $u$ from the notation of the resolvent and denote $\RR(v): = \RR(z)$. Sometimes in order to simplify notations we shall also omit the  argument $v$ in $\RR(v)$ and just write $\RR$. For any $j \in \T_{\J}$ we may express $\RR_{jj}^{(\J)}$ in the following way (compare~\eqref{eq: R_jj representation 0})
\begin{equation}\label{eq: representation for RR_jj}
\RR_{jj}^{(\J)} = \frac{1}{-z + \frac{X_{jj}}{\sqrt n} - \frac{1}{n}\sum_{l,k \in \T_{\J,j}} X_{jk} X_{jl} \RR_{lk}^{(\J,j)}}.
\end{equation}
Let $\varepsilon_j^{(\J)} : = \varepsilon_{1j}^{(\J)} + \varepsilon_{2j}^{(\J)}+\varepsilon_{3j}^{(\J)}+\varepsilon_{4j}^{(\J)}$, where
\begin{align*}
&\varepsilon_{1j}^{(\J)} =  \frac{1}{\sqrt n}X_{jj}, \quad \varepsilon_{2j}^{(\J)} = -\frac{1}{n}\sum_{l \ne k \in T_j} X_{jk} X_{jl} \RR_{kl}^{(\J,j},
\quad \varepsilon_{3j}^{(\J)} = -\frac{1}{n}\sum_{k \in T_{\J,j}} (X_{jk}^2 -1) \RR_{kk}^{(\J)}(z), \\
&\varepsilon_{4j}^{(\J)}= \frac{1}{n} (\Tr \RR^{(\J)} - \Tr \RR^{(\J,j)}(z)).
\end{align*}
We also introduce the quantities $\Lambda_n^{(\J)}(z) : = m_n^{(\J)} (z) - s(z)$ and
$$
T_n^{(\J)}: = \frac{1}{n} \sum_{j \in \T_{\J}} \varepsilon_j^{(\J)}\RR_{jj}^{(\J)}.
$$
The following lemma, Lemma~\ref{lemma: recurence relation for R_jj}, allows  to  recursively estimate the moments of the diagonal entries of the resolvent.
 The proof of the first part of this lemma may be found in~\cite{Schlein2014} and
it is included here for the readers convenience.
\begin{lemma}\label{lemma: recurence relation for R_jj}
For an arbitrary set $\J \subset \T$ and all $j \in \T_\J$ there exist a positive constant $C_1$ depending on $u_0, V$ only such that for all $z = u + i v$ with $V \geq v > 0$ and $|u| \le u_0$ we have
\begin{equation}\label{inequality for R_jj}
|\RR_{jj}^{(\J)}| \le C_1\Big(1 + |T_n^{(\J)}|^{\frac{1}{2}}|\RR_{jj}^{(\J)}| + |\varepsilon_j^{(\J)}||\RR_{jj}^{(\J)}|\Big)
\end{equation}
and
\begin{equation}\label{inequality for 1/(z+m_n(z))}
\frac{1}{|z+m_n^{(\J)}(z)|} \le C_1\left(1 + \frac{|T_n^{(\J)}|^{\frac{1}{2}}}{|z+m_n^{(\J)}(z)|}\right).
\end{equation}
\end{lemma}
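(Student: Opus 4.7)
My plan is to derive both bounds from the one-step recursive representation of $\RR_{jj}^{(\J)}$, combined with the facts that (a) $|z+s(z)|$ is uniformly bounded below on the compact region in $\mathbb{D}$ and (b) the self-consistent equation for $m_n^{(\J)}$ controls $|\Lambda_n^{(\J)}|$ by $|T_n^{(\J)}|^{1/2}$.

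First, repeating for the submatrix $\W^{(\J)}$ the derivation leading to~\eqref{eq: R_jj representation}, for any $j\in\T_{\J}$ one obtains
$$\RR_{jj}^{(\J)}(z+m_n^{(\J)})=-1+\varepsilon_j^{(\J)}\RR_{jj}^{(\J)}.$$
Using the semicircle identity $s^2(z)+zs(z)+1=0$ one has $z+s(z)=-1/s(z)$; since $s(z)$ is continuous and non-vanishing on the compact set $\{|u|\le u_0,\,0\le v\le V\}$, there exists a constant $c_0=c_0(u_0,V)>0$ with $|z+s(z)|\ge c_0$ throughout this region. Substituting $z+m_n^{(\J)}=(z+s(z))+\Lambda_n^{(\J)}$ into the identity above yields
$$\RR_{jj}^{(\J)}(z+s(z))=-1+\varepsilon_j^{(\J)}\RR_{jj}^{(\J)}-\Lambda_n^{(\J)}\RR_{jj}^{(\J)},$$
and taking moduli gives
$$c_0|\RR_{jj}^{(\J)}|\le 1+|\varepsilon_j^{(\J)}||\RR_{jj}^{(\J)}|+|\Lambda_n^{(\J)}||\RR_{jj}^{(\J)}|.$$

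Next, averaging the recursion over $j\in\T_{\J}$ and using $m_n^{(\J)}=\frac{1}{n}\sum_{j\in\T_\J}\RR_{jj}^{(\J)}$ one finds that $\Lambda_n^{(\J)}=m_n^{(\J)}-s(z)$ satisfies a quadratic of the form
$$\Lambda_n^{(\J)}\bigl(b(z)+\Lambda_n^{(\J)}\bigr)=T_n^{(\J)}+|\J|/n,$$
and solving for $\Lambda_n^{(\J)}$, picking the branch of the square root dictated by $\Lambda_n^{(\J)}\to 0$ as the right-hand side vanishes, produces $|\Lambda_n^{(\J)}|\le C|T_n^{(\J)}|^{1/2}$ (for bounded $|\J|$ the correction $|\J|/n$ is absorbed). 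This is the submatrix analog of Lemma~\ref{appendix inequality for lambda}. Inserting this bound in the previous display and absorbing constants into $C_1$ yields~\eqref{inequality for R_jj}. For~\eqref{inequality for 1/(z+m_n(z))}, writing $z+s(z)=(z+m_n^{(\J)})-\Lambda_n^{(\J)}$, the triangle inequality and the same bound on $|\Lambda_n^{(\J)}|$ give
$$c_0\le |z+s(z)|\le |z+m_n^{(\J)}|+C|T_n^{(\J)}|^{1/2},$$
and dividing through by $c_0|z+m_n^{(\J)}|$ yields the desired inequality.

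The main technical point is the quadratic-equation step: one must justify that the relevant branch of the square root delivers $|\Lambda_n^{(\J)}|\le C|T_n^{(\J)}|^{1/2}$ uniformly on $\mathbb{D}$, not merely in the perturbative regime where $|T_n^{(\J)}|\ll |b(z)|^2$. This is essentially the content of Proposition~2.2 in~\cite{Schlein2014}, adapted to the submatrix setting by carefully tracking the arithmetic correction $|\J|/n$ arising from the discrepancy between $\frac{1}{n}\Tr\RR^{(\J)}$ and the genuine Stieltjes transform of $\W^{(\J)}$; the remaining arguments are routine triangle-inequality manipulations.
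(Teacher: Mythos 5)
Your first step matches the paper's: the one-step recursion and the observation that $|s(z)|\le 1$, equivalently $|z+s(z)|\ge 1$, give exactly the paper's intermediate bound~\eqref{inequality for R_jj 1}. The gap is where you invoke $|\Lambda_n^{(\J)}|\le C|T_n^{(\J)}|^{1/2}$ ``uniformly on $\mathbb D$.'' That bound is not uniform: Lemma~\ref{appendix inequality for lambda} only yields $|\Lambda_n|\le C\sqrt{|T_n|}$ under the restriction $|u|\le 2+v$ (inequality~\eqref{eq: abs value lambda}), precisely because for $|u|>2+v$ the quantity $a=z^2/4-1$ loses the property $|\imag a|\ge c\,\re a$ and the denominator $|\sqrt{a+T_n}+\sqrt{a}|$ can degenerate, so $\Lambda_n$ may sit near the ``large'' root $\approx -b(z)$ of the quadratic rather than the small one. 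The domain of the present lemma is $|u|\le u_0$ with $u_0$ arbitrary (indeed $u_0>2$ in Theorems~\ref{th: general bound} and~\ref{th: stronger bound for imag part}, which rely on it), and $v$ can be as small as $A_0 n^{-1}$, so you cannot assume $|u|\le 2+v$. Flagging this as ``the main technical point'' and then attributing its resolution to Proposition~2.2 of~\cite{Schlein2014}, i.e.\ Lemma~\ref{appendix inequality for lambda}, does not close it: the only uniform conclusion of that lemma is the weaker~\eqref{eq: min of abs values lambda}, $\min(|\Lambda_n|,|b_n(z)|)\le C\sqrt{|T_n|}$.

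The paper's proof works precisely because it derives a \emph{second} inequality alongside~\eqref{inequality for R_jj 1}. Rewriting $\RR_{jj}^{(\J)}=s(z)+s(z)(b_n^{(\J)}(z)-\varepsilon_j^{(\J)})\RR_{jj}^{(\J)}-s(z)b(z)\RR_{jj}^{(\J)}$ and using $1+zs(z)+s^2(z)=0$ yields the companion bound~\eqref{inequality for R_jj 2}, whose coefficient is $|b_n^{(\J)}|$ rather than $|\Lambda_n^{(\J)}|$; together with $|s(z)|^{-1}\le 1+|z|$, which is bounded on $\mathbb D$, the two inequalities give $|\RR_{jj}^{(\J)}|\le C_1\bigl(1+\min(|\Lambda_n^{(\J)}|,|b_n^{(\J)}|)|\RR_{jj}^{(\J)}|+|\varepsilon_j^{(\J)}||\RR_{jj}^{(\J)}|\bigr)$, and only then is~\eqref{eq: min of abs values lambda} applied. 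The same two-inequality device is needed for~\eqref{inequality for 1/(z+m_n(z))}: the paper produces both~\eqref{inequality for 1/(z+m_n(z)) 1} with coefficient $|\Lambda_n^{(\J)}|$ and~\eqref{inequality for 1/(z+m_n(z)) 2} with coefficient $|b_n^{(\J)}|=|\Lambda_n^{(\J)}+2s(z)+z|$ before taking the minimum, whereas your argument again leans on the non-uniform $|\Lambda_n^{(\J)}|$ bound. Your observation about the $|\J|/n$ arithmetic correction to the self-consistent equation for $m_n^{(\J)}$ is legitimate and not addressed in the paper, but it is secondary to the missing second inequality.
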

\begin{proof}
We first prove~\eqref{inequality for R_jj}. We may rewrite~\eqref{eq: representation for RR_jj} in the following way
$$
\RR_{jj}^{(\J)}  = -\frac{1}{z + m_n^{(\J)} (z)} + \frac{1}{z + m_n^{(\J)} (z)} \varepsilon_j^{(\J)}  \RR_{jj}^{(\J)}.
$$
Applying the definition of $\Lambda_n^{(j)}$ we rewrite the previous equation as
\begin{equation}\label{eq: representation for RR_jj 2}
\RR_{jj}^{(\J)} = s(z)+ s(z)(\Lambda_n^{(\J)} - \varepsilon_j^{(\J)} ) \RR_{jj}^{(\J)}.
\end{equation}
Since $|s(z)| \le 1$ we get
\begin{equation}\label{inequality for R_jj 1}
|\RR_{jj}^{(\J)}| \le 1 + (|\Lambda_n^{(\J)}| + |\varepsilon_j^{(\J)}| )|\RR_{jj}^{(\J)}|.
\end{equation}
Rewriting~\eqref{eq: representation for RR_jj 2} in a
$$
\RR_{jj}^{(\J)} = s(z) + s(z)(b_n^{(\J)}(z) -  \varepsilon_j^{(\J)} ) \RR_{jj}^{(\J)} - s(z) b(z)\RR_{jj}^{(\J)}.
$$
Since $1+zs(z) + s^2(z) = 0$ we get the following inequality
\begin{equation}\label{inequality for R_jj 2}
|\RR_{jj}^{(\J)}| \le \frac{1}{|s(z)|} + \frac{|b_n^{(\J)}||\RR_{jj}^{(\J)}|}{|s(z)|}+ \frac{|\varepsilon_j^{(\J)}| |\RR_{jj}^{(\J)}|}{|s(z)|}.
\end{equation}
From $|s(z)|^{-1} \le 1 + |z|$,~\eqref{inequality for R_jj 1},~\eqref{inequality for R_jj 2} and Lemma~\ref{appendix inequality for lambda}[Inequality~\eqref{eq: min of abs values lambda}] we conclude that there exists a positive constant $C_1$ such that
\begin{align*}
|\RR_{jj}^{(\J)}| &\le C_1(1 + \min ( |b_n^{(\J)}| ,|\Lambda_n^{(\J)}| ) |\RR_{jj}^{(\J)}| + |\varepsilon_j^{(\J)}||\RR_{jj}^{(\J)}|) \\
&\le C(1 + |T_n^{(\J)}|^{\frac{1}{2}}|\RR_{jj}^{(\J)}| + |\varepsilon_j^{(\J)}||\RR_{jj}^{(\J)}|) .
\end{align*}

Consider now the second inequality~\eqref{inequality for 1/(z+m_n(z))}. From the representation
\begin{equation}\label{eq: representation for 1/(z+m_n(z))}
\frac{1}{z+m_n^{(\J)}(z)} = \frac{1}{z + s(z)} - \frac{\Lambda_n^{(\J)}}{(z+m_n^{(\J)}(z))(z+s(z))}
\end{equation}
we conclude with $|z+s(z)| \ge 1$ that
\begin{equation}\label{inequality for 1/(z+m_n(z)) 1}
\frac{1}{|z+m_n^{(\J)}(z)|} \le 1+ \frac{\Lambda_n^{(\J)}}{|z+m_n^{(\J)}(z)|}.
\end{equation}
Rewriting~\eqref{eq: representation for 1/(z+m_n(z))} as follows we get
$$
\frac{1}{z+m_n^{(\J)}(z)} = \frac{1}{z + s(z)} - \frac{\Lambda_n^{(\J)}+2s(z)+z}{(z+m_n^{(\J)}(z))(z+s(z))} + \frac{2s(z)+z}{(z+m_n^{(\J)}(z))(z+s(z))}.
$$
This equation may be rewritten as
$$
-\frac{s(z)}{z+m_n^{(\J)}(z)} = 1 - \frac{\Lambda_n^{(\J)}+2s(z)+z}{z+m_n^{(\J)}(z)}.
$$
Taking  absolute values and applying the triangular inequality we get
\begin{equation}\label{inequality for 1/(z+m_n(z)) 2}
\frac{1}{|z+m_n^{(\J)}(z)|} \le \frac{1}{|s(z)|} + \frac{1}{|s(z)|}\frac{|\Lambda_n^{(\J)}+2s(z)+z|}{|z+m_n^{(\J)}(z)|}.
\end{equation}
From $|s(z)|^{-1} \le 1 + |z|$,~\eqref{inequality for 1/(z+m_n(z)) 1},~\eqref{inequality for 1/(z+m_n(z)) 2} and~\ref{appendix inequality for lambda}[Inequality~\eqref{eq: min of abs values lambda}] it follows
\begin{equation*}
\frac{1}{|z+m_n^{(\J)}(z)|} \le C_1\left(1 + \frac{|T_n^{(\J)}|^{\frac{1}{2}}}{|z+m_n^{(\J)}(z)|}\right).
\end{equation*}
\end{proof}

Let us introduce the following quantities for an integer $K > 0$
\begin{align}\label{eq: definition of A and F}
&A_{\nu,q}:= A_{\nu,q}^{(K)}(v):=  \max_{\J: |\J| \le \nu + K } \max_{l \neq k \in \T_{\J}} \E|\RR_{lk}^{(\J,j)}(v)|^q, \nonumber \\
&F_{\nu,q}: = F_{\nu,q}^{(K)}(v): =   \max_{\J: |\J| \le \nu + K } \max_{l \in \T_{\J}} \E \imag^q \RR_{ll}^{(\J,j)}(v).
\end{align}
In the following lemma we show that  $A_{1,q}$ is uniformly bounded with respect to $v$ and $n$.
\begin{lemma}\label{lemma off diagonal entries assumption}
Let $\tilde v > 0$ be an arbitrary number and $C_0$ be some large positive constant. Suppose that the conditions $\CondTwo$ hold. There exist positive constant $s_0$ depending on $\alpha$ and positive constants $A_0, A_1$ depending on $C_0, s_0$ such that assuming
\begin{equation}\label{main condition off diagonal}
\max_{\J: |\J| \le K + 2} \max_{l,k \in \T_\J}\E |\RR_{lk}^{(\J)}(v')|^q \le C_0^q
\end{equation}
for all $v' \geq \tilde v, |u| \le u_0$ and $1 \le q \le A_1(nv')^{\frac{1-2\alpha}{2}}$ we have
$$
A_{1,q} \le C_0^q
$$
for all $v \geq \tilde v/s_0, |u| \le u_0$ and $1 \le q \le A_1(nv)^{\frac{1-2\alpha}{2}}$.
\end{lemma}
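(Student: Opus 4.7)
The approach is a one-step descent in the spectral parameter: bound the off-diagonal entries at level $v$ by transferring information from level $v_0:=s_0\,v$, using Lemma~\ref{appendix lemma resolvent relations on different v} to compare resolvents at different values of $v$, and invoking the hypothesis to control the resulting terms at $v_0$. For $v \geq \tilde v$ the claim is immediate from the hypothesis (since $|\J|\leq K+1\leq K+2$), so the only nontrivial case is $v\in[\tilde v/s_0,\tilde v)$, in which $v_0\geq\tilde v$ lies in the admissible range.

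Fix $\J$ with $|\J|\leq K+1$ and distinct $l,k\in\T_\J$. The plan starts from the resolvent identity applied to the principal submatrix indexed by $\T_\J$,
\[
\RR_{lk}^{(\J)}(v)=\RR_{lk}^{(\J)}(v_0)+i(v_0-v)\sum_{m\in\T_\J}\RR_{lm}^{(\J)}(v)\,\RR_{mk}^{(\J)}(v_0).
\]
I would estimate the sum by Cauchy--Schwarz together with the standard identity $\sum_m|\RR_{lm}^{(\J)}(v)|^2=v^{-1}\imag\RR_{ll}^{(\J)}(v)$ (and its analogue at $v_0$), combined with the monotonicity fact that $v\mapsto v\,\imag\RR_{jj}^{(\J)}(u+iv)$ is non-decreasing (part of Lemma~\ref{appendix lemma resolvent relations on different v}), which yields $\imag\RR_{ll}^{(\J)}(v)\leq s_0\,\imag\RR_{ll}^{(\J)}(v_0)$. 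This produces the deterministic bound
\[
|\RR_{lk}^{(\J)}(v)|\leq|\RR_{lk}^{(\J)}(v_0)|+(s_0-1)\sqrt{\imag\RR_{ll}^{(\J)}(v_0)\,\imag\RR_{kk}^{(\J)}(v_0)}.
\]

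Passing to $q$-th moments via Minkowski and Cauchy--Schwarz, and invoking the hypothesis at $v_0$ for both the off-diagonal term (with $\J$ of size $\leq K+1<K+2$) and the diagonal imaginary parts (via $\imag\RR_{jj}\leq|\RR_{jj}|$), one obtains
\[
\E^{1/q}|\RR_{lk}^{(\J)}(v)|^q\leq C_0\bigl(1+(s_0-1)\bigr)=s_0\,C_0.
\]
The target conclusion $\E|\RR_{lk}^{(\J)}(v)|^q\leq C_0^q$ then follows by fixing $s_0=s_0(\alpha)$ sufficiently close to $1$ so that this multiplicative inflation is absorbed: equivalently, one selects $C_0$ at the outset slightly above the genuine target bound, so that a single descent step preserves it. The admissible moment range transfers compatibly, since $q\leq A_1(nv)^{(1-2\alpha)/2}$ becomes $q\leq A_1 s_0^{(1-2\alpha)/2}(nv_0)^{(1-2\alpha)/2}$, absorbed in $A_1$ via a rescaling that depends on $s_0$ and $\alpha$.

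The main obstacle is precisely the preservation of the constant $C_0$: the naive estimate costs a factor $s_0>1$ per descent step, which forces the quantitative dependence of $s_0$ on $\alpha$ and the dependence of $A_0,A_1$ on $C_0$. Condition $\CondTwo$ itself does not enter the off-diagonal descent (which uses only the resolvent identity, Cauchy--Schwarz, and the monotonicity of $v\,\imag\RR_{jj}^{(\J)}(u+iv)$), but it is crucial for the companion bounds on diagonal entries and quadratic forms that feed the bootstrap in Lemma~\ref{main lemma}, where this lemma is subsequently used.
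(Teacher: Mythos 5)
Your descent via the resolvent identity $\RR^{(\J)}(v)-\RR^{(\J)}(v_0)=i(v_0-v)\RR^{(\J)}(v)\RR^{(\J)}(v_0)$ is a genuinely different route from the paper's, and it has a fatal quantitative defect: it inflates the target constant by a factor $s_0>1$ at each step. Your final estimate is $\E^{1/q}|\RR_{lk}^{(\J)}(v)|^q\le s_0\,C_0$, and you propose to absorb the extra factor by taking $s_0$ close to $1$ or $C_0$ slightly larger than the ``genuine'' target. But this lemma is fed into the iterated descent in Lemma~\ref{main lemma}: one starts at $v=1$ and descends $L=[-\log_{s_0}v_0]+1$ times to reach $v_0=A_0n^{-1}$, re-applying Lemma~\ref{lemma off diagonal entries assumption} (through Lemma~\ref{lemma varepsilon_2 assumption}) at each step. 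A multiplicative loss of $s_0$ per step compounds to $s_0^L\approx v_0^{-1}\approx n/A_0$, regardless of how close $s_0$ is to $1$ (shrinking $s_0$ increases $L$ proportionally, so $s_0^L$ stays of order $n$). There is no fixed constant $C_0$ that a single such step ``preserves'' in a way that survives $L$ iterations, because the input and output constants of each step must coincide for the bootstrap to close.

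The paper avoids this by never estimating $\RR_{jk}^{(\J)}(v)$ directly against $\RR_{jk}^{(\J)}(s_0v)$. Instead it expands the off-diagonal entry as a \emph{random linear form} in the matrix row, $\RR_{jk}^{(\J)}=-n^{-1/2}\sum_{l}X_{jl}\RR_{lk}^{(\J,j)}\RR_{jj}^{(\J)}$, and applies Rosenthal's inequality (Theorem~\ref{th: Rosenthal}). Combined with the Ward-type identity of Lemma~\ref{appendix lemma resolvent inequalities 1} and the truncation $|X_{jl}|\le Dn^\alpha$ from $\CondTwo$ (which you assert ``does not enter the off-diagonal descent''---it does, via the moment bound $\mu_{2q}\le\mu_4D^{2q-4}n^{\alpha(2q-4)}$), this yields a bound of the shape $(CC_0s_0)^q\bigl(q^{q/2}(nv)^{-q/2}+q^q n^{-q(1-2\alpha)/2}\bigr)$. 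The prefactors $(nv)^{-q/2}$ and $n^{-q(1-2\alpha)/2}$ are genuinely small in the admissible range $v\ge A_0n^{-1}$, $q\le A_1(nv)^{(1-2\alpha)/2}$, and by taking $A_0$ large and $A_1$ small depending on $C_0,s_0$ one can make the whole expression $\le C_0^q$ with no residual $s_0$ inflation. The deterministic resolvent identity you rely on does appear in the paper, but only in the secondary subroutine of bounding $A_{2,2q}$; the resulting $s_0$-factor there is harmless because it multiplies the already small Rosenthal prefactor. The key mechanism you are missing is this concentration gain; without it, one-step descent is not iterable.
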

\begin{proof}
We start from the assumption that we have already chosen the value of $s_0$. Let us fix an  arbitrary $v \geq \tilde v/s_0$, $\J \subset \T$ such that $|\J| \le K+1$ and $j \in \T_\J$.  We may express $\RR_{jk}^{(\J)}$ as follows
$$
\RR_{jk}^{(\J)} = -\frac{1}{\sqrt n} \sum_{l \in \T_{\J,j}} X_{jl} \RR_{lk}^{(\J,j)} \RR_{jj}^{(\J)}.
$$
Applying  H{\"o}lder's inequality we obtain
$$
\E |\RR_{jk}^{(\J)}|^{q} \le n^{-\frac{1}{2}} \E^{\frac{1}{2}} \left| \sum_{l \in \T_{\J,j}} X_{jl} \RR_{lk}^{(\J,j)} \right|^{2q}  \E^{\frac{1}{2}} |\RR_{jj}^{(\J)}|^{2q}.
$$
From Lemma~\eqref{appendix lemma resolvent relations on different v} we conclude that  for $s_0 \geq 1$ the following relation holds
\begin{equation}\label{eq: recurrence relation 0}
|\RR_{jj}^{(\J)}(v)|^{q} \le s_0^{q}|\RR_{jj}^{(\J)}(s_0v)|^{q}.
\end{equation}
We choose $s_0: = 2^{\frac{2}{1-2\alpha}}$. Since $ v': = s_0 v \geq \tilde v$ and $2 q \le A_1 (nv')^{\frac{1-2\alpha}{2}}$ we may apply~\eqref{eq: recurrence relation 0} and the assumption~\eqref{main condition off diagonal} to estimate  $\E^{\frac{1}{2}} |\RR_{jj}^{(\J)}|^{2q} \le (s_0 C_0)^{q}$. We get the following bound
\begin{equation}\label{off diagonal eq 0}
\E |\RR_{jk}^{(\J)}|^{q} \le n^{-\frac{q}{2}} (s_0 C_0)^{q} \E^{\frac{1}{2}} \left| \sum_{l \in \T_{\J, j}} X_{jl} \RR_{lk}^{(\J,j)} \right|^{2q}.
\end{equation}
Since $X_{jk}, k \in \T_{\J,j},$ and $\RR^{(\J,j)}$ are independent we may apply Rosenthal's inequality  and get
\begin{align*}
\E\left| \sum_{l \in \T_{\J,j}} X_{jl} \RR_{lk}^{(\J,j)} \right|^{2q}  \le C^{q}\left( q^{q} \E\left(\sum_{l \in \T_{\J,j}} |\RR_{lk}^{(\J,j)}|^2 \right)^{q} + \mu_{2q} q^{2q}
\sum_{l \in \T_{\J,j}} \E|\RR_{lk}^{(\J,j)}|^{2q} \right).
\end{align*}
From Lemma~\ref{appendix lemma resolvent inequalities 1} [Inequality~\eqref{appendix lemma resolvent inequality 2}] and~\eqref{eq: recurrence relation 0}  we  derive the following inequality
\begin{align}\label{off diagonal eq 1}
\E \left(\sum_{l \in \T_{\J,j}} |\RR_{lk}^{(\J,j)}|^2 \right)^{q} \le \frac{1}{v^{q}}   F_{2,q} \le \frac{s_0^q C_0^q}{v^{q}}.
\end{align}
Since $|X_{jk}| \le D n^{\alpha}$ we get
\begin{align}\label{off diagonal eq 2}
\mu_{2q} \le \mu_4 D^{2q-4}n^{\alpha(2q-4)}.
\end{align}
By definition
\begin{equation}\label{off diagonal eq decomposition}
\sum_{l \in \T_{\J,j}} \E |\RR_{lk}^{(\J,j)}|^{2q} = \sum_{l \in \T_{\J,j,k}} \E |\RR_{lk}^{(\J,j)}|^{2q} + \E |\RR_{kk}^{(\J,j)}|^{2q}.
\end{equation}
The second term on the right hand side of the previous equality can be bounded by~\eqref{eq: recurrence relation 0} and the assumption~\eqref{main condition off diagonal}. To the first term we may apply the following bound
\begin{equation}\label{off diagonal eq first bound for sum}
\sum_{l \in \T_{\J,j,k}} \E |\RR_{lk}^{(\J,j)}|^{2q} \le n A_{2,2q}.
\end{equation}
It follows from~\eqref{off diagonal eq 0}, ~\eqref{off diagonal eq 1},~\eqref{off diagonal eq 2}~\eqref{off diagonal eq decomposition} and~\eqref{off diagonal eq first bound for sum} that
\begin{align}\label{off diagonal recurrent formula 1}
&A_{1,q} \le (C C_0 s_0)^q  \left ( \frac{q^\frac{q}{2} (s_0 C_0)^\frac{q}{2}   }{(nv)^{\frac{q}{2}}}  +  \frac{q^{q}}{n^{\frac{q}{2}(1-2\alpha)}} A_{2,2q}^{\frac{1}{2}}     +  \frac{q^{q} (C_0 s_0)^q}{n^{\frac{q}{2}(1-2\alpha)  } } \right).
\end{align}
To estimate $A_{2,2q}$ we apply the resolvent equality and obtain that for arbitrary $l \neq k \in \T_\J$
$$
|\RR_{lk}^{(\J)}(v) - \RR_{lk}^{(\J)}(s_0v)| \le v(s_0-1) |[\RR^{(\J)}(v) \RR^{(\J)}(s_0v)]_{lk}|.
$$
The Cauchy-Schwartz inequality and Lemma~\ref{appendix lemma resolvent inequalities 1} [Inequality~\eqref{appendix lemma resolvent inequality 2}] together imply that
$$
|\RR_{lk}^{(\J)}(v) - \RR_{lk}^{(\J)}(s_0v)| \le \sqrt{s_0} \sqrt{|\RR^{(\J)}_{ll}(v)| |\RR^{(\J)}_{kk}(s_0 v)|}.
$$
It remains to apply~\eqref{eq: recurrence relation 0} and the assumption~\eqref{main condition off diagonal} to get
$$
A_{2,2q} \le 2^{2q} (s_0 C_0)^{2q} + 2^{2q} s_0^{3q} C_0^{2q}.
$$
It follows from the last inequality and~\eqref{off diagonal recurrent formula 1} that
$$
A_{1,q} \le (C C_0 s_0)^q  \left ( \frac{q^\frac{q}{2} (s_0 C_0)^\frac{q}{2}   }{(nv)^{\frac{q}{2}}}  +  \frac{q^q  (C_0 s_0^{3/2})^q }{n^{\frac{q}{2}(1-2\alpha)}}    \right).
$$
We may choose the constants $A_0$ and $A_1$ depending on $C_0, s_0$ in such way that
$$
A_{1,q} \le C_0^q.
$$
\end{proof}

\begin{lemma}\label{lemma varepsilon_2 assumption}
Let $\tilde v > 0$ be an arbitrary number and $C_0$ be some large positive constant. Suppose that the conditions $\CondTwo$ hold. There exist positive constant $s_0$ depending on $\alpha$ and positive constants $A_0, A_1$ depending on $C_0, s_0$ such that assuming
\begin{equation}\label{main condition 1}
\max_{\J: |\J| \le K + 2} \max_{l,k \in \T_\J}\E |\RR_{lk}^{(\J)}(v')|^q \le C_0^q
\end{equation}
for all $v' \geq \tilde v, |u| \le u_0$ and $1 \le q \le A_1(nv')^{\frac{1-2\alpha}{2}}$ we have
$$
\max_{\J: |\J| \le K} \max_{j \in \T_\J} \E |\varepsilon_{2j}^{(\J)}|^{2q} \le \frac{(C C_0 s_0)^{q} q^{4q}}{(nv)^{2q(1-2\alpha)}}
$$
for all $v \geq \tilde v/s_0, |u| \le u_0$ and $1 \le q \le A_1(nv)^{\frac{1-2\alpha}{2}}$.
\end{lemma}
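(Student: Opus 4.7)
The plan is to condition on the $\sigma$-algebra $\mathfrak{M}^{(\J,j)}$, so that $\RR^{(\J,j)}$ becomes deterministic while $\varepsilon_{2j}^{(\J)}=-n^{-1}\sum_{k\ne l\in\T_{\J,j}}X_{jk}X_{jl}\RR_{kl}^{(\J,j)}$ is an off-diagonal quadratic form in the independent mean-zero random variables $\{X_{jk}\}_{k\in\T_{\J,j}}$, which satisfy $|X_{jk}|\le Dn^{\alpha}$ under $\CondTwo$. First I would invoke a moment inequality for such forms (either via a two-step martingale decomposition combined with Burkholder--Rosenthal, or via de la Pe\~na--Montgomery-Smith decoupling followed by Rosenthal's inequality for sums of independent summands) to obtain, conditionally on $\mathfrak{M}^{(\J,j)}$,
\begin{equation*}
\E\bigl(|\varepsilon_{2j}^{(\J)}|^{2q}\,\bigm|\,\mathfrak{M}^{(\J,j)}\bigr)\le \frac{(Cq)^{2q}}{n^{2q}}\Bigl(\sum_{k\ne l}|\RR_{kl}^{(\J,j)}|^{2}\Bigr)^{q}+\frac{(Cq)^{4q}\mu_{2q}^{2}}{n^{2q}}\sum_{k\ne l}|\RR_{kl}^{(\J,j)}|^{2q}.
\end{equation*}

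Next I would take outer expectations and control the two terms separately. For the Hilbert--Schmidt term, Lemma~\ref{appendix lemma resolvent inequalities 1} supplies the identity $\sum_{k,l}|\RR_{kl}^{(\J,j)}|^{2}=v^{-1}\imag\Tr\RR^{(\J,j)}$, which is bounded by $nv^{-1}\max_{l}\imag\RR_{ll}^{(\J,j)}$; raising to the $q$-th power via the power-mean inequality and using $|\imag\RR_{ll}|\le|\RR_{ll}|$ together with the hypothesis~\eqref{main condition 1} applied to the enlarged index set $\J\cup\{j\}$ (of size $\le K+1\le K+2$) yields $\E(\sum_{k\ne l}|\RR_{kl}^{(\J,j)}|^{2})^{q}\le (C_0 n/v)^{q}$. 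For the higher-moment term, the truncation in $\CondTwo$ gives $\mu_{2q}\le (Dn^{\alpha})^{2q-4}\mu_{4}\le C^{q}n^{2\alpha q}$, while Lemma~\ref{lemma off diagonal entries assumption} applied with the same enlarged set $\J\cup\{j\}$ provides $\E|\RR_{kl}^{(\J,j)}|^{2q}\le C_0^{2q}$ for every $k\ne l$, so $\sum_{k\ne l}\E|\RR_{kl}^{(\J,j)}|^{2q}\le n^{2}C_{0}^{2q}$.

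Assembling the two pieces produces contributions of orders $(CC_0)^{q}q^{2q}(nv)^{-q}$ and $(CC_0)^{2q}q^{4q}n^{-2q(1-2\alpha)+2-8\alpha}$ respectively. The exponent $2-8\alpha\le 0$ on $[1/4,1/2]$ lets us discard the extra power of $n$; $v\le V$ gives $n^{-2q(1-2\alpha)}\le V^{2q}(nv)^{-2q(1-2\alpha)}$; and $\alpha\ge 1/4$ together with $nv\ge 1$ gives $(nv)^{-q}\le(nv)^{-2q(1-2\alpha)}$. Hence both contributions collapse into the target bound $(CC_0s_0)^{q}q^{4q}(nv)^{-2q(1-2\alpha)}$ after absorbing constants. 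The parameters $s_0,A_0,A_1$ are inherited from Lemma~\ref{lemma off diagonal entries assumption}, with a further harmless shrinkage of $A_1$ (by a factor $2$) ensuring that $2q\le A_1(nv)^{(1-2\alpha)/2}$ still lies in the admissible range of that off-diagonal bound.

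The main obstacle is obtaining the quadratic-form moment inequality with precisely the coefficient structure written above: because $\varepsilon_{2j}^{(\J)}$ is genuinely bilinear in the $X_{jk}$'s, Rosenthal-type bounds for sums of independent variables cannot be applied directly, and one must run a two-step martingale (or decoupling) argument, keeping the $q$-dependence sharp so that the second term stays of order $q^{4q}\mu_{2q}^{2}$ rather than something worse. Once this moment bound is in place, the remaining substitutions via the assumption~\eqref{main condition 1} and Lemma~\ref{lemma off diagonal entries assumption} are routine.
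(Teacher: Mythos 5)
Your plan follows the same route as the paper: bound $\varepsilon_{2j}^{(\J)}$ by conditioning and invoking a moment inequality for off-diagonal quadratic forms, insert the hypothesis and Lemma~\ref{lemma off diagonal entries assumption}, and collapse the resulting pieces into the target. Two points need tightening.

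First, a genuine two-term quadratic-form bound of the type you wrote is not correct; the sharp Rosenthal-type estimate for decoupled quadratic forms necessarily carries a third, \emph{intermediate} term of order $\mu_{2q}q^{3q}n^{-2q}\sum_{k}\bigl(\sum_{l}|\RR_{kl}^{(\J,j)}|^{2}\bigr)^{q}$ (the mixed $\ell^2$--$\ell^{q}$ contribution): this is exactly the Gin\'e--Lata\l a--Zinn bound, which the paper already has in the Appendix (Theorem~\ref{appendix lemma for quadratic forms}) and uses through Lemma~\ref{appendix lemma varepsilon_2}. Fortunately, after inserting the resolvent estimates this middle term is of order $(CC_0s_0)^{q}q^{3q}(nv)^{-q}$, which is dominated by the stated target because $(nv)^{-q}\le(nv)^{-2q(1-2\alpha)}$ when $\alpha\ge1/4$ and $nv\ge1$, so the conclusion is unaffected; but you should cite the three-term bound rather than claim a two-term one, which would fail as an intermediate inequality.

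Second, and more substantively, you apply the hypothesis~\eqref{main condition 1} \emph{directly at} the level $v$ (through $\E(\sum_{k\ne l}|\RR_{kl}^{(\J,j)}(v)|^{2})^{q}\le (C_0n/v)^{q}$), but that hypothesis is only available for $v'\ge\tilde v$, while the conclusion is claimed for all $v\ge\tilde v/s_0$, which includes values $v<\tilde v$. The missing step is the monotone-descent bound $|\RR_{ll}^{(\J,j)}(v)|\le s_0\,|\RR_{ll}^{(\J,j)}(s_0v)|$ from Lemma~\ref{appendix lemma resolvent relations on different v}, which lifts the diagonal (and imaginary-part) quantities from level $v$ to $v'=s_0v\ge\tilde v$ at the price of an extra $s_0^{q}$, exactly the source of the $s_0$-dependence in the final constant. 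Without this step your application of~\eqref{main condition 1} is out of range. With the Gin\'e--Lata\l a--Zinn bound and the descent inserted, the rest of your assembling (truncation bound on $\mu_{2q}$, the exponent check $2-8\alpha\le0$, the $v\le V$ absorption, and the shrinkage of $A_1$ to keep $2q$ admissible) matches the paper's proof.
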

\begin{proof}
Let us fix an arbitrary $v \geq \tilde v/s_0$, $\J \subset \T$ such that $|\J| \le K$ and $j \in \T_\J$. Applying Lemma~\ref{appendix lemma varepsilon_2} we get
\begin{align*}
\E |\varepsilon_{2j}^{(\J)}|^{2q} &\le C^{p}   \left( \frac{q^{2q}}{(nv)^{q}}   \E[\imag m_n^{(\J,j)}(z)]^{q} + \frac{q^{3q}}{(nv)^{q}} F_{1,q}+\frac{q^{4q}}{n^{2q(1-2\alpha)}} A_{1,2q} \right).
\end{align*}
To estimate $\E[\imag m_n^{(\J,j)}(z)]^{q}$ and $F_{1,q}$ we use Lemma~\eqref{appendix lemma resolvent relations on different v} which states that  for all $s_0 \geq 1$ the following relation holds
\begin{equation}\label{eq: recurrence relation 1}
|\RR_{kk}^{(\J,j)}(v)|^{q} \le s_0^{q}|\RR_{kk}^{(\J,j)}(s_0v)|^{q}.
\end{equation}
We choose $s_0: = 2^{\frac{4}{1-2\alpha}}$. This choice implies that $ v': = s_0 v \geq \tilde v$ and $4 q \le A_1 (nv')^{\frac{1-2\alpha}{2}}$.  We may apply~\eqref{eq: recurrence relation 1} and~\eqref{main condition 1} to estimate
\begin{equation}\label{eq: bounds for RR jj 1}
\E[\imag m_n^{(\J,j)}(z)]^{q} \le s_0^{q} C_0^{q} \text{ and } F_{1,q} \le s_0^{q} C_0^{q}.
\end{equation}
In view of these inequalities we may write
\begin{align}\label{eq: inequality fo ve j2 p}
\E |\varepsilon_{2j}^{(\J)}|^{2q} &\le \frac{(C C_0 s_0)^q q^{3q}}{(nv)^{q}} +\frac{C^q q^{4q}}{n^{2q(1-2\alpha)}} A_{1,2q} .
\end{align}
Applying Lemma~\ref{lemma off diagonal entries assumption} we obtain that there exist some positive constants $A_0$ and $A_1$ depending on $C_0, s_0$ in such way that
\begin{equation}\label{eq: inequality fo A 1 2p}
A_{1,2q} \le C_0^q.
\end{equation}
Combining~\eqref{eq: inequality fo ve j2 p} and~\eqref{eq: inequality fo A 1 2p} we obtain
\begin{align*}
\E |\varepsilon_{2j}^{(\J)}|^{2q} &\le \frac{(C C_0 s_0)^{q} q^{4q}}{(nv)^{2q(1-2\alpha)}}.
\end{align*}
\end{proof}

\begin{lemma}\label{lemma varepsilon_3 assumption}
Let $\tilde v > 0$ be an arbitrary number and $A_0, A_1$ and $C_0$ be some positive constants. Assume that the conditions $\CondTwo$ hold and
\begin{equation}\label{main condition 2}
\max_{\J: |\J| \le K + 2} \max_{l \in \T_\J}\E |\RR_{ll}^{(\J)}(v')|^q \le C_0^q
\end{equation}
for all $v' \geq \tilde v, |u| \le u_0$ and  $1 \le q \le A_1(nv')^{\frac{1-2\alpha}{2}}$. Then there exists $s_0 \geq 1$ depending on $\alpha$ such that for all $v \geq \tilde v/s_0, |u| \le u_0$ and $1 \le q \le A_1(nv)^{\frac{1-2\alpha}{2}}$ we have
$$
\max_{\J: |\J| \le K} \max_{j \in \T_\J} \E |\varepsilon_{3j}^{(\J)}|^{2q} \le \frac{ (s_0 C C_0)^{2q} q^{2q} }{n^{2q(1-2\alpha)}}.
$$
\end{lemma}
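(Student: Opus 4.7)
\medskip

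\noindent\textbf{Proof proposal.} The plan is to condition on the $\sigma$-algebra $\mathfrak M^{(\J,j)}$ generated by all entries outside row $j$. With respect to this conditioning the random variables $X_{jk}^2-1$, $k\in \T_{\J,j}$, are i.i.d., centered, and independent of the entries of $\RR^{(\J,j)}$, so that $\varepsilon_{3j}^{(\J)}$ is the $1/n$-scaled sum of conditionally centered independent random variables with (random) weights $\RR_{kk}^{(\J,j)}$. Rosenthal's inequality then yields, after taking total expectation,
\begin{align*}
\E|\varepsilon_{3j}^{(\J)}|^{2q}
&\le \frac{C^q q^q}{n^{2q}}\,\E\Bigl(\sum_{k\in\T_{\J,j}}\E[(X_{jk}^2-1)^2]\,|\RR_{kk}^{(\J,j)}|^2\Bigr)^{q}\\
&\qquad+\frac{C^q q^{2q}}{n^{2q}}\sum_{k\in\T_{\J,j}}\E|X_{jk}^2-1|^{2q}\,\E|\RR_{kk}^{(\J,j)}|^{2q}.
\end{align*}

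For the tail term I would use the truncation $|X_{jk}|\le D n^{\alpha}$ together with the fourth-moment bound: $\E|X_{jk}^2-1|^{2q}\le (Dn^{\alpha})^{4q-4}\E X_{jk}^4\le C^q n^{\alpha(4q-4)}$. For the variance term I would apply Lemma~\ref{appendix lemma resolvent inequalities 1} in the Hilbert--Schmidt form $\sum_{k}|\RR_{kk}^{(\J,j)}|^{2}\le v^{-1}\imag\Tr\RR^{(\J,j)}=nv^{-1}\imag m_n^{(\J,j)}$, so the sum is controlled by $n^q v^{-q}\E[\imag m_n^{(\J,j)}]^{q}$. At this point the hypothesis~\eqref{main condition 2} only bounds moments at levels $v'\ge \tilde v$; to apply it I would choose $s_0=s_0(\alpha)\ge 1$ and invoke the scaling Lemma~\ref{appendix lemma resolvent relations on different v} to pass from $v$ to $v':=s_0 v\ge \tilde v$, paying a factor $s_0^q$ per resolvent moment and simultaneously widening the admissible range of $q$ (this is exactly the same mechanism used in Lemmas~\ref{lemma off diagonal entries assumption} and~\ref{lemma varepsilon_2 assumption}). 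After this the hypothesis yields $\E[\imag m_n^{(\J,j)}]^{q}\le (s_0 C_0)^{q}$ and $\E|\RR_{kk}^{(\J,j)}|^{2q}\le (s_0 C_0)^{2q}$.

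Substituting produces two contributions, a diffusive one of order $(CC_0 s_0)^q q^q (nv)^{-q}$ and a heavy-tail one of order $(CC_0 s_0)^{2q} q^{2q} n^{-2q+1+\alpha(4q-4)}=(CC_0 s_0)^{2q} q^{2q} n^{-(2q(1-2\alpha)-1+4\alpha)}$. The exponent $-1+4\alpha\ge 0$ is nonnegative since $\alpha\ge 1/4$, so the tail contribution is already bounded by the claimed $(s_0 CC_0)^{2q}q^{2q}n^{-2q(1-2\alpha)}$.

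The main obstacle, exactly as in the proof of Lemma~\ref{lemma varepsilon_2 assumption}, is absorbing the diffusive term $(CC_0 s_0)^q q^q (nv)^{-q}$ into the tail term. This is where the range restriction $q\le A_1 (nv)^{(1-2\alpha)/2}$ is used: it implies $(nv)^{q}\ge A_1^{-2q/(1-2\alpha)} q^{2q/(1-2\alpha)}$, which (together with $\alpha\ge 1/4$ forcing $n^{1-4\alpha}\le 1$) gives $q^q(nv)^{-q}\le (C/A_1)^q q^{2q} n^{-2q(1-2\alpha)}$ after a short bookkeeping step. Choosing $A_0,A_1$ appropriately large in terms of $C_0$ and $s_0$ then finishes the proof and delivers the single clean bound stated in the lemma.
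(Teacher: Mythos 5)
Your decomposition via Rosenthal's inequality and the treatment of the heavy-tail term using the truncation $|X_{jk}|\le D n^\alpha$ agree with the paper. The gap is in how you handle the diffusive (variance) term. You propose to bound $\sum_{k}|\RR_{kk}^{(\J,j)}|^2$ by the crude Hilbert--Schmidt estimate $n v^{-1}\imag m_n^{(\J,j)}$, which turns the variance contribution into $C^q q^q (nv)^{-q}(s_0C_0)^q$. You then assert that the range restriction $q\le A_1(nv)^{(1-2\alpha)/2}$ lets you absorb this into $(s_0CC_0)^{2q}q^{2q}n^{-2q(1-2\alpha)}$, but the bookkeeping does not close. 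Take $\alpha=1/4$, $q=1$, $v=A_0/n$: your diffusive bound is of order $1$, whereas the target bound is of order $n^{-1}$. The inequality $q^q(nv)^{-q}\le (C/A_1)^q q^{2q}n^{-2q(1-2\alpha)}$ is simply false there; the factor $v^{-q}$ you introduced is not recoverable from the range restriction because the latter controls $q$, not $v$, near the optimal scale $v\asymp n^{-1}$.

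The paper avoids this entirely. It does not invoke any resolvent inequality for the variance term; instead it uses Jensen's inequality
$$
\E\Bigl(\frac{1}{n}\sum_{k\in\T_{\J}}|\RR_{kk}^{(\J,j)}|^2\Bigr)^{q}\le \frac{1}{n}\sum_{k\in\T_{\J}}\E|\RR_{kk}^{(\J,j)}|^{2q}\le (s_0C_0)^{2q},
$$
which exploits the hypothesis \eqref{main condition 2} directly and produces a variance contribution of order $q^{q}n^{-q}(s_0C_0)^{2q}$ with no factor of $v^{-q}$. Since $\alpha\ge 1/4$ forces $n^{-q}\le n^{-2q(1-2\alpha)}$ and trivially $q^q\le q^{2q}$, this sits under the claimed bound without any juggling of the $q$-range. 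You should replace your Hilbert--Schmidt step by this Jensen step; the rest of your argument (scaling via Lemma~\ref{appendix lemma resolvent relations on different v} with $s_0=s_0(\alpha)$ and the tail estimate) is sound and matches the paper.
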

\begin{proof}
Let us take an arbitrary $v \geq \tilde v/s_0$, $\J \subset \T$ such that $|\J| \le K$ and $j \in \T_\J$. Applying Lemma~\ref{appendix lemma varepsilon_3} we get
$$
\E |\varepsilon_{3j}^{(\J)}|^{2q} \le C^q\left (\frac{q^{q}}{n^{2q}}\E\left(\sum_{k \in \T_{\J}} |\RR_{kk}^{(\J,j)}|^2 \right)^{q}
+  \frac{q^{2q}}{n^{2q(1-2\alpha)}}  \frac{1}{n} \sum_{k \in \T_{\J}} \E|\RR_{kk}^{(\J,j)}|^{2q}       \right ).
$$
From Lemma~\eqref{appendix lemma resolvent relations on different v} we get for $s_0 \geq 1$
$$
|\RR_{kk}^{(\J,j)}(v)|^{2q} \le s_0^{2q}|\RR_{kk}^{(\J,j)}(s_0v)|^{2q}.
$$
As in the previous lemmas we may take $s_0 := 2^{\frac{4}{1-2\alpha}}$ and set $v':=s_0 v \geq \tilde v$ (note that in this lemma it actually suffices to take $s_0 := 2^{\frac{2}{1-2\alpha}}$, but for simplicity we shall use the same value for $s_0$) and get that $2q \le A_1 (nv')^{\frac{1-2\alpha}{2}}$. We may apply~\eqref{main condition 2} to obtain the bound
\begin{equation*}
\E|\RR_{kk}^{(\J,j)}(v)|^{2q} \le s_0^{2q} C_0^{2q}.
\end{equation*}
It is easy to see that
$$
\E\left(\frac{1}{n}\sum_{k \in \T_{\J}} |\RR_{kk}^{(\J,j)}|^2 \right)^{q} \le \frac{1}{n} \sum_{k \in \T_{\J}} |\RR_{kk}^{(\J,j)}|^{2q}.
$$
The last two inequalities together imply that
$$
\E |\varepsilon_{3j}^{(\J)}|^{2q} \le \frac{ (s_0 C C_0)^{2q} q^{2q} }{n^{2q(1-2\alpha)}}.
$$
\end{proof}

\begin{lemma}\label{lemma step for resolvent}
Let $\tilde v > 0$ be an arbitrary number and $C_0$ be some large positive constant. Suppose that the conditions $\CondTwo$ hold. There exist positive constant $s_0$ depending on $\alpha$ and positive constants $A_0, A_1$ depending on $C_0, s_0$ such that assuming
\begin{equation}\label{main condition 3}
\max_{\J: |\J| \le K + 2} \max_{l,k \in \T_\J}\E |\RR_{lk}^{(\J)}(v')|^q \le C_0^q
\end{equation}
for all $v' \geq \tilde v, |u| \le u_0$ and $1 \le q \le A_1(nv')^{\frac{1-2\alpha}{2}}$ we have
$$
\max_{\J: |\J| \le K} \max_{l \in \T_\J} \E|\RR_{ll}^{(\J)}(v)|^{q} \le C_0^{q}
$$
for all $v \geq \tilde v/s_0, |u| \le u_0$ and $1 \le q \le A_1(nv)^{\frac{1-2\alpha}{2}}$.
\end{lemma}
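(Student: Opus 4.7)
The plan is to combine the recursive relation of Lemma~\ref{lemma: recurence relation for R_jj} with the moment bounds on $T_n^{(\J)}$ and $\varepsilon_j^{(\J)}$ that follow from the hypothesis, and then close the estimate by choosing $C_0$ large and $A_1$ small. Concretely, fix $\J$ with $|\J|\le K$ and $j\in\T_\J$, set $s_0:=2^{4/(1-2\alpha)}$, raise the inequality of Lemma~\ref{lemma: recurence relation for R_jj} to the $q$-th power using $(a+b+c)^q\le 3^{q-1}(a^q+b^q+c^q)$, take expectations, and apply Cauchy--Schwarz to separate the resolvent factor:
\begin{equation*}
\E|\RR_{jj}^{(\J)}|^q\le (3C_1)^q+(3C_1)^q\bigl(\E^{1/2}|T_n^{(\J)}|^q+\E^{1/2}|\varepsilon_j^{(\J)}|^{2q}\bigr)\E^{1/2}|\RR_{jj}^{(\J)}|^{2q}.
\end{equation*}
The crucial point is that the factor $\E^{1/2}|\RR_{jj}^{(\J)}|^{2q}$ need not be controlled at the scale $v$; instead, Lemma~\ref{appendix lemma resolvent relations on different v} gives $\E|\RR_{jj}^{(\J)}(v)|^{2q}\le s_0^{2q}\E|\RR_{jj}^{(\J)}(s_0v)|^{2q}$, and the hypothesis~\eqref{main condition 3}, applied at the scale $v'=s_0v\ge\tilde v$ (with $|\J|\le K\le K+2$ and $2q\le A_1(nv')^{(1-2\alpha)/2}$ after halving $A_1$), bounds this by $(s_0C_0)^{2q}$.

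Next I would bound the two small prefactors. For $\E|\varepsilon_j^{(\J)}|^{2q}$, decompose $\varepsilon_j^{(\J)}=\sum_{\nu=1}^4\varepsilon_{\nu j}^{(\J)}$: the terms $\varepsilon_{1j}^{(\J)}=X_{jj}/\sqrt n$ and $\varepsilon_{4j}^{(\J)}$ are controlled directly by $|X_{jj}|\le Dn^\alpha$ and by~\eqref{eq: distance between traces} together with Lemma~\ref{appendix lemma resolvent inequalities 1} (giving $|\varepsilon_{4j}^{(\J)}|\le (nv)^{-1}$), while $\E|\varepsilon_{2j}^{(\J)}|^{2q}$ and $\E|\varepsilon_{3j}^{(\J)}|^{2q}$ are supplied by Lemmas~\ref{lemma varepsilon_2 assumption} and~\ref{lemma varepsilon_3 assumption}, whose hypotheses are exactly~\eqref{main condition 3}. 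The combined estimate is of order $(CC_0s_0)^{2q}q^{4q}(nv)^{-2q(1-2\alpha)}$, which is $\le 1$ throughout our range $1\le q\le A_1(nv)^{(1-2\alpha)/2}$ once $A_1$ is chosen small depending on $C_0,s_0$.

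For $\E|T_n^{(\J)}|^q$ I would invoke the proof of Theorem~\ref{th: general bound}, which applies verbatim to $T_n^{(\J)}$ as long as all auxiliary bounds on $\RR^{(\J')}$ with $|\J'|\le|\J|+2\le K+2$ are available---and this is precisely the content of the hypothesis~\eqref{main condition 3} (transferred from scale $s_0v$ to $v$ via Lemma~\ref{appendix lemma resolvent relations on different v}, with $\mathcal A(\kappa q)$ estimated by $s_0C_0$). The resulting bound $\E|T_n^{(\J)}|^q\le C^q\mathcal E_q^{(\J)}$ yields $\E^{1/2}|T_n^{(\J)}|^q\le C^q(s_0C_0)^{q/2}q^q(nv)^{-q/2}$, which is also small when $q\le A_1(nv)^{(1-2\alpha)/2}$.

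Inserting both small factors into the display above gives
\begin{equation*}
\E|\RR_{jj}^{(\J)}|^q\le (3C_1)^q+\tfrac{1}{2}C_0^q
\end{equation*}
once $A_1$ is taken sufficiently small (depending on $C_0$ and $s_0$); the claim follows by choosing $C_0\ge 6C_1$. The main technical obstacle is the bookkeeping: one must check that every application of Theorem~\ref{th: general bound} and of Lemmas~\ref{lemma off diagonal entries assumption}--\ref{lemma varepsilon_3 assumption} at level $|\J|\le K$ only ever calls on information at level $|\J'|\le K+2$, and that the successive shrinkings of the range of $q$ (first by $2q$ in the resolvent bound, then by $\kappa q$ in the bound for $T_n^{(\J)}$) can all be absorbed in a single choice of $A_1$. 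Once this accounting is done, the remaining computations are the routine estimates already established in the earlier lemmas.
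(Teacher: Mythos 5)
Your overall skeleton matches the paper's: isolate the recursion of Lemma~\ref{lemma: recurence relation for R_jj}, take $q$-th moments, apply Cauchy--Schwarz, and transfer the $2q$-moment of $\RR_{jj}^{(\J)}$ to scale $s_0 v$ via Lemma~\ref{appendix lemma resolvent relations on different v}. The bounds for $\varepsilon_j^{(\J)}$ via Lemmas~\ref{lemma varepsilon_2 assumption} and~\ref{lemma varepsilon_3 assumption} are also exactly as in the paper. However, the treatment of $T_n^{(\J)}$ diverges from the paper's, and this is where the proposal breaks.

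You propose to bound $\E|T_n^{(\J)}|^q$ by running the argument of Theorem~\ref{th: general bound} at level $\J$. This argument inherently requires control of $\mathcal A(\kappa q)$, i.e.\ of $\kappa q$-th moments of $\imag\RR_{jj}^{(\J')}$, where $\kappa=\frac{16}{1-2\alpha}$ (so $\kappa\geq 32$). But with your choice $s_0 = 2^{4/(1-2\alpha)}$, the transfer from scale $s_0 v$ to scale $v$ only enlarges the admissible moment range by the factor $s_0^{(1-2\alpha)/2}=4$: from hypothesis~\eqref{main condition 3} one controls $\E|\RR_{jj}^{(\J')}(v)|^{r}$ only for $r\le 4A_1(nv)^{(1-2\alpha)/2}$. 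The constraint $\kappa q\le 4A_1(nv)^{(1-2\alpha)/2}$ forces $q\le (4A_1/\kappa)(nv)^{(1-2\alpha)/2}$, so the conclusion would hold with a strictly smaller constant $A_1'=4A_1/\kappa<A_1$ than the one appearing in the hypothesis. This cannot be ``absorbed in a single choice of $A_1$'': Lemma~\ref{lemma step for resolvent} must output the \emph{same} $A_1$ as it takes in, since the proof of Lemma~\ref{main lemma} iterates it $L\sim\log n$ times; an $A_1$ that shrinks by $\kappa/4>1$ per step would be driven to zero. (Enlarging $s_0$ to $\ge\kappa^{2/(1-2\alpha)}$ could in principle repair this, but then the bookkeeping in Lemmas~\ref{lemma off diagonal entries assumption}--\ref{lemma varepsilon_3 assumption} and the choice of $L$ would all have to be revisited, and it is also worth noting that feeding the machinery of Theorem~\ref{th: general bound} back into the proof of the lemma on which that machinery's validity rests is structurally delicate.)

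The paper's proof avoids this entirely: it does \emph{not} invoke Theorem~\ref{th: general bound} here, but uses the elementary bound
\begin{equation*}
\E|T_n^{(\J)}|^{q} \le \Bigl(\tfrac{1}{n}\textstyle\sum_{j\in\T_\J}\E|\varepsilon_j^{(\J)}|^{2q}\Bigr)^{1/2}\Bigl(\tfrac{1}{n}\textstyle\sum_{j\in\T_\J}\E|\RR_{jj}^{(\J)}(v)|^{2q}\Bigr)^{1/2},
\end{equation*}
which requires only $2q$-th moments---well within the factor-$4$ enlargement provided by $s_0=2^{4/(1-2\alpha)}$. Together with the $\varepsilon_j^{(\J)}$ moment bounds this already forces $\E^{1/2}|T_n^{(\J)}|^q$ to be small for $q\le A_1(nv)^{(1-2\alpha)/2}$, closing the recursion with the same $A_1$. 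You should replace the appeal to Theorem~\ref{th: general bound} with this Cauchy--Schwarz step; the rest of your argument then goes through.
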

\begin{proof}
Let us take an arbitrary $v \geq \tilde v/s_0$, $\J \subset \T$ such that $|\J| \le K$ and $j \in \T_\J$. From Lemma~\ref{lemma: recurence relation for R_jj}, inequality~\eqref{inequality for R_jj}, it follows that
$$
\E|\RR_{jj}^{(\J)}(v)|^{q} \le C_1^{q}(1 + \E^{\frac{1}{2}}|T_n^{(\J)}|^{q}\E^{\frac{1}{2}}|\RR_{jj}^{(\J)}(v)|^{2q} + \E^{\frac{1}{2}}|\varepsilon_j^{(\J)}|^{2q} \E^{\frac{1}{2}}|\RR_{jj}^{(\J)}(v)|^{2q}).
$$
Let us choose again $s_0 := 2^{\frac{4}{1-2\alpha}}$. As shown in proof of the previous lemmas choosing $v':=s_0 v \geq v_1$ ensures $2q \le A_1 (nv')^{\frac{1-2\alpha}{2}}$. We now apply Lemma~\eqref{appendix lemma resolvent relations on different v} and the assumption  ~\eqref{main condition 3} to estimate
\begin{equation}\label{eq: bound for RR_jj^2p}
\E|\RR_{jj}^{(\J)}(v)|^{2q} \le s_0^{2q}\E|\RR_{jj}^{(\J)}(s_0v)|^{2q} \le (C_0 s_0)^{2q}.
\end{equation}
The Cauchy-Schwartz inequality  implies that
$$
\E|T_n^{(\J)}|^{q} \le \left(\frac{1}{n} \sum_{j \in \T_{\J}} \E |\varepsilon_j^{(\J)}|^{2q} \right)^{1/2} \left(\frac{1}{n} \sum_{j \in \T_{\J}} \E |\RR_{jj}^{(\J)}(v)|^{2q} \right)^{1/2}
$$
From~\eqref{eq: bound for RR_jj^2p} it follows that
$$
\left(\frac{1}{n} \sum_{j \in \T_{\J}} \E |\RR_{jj}^{(\J)}(v)|^{2q} \right)^{1/2} \le s_0^q C_0^q.
$$
By an obvious inequality we get
$$
\E |\varepsilon_j^{(\J)}|^{2q} \le 4^{2q} (\E |\varepsilon_{1j}^{(\J)}|^{2q} + \E |\varepsilon_{2j}^{(\J)}|^{2q} + \E |\varepsilon_{3j}^{(\J)}|^{2q} + \E |\varepsilon_{4j}^{(\J)}|^{2q}).
$$
Now we may use Lemmas~\ref{lemma varepsilon_2 assumption},~\ref{lemma varepsilon_3 assumption},~\ref{appendix lemma varepsilon_1} and~\ref{appendix lemma varepsilon_4} and obtain the following bound
\begin{align*}
\E |\varepsilon_j^{(\J)}|^{2q} &\le 4^{2q} \left[ \frac{\mu_4 D^{2q-4}}{n^{q(1-2\alpha)+4\alpha}}  +  \frac{(C C_0 s_0)^{q} q^{4q}}{(nv)^{2q(1-2\alpha)}} +  \frac{ (s_0 C C_0)^{2q} q^{2q} }{n^{2q(1-2\alpha)}} + \frac{1}{(nv)^{2q}}\right].
\end{align*}
It is easy to see that  since the estimates for $T_n^{(\J)}$ and $\RR_{jj}^{(\J)}$ depend on $\E |\varepsilon_j^{(\J)}|^{2q}$ we may choose the constants $A_0$ and $A_1$ (correcting the previous choice if needed) depending on $C_0, s_0$ in such way that
\begin{align*}
\E|\RR_{jj}^{(\J)}(v)|^{q} \le C_0^q
\end{align*}
for all $1 \le q \le A_1 (nv)^{\frac{1-2\alpha}{2}}$ and $z \in \mathbb D$.
\end{proof}

\begin{proof}[Proof of Lemma~\ref{main lemma}]
We first prove~\eqref{eq: main lemma first statement}. Let us take $v = 1$ and some large constant $C_0 \gg \max(1, C_1)$. We also take $s_0, A_0$ and $A_1$ as chosen in the previous  Lemma~\ref{lemma step for resolvent}. Set $L = [-\log_{s_0} v_0]+1$. Since $\|\RR^{(\J)}(v)\| \le v^{-1} = 1$ we may write
$$
\max_{\J: |\J| \le 2L} \max_{l,k  \in \T_\J}|\RR_{lk}^{(\J)}(v)| \le C_0
$$
and
$$
\max_{\J: |\J| \le  2L} \max_{l,k \in \T_\J} \E|\RR_{lk}^{(\J)}(v)|^q \le C_0^q
$$
for $1 \le q \le A_1 n^{\frac{1-2\alpha}{2}}$. Taking $K: = 2(L-1)$ and applying Lemma~\ref{lemma step for resolvent} we get
$$
\max_{\J: |\J| \le 2(L-1)} \max_{l,k \in \T_\J} \E|\RR_{lk}^{(\J)}(v)|^q \le C_0^q
$$
for $1 \le q \le A_1 n^\frac{1-2\alpha}{2} s_0^{-\frac{1-2\alpha}{2}}$, $v \geq 1/s_0$. We may repeat this procedure $L$ times and finally obtain
$$
\max_{l,k \in \T}\E|\RR_{lk}(v)|^q \le C_0^q
$$
for $1 \le q \le A_1 n^\frac{1-2\alpha}{2} s_0^{-L\frac{1-2\alpha}{2}} \le A_1(n v_0)^{\frac{1-2\alpha}{2}}$ and $v \geq 1/s_0^{L} \geq v_0$.

To prove~\eqref{eq: main lemma second statement} it is enough to repeat all the previous steps and apply the inequality~\eqref{appendix eq resolvent relations on different v 1} from Lemma~\ref{appendix lemma imag resolvent relations on different v}.  We omit the details.
\end{proof}

\section{Imaginary part of diagonal entries of the resolvent}\label{sec: imagionary part}
In this section we estimate the moments of the imaginary part of diagonal entries of the  resolvent.  Let us introduce the following quantity
\begin{equation}\label{eq: Psi definition}
\Psi(z): = \imag s(z) + \frac{p^2}{nv}.
\end{equation}
To simplify notations we will often write $\Psi(v)$ and $\Psi$ instead of $\Psi(z)$. The main result of this section is the following lemma.
\begin{lemma}\label{lemma: imag part of R_jj}
Assuming conditions $\CondTwo$ there exist positive constants $H_0$ depending on $u_0, V$ and positive constants $A_0, A_1$ depending on $H_0, \alpha$ such that for all $1 \le p \le A_1(nv)^{\frac{1-2\alpha}{2}}$ and $z \in \mathbb D$  we get
$$
\max_{j \in \T }\E \imag^p \RR_{jj}(z) \le H_0^p \Psi^p(z).
$$
\end{lemma}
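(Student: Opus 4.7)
The aim is to derive a self-consistent upper bound on $\mathcal{A}(q)$ (defined in~\eqref{definition of A}) and close it by a bootstrap starting from the coarse estimate in Lemma~\ref{main lemma}. I would begin with the Schur-complement identity $\RR_{jj} = -(z + m_n^{(j)} - \hat\varepsilon_j)^{-1}$, where $\hat\varepsilon_j := \varepsilon_{1j}+\varepsilon_{2j}+\varepsilon_{3j}$; taking imaginary parts gives
\[
\imag \RR_{jj} \;=\; \bigl(v + \imag m_n^{(j)} - \imag\hat\varepsilon_j\bigr)\,|\RR_{jj}|^2.
\]
Writing $\imag m_n^{(j)} = \imag s + \imag \Lambda_n + (\imag m_n^{(j)} - \imag m_n)$, combining $|m_n - m_n^{(j)}| \le 1/(nv)$ (a consequence of~\eqref{eq: difference between Lambda and Lambda j} together with $\imag\RR_{jj}\le|\RR_{jj}|$), and noting $v \le C\,\imag s(z)$ on $\mathbb D$ (a direct calculation from~\eqref{eq: Stieltjes transform of Wigner's semicircle law}), one deduces the pointwise domination
\[
\imag \RR_{jj} \;\le\; C\bigl(\imag s + |\imag \Lambda_n| + (nv)^{-1} + |\hat\varepsilon_j|\bigr)\,|\RR_{jj}|^2.
\]

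Taking the $p$-th moment and applying Cauchy--Schwarz to each summand, together with $\E|\RR_{jj}|^{4p}\le C_0^{4p}$ from Lemma~\ref{main lemma}, reduces the problem to estimating $\E|\imag\Lambda_n|^{2p}$ and $\E|\hat\varepsilon_j|^{2p}$. For the first, $|\imag \Lambda_n|\le C\sqrt{|T_n|}$ from Lemma~\ref{appendix inequality for lambda} combined with Theorem~\ref{th: general bound} yields $\E|\imag\Lambda_n|^{2p}\le C^p\mathcal{E}_p$. For the second I would use the Rosenthal-type moment estimates already established for $\varepsilon_{1j},\varepsilon_{2j},\varepsilon_{3j}$ in Section~\ref{sec: diagonal entries} (Lemmas~\ref{lemma varepsilon_2 assumption} and~\ref{lemma varepsilon_3 assumption}, and their companion bounds for $\varepsilon_{1j}$), each of which expresses the relevant moment in terms of $\mathcal{A}(q)$ or $F_{1,q}$ at various indices together with explicit negative powers of $nv$ or $n^{1-2\alpha}$. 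After absorbing the $\mathcal{A}(p)^{p/2}$ contribution coming from $\E^{1/2}|\varepsilon_{2j}|^{2p}$ by the elementary resolution $x\le A+B\sqrt{x}\Rightarrow x\le 2A+2B^2$, I arrive at a self-consistent inequality of schematic form
\[
\mathcal{A}(p)^p \;\le\; H_1^p\Psi(z)^p \;+\; H_2^p\, p^{p/2}\mathcal{A}^{p/2}(\kappa p)\,(nv)^{-p/2} \;+\; \text{(lower-order terms)},
\]
with constants $H_1, H_2$ depending only on $C_0,\kappa,u_0,V$.

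Substituting the target bound $\mathcal{A}(\kappa p)\le H_0\Psi_{\kappa p}$ and using $\Psi_{\kappa p}\le \kappa^2\Psi_p$ turns the main correction into $H_2^p\kappa^p H_0^{p/2}p^{p/2}\Psi_p^{p/2}(nv)^{-p/2}$, which is bounded by $H_0^p\Psi_p^p$ as soon as $H_0\ge H_2^2\kappa^2$ (since $p/(nv)\le \Psi_p$ for $p\ge 1$). The principal obstacle is that this self-reference occurs at the \emph{larger} index $\kappa p$, so plain induction in $p$ is unavailable. I would close the loop by bootstrap: starting from $\mathcal{A}(q)\le C_0$ (Lemma~\ref{main lemma}), iterate the self-consistent inequality. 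Because $\mathcal{A}(\kappa p)$ enters under a square root, each iteration contracts the ratio $\mathcal{A}(q)/\Psi_q$ via a square-root, so $O(\log\log n)$ iterations converge to the fixed point $\mathcal{A}(p)\le H_0\Psi(z)$. Each iteration pushes the relevant index from $p$ to $\kappa p$, so the total index traversed remains within the admissible range $q\le A_1(nv)^{(1-2\alpha)/2}$ provided $A_0,A_1$ are tuned so that a polylog factor $(\log n)^{O(1)}$ is absorbed in $(nv)^{(1-2\alpha)/2}$. Careful bookkeeping of the constants is needed to ensure that $H_0$ is independent of $n$; this is the main technical obstacle.
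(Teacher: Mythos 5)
Your approach differs substantially from the paper's. The paper first establishes (Lemma~\ref{lemma recurence relation for imag R_jj}) a multiplicative recurrence that isolates $\imag s(z)$ with only a small correction, namely
$\imag \RR_{jj}^{(\J)} \le C_1\bigl[\imag s\,(1+(|\varepsilon_j^{(\J)}|+|T_n^{(\J)}|^{1/2})|\RR_{jj}^{(\J)}|) + |\imag\varepsilon_j^{(\J)}+\imag\Lambda_n^{(\J)}||\RR_{jj}^{(\J)}| + (|\varepsilon_j^{(\J)}|+|T_n^{(\J)}|^{1/2})\imag\RR_{jj}^{(\J)}\bigr]$,
and then runs a descent in $v$ (Lemma~\ref{lem: imag RR jj recur}) analogous to Lemma~\ref{main lemma}, transferring the estimate from $s_0 v$ to $v$ via Lemma~\ref{appendix lemma imag resolvent relations on different v}. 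You instead bound $\imag\RR_{jj}$ by $(\cdots)|\RR_{jj}|^2$ from the Schur complement, apply Cauchy--Schwarz against Lemma~\ref{main lemma}, and bootstrap at fixed $v$.

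The first genuine gap is in the treatment of the error term $\hat\varepsilon_j$. Notice that the Schur identity produces $\imag\hat\varepsilon_j$, not $|\hat\varepsilon_j|$; this distinction is not cosmetic. You propose to bound its moments via Lemmas~\ref{lemma varepsilon_2 assumption} and~\ref{lemma varepsilon_3 assumption}, which give $\E|\varepsilon_{2j}|^{2q}\le C^q q^{4q}(nv)^{-2q(1-2\alpha)}$ and $\E|\varepsilon_{3j}|^{2q}\le C^{2q}q^{2q}n^{-2q(1-2\alpha)}$. Under the constraint $q\le A_1(nv)^{(1-2\alpha)/2}$ these are uniformly bounded, but they are \emph{not} bounded by a constant times $\Psi^q(z)$ when $|u|>2$, where $\imag s(z)\asymp v/\sqrt{\gamma+v}$ is small and $\Psi(z)\approx q^2/(nv)$: the ratio of, say, the $\varepsilon_{2j}$ contribution to $\Psi^q$ is of order $(nv)^{2\alpha q}$, which diverges. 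The Lemma must hold on the full strip $\mathbb D$ (including $|u|>2$) because it feeds into Theorem~\ref{th: stronger bound for imag part}. The paper's proof avoids this by keeping $\imag\varepsilon_j$ and invoking Lemmas~\ref{appendix lemma imag epsilon 2} and~\ref{appendix lemma imag epsilon 3}: these bound $\E|\imag\varepsilon_{\nu j}|^{2q}$ in terms of $F_{1,2q}$ and $\E\imag^q m_n^{(\J,j)}$ — self-referential quantities that inherit the $\Psi$-smallness through the recursion — with prefactors $q^{2q}(nv)^{-q}$ and $q^{4q}n^{-2q(1-2\alpha)}$ that are genuinely dominated by $\Psi^{2q}$ under the same moment constraint. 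You need to replace the cited lemmas with the imaginary-part versions; with the $|\hat\varepsilon_j|$ bound your scheme does not close.

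The second issue is the bootstrap at fixed $v$. As you observe, the self-consistent inequality refers to $\mathcal{A}(\kappa p)$ at the \emph{larger} index $\kappa p$, so each iteration escalates the index by a factor $\kappa$. With $O(\log\log n)$ iterations needed to contract the crude bound $\mathcal{A}\le C_0$ to $H_0\Psi$, the final index is $\kappa^{O(\log\log n)}p\asymp(\log n)^{O(1)}p$. At fixed $v$ the admissible range $q\le A_1(nv)^{(1-2\alpha)/2}$ does not grow, so you lose a polylogarithmic factor in the allowed range of $p$ — and $A_1$ would effectively depend on $n$, contrary to the lemma's statement. The paper's descent sidesteps this: the estimate at the \emph{higher} scale $s_0 v$ is what feeds the index-$\kappa q$ requirement, and since the admissible range at $s_0 v$ is larger by $s_0^{(1-2\alpha)/2}$, a sufficiently large (but $n$-independent) choice of $s_0$ absorbs the index escalation at each descent step. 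This mechanism has no counterpart in a fixed-$v$ bootstrap; if you retain that scheme you should explicitly state the polylog loss, which weakens the conclusion relative to what is claimed.
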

We remark here that the values of $A_0$ and $A_1$ in this lemma are different from the values of respective quantities in Lemma~\ref{main lemma}, but for simplicity we shall use the same notations. Applying both Lemmas we shall restrict the upper limit of the moment $q$ to the minimum of the two $A_1$'s and  the lower end of the range of $v$ to the maximum of the two $A_0$'s via $v \ge A_0n^{-1}$. Throughout this section we shall assume that for all $1\le p \le A_1(nv)^{\frac{1-2\alpha}{2}}$ and $z \in \mathbb D$ we have
\begin{equation}\label{eq: main assumption about diagonal entries of resolvent for imag R_jj}
\E |\RR_{jj}(u+iv)|^p \le C_0^p,
\end{equation}
where the value of $C_0$ is defined in Lemma~\ref{main lemma}.

The following lemma is the analogue of Lemma~\ref{lemma: recurence relation for R_jj} and provides a recurrence relation for $\imag \RR_{jj}$.
\begin{lemma}\label{lemma recurence relation for imag R_jj}
For any set $\J$ and $j \in \T_\J$ there exists a positive constant $C_1$ depending on $u_0, V$ such that for all $z = u + iv$ with $V \geq v > 0$ and $|u| \le u_0$ we have
\begin{align*}
\imag \RR_{jj}^{(\J)}(z) &\le C_1 \left[\imag s(z) (1 + (|\varepsilon_j^{(\J)}| + |T_n^{(\J)}|^{\frac{1}{2}})|\RR_{jj}^{(\J)}(z)|)   +  |\imag \varepsilon_j^{(\J)} + \imag \Lambda_n^{(\J)}| |\RR_{jj}^{(\J)}(z)|\right.\\
&\left.\qquad\qquad + (|\varepsilon_j^{(\J)}| + |T_n^{(\J)}|^{\frac{1}{2}})\imag \RR_{jj}^{(\J)}(z) \right ].
\end{align*}
\end{lemma}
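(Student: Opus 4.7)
The plan is to derive this bound by taking imaginary parts of the same identity used in the proof of Lemma~\ref{lemma: recurence relation for R_jj}. First I would recall (equation~\eqref{eq: representation for RR_jj 2} in the excerpt) that
$$
\RR_{jj}^{(\J)} = s(z) + s(z)\bigl(\Lambda_n^{(\J)} - \varepsilon_j^{(\J)}\bigr)\RR_{jj}^{(\J)},
$$
which is the cleanest starting point since it already isolates the ``deterministic'' term $s(z)$. Taking imaginary parts of both sides and applying the elementary product rule $|\imag(XY)| \le |\imag X|\,|Y| + |X|\,|\imag Y|$ twice to the triple product on the right yields
$$
\imag \RR_{jj}^{(\J)} \le \imag s(z) + |\imag s(z)|\bigl(|\Lambda_n^{(\J)}| + |\varepsilon_j^{(\J)}|\bigr)|\RR_{jj}^{(\J)}| + |s(z)|\,|\imag(\Lambda_n^{(\J)} - \varepsilon_j^{(\J)})|\,|\RR_{jj}^{(\J)}| + |s(z)|\bigl(|\Lambda_n^{(\J)}| + |\varepsilon_j^{(\J)}|\bigr)\imag \RR_{jj}^{(\J)}.
$$

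Next I would use that $|s(z)| \le 1$ in the relevant region $V \ge v > 0$, $|u| \le u_0$ (as already noted in the proof of Lemma~\ref{lemma: recurence relation for R_jj}), and that $\imag s(z) \ge 0$, so $|\imag s(z)| = \imag s(z)$. The term $|\imag(\Lambda_n^{(\J)} - \varepsilon_j^{(\J)})|$ is, by the triangle inequality, no larger than $|\imag \Lambda_n^{(\J)}| + |\imag \varepsilon_j^{(\J)}|$, which is exactly the content of the middle term $|\imag \varepsilon_j^{(\J)} + \imag \Lambda_n^{(\J)}|$ appearing in the statement (absorbing this triangle bound into the constant $C_1$).

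To put the bound into the stated form, I would replace $|\Lambda_n^{(\J)}|$ by $|T_n^{(\J)}|^{1/2}$ by invoking Lemma~\ref{appendix inequality for lambda}, which gives $|\Lambda_n^{(\J)}| \le C |T_n^{(\J)}|^{1/2}$ uniformly in the considered region (this is the same step that converted $|\Lambda_n^{(\J)}|$ into $|T_n^{(\J)}|^{1/2}$ in the proof of Lemma~\ref{lemma: recurence relation for R_jj}). Collecting the resulting terms and consolidating constants into a single $C_1 = C_1(u_0, V)$ then matches the four pieces in the right-hand side of the statement: an $\imag s(z)(1 + \cdots)$ term, the $|\imag \varepsilon_j^{(\J)} + \imag \Lambda_n^{(\J)}|\,|\RR_{jj}^{(\J)}|$ term, and a factor $(|\varepsilon_j^{(\J)}| + |T_n^{(\J)}|^{1/2})\imag \RR_{jj}^{(\J)}$ on the right.

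There is no serious obstacle here since the proof is essentially the ``imaginary part version'' of Lemma~\ref{lemma: recurence relation for R_jj}; the only subtle point is the bookkeeping that turns the single complex identity into a real inequality while keeping $\imag \RR_{jj}^{(\J)}$ isolated on the left. The key ingredients are the identity from~\eqref{eq: representation for RR_jj 2}, the product rule for imaginary parts, the bound $|s(z)| \le 1$, and Lemma~\ref{appendix inequality for lambda}; nothing new about the resolvent entries themselves is needed.
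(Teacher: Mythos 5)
Your starting point and overall plan are right: take imaginary parts of the identity~\eqref{eq: representation for RR_jj 2} and iterate the product rule $|\imag(XY)| \le |\imag X||Y| + |X||\imag Y|$, mirroring the proof of Lemma~\ref{lemma: recurence relation for R_jj}. But there is one genuine gap, plus a sign confusion.

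The gap is in the step replacing $|\Lambda_n^{(\J)}|$ by $|T_n^{(\J)}|^{1/2}$. You assert that Lemma~\ref{appendix inequality for lambda} gives $|\Lambda_n^{(\J)}| \le C|T_n^{(\J)}|^{1/2}$ ``uniformly in the considered region.'' That is false: inequality~\eqref{eq: abs value lambda} is proved only for $u \le 2+v$, while the present lemma is claimed for all $|u| \le u_0$ with $u_0$ possibly exceeding $2$. What holds uniformly is only~\eqref{eq: min of abs values lambda}, i.e.\ $\min(|\Lambda_n^{(\J)}|, |b_n^{(\J)}|) \le C|T_n^{(\J)}|^{1/2}$. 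This is precisely why the proof of Lemma~\ref{lemma: recurence relation for R_jj} derives \emph{two} bounds---one from the $\Lambda_n^{(\J)}$-representation and one from the alternative form $\RR_{jj}^{(\J)} = s(z)^{-1} + s(z)^{-1}(b_n^{(\J)} - \varepsilon_j^{(\J)})\RR_{jj}^{(\J)}$---and only then invokes the $\min$ inequality. You must carry out the analogous two-track argument for the imaginary part, and the second branch is not free: one needs to note that $\imag(1/s(z)) = -\imag s(z)/|s(z)|^2 \le 0$ (which fortunately drops from the upper bound on $\imag\RR_{jj}^{(\J)}$), and that $\imag b_n^{(\J)} = v + 2\imag s(z) + \imag\Lambda_n^{(\J)}$, so the extra $v + 2\imag s(z)$ has to be absorbed into the $\imag s(z)$ pieces of the target inequality using $v \le c\,\imag s(z)$ (Lemma~\ref{appendix lemma inequality v le  imag s RR jj}) and the lower bound on $|s(z)|$ over $\mathbb D$. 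Without this second branch the claimed bound is not established when $|u| > 2 + v$.

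The sign confusion is secondary but should be flagged. From~\eqref{eq: representation for RR_jj 2} one gets $|\imag(\Lambda_n^{(\J)} - \varepsilon_j^{(\J)})| = |\imag\Lambda_n^{(\J)} - \imag\varepsilon_j^{(\J)}|$, which by the triangle inequality is $\le |\imag\Lambda_n^{(\J)}| + |\imag\varepsilon_j^{(\J)}|$. You then assert this ``is exactly the content of'' $|\imag\varepsilon_j^{(\J)} + \imag\Lambda_n^{(\J)}|$. That runs the wrong way: $|\imag\varepsilon_j^{(\J)} + \imag\Lambda_n^{(\J)}| \le |\imag\varepsilon_j^{(\J)}| + |\imag\Lambda_n^{(\J)}|$, so your derived bound is strictly weaker and cannot be folded into $C_1$. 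In fact the natural form is $|\imag\varepsilon_j^{(\J)} - \imag\Lambda_n^{(\J)}|$; the stated sign is almost certainly a typo, and the downstream use in Lemma~\ref{lem: imag RR jj recur} splits the term by the triangle inequality anyway, so it is harmless---but you should state what you actually prove rather than claim the two forms coincide.
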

\begin{proof}
The proof is similar to the proof of Lemma~\ref{lemma: recurence relation for R_jj} is omitted.
\end{proof}

\begin{lemma}\label{lem: imag RR jj recur}
Let $\tilde v > 0$ be an arbitrary number and $H_0$ be some large positive constant. Assume that the conditions $\CondTwo$ hold. There exist a positive  constant $s_0$ depending on $\alpha$ and positive constants $A_0, A_1$ depending on $H_0, s_0$ such that assuming
\begin{equation}\label{main condition imag part of R_jj}
\max_{\J: |\J| \le K + M} \max_{l \in \T_\J} \E \imag^q \RR_{ll}^{(\J)}(v') \le H_0^q \Psi^q(v')
\end{equation}
for all $\quad v' \geq \tilde v, |u|\le u_0$ and $1 \le q \le A_1(nv')^{\frac{1-2\alpha}{2}}$ we have
$$
\max_{\J: |\J| \le K} \max_{l \in \T_\J} \E \imag^q \RR_{ll}^{(\J)}(v) \le H_0^q \Psi^q(v)
$$
for all $v \geq \tilde v/s_0, |u| \le u_0$ and $1 \le q \le A_1(nv)^{\frac{1-2\alpha}{2}}$.
\end{lemma}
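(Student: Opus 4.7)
The plan is to mimic the structure of the proof of Lemma~\ref{lemma step for resolvent}, but starting from the sharper pointwise inequality of Lemma~\ref{lemma recurence relation for imag R_jj} in place of Lemma~\ref{lemma: recurence relation for R_jj}. I would fix an arbitrary $v \geq \tilde v / s_0$ with $s_0 := 2^{4/(1-2\alpha)}$, a subset $\J$ with $|\J| \le K$, and $j \in \T_\J$, so that $v' := s_0 v \ge \tilde v$ and the induction hypothesis~\eqref{main condition imag part of R_jj} applies at scale $v'$ with up to a constant factor more moments than those being estimated at scale $v$. Raising the recurrence of Lemma~\ref{lemma recurence relation for imag R_jj} to the $q$-th power and taking expectations produces, up to an absolute constant depending only on $C_1$, three families of summands: (i) $S_1:=\imag^q s(z)\,\E\bigl(1+(|\varepsilon_j^{(\J)}|+|T_n^{(\J)}|^{1/2})|\RR_{jj}^{(\J)}|\bigr)^q$, (ii) $S_2:=\E\bigl|\imag\varepsilon_j^{(\J)}+\imag\Lambda_n^{(\J)}\bigr|^q|\RR_{jj}^{(\J)}|^q$, and (iii) $S_3:=\E(|\varepsilon_j^{(\J)}|+|T_n^{(\J)}|^{1/2})^q \imag^q \RR_{jj}^{(\J)}(v)$.

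For $S_1$ I would apply H\"older's inequality to separate the diagonal-resolvent factor, bound $\E|\RR_{jj}^{(\J)}|^{q'}$ by $C_0^{q'}$ via inequality~\eqref{eq: main assumption about diagonal entries of resolvent for imag R_jj}, estimate $\E|\varepsilon_{\nu j}^{(\J)}|^{q'}$ for $\nu=1,2,3,4$ using Lemmas~\ref{lemma varepsilon_2 assumption} and~\ref{lemma varepsilon_3 assumption} together with the appendix estimates for $\varepsilon_{1j}$ and $\varepsilon_{4j}$ (all of which already require only moment bounds valid at scale $s_0 v$, hence are supplied by the inductive hypothesis), and bound $\E|T_n^{(\J)}|^{q'/2}$ via Theorem~\ref{th: general bound}, using~\eqref{main condition imag part of R_jj} to control the factor $\mathcal A(\kappa q')$ inside $\mathcal E_{q'}$. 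The net contribution is $\imag^q s(z)(1+o(1))$, which is dominated by $(H_0/2)^q\Psi^q(v)$ as soon as $A_0$ is large enough.

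For $S_2$ I would decompose $\imag\varepsilon_j^{(\J)}=\sum_\nu \imag\varepsilon_{\nu j}^{(\J)}$, treat $\imag\varepsilon_{3j}^{(\J)}$ via a variant of Lemma~\ref{appendix lemma varepsilon_3} which yields moments of $\imag\RR_{kk}^{(\J,j)}$ bounded by $H_0^{q'}\Psi^{q'}(v')$ through the induction hypothesis, and control $\imag\Lambda_n^{(\J)}$ via the bound $|\imag\Lambda_n^{(\J)}|\le C\min(|T_n^{(\J)}|/|b(z)|,|T_n^{(\J)}|^{1/2})$ from Lemma~\ref{appendix inequality for lambda} combined with Theorem~\ref{th: general bound}. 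Since $\imag s$ and $p^2/(nv)$ are comparable at scales $v$ and $s_0 v$ (both being nonincreasing in $v$ in the relevant way), $\Psi(v')\le C\Psi(v)$, and pairing with $\E|\RR_{jj}^{(\J)}|^{q'}\le C_0^{q'}$ yields $S_2\le C^q H_0^q \Psi^q(v)(nv)^{-cq}$ for some $c>0$ depending on $\alpha$.

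The main obstacle is the self-referential term $S_3$, in which $\imag^q\RR_{jj}^{(\J)}(v)$ reappears on the right. I would handle it by H\"older with exponents $(q_1,q_2)$, $q_1^{-1}+q_2^{-1}=1$ and $q_2=O(1)$, writing $S_3\le \E^{1/q_1}(|\varepsilon_j^{(\J)}|+|T_n^{(\J)}|^{1/2})^{q_1 q}\cdot \E^{1/q_2}\imag^{q_2 q}\RR_{jj}^{(\J)}(v)$; the last factor is transferred to scale $s_0 v$ via Lemma~\ref{appendix lemma imag resolvent relations on different v}, after which it is bounded by $(C s_0 H_0\Psi(v))^q$ thanks to~\eqref{main condition imag part of R_jj} (since $q_2 q$ still lies within the admissible moment range at scale $v'$). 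The first factor is of order $(nv)^{-\rho q}$ with some $\rho>0$ by the same moment lemmas for $\varepsilon_j$ and Theorem~\ref{th: general bound}, so $S_3\le \delta_n H_0^q \Psi^q(v)$ with $\delta_n\to 0$. Choosing $H_0$ depending on $u_0,V,C_0,C_1$, and then $A_0,A_1$ large enough depending on $H_0$ and $s_0$, the sum $S_1+S_2+S_3$ is bounded by $H_0^q\Psi^q(v)$, completing the induction.
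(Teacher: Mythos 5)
Your proposal is correct and follows essentially the same route as the paper: expand Lemma~\ref{lemma recurence relation for imag R_jj} to the $q$-th power, split into three groups of terms, bound the $\varepsilon$- and $T_n$-moments by the lemmas of Section~4 and Theorem~\ref{th: general bound}, transfer the self-referential $\imag^{2q}\RR_{jj}^{(\J)}(v)$ factor to scale $s_0v$ via Lemma~\ref{appendix lemma imag resolvent relations on different v} so the hypothesis~\eqref{main condition imag part of R_jj} applies, and finally note $\Psi(s_0v)\le s_0\Psi(v)$ to return to scale $v$ before tuning $C_2,A_0,A_1$. Your use of general H\"older exponents for the self-referential term is a cosmetic variant of the Cauchy--Schwarz step already embedded in~\eqref{eq: imag R step 1}, and the appendix lemmas you invoke as ``variants'' of Lemma~\ref{appendix lemma varepsilon_3} are precisely Lemmas~\ref{appendix lemma imag epsilon 2}--\ref{appendix lemma imag epsilon 3}.
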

\begin{proof}
From Lemma~\ref{lemma recurence relation for imag R_jj} it follows that
\begin{align}\label{eq: imag R step 1}
\E\imag^q \RR_{jj}^{(\J)} &\le (C C_0)^q \imag^q s(z) \E^\frac12 (1 + (|\varepsilon_j^{(\J)}| + |T_n^{(\J)}|^{\frac{1}{2}})^{2q}  \nonumber \\
&\qquad + (C C_0)^q \E^{\frac{1}{2}}|\imag \varepsilon_j^{(\J)} + \imag \Lambda_n^{(\J)}|^{2q} \nonumber \\
&\qquad + C^q \E^{\frac{1}{2}}(|\varepsilon_j^{(\J)}| + |T_n^{(\J)}|^{\frac{1}{2}})^{2q} \E^{\frac{1}{2}} \imag^{2q} \RR_{jj}^{(\J)}.
\end{align}
To estimate $\E |\varepsilon_j^{(\J)}|^{2q}$ and $\E|T_n^{(\J)}|^{q}$ we may proceed as in Lemma~\ref{lemma step for resolvent}. We obtain the following inequalities
\begin{align}\label{eq: imag R step 2}
\E |\varepsilon_j^{(\J)}|^{2q} &\le 4^{2q} \left[ \frac{\mu_4 D^{2q-4}}{n^{q(1-2\alpha)+4\alpha}}  +  \frac{C^{q} q^{4q}}{(nv)^{2q(1-2\alpha)}} +  \frac{ C^{2q} q^{2q} }{n^{2q(1-2\alpha)}} + \frac{1}{(nv)^{2q}}\right].
\end{align}
and
\begin{align}\label{eq: imag R step 3}
\E|T_n^{(\J)}|^{q} \le C_0^q \left(\frac{1}{n} \sum_{j \in \T_{\J}} \E |\varepsilon_j^{(\J)}|^{2q} \right)^{1/2}.
\end{align}
Take as before $s_0 := 2^{\frac{4}{1-2\alpha}}$. Choosing $v':=s_0 v \geq v_1$ we may show that $2q \le A_1 (nv')^{\frac{1-2\alpha}{2}}$. Applying Lemma~\ref{appendix lemma imag resolvent relations on different v} and using the assumption~\eqref{main condition imag part of R_jj} we get
\begin{equation}\label{eq: bound for imag part of RR jj}
\E \imag^{2q} \RR_{jj}^{(\J)}( v) \le s_0^{2q} \E \imag^{2q} \RR_{jj}^{(\J)}(s_0 v) \le s_0^{2q} H_0^{2q} \Psi^{2q}(s_0 v).
\end{equation}
Since we need an estimate involving $\Psi^{2q}(v)$ instead of $\Psi^{2q}(s_0 v)$ on the r.h.s. of the previous inequality we need to perform a descent along the imaginary line from $s_0 v$ to $v$. To this purpose we again apply Lemma~\ref{appendix lemma imag resolvent relations on different v}.  Choosing suitable constants $A_0$ and $A_1$ in~\eqref{eq: imag R step 2} and~\eqref{eq: imag R step 3} one may show that
\begin{align}\label{eq: imag R step 1 0}
\E\imag^q \RR_{jj}^{(\J)} &\le (C C_0)^q \imag^q s(z)  + (C C_0)^q \E^{\frac{1}{2}}|\imag \varepsilon_j^{(\J)} + \imag \Lambda_n^{(\J)}|^{2q} + \frac{H_0^q}{3} \Psi^q.
\end{align}
Applying Lemmas~\ref{appendix lemma imag epsilon 2}--\ref{appendix lemma imag epsilon 3} we obtain
$$
\E |\imag \varepsilon_j^{(\J)}|^{2q} \le 3^{2q} \left[ C^q  \left( \frac{q^{2q} \E \imag^{q} m_n^{(\J,j)}(z)  }{(nv)^{q}}  + \frac{q^{4q}}{n^{2q(1-2\alpha)}} F_{1,2q} \right) + \frac{C^q q^{2q}}{n^{2q(1-2\alpha)}} F_{1,2q} + \frac{1}{(nv)^{2q}} \right].
$$
We may write
\begin{align}\label{eq: imag R step 4}
(C C_0)^q \E |\imag \varepsilon_j^{(\J)}|^{2q} &\le C^q \left[  \frac{q^{2q} s_0^{2q} H_0^q \Psi^q  }{(nv)^{q}}  + \frac{q^{4q} s_0^{4q} H_0^{2q} \Psi^{2q} }{n^{2q(1-2\alpha)}} + \frac{1}{(nv)^{{2q}}} \right]  \nonumber \\
& \le \frac{H_0^{2q} }{C_2} \Psi^{2q}  + \frac{ (C s_0)^{4q}q^{4q} }{(nv)^{2q}} +  \frac{q^{4q} s_0^{4q} H_0^{2q} \Psi^{2q} }{n^{2q(1-2\alpha)}},
\end{align}
where $C_2$ will be chosen later.
To estimate $\E |\imag \Lambda_n^{(\J)}|^{q}$ we may proceed as in the proof of Theorem~\ref{th:main}. We will apply Theorem~\ref{th: general bound} (one has to replace in the definition of~\eqref{definition of A} the maximum over $|\J| \le 1$ by the maximum over $|\J| \le L$) and assumption~\eqref{main condition imag part of R_jj}. Hence,
\begin{align}\label{eq: imag R step 5}
(C C_0)^q \E |\imag \Lambda_n^{(\J)}|^{2q} \le \frac{H_0^{2q}}{C_2} \Psi^{2q} + \frac{(C s_0)^{4q} q^{4q}}{(nv)^{2q}}.
\end{align}
Combining the estimates~\eqref{eq: imag R step 4} and \eqref{eq: imag R step 5} we may choose constants $C_2, A_0$ and $A_1$ (correcting the previous choice if needed) such that
$$
(C C_0)^q \E^{\frac{1}{2}}|\imag \varepsilon_j^{(\J)} + \imag \Lambda_n^{(\J)}|^{2q} \le \frac{H_0^q}{3} \Psi^q, \quad \text{ and } \quad
(C C_0)^q \imag^q s(z) \le \frac{H_0^q}{3} \Psi^q.
$$
The last two inequalities and~\eqref{eq: imag R step 1 0} together imply the desired bound
$$
\E \imag^q \RR_{jj}^{(\J)} \le H_0^q \Psi^q.
$$
\end{proof}
\begin{proof}[Proof of Lemma~\ref{lemma: imag part of R_jj}]
Let us take any $u_0>0$ and any $\hat v \ge 2+u_0, |u|\le u_0$. We also fix arbitrary $\J\subset \T$. We claim that
\begin{equation}\label{bound of imag RR via imag s}
\imag s(u+i\hat v)\ge \frac12\imag \RR_{jj}^{(\J)}(u+i\hat v).
\end{equation}
Indeed, we first mention that for all $u$ (and $|u|\le u_0$ as well)
\begin{equation}\label{bound for imag RR_{jj} for big V}
\imag \RR_{jj}^{(\J)}(u+i \hat v)\le \frac{1}{\hat v}.
\end{equation}
For all $|u|\le u_0$ and $|x|\le 2$ we obtain
\begin{equation}
\frac{\hat v}{(x-u)^2+\hat v^2}\ge\frac{\hat v}{(2+u_0)^2+\hat v^2} \geq \frac{1}{2\hat v}.
\end{equation}
It follows from the last inequality that
\begin{equation}\label{bound for imag s for big V}
\imag s(u+i \hat v)=\frac1{2\pi}\int_{-2}^2\frac{\hat v}{(u-x)^2+\hat v^2}\sqrt{4-x^2}dx\ge \frac{1}{2\hat v}.
\end{equation}
Comparing~\eqref{bound for imag RR_{jj} for big V} and~\eqref{bound for imag s for big V} we arrive at~\eqref{bound of imag RR via imag s}.

We not take $v \geq \hat v$. Let $H_0$ be some large constant, $H_0 \geq \max(C', C'')$. We choose $s_0, A_0$ and $A_1$  as in the previous Lemma~\ref{lem: imag RR jj recur} obtaining
$$
\max_{\J: |\J| \le  2 L} \max_{j \in \T_\J} \imag^q \RR_{jj}^{(\J)}(z) \le H_0^q  \Psi^q(z)
$$
with $L = [-\log_{s_0} v_0] + 1$. We may now proceed recursively  in $L$ steps
and arrive at
$$
\max_{j \in \T} \imag^q \RR_{jj}(z) \le H_0^q  \Psi^q(z)
$$
for $v \geq v_0$ and  $1 \le q \le A_1(nv)^{\frac{1-2\alpha}{2}}$ .
\end{proof}
\appendix

\section{Moment inequalities for linear and quadratic forms of random variables}
In this section we present some inequalities for linear and quadratic forms.

\begin{theorem}[Rosenthal type inequality]\label{th: Rosenthal}
Let $X_j, j = 1, ..., n$, be independent random variables with $\E X_j = 0, \E X_j^2 = \sigma^2$ and $\mu_p: = \max_j \E|X_j|^p < \infty$.
Then there exists some absolute constant $C$ such that for $p \geq 2$
$$
\E\big|\sum_{k=1}^n a_{k } X_k\big|^p \le (C p)^{p/2} \sigma^p \left (\sum_{k=1}^n a_{k}^2 \right )^{\frac{p}{2}} +  \mu_p (C p)^p \sum_{k=1}^n |a_k|^p.
$$
\end{theorem}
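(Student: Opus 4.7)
The plan is to proceed via the standard Burkholder--Rosenthal route: first reduce the moment of $S := \sum_k a_k X_k$ to the moment of the quadratic variation $Q := \sum_k Y_k^2$ (where $Y_k := a_k X_k$), and then control $Q$ by a second, iterated application of the same martingale square-function inequality. Concretely, after symmetrizing $S$ with an independent Rademacher sequence and conditioning on the $X_k$, Khintchine's inequality yields
$$
\|S\|_p^2 \le C\,p\,\|Q\|_{p/2}, \qquad p\ge 2,
$$
where $\|\cdot\|_p$ denotes the $L^p$-norm and the constant is the sharp Khintchine / BDG constant of order $\sqrt{p}$. Equivalently one may invoke Burkholder--Davis--Gundy directly on the martingale $S_n = \sum_{k\le n} a_k X_k$.

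For the quadratic variation I would split off its mean, $Q = \sigma^2 \|a\|_2^2 + W$ with $W := \sum_k (Y_k^2 - \E Y_k^2)$, and use the triangle inequality in $L^{p/2}$ to obtain $\|Q\|_{p/2} \le \sigma^2\|a\|_2^2 + \|W\|_{p/2}$. Since $W$ is itself a sum of independent mean-zero random variables, for $p \ge 4$ the square-function inequality applies once more and gives
$$
\|W\|_{p/2}^{2} \le C\,p\,\bigl\| \sum_k (Y_k^2 - \E Y_k^2)^2 \bigr\|_{p/2}.
$$
The elementary bound $(Y_k^2 - \E Y_k^2)^2 \le 2 Y_k^4 + 2(\E Y_k^2)^2$, combined with $\sum_k Y_k^4 \le (\max_k |Y_k|^2)\,Q$ and a Cauchy--Schwarz step, reduces this to $\|W\|_{p/2} \le C\sqrt{p}\,\|\max_k |Y_k|\|_p\,\|Q\|_{p/2}^{1/2}$ up to an additive lower-order term of the shape $C\sqrt{p}\,\sigma^2\,\|a\|_\infty\,\|a\|_2$, which is absorbed using $\|a\|_\infty \le \|a\|_p$.

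The next step is a standard absorption: applying $2ab \le \tfrac12 a^2 + 2b^2$ isolates $\|Q\|_{p/2}$ on the left and yields $\|Q\|_{p/2} \le C\sigma^2\|a\|_2^2 + Cp\,\|\max_k |Y_k|\|_p^{2}$. The maximum is controlled trivially by $\|\max_k |Y_k|\|_p^{p} \le \sum_k \E|Y_k|^p \le \mu_p\,\|a\|_p^p$. Substituting back into $\|S\|_p^2 \le Cp\,\|Q\|_{p/2}$ and raising to the $p/2$-th power via $(x+y)^{p/2} \le 2^{p/2}(x^{p/2} + y^{p/2})$ produces precisely the stated two-term bound $(Cp)^{p/2}\sigma^p\|a\|_2^p + (Cp)^p \mu_p \|a\|_p^p$.

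The main obstacles I anticipate are two technical points. First, the second application of the square-function inequality needs $p/2 \ge 2$, so the range $2 \le p < 4$ must be handled separately, either by bounding $\|W\|_{p/2}$ directly via the triangle inequality in $L^{p/2}$ together with the moment assumption on $X_k$, or by interpolating against the exact identity $\E |S|^2 = \sigma^2 \|a\|_2^2$. Second, tracking the universal constant in order to secure the $\sqrt{p}$ scaling on the sub-Gaussian term (rather than the suboptimal $p$) forces the use of the sharp Khintchine / Burkholder--Davis--Gundy constant rather than a Marcinkiewicz--Zygmund form; this discrepancy between $\sqrt{p}$ and $p$ is precisely what separates the two contributions in the final estimate.
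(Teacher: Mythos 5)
Your proposal is essentially a correct, self-contained derivation of Rosenthal's inequality with sharp constants via the iterated Burkholder--Davis--Gundy / Khintchine route, whereas the paper gives no proof at all: it simply cites Rosenthal (1970) for the inequality and Johnson--Schechtman--Zinn (1985) for the $(Cp)^{p/2}$, $(Cp)^p$ dependence on $p$. So there is nothing in the paper to compare against; what you supply is a genuine alternative to going to the literature.

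A few points to tighten up. First, the display $\|W\|_{p/2}^2 \le Cp\,\|\sum_k(Y_k^2-\E Y_k^2)^2\|_{p/2}$ has an index slip: BDG applied to the mean-zero sum $W$ at level $q=p/2$ controls $\|W\|_q^2$ by $Cq$ times the bracket in $L^{q/2}$, so the norm on the right should be $\|\cdot\|_{p/4}$, not $\|\cdot\|_{p/2}$. Your subsequent Cauchy--Schwarz step $\|fg\|_{p/4}\le\|f\|_{p/2}\|g\|_{p/2}$, which produces the factor $\|\max_k|Y_k|\|_p\,\|Q\|_{p/2}^{1/2}$, is in fact consistent with the corrected $L^{p/4}$ norm, so the final bound is unaffected, but the intermediate formula should be fixed. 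Second, for $2\le p<4$ the plain ``triangle inequality in $L^{p/2}$'' applied to $W=\sum_k(Y_k^2-\E Y_k^2)$ does not have the right scaling, since for exponents strictly between $1$ and $2$ it produces a sum of $L^{p/2}$ norms rather than a sum of $(p/2)$-th powers; what you need there is the von Bahr--Esseen inequality $\E|W|^{p/2}\le 2\sum_k\E|Y_k^2-\E Y_k^2|^{p/2}$, valid for $1\le p/2\le 2$, after which the bound $\|W\|_{p/2}\le C\mu_p^{2/p}\|a\|_p^2$ drops out directly without any absorption. Your alternative suggestion (interpolating against the exact identity $\E|S|^2=\sigma^2\|a\|_2^2$) also works and is cleaner. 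Third, the lower-order term $C\sqrt{p}\,\sigma^2\|a\|_\infty\|a\|_2$ is absorbed most cleanly by noting that $\sigma\|a\|_\infty=\max_k(\E|Y_k|^2)^{1/2}\le\|\max_k|Y_k|\|_2\le\|\max_k|Y_k|\|_p$, so that $C\sqrt{p}\,\sigma^2\|a\|_\infty\|a\|_2\le\tfrac14\sigma^2\|a\|_2^2+C^2p\,\|\max_k|Y_k|\|_p^2$, which slots into the same two buckets as the main absorption step; the bound $\|a\|_\infty\le\|a\|_p$ you invoke does not by itself land the term in the right place. With these repairs the argument does yield exactly the stated $(Cp)^{p/2}\sigma^p\|a\|_2^p+(Cp)^p\mu_p\|a\|_p^p$.
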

\begin{proof}
For a proof see~\cite{Rosenthal1970}[Theorem~3] and~\cite{JohnSchecttmanZinn1985}[Inequality~(A)].
\end{proof}
To estimate the moments of quadratic forms in our proof we shall  use the following inequality due to Gin{\'e}, Lata{\l}a, Zinn~\cite{GineLatalaZinn2000}.  Let us denote
$$
Q: = \sum_{1 \le j \neq k \le n} a_{jk} X_j X_k.
$$
\begin{theorem}\label{appendix lemma for quadratic forms}
Let $X_j, j = 1, ..., n$, be independent random variables with $\E X_j = 0, \E X_j^2 = \sigma^2$ and $\mu_p: = \max_j \E|X_j|^p < \infty$. Then for $p \geq 2$
$$
\E |Q|^p \le C^{p} \left[p^p\left(\sum_{j=1}^n \sum_{k \in \T_j} |a_{jk}|^2\right)^{\frac{p}{2}} + \mu_p p^{\frac{3p}{2}} \sum_{j=1}^n \left(\sum_{k \in \T_j} |a_{jk}|^2 \right )^{\frac{p}{2}} +
\mu_p^2 p^{2p} \sum_{j=1}^n \sum_{k \in \T_j} |a_{jk}|^p \right].
$$
\end{theorem}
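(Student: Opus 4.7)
The plan is to follow a decoupling plus iterated Rosenthal argument, using the Hilbert-space analogue of Rosenthal's inequality to handle the quadratic term that arises after one round of conditioning.

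Step one is decoupling. After replacing $a_{jk}$ by its symmetrization $\tfrac12(a_{jk}+a_{kj})$ at the cost of absolute constants, I invoke the de la Pe\~na--Montgomery-Smith decoupling inequality for U-statistics: with $(Y_k)$ an independent copy of $(X_k)$,
\begin{equation*}
\E|Q|^p \le C^p \, \E\Bigl|\sum_{j \ne k} a_{jk} X_j Y_k \Bigr|^p.
\end{equation*}
Writing the bracket as $\sum_j X_j S_j$ with $S_j := \sum_{k\ne j} a_{jk} Y_k$ and conditioning on $\mathcal F_Y := \sigma(Y_1,\ldots,Y_n)$, the summands $X_j S_j$ are independent centered with conditional second moment $\sigma^2 S_j^2$ and conditional $p$-th absolute moment $\mu_p |S_j|^p$. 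Applying Theorem~\ref{th: Rosenthal} conditionally and then integrating out $Y$ yields
\begin{equation*}
\E|Q|^p \le C^p\Bigl[ (Cp)^{p/2} \sigma^p \, \E\Bigl(\sum_j S_j^2\Bigr)^{p/2} + \mu_p (Cp)^p \sum_j \E|S_j|^p \Bigr].
\end{equation*}

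Step two processes both bracketed terms. For $\sum_j \E|S_j|^p$, I apply Theorem~\ref{th: Rosenthal} once more to each linear form $S_j$: the bound $\E|S_j|^p \le (Cp)^{p/2}\sigma^p (\sum_k a_{jk}^2)^{p/2} + \mu_p (Cp)^p \sum_k |a_{jk}|^p$ produces exactly the middle contribution $C^p \mu_p p^{3p/2}\sum_j(\sum_k a_{jk}^2)^{p/2}$ and the last contribution $C^p \mu_p^2 p^{2p}\sum_{j,k}|a_{jk}|^p$ of the theorem. For $\E(\sum_j S_j^2)^{p/2}$, I observe that with $v_k := (a_{jk})_{j=1}^n \in \mathbb R^n$ one has $\sum_j S_j^2 = \|\sum_k Y_k v_k\|_2^2$, where the $Y_k v_k$ are independent centered $\mathbb R^n$-valued vectors with $\E\|Y_k v_k\|_2^2 = \sigma^2\sum_j a_{jk}^2$ and $\E\|Y_k v_k\|_2^p \le \mu_p(\sum_j a_{jk}^2)^{p/2}$; the Hilbert-space Rosenthal inequality (a standard consequence of Pinelis's martingale inequality) then gives
\begin{equation*}
\E\Bigl(\sum_j S_j^2\Bigr)^{p/2} \le (Cp)^{p/2}\sigma^p\Bigl(\sum_{j,k}a_{jk}^2\Bigr)^{p/2} + (Cp)^p \mu_p \sum_k\Bigl(\sum_j a_{jk}^2\Bigr)^{p/2}.
\end{equation*}
Multiplying by $(Cp)^{p/2}\sigma^p$ and using the symmetry of $a$ (which makes $\sum_k(\sum_j a_{jk}^2)^{p/2} = \sum_j(\sum_k a_{jk}^2)^{p/2}$), the first contribution matches the target $C^p p^p (\sum a_{jk}^2)^{p/2}$ and the second is absorbed into the middle term already extracted.

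The main obstacle is justifying the vector-valued Rosenthal inequality, which is not among the results stated in the excerpt. A self-contained, purely scalar alternative is to expand $T := \sum_j S_j^2$ as a second-order chaos in the $Y_k$, split $T = \E T + (T-\E T)$ with $\E T = \sigma^2\sum_{j,k}a_{jk}^2$, use $T^{p/2} \le 2^{p/2-1}((\E T)^{p/2}+|T-\E T|^{p/2})$ to extract the deterministic leading piece, and handle the centered remainder by applying the same decoupling-Rosenthal scheme recursively at exponent $p/2$; its contribution is then dominated by the three canonical terms already on the right-hand side. This recursion is the one carried out by Gin\'e--Lata\l{}a--Zinn, and the key book-keeping is to verify that constants remain uniformly controlled across the levels of the recursion.
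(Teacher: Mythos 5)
The paper does not prove this statement; it simply cites Gin\'e--Lata\l{}a--Zinn \cite{GineLatalaZinn2000}[Proposition~2.4] and \cite{GotTikh2003}[Lemma~A.1]. Your proposal is a reconstruction of precisely the argument behind that citation: decoupling \`a la de la Pe\~na--Montgomery-Smith, conditional Rosenthal in $X$, a second Rosenthal for the linear forms $S_j$, and a vector-valued (or recursively unfolded) Rosenthal bound for $\E(\sum_j S_j^2)^{p/2}$. The bookkeeping of the three terms and the $p$-powers ($p^{p/2}\cdot p^{p/2}=p^p$, $p^{p/2}\cdot p^p=p^{3p/2}$, $p^p\cdot p^p=p^{2p}$) is correct, and you rightly flag that the Hilbert-space Rosenthal inequality is the one ingredient not available in the paper's toolbox; the scalar recursion you sketch as a fallback is exactly the route taken in Gin\'e--Lata\l{}a--Zinn, where one also has to peel off the diagonal part $\sum_k\bigl(\sum_{j}a_{jk}^2\bigr)(Y_k^2-\sigma^2)$ of $T-\E T$ and feed the purely off-diagonal remainder (coefficients $b_{kl}=\sum_j a_{jk}a_{jl}$) back into the induction, verifying that the resulting norms of $b$ are dominated by those of $a$. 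Two small points worth keeping track of: the theorem as printed silently absorbs $\sigma$ into the constant (it is applied with $\sigma^2=1$), so your $\sigma^{2p}$ and $\sigma^p\mu_p$ factors are harmless but should be noted, and the preliminary symmetrization $a_{jk}\mapsto\tfrac12(a_{jk}+a_{kj})$ is needed to identify $\sum_k(\sum_j a_{jk}^2)^{p/2}$ with $\sum_j(\sum_k a_{jk}^2)^{p/2}$ when you absorb the second term of the Hilbert-space Rosenthal bound into the middle term of the claim.
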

\begin{proof}
See~\cite{GineLatalaZinn2000}[Proposition~2.4] or~\cite{GotTikh2003}[Lemma A.1].
\end{proof}
We remark here that the sequence of papers  ~\cite{Schlein2014}, \cite{ErdosSchleinYau2009},~\cite{ErdosSchleinYau2009b},~\cite{ErdosSchleinYau2010} is relying on the Hanson--Wright large deviation inequality~\cite{HansonWright}
for quadratic forms instead of this estimate.

The following result is trivial but we formulate it as a lemma since we shall use it many times during the proof of the main result.
\begin{lemma}\label{appendix lemma varepsilon_1}
Assuming conditions $\CondTwo$ for $p \geq 1$ we have
$$
\E|\varepsilon_{1j}|^{2p} \le \frac{\mu_4 D^{2p-4}}{n^{p(1-2\alpha)+4\alpha}}.
$$
\end{lemma}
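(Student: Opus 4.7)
The proof is a one–line calculation, so my plan is essentially just to spell out the decomposition carefully and check the bookkeeping of exponents. The key observation is that, under condition $\CondTwo$, every matrix entry satisfies $|X_{jk}| \le Dn^\alpha$, so we can trade off moments of $X_{jj}$ against the deterministic bound.

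The plan is as follows. First I would write
\[
\E|\varepsilon_{1j}|^{2p} \;=\; \frac{1}{n^{p}}\,\E|X_{jj}|^{2p},
\]
directly from the definition $\varepsilon_{1j} = n^{-1/2}X_{jj}$. For $p\ge 2$, I would then split $|X_{jj}|^{2p} = |X_{jj}|^{4}\cdot|X_{jj}|^{2p-4}$ and use the truncation bound $|X_{jj}|^{2p-4}\le (Dn^{\alpha})^{2p-4}$ together with $\E|X_{jj}|^{4}=\mu_{4}$ (guaranteed by $\Cond$). This yields
\[
\E|X_{jj}|^{2p} \;\le\; \mu_{4}\,D^{2p-4}\,n^{\alpha(2p-4)},
\]
and plugging back gives
\[
\E|\varepsilon_{1j}|^{2p} \;\le\; \frac{\mu_{4}\,D^{2p-4}\,n^{2\alpha p-4\alpha}}{n^{p}} \;=\; \frac{\mu_{4}\,D^{2p-4}}{n^{p(1-2\alpha)+4\alpha}},
\]
which is the claimed estimate.

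There is really no obstacle here; the only thing to check is that the exponents combine correctly, namely $-p + 2\alpha p - 4\alpha = -\bigl(p(1-2\alpha)+4\alpha\bigr)$, and that the condition $p\ge 2$ is enough for the factor $(Dn^{\alpha})^{2p-4}$ to make sense (i.e.\ to have a nonnegative exponent). For the boundary case $p=1$, one can absorb the $\E X_{jj}^{2}=1$ trivially into the constants by adjusting $D$ if necessary, since the inequality is then a statement about constants independent of $n$ only through the normalization. Thus the lemma reduces to this direct computation.
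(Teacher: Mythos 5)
Your core calculation for $p \ge 2$ --- namely $\E|\varepsilon_{1j}|^{2p} = n^{-p}\E|X_{jj}|^{2p}$, the split $|X_{jj}|^{2p}=|X_{jj}|^4\cdot|X_{jj}|^{2p-4}$, the deterministic bound $|X_{jj}|^{2p-4}\le (Dn^\alpha)^{2p-4}$ from $\CondTwo$, and $\E|X_{jj}|^4\le\mu_4$ --- is exactly what the paper's terse one-line proof (``follows directly from the definition'') intends, and the exponent arithmetic $-p+2\alpha p-4\alpha=-\big(p(1-2\alpha)+4\alpha\big)$ checks out.

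Your closing remark about the range $1\le p<2$, however, is not sound. There $2p-4<0$, so the pointwise bound $|X_{jj}|^{2p-4}\le (Dn^\alpha)^{2p-4}$ fails (it reverses), and the discrepancy cannot be absorbed into the constant $D$: at $p=1$ the left side is $\E|\varepsilon_{1j}|^2=n^{-1}\E X_{jj}^2=n^{-1}$ while the claimed bound is $\mu_4 D^{-2} n^{-(1+2\alpha)}$, smaller by a factor $n^{2\alpha}\to\infty$, so the inequality is false for large $n$ no matter how $D$ is chosen. This is really a mild imprecision in the statement of the lemma itself --- it should say $p\ge 2$, which is how the argument actually proceeds and consistent with the split. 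So your proof is correct where the lemma is correct; you should simply delete the paragraph attempting to cover $p<2$ rather than gesture at adjusting constants.
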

\begin{proof}
The proof follows directly from the definition of $\varepsilon_{1j}: = \frac{1}{\sqrt n} X_{jj}$.
\end{proof}
The rest of this section is devoted to the proof of moment inequalities for linear and quadratic forms based on the entries of the resolvent $\RR^{(\J)}$ or some functions of it.
\begin{lemma}\label{appendix lemma sum of varepsilon_1}
Under conditions $\CondTwo$ for $p \geq 1$ and $q = 1,2$ we have
\begin{equation}\label{appendix eq sum of varepsilon_1 q = 1,2}
\E \left| \frac{1}{n} \sum_{j=1}^n \varepsilon_{1j}^q \right|^p \le  \frac{(C p)^p}{n^p}.
\end{equation}
\end{lemma}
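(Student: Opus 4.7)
The plan is to reduce both statements to a direct application of Rosenthal's inequality (Theorem~\ref{th: Rosenthal}) for $p\ge 2$, complemented by an elementary Lyapunov bound for $1\le p\le 2$. The only quantitative input needed from $\CondTwo$ is the truncation $|X_{jj}|\le Dn^\alpha$, which yields, for every $r\ge 2$,
$$
\E|X_{jj}|^r \le (Dn^\alpha)^{r-2}\E X_{jj}^2 = D^{r-2}n^{\alpha(r-2)},
\qquad
\E|X_{jj}^2-1|^r \le 2^r(Dn^\alpha)^{2r-2}+2^r.
$$

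For $q=1$, I rewrite $\frac{1}{n}\sum_j \varepsilon_{1j}=n^{-3/2}\sum_j X_{jj}$, a normalised sum of i.i.d.\ centered variables of unit variance. Theorem~\ref{th: Rosenthal} with $a_k\equiv 1$ gives, for $p\ge 2$,
$$
\E\Bigl|\sum_{j=1}^n X_{jj}\Bigr|^p \le (Cp)^{p/2}n^{p/2} + (Cp)^p D^{p-2}n^{1+\alpha(p-2)}.
$$
After dividing by $n^{3p/2}$ the first summand becomes $(Cp)^{p/2}n^{-p}\le (Cp)^p n^{-p}$, while the exponent of $n$ in the second summand equals $1+\alpha(p-2)-\tfrac{3p}{2}=-p-(p-1)(1-2\alpha)$, which is $\le -p$ since $\alpha<1/2$; absorbing $D^{p-2}$ into the constant then yields $(Cp)^p n^{-p}$. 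For $1\le p<2$ the claim is immediate from Lyapunov's inequality: $\E|n^{-3/2}\sum_j X_{jj}|^p\le(n^{-3}\cdot n)^{p/2}=n^{-p}$.

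For $q=2$, the key algebraic observation is that $\varepsilon_{1j}^2=n^{-1}X_{jj}^2$ decomposes as
$$
\frac{1}{n}\sum_{j=1}^n \varepsilon_{1j}^2 = \frac{1}{n} + \frac{1}{n^2}\sum_{j=1}^n \bigl(X_{jj}^2-1\bigr),
$$
so that one deterministic term already contributes $n^{-p}$ and it remains to bound the $p$-th moment of the centered sum. Applying Theorem~\ref{th: Rosenthal} to the i.i.d.\ centered variables $Y_j:=X_{jj}^2-1$, whose variance is at most $\mu_{4+\delta}$ and whose $r$-th absolute moment is at most $C^r D^{2r-2}n^{2\alpha(r-1)}$, yields for $p\ge 2$
$$
\E\Bigl|\sum_{j=1}^n Y_j\Bigr|^p \le (Cp)^{p/2} n^{p/2} + (Cp)^p D^{2p}n^{1+2\alpha(p-1)}.
$$
Dividing by $n^{2p}$ and observing that the relevant exponent comparison $2p-2\alpha(p-1)-1\ge p$ again reduces to $(p-1)(1-2\alpha)\ge 0$, both summands are bounded by $(Cp)^p n^{-p}$. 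The range $1\le p<2$ is handled once more by Lyapunov's inequality, since $\E\bigl(n^{-2}\sum_j(X_{jj}^2-1)\bigr)^2\le Cn^{-3}$.

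There is no genuine obstacle; the argument is essentially an exercise in bookkeeping the powers of $n$ introduced by the truncation level $n^\alpha$. The only step that merits attention is the exponent comparison in the Rosenthal tail---in both cases it collapses to the single inequality $(p-1)(1-2\alpha)\ge 0$, which is valid for all $p\ge 1$ thanks to $\alpha<1/2$. All constants produced by the argument depend only on $\alpha$ and on the absolute truncation constant $D=D(\alpha)$ introduced in $\CondTwo$.
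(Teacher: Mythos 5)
Your argument follows the paper's proof in essence: split off the deterministic mean (only present for $q=2$), apply Rosenthal's inequality (Theorem~\ref{th: Rosenthal}) to the centered sum of i.i.d.\ variables, control the Rosenthal tail via the truncation $|X_{jj}|\le Dn^\alpha$, and handle $1\le p<2$ by Lyapunov. The decomposition and the tools are the same as in the paper.

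One slip worth flagging, since it concerns the only step you yourself single out as delicate: for $q=1$ the identity
$1+\alpha(p-2)-\tfrac{3p}{2}=-p-(p-1)(1-2\alpha)$
is false; the correct rewriting is
$1+\alpha(p-2)-\tfrac{3p}{2}=-p-\tfrac{(p-2)(1-2\alpha)}{2}$,
so the $q=1$ exponent comparison actually reduces to $(p-2)(1-2\alpha)\ge 0$, not to $(p-1)(1-2\alpha)\ge 0$. The conclusion $\le -p$ is still valid because Rosenthal's inequality is only invoked for $p\ge 2$, but your claim that \emph{both} cases collapse to the same inequality $(p-1)(1-2\alpha)\ge 0$ valid for all $p\ge 1$ is incorrect in the $q=1$ case. (For $q=2$ your reduction is exact.)
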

\begin{proof}
From the definition $\varepsilon_{1j}: = \frac{1}{\sqrt n} X_{jj}$. Let us introduce the following notations for moments $\beta_q: = \E X^q$. In these notations
$\beta_1 = 0$ and  $\beta_2 = 1$.
It is easy to see that
\begin{equation}\label{appendix eq varepsilon_1 1}
\E \left| \frac{1}{n n^{\frac{q}{2}}} \sum_{j=1}^n X_{jj}^q \right|^{p} \le 2^{p}\E \left| \frac{1}{n n^{\frac{q}{2}}} \sum_{j=1}^n [X_{jj}^q - \beta_q] \right|^{p} + \frac{(2\beta_q)^p}{ n^{\frac{qp}{2}}} \one[q = 2].
\end{equation}
Applying Rosenthal's inequality~\ref{th: Rosenthal}  we get
$$
\E \left| \sum_{j=1}^n [X_{jj}^q - \beta_q] \right|^{p} \le (Cp)^{\frac{p}{2}} (n\E |X_{jj}|^{2q})^{\frac{p}{2}} + \E |X_{jj}|^{qp} (Cp)^{p} n.
$$
Since $\max_{1 \le j \le n} \E|X_{jj}|^4 \le \mu_4 < \infty$ and $|X_{jj}| \le D n^{\alpha}$ it follows that
$$
\mu_{qp} \le \mu_4 D n^{\alpha (pq - 4)}.
$$
and we get
\begin{equation}\label{appendix eq varepsilon_1 2}
\E \left| \sum_{j=1}^n [X_{jj}^q - \beta_q] \right|^{p} \le (Cp)^{p} n^{q p} .
\end{equation}
Inequalities ~\eqref{appendix eq varepsilon_1 1} and~\eqref{appendix eq varepsilon_1 2} conclude the proof of the  lemma.
\end{proof}

Recall the definition of the following quantities given in ~\eqref{eq: definition of A and F}
\begin{align*}
&A_{\nu,q}:= A_{\nu,q}^{(K)}:= \max_{\J: |\J| \le \nu + K } \max_{l \neq k \in \T_{\J}} \E|\RR_{lk}^{(\J)}|^q,
&F_{\nu,q}: = F_{\nu,q}^{(K)}: =   \max_{\J: |\J| \le \nu + K } \max_{l \in \T_{\J}} \E \imag^q \RR_{ll}^{(\J)},
\end{align*}
where $K > 0$ is some integer.

\begin{lemma}\label{appendix lemma varepsilon_2}
Assuming conditions $\CondTwo$ for $p \geq 2$ and $|\J| \le K$ we have
\begin{align*}
\E |\varepsilon_{2j}^{(\J)}|^p &\le C^{p}   \left( \frac{p^p}{(nv)^{\frac{p}{2}}}   \E[\imag m_n^{(\J,j)}(z)]^{\frac{p}{2}} + \frac{p^{\frac{3p}{2}}}{(nv)^{\frac{p}{2}}} F_{1,\frac{p}{2}}+\frac{p^{2p}}{n^{p(1-2\alpha)}} A_{1,p} \right).
\end{align*}
\end{lemma}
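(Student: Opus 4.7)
The plan is to apply the Gin{\'e}--Lata{\l}a--Zinn quadratic-form moment inequality (Theorem~\ref{appendix lemma for quadratic forms}) conditionally on the sub-matrix $\X^{(\J,j)}$, and then convert the three resulting sums into the target quantities via the Ward identity.

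The key observation is that the family $\{X_{jk}\}_{k\in \T_{\J,j}}$ is independent of the sub-resolvent $\RR^{(\J,j)}$, which depends only on $\{X_{ab}:a,b\in\T_{\J,j}\}$. Conditioning on $\mathfrak M^{(j)}=\sigma(\X^{(\J,j)})$, I would view $n\,\varepsilon_{2j}^{(\J)}$ as a decoupled off-diagonal quadratic form with symmetric coefficient matrix $\RR^{(\J,j)}$, apply Theorem~\ref{appendix lemma for quadratic forms}, and carry the prefactor $n^{-p}$ outside. This produces three contributions, proportional to $p^p(\sum_{k,l}|\RR_{kl}^{(\J,j)}|^2)^{p/2}$, $\mu_p p^{3p/2}\sum_k(\sum_l|\RR_{kl}^{(\J,j)}|^2)^{p/2}$, and $\mu_p^2 p^{2p}\sum_{k\ne l}|\RR_{kl}^{(\J,j)}|^p$ respectively.

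Next I would use the Ward identity from Lemma~\ref{appendix lemma resolvent inequalities 1}, namely $\sum_l|\RR_{kl}^{(\J,j)}|^2 = v^{-1}\imag\RR_{kk}^{(\J,j)}$, which converts the first double sum into $nv^{-1}\imag m_n^{(\J,j)}$ and the second sum into $v^{-p/2}\sum_k (\imag \RR_{kk}^{(\J,j)})^{p/2}$. After taking unconditional expectations and bounding the single-site averages by the definitions of $F_{1,p/2}$ and $A_{1,p}$ (the index shift $|\J\cup\{j\}|\le K+1$ matches the range allowed in those definitions), the bound becomes
\begin{equation*}
\E|\varepsilon_{2j}^{(\J)}|^p
\le C^p\!\left[\frac{p^p\,\E[\imag m_n^{(\J,j)}]^{p/2}}{(nv)^{p/2}}
+\frac{\mu_p p^{3p/2}F_{1,p/2}}{n^{p-1}v^{p/2}}
+\frac{\mu_p^2 p^{2p}A_{1,p}}{n^{p-2}}\right].
\end{equation*}

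Finally I would absorb $\mu_p$ and $\mu_p^2$ into the $n$-denominators to match the exponents stated in the lemma. Under $\CondTwo$ the truncation $|X_{jk}|\le Dn^\alpha$ combined with $\E X_{jk}^2=1$ gives the crude bound $\mu_p\le D^{p-2}n^{\alpha(p-2)}$, while the finiteness of the fourth moment furnishes the sharper estimate $\mu_p\le \mu_4 D^{p-4}n^{\alpha(p-4)}$ for $p\ge 4$. A direct exponent count then yields $\mu_p/n^{p/2-1}\le C^p$, which converts the second term into the target $p^{3p/2}F_{1,p/2}/(nv)^{p/2}$, and $\mu_p^2/n^{2p\alpha-2}\le C^p$, which converts the third term into $p^{2p}A_{1,p}/n^{p(1-2\alpha)}$. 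The main technical point is this third absorption: it reduces to the inequality $2-8\alpha\le 0$, and therefore depends crucially on the assumption $\alpha\ge 1/4$ built into $\CondTwo$. For the narrow range $2\le p<4$, where the sharper fourth-moment bound on $\mu_p$ is unavailable, one instead appeals to the exact second-moment identity $\E|\varepsilon_{2j}^{(\J)}|^2 = 2n^{-2}\sum_{k\ne l}\E|\RR_{kl}^{(\J,j)}|^2 \le 2(nv)^{-1}\E\imag m_n^{(\J,j)}$ together with a Lyapunov interpolation against the $p=4$ bound.
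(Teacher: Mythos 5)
Your argument is essentially the paper's: apply the Gin\'e--Lata{\l}a--Zinn moment bound (Theorem~\ref{appendix lemma for quadratic forms}) to the decoupled off-diagonal quadratic form, convert the three resulting sums with the resolvent identities of Lemma~\ref{appendix lemma resolvent inequalities 1}, and absorb the truncated moments $\mu_p$ into powers of $n$ via $|X_{jk}|\le Dn^\alpha$. Your explicit exponent bookkeeping --- in particular the observation that the third-term absorption reduces to $8\alpha\ge 2$, hence requires $\alpha\ge 1/4$ --- makes explicit a step the paper dispatches with ``substituting \ldots{} concludes the proof.'' You are also right that the paper's displayed truncation estimate $\mu_p\le\mu_4 D^{p-4}n^{\alpha(p-4)}$ is only justified for $p\ge 4$; for $2\le p<4$, however, the simplest patch is to use $\mu_p\le\max(1,\mu_4)$ (a constant) directly in the Gin\'e--Lata{\l}a--Zinn bound, together with $\sum_{k\ne l}|\RR_{lk}^{(\J,j)}|^p\le\bigl(\sum_{k\ne l}|\RR_{lk}^{(\J,j)}|^2\bigr)^{p/2}$ so the third term is swallowed by the first, which is lighter than the second-moment-plus-interpolation detour you sketch.
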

\begin{proof}
By definition
$$
\varepsilon_{2j}^{(\J)}: = -\frac{1}{n}\sum_{k\neq l \in \T_{\J,j}} X_{jk} X_{jl} \RR_{lk}^{(\J,j)}.
$$
Applying Lemma~\ref{appendix lemma for quadratic forms} we get
\begin{align}\label{appendix: vareps_2 eq 1}
\E |\varepsilon_{2j}^{(\J)}|^p &\le \frac{C^{p}}{n^p} \left[p^p \E\left(\sum_{k \in \T_{\J, j}} \sum_{l \in \T_{\J,j,k}} |\RR_{lk}^{(\J,j)}|^2\right)^{\frac{p}{2}} +
\mu_p p^{\frac{3p}{2}} \sum_{k \in \T_{\J, j}} \E\left(\sum_{l \in \T_{\J,j,k}} |\RR_{lk}^{(\J,j)}|^2 \right )^{\frac{p}{2}} \right. \nonumber\\
&\qquad\qquad\left.+ \mu_p^2 p^{2p}\sum_{k \neq l \in \T_{\J,j}} \E |\RR_{lk}^{(\J,j)}|^p \right].
\end{align}
From Lemma~\ref{appendix lemma resolvent inequalities 1} [Inequality~\eqref{appendix lemma resolvent inequality 1}] we get
\begin{align}\label{appendix: vareps_2 eq 2}
\left(\sum_{k \in \T_{\J, j}} \sum_{l \in \T_{\J,j,k}} |\RR_{lk}^{(\J,j)}|^2\right)^{\frac{p}{2}} \le \left (\frac{n}{v} \imag m_n^{(\J,j)}(z)\right)^{\frac{p}{2}} .
\end{align}
Since $|X_{jk}| \le D n^{\alpha}$ we get
\begin{align}\label{appendix: vareps_2 eq 3}
\mu_p \le \mu_4 D^{p-4} n^{\alpha(p-4)}.
\end{align}
From Lemma~\ref{appendix lemma resolvent inequalities 1} [Inequality~\eqref{appendix lemma resolvent inequality 2}] we may conclude that
\begin{align}\label{appendix: vareps_2 eq 4}
\sum_{k \in \T_{\J, j}} \left(\sum_{l \in \T_{\J,j,k}} |\RR_{lk}^{(\J,j)}|^2 \right )^{\frac{p}{2}} \le \sum_{k \in \T_{\J, j}} \left( \frac{1}{v}\imag \RR_{kk}^{(\J,j)}  \right )^{\frac{p}{2}}.
\end{align}
Substituting~\eqref{appendix: vareps_2 eq 2}--\eqref{appendix: vareps_2 eq 4} into~\eqref{appendix: vareps_2 eq 1} concludes the proof of the  lemma.
\end{proof}

For $p = 2$ and $4$ we may give a better bound for the quadratic form $\varepsilon_{2j}$.
\begin{lemma}\label{appendix lemma varepsilon_2 4 moment}
Let $\mathfrak M^{(j)}: = \sigma\{X_{lk}, l, k \in \T_j\}$. Assuming conditions $\Cond$ for $q = 2$ and $4$ we have
$$
\E(|\varepsilon_{2j}|^q \big|\mathfrak M^{(j)}) \le \frac{C}{(nv)^{\frac{q}{2}}} \imag^{\frac{q}{2}} m_n^{(j)}(z).
$$
\end{lemma}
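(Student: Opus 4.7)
\bigskip
\noindent\textbf{Proof plan.}
Since we condition on $\mathfrak{M}^{(j)}$, the matrix $\RR^{(j)}$ is deterministic and the variables $\{X_{jk}\}_{k\in T_j}$ are i.i.d.\ with mean zero, unit variance, and finite fourth moment (by $\Cond$). In both cases the proof is a direct Wick-style moment computation combined with the resolvent identity \eqref{appendix lemma resolvent inequality 1}.

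For $q=2$ I would expand
$$
\E\bigl(|\varepsilon_{2j}|^2\bigm|\mathfrak{M}^{(j)}\bigr)=\frac{1}{n^2}\sum_{k\neq l}\sum_{k'\neq l'}\E\bigl(X_{jk}X_{jl}X_{jk'}X_{jl'}\bigr)\,\RR^{(j)}_{kl}\bar{\RR}^{(j)}_{k'l'}.
$$
The conditional expectation vanishes unless the four $X$-indices pair up, and since $k\neq l,\ k'\neq l'$ the only surviving configurations are $\{k,l\}=\{k',l'\}$, giving two pairings each contributing $|\RR^{(j)}_{kl}|^2$ (using the symmetry $\RR^{(j)}_{kl}=\RR^{(j)}_{lk}$). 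Therefore
$$
\E\bigl(|\varepsilon_{2j}|^2\bigm|\mathfrak{M}^{(j)}\bigr)=\frac{2}{n^2}\sum_{k\neq l\in T_j}|\RR^{(j)}_{kl}|^2\le\frac{2}{n^2}\cdot\frac{n\,\imag m_n^{(j)}(z)}{v}=\frac{2\,\imag m_n^{(j)}(z)}{nv},
$$
where the inequality is Lemma \ref{appendix lemma resolvent inequalities 1}[\eqref{appendix lemma resolvent inequality 1}].

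For $q=4$ I would expand $|\varepsilon_{2j}|^4=\varepsilon_{2j}^2\,\bar{\varepsilon}_{2j}^2$ as an 8-fold sum and take the conditional expectation. The key combinatorial observation is that the constraint $k_i\neq l_i$ in each of the four factors prevents any single index value from occupying both slots of the same factor, so no value can appear 6 or 8 times among the 8 slots; only products of $\mu_2=1$ and $\mu_4<\infty$ ever appear, which is exactly what $\Cond$ provides. The surviving pairings split into three classes: (i) ``disjoint pair'' matchings where two pairs of factors fully share their indices, summing (up to absolute constants) to $\bigl(\sum_{k\neq l}|\RR^{(j)}_{kl}|^2\bigr)^2/n^4$, which already matches the target $(\imag m_n^{(j)})^2/(nv)^2$ via the $q=2$ calculation; (ii) ``4-cycle'' matchings that evaluate to $n^{-4}\Tr\bigl((\RR^{(j)}\RR^{(j)*})^2\bigr)$ times a combinatorial factor; (iii) ``coincidence'' matchings (one or two index values used four times each) that pick up factors of $\mu_4$ and reduce to sums like $\mu_4 n^{-4}v^{-2}\sum_k(\imag\RR^{(j)}_{kk})^2$.

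The main obstacle is to control (ii) and (iii) by the leading term (i). For this I would systematically use the matrix identity $\RR^{(j)}\RR^{(j)*}=v^{-1}\imag\RR^{(j)}$ (a direct consequence of $\RR^{(j)}-\RR^{(j)*}=2iv\RR^{(j)}\RR^{(j)*}$), which rewrites every cross sum in terms of $\imag\RR^{(j)}$. In particular $\sum_l|\RR^{(j)}_{kl}|^2=v^{-1}\imag\RR^{(j)}_{kk}$, so the row-square-sums in (iii) collapse to $\sum_k(\imag\RR^{(j)}_{kk})^2$, which is then bounded using Cauchy--Schwarz together with the pointwise estimate $\imag\RR^{(j)}_{kk}\le v^{-1}$; an analogous trick bounds $\Tr((\RR^{(j)}\RR^{(j)*})^2)\le v^{-1}\Tr(\RR^{(j)}\RR^{(j)*})=v^{-2}n\,\imag m_n^{(j)}$ in (ii). Plugging these bounds in and collecting powers shows that both (ii) and (iii) are absorbed into $C(\imag m_n^{(j)})^2/(nv)^2$, yielding the asserted bound for $q=4$.
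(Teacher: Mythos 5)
Your $q=2$ computation is correct and essentially the same as the paper (which simply says it "follows immediately" from the resolvent bound). For $q=4$, the paper applies the Gin\'e--Lata{\l}a--Zinn bound (Theorem~\ref{appendix lemma for quadratic forms}) once and dominates all three resulting terms by $C\mu_4^2 n^{-4}\bigl(\sum_{k\neq l}|\RR^{(j)}_{kl}|^2\bigr)^2$, using $\mu_4\ge 1$, $\sum_k\bigl(\sum_l|a_{kl}|^2\bigr)^2\le\bigl(\sum_{kl}|a_{kl}|^2\bigr)^2$ and $\sum_{kl}|a_{kl}|^4\le\bigl(\sum_{kl}|a_{kl}|^2\bigr)^2$; you instead redo the Wick expansion by hand. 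That route is perfectly viable, and your classification of pairings is sound (including the observation that the constraints $k_i\neq l_i$ cap each index at multiplicity $4$, so only $\mu_4$ --- and possibly $\mu_3$ --- ever enter), but the way you close the estimate on the subleading classes (ii) and (iii) is wrong.

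Concretely, for (ii) you claim $\Tr\bigl((\RR^{(j)}\RR^{(j)*})^2\bigr)\le v^{-1}\Tr(\RR^{(j)}\RR^{(j)*})$. Since $\|\RR^{(j)}\RR^{(j)*}\|\le v^{-2}$, the operator-norm argument only yields $\Tr\bigl((\RR^{(j)}\RR^{(j)*})^2\bigr)\le v^{-2}\Tr(\RR^{(j)}\RR^{(j)*})=v^{-3}n\,\imag m_n^{(j)}$, and plugging that into $n^{-4}\cdot(\cdot)$ gives $\imag m_n^{(j)}/(n^3v^3)$, not $C\,\imag^2 m_n^{(j)}/(nv)^2$. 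These are \emph{not} comparable: the lemma must hold conditionally on $\mathfrak M^{(j)}$, i.e.\ deterministically in $\W^{(j)}$, and $\imag m_n^{(j)}$ can be as small as of order $v/n$ (when no eigenvalue of $\W^{(j)}$ is near $u$), in which case $1/(nv)\gg\imag m_n^{(j)}$ for the relevant range $v\gtrsim n^{-1}$. The same problem occurs for (iii): bounding $\sum_k(\imag\RR^{(j)}_{kk})^2$ via $\imag\RR^{(j)}_{kk}\le v^{-1}$ again produces the wrong power. The fix is to avoid the operator-norm/pointwise bounds altogether: for (ii) use $\Tr(B^2)\le(\Tr B)^2$ for the positive matrix $B=\RR^{(j)}\RR^{(j)*}$, and for (iii) use $\sum_k(\imag\RR^{(j)}_{kk})^2\le\bigl(\sum_k\imag\RR^{(j)}_{kk}\bigr)^2$ (valid since all summands are nonnegative; this is the $\ell^2\le\ell^1$ inequality, not Cauchy--Schwarz). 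Both then collapse to $\bigl(\sum_{k\neq l}|\RR^{(j)}_{kl}|^2\bigr)^2$, after which Lemma~\ref{appendix lemma resolvent inequalities 1}\eqref{appendix lemma resolvent inequality 1} gives $n^{-4}\bigl(nv^{-1}\imag m_n^{(j)}\bigr)^2=\imag^2 m_n^{(j)}/(nv)^2$ as required.
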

\begin{proof}
Recall that
$$
\varepsilon_{2j}: = -\frac{1}{n}\sum_{k\neq l \in \T_{j}} X_{jk} X_{jl} \RR_{lk}^{(j)}.
$$
For $q = 2$ the proof follows immediately from Lemma~\ref{appendix lemma resolvent inequalities 1} [Inequality~\eqref{appendix lemma resolvent inequality 1}] and
\begin{equation}\label{appendix lemma varepsilon_2 4 moment eq 1}
\sum_{k \in \T_{j}} \sum_{l \in \T_{j,k}} |\RR_{lk}^{(j)}|^2 \le \frac{n}{v} \imag m_n^{(j)}(z).
\end{equation}
For $q = 4$ we apply Lemma~\ref{appendix lemma for quadratic forms} and get
\begin{align*}
\E(|\varepsilon_{2j}|^q|\mathfrak M^{(j)}) \le \frac{C \mu_4^2}{n^q} \left(\sum_{k \in \T_{j}} \sum_{l \in \T_{j,k}} |\RR_{lk}^{(j)}|^2\right)^{2}.
\end{align*}
and use~\eqref{appendix lemma varepsilon_2 4 moment eq 1}.
\end{proof}

\begin{lemma}\label{appendix lemma varepsilon_3}
Assuming conditions $\Cond$ for $p \geq 2$ we have
$$
\E |\varepsilon_{3j}^{(\J)}|^p \le C^p\left (\frac{p^{\frac{p}{2}}}{n^p}\E\left(\sum_{k \in \T_{\J}} |\RR_{kk}^{(\J,j)}|^2 \right)^{\frac{p}{2}}
+  \frac{p^p}{n^{p(1-2\alpha)}}  \frac{1}{n} \sum_{k \in \T_{\J}} \E|\RR_{kk}^{(\J,j)}|^p       \right ).
$$
\end{lemma}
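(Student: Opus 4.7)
The plan is to apply Rosenthal's inequality (the Appendix Theorem) to $\varepsilon_{3j}^{(\J)}$, conditioning on the sigma-algebra generated by the entries that do not involve row or column $j$. Concretely, let $\mathfrak M^{(\J,j)} := \sigma\{X_{lm} : l,m \in \T_{\J,j}\}$. Then the resolvent $\RR^{(\J,j)}$ and in particular the diagonal entries $\RR_{kk}^{(\J,j)}$ are $\mathfrak M^{(\J,j)}$-measurable, while the random variables $Y_k := X_{jk}^2 - 1$, $k \in \T_{\J,j}$, are independent of $\mathfrak M^{(\J,j)}$ and i.i.d.\ (across $k$) with $\E Y_k = 0$ and $\E Y_k^2 \le \E X_{jk}^4 \le \mu_4 \le C$.

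First I would write $\varepsilon_{3j}^{(\J)} = -\frac{1}{n}\sum_{k \in \T_{\J,j}} Y_k \RR_{kk}^{(\J,j)}$ and apply Theorem~\ref{th: Rosenthal} conditionally on $\mathfrak M^{(\J,j)}$ with coefficients $a_k = \RR_{kk}^{(\J,j)}/n$, obtaining
\begin{align*}
\E\bigl(|\varepsilon_{3j}^{(\J)}|^p \,\big|\, \mathfrak M^{(\J,j)}\bigr)
&\le \frac{(Cp)^{p/2}}{n^p}\Bigl(\E Y_1^2\Bigr)^{p/2}\Bigl(\sum_{k\in\T_{\J,j}}|\RR_{kk}^{(\J,j)}|^2\Bigr)^{p/2} \\
&\quad + \frac{(Cp)^{p}}{n^p}\,\E|Y_1|^p \sum_{k\in\T_{\J,j}}|\RR_{kk}^{(\J,j)}|^p.
\end{align*}
Then I would bound $\E|Y_1|^p = \E|X_{jk}^2-1|^p \le 2^p(\E X_{jk}^{2p} + 1)$, and use $\CondTwo$ (i.e.\ $|X_{jk}| \le D n^{\alpha}$) together with $\E X_{jk}^4 \le \mu_4$ to get $\E X_{jk}^{2p} \le (Dn^\alpha)^{2p-4}\mu_4 \le C^p n^{2\alpha(p-2)}$, whence $\E|Y_1|^p \le C^p n^{2\alpha(p-2)}$.

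Taking unconditional expectations and noting that replacing the sum over $\T_{\J,j}$ by the sum over $\T_{\J}$ only adds one term (bounded by $|\RR_{kk}^{(\J,j)}|^p$, which is already present), I would combine the prefactor of the second term as
\[
\frac{(Cp)^p}{n^p}\cdot n^{2\alpha(p-2)} \cdot \sum_{k \in \T_{\J}}\E|\RR_{kk}^{(\J,j)}|^p
= \frac{(Cp)^p n^{2\alpha p}}{n^{p+4\alpha-1}}\cdot \frac{1}{n}\sum_{k\in\T_\J}\E|\RR_{kk}^{(\J,j)}|^p,
\]
and use the standing assumption $\alpha \ge \tfrac14$ (from $\delta \le 4$) to conclude that $n^{1-4\alpha} \le 1$, so that this expression is bounded by $\frac{C^p p^p}{n^{p(1-2\alpha)}}\cdot\frac{1}{n}\sum_{k\in\T_\J}\E|\RR_{kk}^{(\J,j)}|^p$. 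The first term of Rosenthal's bound yields the $\sum |\RR_{kk}^{(\J,j)}|^2$ piece directly after using $\E Y_1^2 \le C$.

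There is no real obstacle here — the only point demanding care is the bookkeeping of the power of $n^{2\alpha}$ in the $\mu_p$ factor, where one must invoke $\alpha \ge 1/4$ to absorb the leftover power of $n$ into the $n^{-p(1-2\alpha)}$ rate; the rest is a direct application of the Rosenthal inequality already recorded in the appendix.
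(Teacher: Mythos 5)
Your proof is correct and follows the same route as the paper: a direct application of the Rosenthal inequality (Theorem~\ref{th: Rosenthal}) to the weighted sum $\frac1n\sum_k(X_{jk}^2-1)\RR_{kk}^{(\J,j)}$, followed by the truncation bound $\mu_{2p}\le\mu_4 D^{2p-4}n^{\alpha(2p-4)}$. Your explicit observation that the leftover factor $n^{1-4\alpha}$ must be absorbed using the standing reduction $\alpha\ge\tfrac14$ is a useful piece of bookkeeping that the paper leaves implicit, but it is not a different argument.
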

\begin{proof}
By  definition
$$
\varepsilon_{3j}^{(\J)}: = -\frac{1}{n}\sum_{k \in \T_{\J,j}} \RR_{kk}^{(\J,j)} [X_{jk}^2 - 1].
$$
Applying Rosenthal's inequality~\ref{th: Rosenthal} we get
\begin{align}\label{appendix: vareps_3 eq 1}
\E |\varepsilon_{3j}^{(\J)}|^p \le \frac{(Cp)^p}{n^p} \left( \E\left(\sum_{k \in \T_{\J,j}} |\RR_{kk}^{(\J,j)}|^2 \right)^{\frac{p}{2}}  +  \E|X_{11}|^{2p} \sum_{k \in \T_{\J,j}} \E|\RR_{kk}^{(\J,j)}|^p \right ).
\end{align}
Since $|X_{jk}| \le D n^{\alpha}$ we get
\begin{align}\label{appendix: vareps_3 eq 2}
\mu_{2p} \le \mu_4 D^{2p-4} n^{\alpha(2p-4)}.
\end{align}
Substituting~\eqref{appendix: vareps_3 eq 2} to~\eqref{appendix: vareps_3 eq 1} we finish the proof of the lemma.
\end{proof}

For small $p$ that is $p \le \frac{1}{\alpha}$ we may state a better bound for $\varepsilon_{3j}$.
\begin{lemma}\label{appendix lemma varepsilon_3 small p}
Assuming conditions $\CondTwo$ for $2 \le p \leq \frac{1}{\alpha}$ we have
$$
\E(|\varepsilon_{3j}|^p \big|\mathfrak M^{(j)}) \le \frac{C}{n^{\frac{p}{2} + 1}} \sum_{k \in \T_{j}} \E|\RR_{kk}^{(j)}|^p.
$$
\end{lemma}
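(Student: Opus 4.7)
The plan is to apply Rosenthal's inequality (Theorem on Rosenthal type inequalities) conditionally on $\mathfrak M^{(j)}$. Fix $j$ and condition on $\mathfrak M^{(j)}$: the matrix $\RR^{(j)}$ becomes deterministic, so
$$
\varepsilon_{3j} = \frac{1}{n}\sum_{k\in \T_j} a_k Y_k, \qquad a_k := -\RR_{kk}^{(j)},\quad Y_k := X_{jk}^2-1,
$$
where the $Y_k$'s are independent (conditionally on $\mathfrak M^{(j)}$), of mean zero with $\E Y_k^2 = \mu_4-1$. The key observation that distinguishes this small-$p$ regime from Lemma~\ref{appendix lemma varepsilon_3} is that for $p \le 1/\alpha$ we have $2p \le 2/\alpha = 4+\delta$, so
$$
\E|Y_k|^p \le 2^{p-1}\bigl(\E|X_{jk}|^{2p}+1\bigr) \le 2^{p-1}\bigl(\mu_{4+\delta}+1\bigr) \le C,
$$
\emph{uniformly in $n$}; no truncation is needed.

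Next I would invoke Rosenthal's inequality to obtain
$$
\E\bigl(|\varepsilon_{3j}|^p\,\big|\,\mathfrak M^{(j)}\bigr)
\le \frac{(Cp)^{p/2}(\mu_4-1)^{p/2}}{n^p}\Bigl(\sum_{k\in\T_j}|\RR_{kk}^{(j)}|^2\Bigr)^{p/2}
 + \frac{(Cp)^p\,\E|Y|^p}{n^p}\sum_{k\in\T_j}|\RR_{kk}^{(j)}|^p.
$$
To merge the two terms I would apply the power mean inequality
$$
\Bigl(\sum_{k\in\T_j}|\RR_{kk}^{(j)}|^2\Bigr)^{p/2} \le n^{p/2-1}\sum_{k\in\T_j}|\RR_{kk}^{(j)}|^p,
$$
so the first summand is bounded by $(Cp)^{p/2}n^{-(p/2+1)}\sum_k|\RR_{kk}^{(j)}|^p$. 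For the second summand, $n^p \ge n^{p/2+1}$ whenever $p\ge 2$, so it is majorized by the same expression. Finally, since $p \le 1/\alpha$ and $\alpha$ is regarded as fixed, the prefactors $(Cp)^p$ and $(Cp)^{p/2}$ are bounded by a constant depending only on $\alpha$, which I absorb into the constant $C$.

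I do not anticipate a serious obstacle; the only thing to double-check is the moment bound on $Y_k$ in the full range $2\le p\le 1/\alpha$, which is precisely where the definition $\alpha=2/(4+\delta)$ is used. In particular, the absolute-constant bound on $\mu_{2p}^{1/p}$ is what allows this improved estimate compared with Lemma~\ref{appendix lemma varepsilon_3}, where the truncation level $Dn^\alpha$ had to be used because $p$ could exceed $1/\alpha$.
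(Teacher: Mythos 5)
Your proof is correct and follows essentially the same route as the paper: apply Rosenthal's inequality conditionally on $\mathfrak M^{(j)}$, convert the $\ell^2$ term to an $\ell^p$ term by the power-mean (Jensen) inequality, use $n^p\ge n^{p/2+1}$ for $p\ge2$, and absorb the $p$-dependent prefactors into a constant because $p\le 1/\alpha$ is bounded. Your extra remark that for $p\le 1/\alpha$ the bound $\E|X_{jk}^2-1|^p\le C$ already follows from $\Cond$ alone (so the truncation $|X_{jk}|\le Dn^\alpha$ is not actually needed here, unlike in Lemma~\ref{appendix lemma varepsilon_3}) is a sound and slightly sharper observation than what the paper states, but it does not change the argument.
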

\begin{proof}
Applying Rosenthal's inequality~\ref{th: Rosenthal} and $|X_{jk}| \le D n^{\alpha}$ we get
\begin{align*}
\E(|\varepsilon_{3j}|^p \big|\mathfrak M^{(j)})\le \frac{C}{n^\frac{p}{2}} \left( \E\left(\frac{1}{n}\sum_{k \in \T_{j}} |\RR_{kk}^{(j)}|^2 \right)^{\frac{p}{2}}  +   \frac{1}{n} \sum_{k \in \T_{j}} \E|\RR_{kk}^{(j)}|^p \right ) \le \frac{C}{n^{\frac{p}{2} + 1}} \sum_{k \in \T_{j}} \E|\RR_{kk}^{(j)}|^p,
\end{align*}
where the last inequality holds since $2 \le p \le \frac{1}{\alpha}$.
\end{proof}

\begin{lemma}\label{appendix lemma varepsilon_4}
For $p \geq 2$ we have
$$
\E |\varepsilon_{4j}^{(\J)}|^p \le \frac{1}{(nv)^p}.
$$
\end{lemma}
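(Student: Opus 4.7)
The plan is to observe that $\varepsilon_{4j}^{(\J)}$ admits a \emph{deterministic} pointwise bound of order $(nv)^{-1}$, so that the $L^p$ estimate follows immediately by taking the $p$-th power. The only ingredient needed is Cauchy's eigenvalue interlacing theorem.

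First, I would fix $\J \subset \T$ and $j \in \T_{\J}$ and denote by $N^{(\J)}(\lambda) := N_{(-\infty,\lambda]}(\W^{(\J)})$ and $N^{(\J,j)}(\lambda) := N_{(-\infty,\lambda]}(\W^{(\J,j)})$ the eigenvalue counting functions of $\W^{(\J)}$ and its principal submatrix $\W^{(\J,j)}$. Since $\W^{(\J,j)}$ is obtained from $\W^{(\J)}$ by deleting one row and the corresponding column, the Cauchy interlacing theorem yields
$$
\lambda_k(\W^{(\J)}) \le \lambda_k(\W^{(\J,j)}) \le \lambda_{k+1}(\W^{(\J)})
$$
for every admissible $k$. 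Consequently $\mu(\lambda) := N^{(\J)}(\lambda) - N^{(\J,j)}(\lambda)$ is a nondecreasing step function taking values in $\{0,1\}$ and tending to $1$ as $\lambda \to \infty$, so $d\mu$ is a probability measure on $\R$.

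Next I would express the trace difference as an integral against $d\mu$. Since
$$
\Tr \RR^{(\J)}(z) - \Tr \RR^{(\J,j)}(z) = \int_{-\infty}^{\infty} \frac{d\mu(\lambda)}{\lambda - z},
$$
and $|\lambda - z| \geq v$ for every real $\lambda$, we obtain the deterministic pointwise bound
$$
\bigl|\Tr \RR^{(\J)}(z) - \Tr \RR^{(\J,j)}(z)\bigr| \le \int_{-\infty}^{\infty} \frac{d\mu(\lambda)}{|\lambda - z|} \le \frac{1}{v}\int_{-\infty}^{\infty} d\mu(\lambda) = \frac{1}{v}.
$$
Dividing by $n$ gives $|\varepsilon_{4j}^{(\J)}| \le \frac{1}{nv}$ almost surely, and raising to the $p$-th power and taking expectation concludes the proof.

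There is essentially no obstacle: the estimate is a deterministic consequence of interlacing, and it does not use the distribution of the matrix entries at all. The only thing to be careful about is the direction of the inequality in Cauchy interlacing and checking that $d\mu$ is indeed a positive measure of total mass $1$, but both are standard.
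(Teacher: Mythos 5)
Your overall strategy matches the paper's in spirit: both reduce the claim to the deterministic pointwise bound $|\varepsilon_{4j}^{(\J)}| \le \frac{1}{nv}$ and then simply raise it to the $p$-th power. However, the argument you give to establish this pointwise bound contains a genuine error.

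The gap is the assertion that $\mu(\lambda) := N^{(\J)}(\lambda) - N^{(\J,j)}(\lambda)$ is a \emph{nondecreasing} step function, hence that $d\mu$ is a probability measure. Cauchy interlacing only gives $0 \le \mu(\lambda) \le 1$, not monotonicity. In fact $\mu$ oscillates: writing $\alpha_1 \le \beta_1 \le \alpha_2 \le \cdots \le \beta_{m-1} \le \alpha_m$ for the interlacing eigenvalues of $\W^{(\J)}$ and $\W^{(\J,j)}$ (with $m = |\T_\J|$), the function $\mu$ jumps up by $1$ at each $\alpha_k$ and down by $1$ at each $\beta_k$, so it alternates between $0$ and $1$. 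As a result $d\mu$ is a signed measure whose total variation is of order $m$, and the estimate $\int |\lambda - z|^{-1}\, d\mu(\lambda) \le v^{-1}$ fails. If you repair this by first integrating by parts and using only $|\mu|\le 1$, you obtain $\big|\Tr\RR^{(\J)} - \Tr\RR^{(\J,j)}\big| \le \int |\mu(\lambda)| |\lambda - z|^{-2}\, d\lambda \le \pi/v$, which yields the weaker constant $\pi^p (nv)^{-p}$ (harmless for the paper's applications, but not what is stated). The paper instead uses the Schur complement/Cramer identity
$$
\Tr\RR^{(\J)} - \Tr\RR^{(\J,j)} \;=\; \frac{(\RR_{jj}^{(\J)})'}{\RR_{jj}^{(\J)}},
$$
combined with the facts that $\RR_{jj}^{(\J)}$ is the Stieltjes transform of a probability measure, whence $|(\RR_{jj}^{(\J)})'| \le v^{-1}\imag \RR_{jj}^{(\J)} \le v^{-1} |\RR_{jj}^{(\J)}|$, and this gives exactly $|\varepsilon_{4j}^{(\J)}| \le (nv)^{-1}$. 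You could also reach the sharp constant from interlacing, but you would need to observe that interlacing forces the residues of the rational function $\det(\W^{(\J,j)}-z)/\det(\W^{(\J)}-z) = \RR_{jj}^{(\J)}(z)$ to be nonnegative (so that it is itself a Stieltjes transform of a probability measure), which is a different and somewhat longer route than what you wrote.
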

\begin{proof}
Applying the  Schur complement formula, see~\cite{GotzeTikh2014rateofconv}[Lemma~7.23] or~\cite{GotTikh2003}[Lemma~3.3], one may write
$$
\varepsilon_{4j}^{(\J)} = \frac{1}{n} (\Tr \RR^{(\J)} - \Tr \RR^{(\J,j)}) =
\frac{1}{n}\left( 1 + \frac{1}{n} \sum_{l,k \in \T_{\J,j}} X_{jk} X_{jl} [(\RR^{(\J,j)})^2]_{kl}\right) \RR_{jj}^{(\J)} = \frac{(\RR_{jj}^{(\J)})^{-1}}{n} \frac{d \RR_{jj}^{(\J)}}{dz}.
$$
Applying now Lemma~\ref{appendix lemma resolvent inequalities 1} concludes the proof of the lemma.
\end{proof}
Recall the definition of the quantities $\eta_{\nu j}, \nu = 0, 1, 2$
\begin{align*}
&\eta_{0j}: = \frac{1}{n} \sum_{k \in T_j} [(\RR^{(j)})^2]_{kk} , \qquad \eta_{1j}: = \frac{1}{n} \sum_{k \neq l \in \T_j} X_{jl} X_{jk} [(\RR^{(j)})^2]_{kl},\\
&\eta_{2j}: = \frac{1}{n} \sum_{k \in \T_j} [X_{jk}^2 - 1] [(\RR^{(j)})^2]_{kk}.
\end{align*}

\begin{lemma}\label{appendix lemma eta_1}
Assuming conditions $\Cond$ for $2 \le p \le 4$ we have
$$
\E (|\eta_{1j}|^p \big|\mathfrak M^{(j)})\le  \frac{C}{n^\frac{p}{2}} \left (\frac{1}{n} \Tr |\RR^{(j)}|^4 \right)^\frac{p}{2}.
$$
\end{lemma}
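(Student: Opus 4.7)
The quantity $\eta_{1j}$ is a centered off-diagonal quadratic form
$$
\eta_{1j} = \sum_{k\ne l\in\T_j} a_{kl}^{(j)} X_{jk}X_{jl}, \qquad a_{kl}^{(j)} := \tfrac{1}{n}[(\RR^{(j)})^2]_{kl},
$$
in the variables $\{X_{jk}\}_{k\in\T_j}$, which are independent of $\mathfrak M^{(j)}$ and have mean zero and unit variance. Since $\RR^{(j)}$ is $\mathfrak M^{(j)}$-measurable, I apply Theorem~\ref{appendix lemma for quadratic forms} (Gin\'e--Lata\l a--Zinn) conditionally on $\mathfrak M^{(j)}$ to get, for $p\ge 2$,
$$
\E(|\eta_{1j}|^p\,|\,\mathfrak M^{(j)}) \le C^p\!\left[p^p\!\Big(\sum_{k,l}|a_{kl}^{(j)}|^2\Big)^{\!p/2}\!\!+ \mu_p p^{3p/2}\sum_k\!\Big(\sum_l|a_{kl}^{(j)}|^2\Big)^{\!p/2}\!\!+ \mu_p^2 p^{2p}\!\sum_{k,l}|a_{kl}^{(j)}|^p\right].
$$

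\textbf{Identifying the leading term.} Using the fact that $\W^{(j)}$ is real symmetric, so $\RR^{(j)}$ is complex symmetric and commutes with its adjoint, a short computation shows
$$
\sum_{k,l}|[(\RR^{(j)})^2]_{kl}|^2 = \Tr\bigl((\RR^{(j)})^2 ((\RR^{(j)})^*)^2\bigr) = \Tr\bigl(\RR^{(j)}(\RR^{(j)})^*\bigr)^2 = \Tr|\RR^{(j)}|^4.
$$
Hence $\sum_{k,l}|a_{kl}^{(j)}|^2 = n^{-2}\Tr|\RR^{(j)}|^4$, which already produces the desired right-hand side $\tfrac{C}{n^{p/2}}\bigl(\tfrac{1}{n}\Tr|\RR^{(j)}|^4\bigr)^{p/2}$ for the first GLZ term.

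\textbf{Controlling the lower-order terms.} For $p\in[2,4]$, condition $\Cond$ gives $\mu_p \le C$. For the second and third GLZ terms, I shall use the elementary monotonicity of $\ell^q$ norms: for any nonnegative sequence $(b_k)$ and $q = p/2 \ge 1$, $\sum_k b_k^q \le (\sum_k b_k)^q$. Applying this with $b_k = \sum_l|[(\RR^{(j)})^2]_{kl}|^2$ bounds the second term by $\mu_p p^{3p/2} n^{-p}(\Tr|\RR^{(j)}|^4)^{p/2}$; applying it with $b_k = b_l = |[(\RR^{(j)})^2]_{kl}|^2$ (viewed as a single index over pairs, with $q = p/2$) bounds the third term by $\mu_p^2 p^{2p} n^{-p}(\Tr|\RR^{(j)}|^4)^{p/2}$. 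Since $p$ is confined to $[2,4]$, all the prefactors $p^p$, $p^{3p/2}$, $p^{2p}$ and $C^p$ are absorbed into a single universal constant $C$.

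\textbf{Main obstacle.} There is no serious obstacle; the only points that require a little care are (i) the trace identity $\sum_{k,l}|[(\RR^{(j)})^2]_{kl}|^2 = \Tr|\RR^{(j)}|^4$, which rests on the symmetry $(\RR^{(j)})^T = \RR^{(j)}$ coming from symmetry of $\W$, and (ii) checking that the lower-order GLZ terms are of the same order (not larger), which is precisely what the monotonicity $\|\cdot\|_{p/2}\le\|\cdot\|_1$ provides in the regime $p\ge 2$. After combining the three bounds the lemma follows directly.
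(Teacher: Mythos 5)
Your proof is correct and follows essentially the same route as the paper's: apply the Gin\'e--Lata{\l}a--Zinn quadratic-form inequality conditionally on $\mathfrak M^{(j)}$, then absorb the second and third GLZ terms into the first via $\ell^{p/2}$-monotonicity and the fact that $\mu_p$ is bounded for $p\in[2,4]$. The only added value in your write-up is that you make explicit the trace identity $\sum_{k,l}|[(\RR^{(j)})^2]_{kl}|^2=\Tr|\RR^{(j)}|^4$ and the $\ell^q$ step, which the paper leaves implicit; note that for the trace identity you should invoke normality of $\RR^{(j)}$ (it is the resolvent of the Hermitian matrix $\W^{(j)}$, so $\RR^{(j)}$ commutes with $(\RR^{(j)})^*$) rather than mere complex symmetry, which alone would not suffice.
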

\begin{proof}
Applying Lemma~\ref{appendix lemma for quadratic forms} we get
 \begin{align*}
 \E (|\eta_{1j}|^p \big|\mathfrak M^{(j)}) &\le \frac{C}{n^p} \left[\E\left(\sum_{k \in \T_{j}} \sum_{l \in \T_{j,k}} |[(\RR^{(j)})^2]_{lk}|^2\right)^{\frac{p}{2}} +
 \mu_p  \sum_{k \in \T_{j}} \E\left(\sum_{l \in \T_{j,k}} |[(\RR^{(j)})^2]_{lk}|^2 \right )^{\frac{p}{2}} \right.\\
 &\qquad\qquad\left.+ \mu_p^2 \sum_{k \neq l \in \T_{j}} \E |[(\RR^{(j)})^2]_{lk}|^p \right].
\end{align*}
Since $2 \le p \le 4$ we have that $\mu_p < \infty$ and
\begin{align*}
\E (|\eta_{1j}|^p \big|\mathfrak M^{(j)}) &\le \frac{C}{n^p} \E\left(\sum_{k \in \T_{j}} \sum_{l \in \T_{j,k}} |[(\RR^{(j)})^2]_{lk}|^2\right)^{\frac{p}{2}} \le \frac{C}{n^\frac{p}{2}} \left (\frac{1}{n} \Tr |\RR^{(j)}|^4 \right)^\frac{p}{2}.
\end{align*}
\end{proof}

\begin{lemma}\label{appendix lemma eta_2}
Assuming conditions $\CondTwo$ for $2 \le p \leq \frac{1}{\alpha}$ we have
$$
\E (|\eta_{2j}|^p \big|\mathfrak M^{(j)}) \le \frac{C}{n^{\frac{p}{2} +1}v^p } \sum_{k \in \T_{j}} \E|[(\RR^{(j)})^2]_{kk}|^p.
$$
\end{lemma}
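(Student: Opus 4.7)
The quantity $\eta_{2j}$ is a \emph{linear} form in the (conditionally) independent variables $\{X_{jk}^2-1 : k \in \T_j\}$ rather than a bilinear one, so the natural tool is Rosenthal's inequality (Theorem~\ref{th: Rosenthal}) applied conditionally on $\mathfrak M^{(j)}$. My plan is to run the same program as in Lemma~\ref{appendix lemma eta_1}, with the quadratic-form estimate of Theorem~\ref{appendix lemma for quadratic forms} replaced by Rosenthal's inequality, and then trade $|[(\RR^{(j)})^2]_{kk}|$ for $\imag \RR^{(j)}_{kk}/v$ via the standard resolvent identity.

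Concretely, set $a_k := [(\RR^{(j)})^2]_{kk}$ and $Y_k := X_{jk}^2-1$. Conditionally on $\mathfrak M^{(j)}$ the $a_k$'s are deterministic, the $Y_k$'s are independent and centered, and $\E Y_k^2 \le \mu_4$. Theorem~\ref{th: Rosenthal} then yields
$$
\E\bigl(|\eta_{2j}|^p \,\big|\, \mathfrak M^{(j)}\bigr) \le \frac{(Cp)^{p/2}\mu_4^{p/2}}{n^p}\Bigl(\sum_{k\in\T_j}|a_k|^2\Bigr)^{p/2} + \frac{(Cp)^p\,\nu_p}{n^p}\sum_{k\in\T_j}|a_k|^p,
$$
where $\nu_p := \max_k \E|Y_k|^p$.

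The crucial observation is that the upper endpoint $p \le 1/\alpha = (4+\delta)/2$ is precisely the range in which $Y_k$ has a uniformly bounded $p$-th moment. Indeed, $2p \le 4+\delta$ and Lyapunov's inequality give $\E X_{jk}^{2p} \le \mu_{4+\delta}^{2p/(4+\delta)} \le C$, whence $\nu_p \le 2^{p-1}(\E X_{jk}^{2p}+1) \le C^p$. For the first Rosenthal term, the power-mean inequality (valid for $p \ge 2$) collapses the sum: $\bigl(\sum_k|a_k|^2\bigr)^{p/2} \le n^{p/2-1}\sum_k|a_k|^p$, so its contribution is of order $n^{-p/2-1}\sum_k|a_k|^p$. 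The second Rosenthal term is already of order $n^{-p}\sum_k|a_k|^p$, which, since $p\ge 2$, is dominated by $n^{-p/2-1}\sum_k|a_k|^p$ and hence absorbed. Because $2 \le p \le 1/\alpha \le 4$, the factor $p^p$ is bounded by a constant depending only on $\alpha$, giving the intermediate bound $\E(|\eta_{2j}|^p|\mathfrak M^{(j)}) \le C\,n^{-p/2-1}\sum_k|a_k|^p$.

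To reach the form stated in the lemma I would finally invoke the Ward-type identity $\sum_\ell |\RR^{(j)}_{k\ell}|^2 = v^{-1}\imag \RR^{(j)}_{kk}$ from Lemma~\ref{appendix lemma resolvent inequalities 1}, which after Cauchy-Schwarz yields $|a_k| = \bigl|\sum_\ell (\RR^{(j)}_{k\ell})^2\bigr| \le v^{-1}\imag \RR^{(j)}_{kk}$; this identity is the source of the $v^{-p}$ factor on the right-hand side. The main conceptual obstacle is checking that the interplay between the truncation level $\alpha$ and the moment assumption is tight enough: the hypothesis $p \le 1/\alpha$ is exactly what is required to apply Lyapunov to $\E X_{jk}^{2p}$ without paying a growing power of $n^\alpha$, and this is why the stated range cannot be relaxed without strengthening the moment assumption. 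Beyond this matching of exponents the argument is routine.
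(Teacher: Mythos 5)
Your proof follows the same route as the paper's: apply Rosenthal's inequality conditionally on $\mathfrak M^{(j)}$, collapse the first Rosenthal term via the power-mean inequality $\bigl(\sum_k|a_k|^2\bigr)^{p/2}\le n^{p/2-1}\sum_k|a_k|^p$ (valid since $p\ge 2$), and note that the second term is dominated because $n^{-p}\le n^{-p/2-1}$; the moment control $\E|X_{jk}^2-1|^p\le C^p$ for $p\le 1/\alpha$ via Lyapunov is exactly what the paper uses (there $2p\le 4+\delta$ so $\E|X_{11}|^{2p}<\infty$). This yields the clean bound $\E(|\eta_{2j}|^p\,|\,\mathfrak M^{(j)})\le C\,n^{-p/2-1}\sum_{k\in\T_j}|[(\RR^{(j)})^2]_{kk}|^p$ with no $v$-dependence, and your analysis of why the range $p\le 1/\alpha$ is the correct one is sound. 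One caution about your final step: you propose to invoke $|[(\RR^{(j)})^2]_{kk}|\le v^{-1}\imag\RR^{(j)}_{kk}$ to "produce the $v^{-p}$," but that substitution would simultaneously replace $|[(\RR^{(j)})^2]_{kk}|^p$ by $\imag^p\RR^{(j)}_{kk}$ on the right, whereas the stated bound keeps $|[(\RR^{(j)})^2]_{kk}|^p$ \emph{and} carries a $v^{-p}$. That combination does not come out of the Ward identity and is evidently a typo in the lemma (the paper's own proof cites inequality~\eqref{eq: appendix lemma resolvent square inequality 4} of Lemma~\ref{appendix lemma resolvent square inequalities}, whose right-hand side is $\imag^p\RR^{(j)}_{kk}$; also the $\E$ on the right-hand side of the statement is spurious since $\RR^{(j)}$ is $\mathfrak M^{(j)}$-measurable). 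Apart from tracking that typo, your argument is correct and essentially identical to the paper's.
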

\begin{proof}
Applying Rosenthal's inequality~\ref{th: Rosenthal} we get
$$
\E (|\eta_{2j}|^p \big|\mathfrak M^{(j)}) \le \frac{C}{n^p} \left(  \E\left(\sum_{k \in \T_{j}} |[(\RR^{(j)})^2]_{kk}|^2 \right)^{\frac{p}{2}}  +  \E|X_{11}|^{2p} \sum_{k \in \T_{j}} \E|[(\RR^{(j)})^2]_{kk}|^p \right ).
$$
Applying Lemma~\ref{appendix lemma resolvent square inequalities}  we obtain
$$
\sum_{k \in \T_{j}} \E|[(\RR^{(j)})^2]_{kk}|^p \le \frac{1}{v^p} \sum_{k \in \T_j} \E \imag^p \RR_{kk}^{(j)}.
$$
Repeating  the  arguments in the proof of the previous lemma concludes the proof of the lemma.
\end{proof}

\begin{lemma}\label{appendix lemma eta_3}
For $p \geq 2$ we have
$$
|\eta_{0j}|^p  \le \frac{ \imag^p m_n^{(j)}}{n^p v^{p}}.
$$
\end{lemma}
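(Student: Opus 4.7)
The plan is a one-line spectral calculation: $\eta_{0j}$ is, by the identity already noted in the excerpt, the derivative $(m_n^{(j)})'(z)$, and the Nevanlinna-type pointwise identity $v/|\lambda-z|^2 = \imag(\lambda-z)^{-1}$ controls the modulus of any such derivative by $v^{-1}$ times the imaginary part of the transform itself. No probabilistic tool is needed since the estimate is deterministic given $\RR^{(j)}$.

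Concretely, writing the trace in the eigenbasis of $\W^{(j)}$ and denoting its eigenvalues by $\lambda_k^{(j)}$, the definition of $\eta_{0j}$ expands to
$$
\eta_{0j} \;=\; \frac{1}{n}\Tr(\RR^{(j)})^2 \;=\; \frac{1}{n}\sum_{k}\frac{1}{(\lambda_k^{(j)}-z)^2}.
$$
For each $k$ one has
$$
\left|\frac{1}{(\lambda_k^{(j)}-z)^2}\right| \;=\; \frac{1}{(\lambda_k^{(j)}-u)^2+v^2} \;=\; \frac{1}{v}\,\imag\frac{1}{\lambda_k^{(j)}-z},
$$
which is the same device used in Lemma~\ref{appendix lemma resolvent inequalities 1}[Inequality~\eqref{appendix lemma resolvent inequality 2}] to control diagonal entries of $(\RR^{(j)})^2$ by the imaginary part of $\RR^{(j)}$. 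Summing on $k$, dividing by $n$, and recognising the resulting average as $\imag m_n^{(j)}(z)$ yields the pointwise bound $|\eta_{0j}|\le v^{-1}\imag m_n^{(j)}(z)$, after which raising both sides to the $p$-th power completes the proof.

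I do not foresee any genuine obstacle: the argument is purely deterministic, uses no distributional hypothesis on the $X_{jk}$, and relies only on the elementary identity for the imaginary part of $(\lambda-z)^{-1}$. The sole bookkeeping point is to verify that the $1/n$ prefactor in the definition $\eta_{0j}=n^{-1}\Tr(\RR^{(j)})^2$ agrees with the $1/n$ in $m_n^{(j)}(z)=n^{-1}\Tr\RR^{(j)}(z)$, which is the convention used consistently in the paper, so the identification $\eta_{0j}=(m_n^{(j)})'(z)$ stated in the excerpt is valid and the recursion on $v$-derivatives of Stieltjes transforms applies directly.
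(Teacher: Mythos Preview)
Your argument is correct and coincides with the paper's own proof, which simply invokes Lemma~\ref{appendix lemma resolvent square inequalities} (inequality~\eqref{appendix lemma resolvent square inequality 1}); you have merely written out that lemma's spectral computation explicitly. Note, however, that what you (and the paper's cited lemma) actually establish is $|\eta_{0j}|^p \le v^{-p}\imag^p m_n^{(j)}$, \emph{without} the extra factor $n^{-p}$ appearing in the displayed statement --- since $\eta_{0j}=(m_n^{(j)})'$ is generically of order one rather than $n^{-1}$, that $n^{p}$ in the denominator is evidently a typographical slip in the statement, not a gap in your reasoning.
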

\begin{proof}
The proof follows directly from Lemma~\ref{appendix lemma resolvent square inequalities}.
\end{proof}

\begin{lemma}\label{appendix lemma imag epsilon 2}
Assuming conditions $\CondTwo$ for all $|\J| \le K$ we have for $p \geq 2$
$$
\E|\imag \varepsilon_{2j}^{(\J)}|^{p}  \le C^p  \left( \frac{p^p \E \imag^{\frac{p}{2}} m_n^{(\J,j)}(z)  }{(nv)^{\frac{p}{2}}}  + \frac{p^{2p}}{n^{p(1-2\alpha)}} F_{\nu,p} \right) .
$$
\end{lemma}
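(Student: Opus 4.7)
The plan is to imitate the proof of Lemma~\ref{appendix lemma varepsilon_2} with $\RR_{lk}^{(\J,j)}$ replaced by $\imag \RR_{lk}^{(\J,j)}$. Since taking the imaginary part commutes with the real-coefficient bilinear form,
$$
\imag \varepsilon_{2j}^{(\J)} = -\frac{1}{n}\sum_{l\neq k\in\T_{\J,j}} X_{jk} X_{jl}\,\imag \RR_{lk}^{(\J,j)},
$$
and conditioning on the $\sigma$-algebra $\mathfrak M^{(\J,j)}$ generated by $\{X_{lm}: l,m\in\T_{\J,j}\}$ makes the coefficients deterministic while the $X_{jk}$, $k\in\T_{\J,j}$, are i.i.d., centered, with unit variance and $\mu_p\le \mu_4 D^{p-4} n^{\alpha(p-4)}$ by $\CondTwo$. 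I would then apply Theorem~\ref{appendix lemma for quadratic forms} (the Gin\'e--Lata\l a--Zinn moment inequality) conditionally, producing three contributions involving $(\sum_{k,l}|\imag\RR_{lk}|^2)^{p/2}$, $\sum_k(\sum_l|\imag\RR_{lk}|^2)^{p/2}$ and $\sum_{k\neq l}|\imag\RR_{lk}|^p$ respectively.

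To control these quantities I would invoke Ward-type identities for the imaginary part of the resolvent. From the relation $\imag \RR = v\,\RR \RR^*$ one obtains $\sum_{k,l}|\imag \RR_{lk}|^2 = \|\imag\RR\|_2^2 = (n/v)\,\imag m_n$ together with the row-wise bound $\sum_l|\imag\RR_{lk}|^2 \le v^{-1}\imag\RR_{kk}$; the Cauchy--Schwarz inequality applied to $\imag\RR_{lk}=v\sum_m\RR_{lm}\overline{\RR_{km}}$ gives the pointwise inequality $|\imag\RR_{lk}|^2 \le \imag\RR_{ll}\,\imag\RR_{kk}$. Inserting the global identity into the first Gin\'e--Lata\l a--Zinn term produces the main contribution $C^p p^p \E\imag^{p/2}m_n^{(\J,j)}/(nv)^{p/2}$, matching the first term of the statement. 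For the third Gin\'e--Lata\l a--Zinn term, the pointwise bound and Cauchy--Schwarz in expectation give $\E|\imag\RR_{lk}|^p \le F_{\nu,p}$, and combined with $\mu_p^2\le C^p n^{2\alpha(p-4)}$ together with the elementary observation that $2-p+2\alpha(p-4)\le -p(1-2\alpha)$ whenever $\alpha\ge 1/4$, one recovers the desired $C^p p^{2p} F_{\nu,p}/n^{p(1-2\alpha)}$.

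The main obstacle is the middle Gin\'e--Lata\l a--Zinn term, which after the row-wise Ward bound becomes $C^p\mu_p p^{3p/2} n^{1-p} v^{-p/2}\cdot n^{-1}\sum_k\E(\imag\RR_{kk}^{(\J,j)})^{p/2}$. Lyapunov's inequality bounds the averaged $(p/2)$-moment by $F_{\nu,p}^{1/2}$, and the truncation bound combined with $\alpha\ge 1/4$ yields $\mu_p/n^{p-1}\le C^p n^{-p/2}$ for every $p\ge 2$, simplifying the middle contribution to $C^p p^{3p/2} (nv)^{-p/2} F_{\nu,p}^{1/2}$. I would finish with a Young inequality of the form $a F_{\nu,p}^{1/2}\le a^2/(4\lambda) + \lambda F_{\nu,p}$, with $\lambda$ tuned so that the $F_{\nu,p}$-contribution fits into $C^p p^{2p} F_{\nu,p}/n^{p(1-2\alpha)}$ while the residual $a^2/(4\lambda)$ is absorbed into the first term of the statement; checking that this splitting is uniform for $z\in\mathbb D$ and $1\le p\le A_1(nv)^{(1-2\alpha)/2}$ is the delicate point and will require the lower bound $v\ge A_0/n$.
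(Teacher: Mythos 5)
Your plan is structurally the same as the paper's — condition on $\mathfrak M^{(\J,j)}$, apply the Gin\'e--Lata\l a--Zinn quadratic-form inequality, and then control the three resulting contributions with Ward-type identities. Your treatment of the first GLZ term (global Ward identity giving $(n/v)^{p/2}\,\imag^{p/2}m_n^{(\J,j)}$) and of the third GLZ term (pointwise Cauchy--Schwarz $|\imag\RR_{lk}|^2\le\imag\RR_{ll}\imag\RR_{kk}$, $\mu_p^2\le C^pn^{2\alpha(p-4)}$, and the exponent check $2-p+2\alpha(p-4)\le -p(1-2\alpha)$ for $\alpha\ge\frac14$) is correct and matches the paper.

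The difficulty is the middle GLZ term, and here your proposal does not close. After the row Ward identity, Lyapunov, and $\mu_p/n^{p-1}\le C^pn^{-p/2}$ (all of which are fine), you are left with $C^pp^{3p/2}(nv)^{-p/2}F_{\nu,p}^{1/2}$. The Young splitting $aF_{\nu,p}^{1/2}\le a^2/(4\lambda)+\lambda F_{\nu,p}$ forces $\lambda\lesssim p^{2p}n^{-p(1-2\alpha)}$ to fit the $F_{\nu,p}$ contribution; the residual is then $a^2/(4\lambda)\sim p^{p}n^{-2\alpha p}v^{-p}=\frac{p^p}{(nv)^{p/2}}\cdot n^{p(\frac12-2\alpha)}v^{-p/2}$, and there is no pointwise lower bound on $\E\imag^{p/2}m_n^{(\J,j)}$ that lets you dominate this by the first term of the lemma — indeed, for $|u|$ near $2$ and $v$ close to $A_0/n$ the quantity $\imag m_n^{(\J,j)}$ can be much smaller than $n^{1-4\alpha}v^{-1}$, so the constraints $v\ge A_0/n$ and $p\le A_1(nv)^{(1-2\alpha)/2}$ do not rescue the splitting. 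You flag this as the delicate point, but it is not merely delicate: as written, the step fails. The clean fix, consistent with the companion Lemma~\ref{appendix lemma varepsilon_2} (which keeps the analogous contribution as a separate $\frac{p^{3p/2}}{(nv)^{p/2}}F_{1,p/2}$ term), is simply to retain a middle term of the form $\frac{C^pp^{3p/2}}{(nv)^{p/2}}F_{\nu,p/2}$ rather than trying to absorb it; every place Lemma~\ref{appendix lemma imag epsilon 2} is invoked, such a term is harmless. Note also that the paper's own reduction of this middle term (the crude bound $\sum_k\E(\sum_l|\imag\RR_{lk}|^2)^{p/2}\le n^{p/2}\sum_k\E\imag^p\RR_{kk}$) suffers from the same absorption problem, so you should not try to reverse-engineer the two-term form: keep the third term.
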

\begin{proof}
Applying Lemma~\ref{appendix lemma for quadratic forms} we get
\begin{align*}\label{appendix: imag vareps_2 eq 1}
\E |\imag \varepsilon_{2j}^{(\J)}|^p &\le \frac{C^{p}}{n^p} \left[p^p\E\left(\sum_{k \in \T_{\J, j}} \sum_{l \in \T_{\J,j,k}} |\imag \RR_{lk}^{(\J,j)}|^2\right)^{\frac{p}{2}} \right. \nonumber \\
&\qquad\left.+
\mu_p p^{\frac{3p}{2}} \sum_{k \in \T_{\J, j}} \E\left(\sum_{l \in \T_{\J,j,k}} |\imag \RR_{lk}^{(\J,j)}|^2 \right )^{\frac{p}{2}}
+ \mu_p^2 p^{2p}\sum_{k \neq l \in \T_{\J,j}} \E |\imag \RR_{lk}^{(\J,j)}|^p \right].
\end{align*}
Since
$$
\imag \RR_{kl}^{(\J,j)} = v [\RR^{(\J,j)} (\RR^{(\J,j)})^{*} ]_{kl}
$$
it follows that
\begin{align*}
&\sum_{k \neq l \in \T_{\J,j}} \E |\imag \RR_{lk}^{(\J,j)}|^p \le n \sum_{k \in   \T_{\J,j}} \E \imag^p \RR_{kk}^{(\J,j)}, \\
&\E\left(\sum_{k \in \T_{\J, j}} \sum_{l \in \T_{\J,j,k}} |\imag \RR_{lk}^{(\J,j)}|^2\right)^{\frac{p}{2}} \le \frac{n^{\frac{p}{2}}}{v^{\frac{p}{2}}}\E \imag^{\frac{p}{2}} m_n^{(\J,j)}(z)
\end{align*}
and
$$
\sum_{k \in \T_{\J, j}}^n \E\left(\sum_{l \in \T_{\J,j,k}} |\imag \RR_{lk}^{(\J,j)}|^2 \right )^{\frac{p}{2}} \le n^{\frac{p}{2}} \sum_{k \in   \T_{\J,j}} \E \imag^p \RR_{kk}^{(\J,j)}.
$$
Since
$$
\mu_{p} \le \mu_4 D^{p-4} n^{\alpha (p - 4)}
$$
we get the statement of the lemma.
\end{proof}

\begin{lemma}\label{appendix lemma imag epsilon 3}
Assuming conditions $\Cond$ we have for $p \geq 2$
$$
\E |\imag \varepsilon_{3j}^{(\J)}|^p \le \frac{C^p p^p}{n^{p(1-2\alpha)}} F_{1,p} .
$$
\end{lemma}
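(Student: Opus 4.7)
The plan is to apply Rosenthal's inequality (Theorem~\ref{th: Rosenthal}) conditionally on $\mathfrak M^{(\J,j)}$, exploiting that the submatrix $\RR^{(\J,j)}$ is independent of the row entries $X_{jk}$, $k\in\T_{\J,j}$. By definition,
$$
\imag \varepsilon_{3j}^{(\J)} = -\frac{1}{n}\sum_{k\in\T_{\J,j}} (X_{jk}^2-1)\,\imag \RR_{kk}^{(\J,j)},
$$
where the summands are conditionally independent and centered, the weights $a_k:=\imag\RR_{kk}^{(\J,j)}\ge 0$ are $\mathfrak M^{(\J,j)}$-measurable, the variances satisfy $\E(X_{jk}^2-1)^2 \le \mu_4$, and the truncation $|X_{jk}|\le Dn^\alpha$ yields $\E|X_{jk}^2-1|^p \le C^p n^{\alpha(2p-4)}$.

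Applying Rosenthal's inequality conditionally, then taking expectation, we get
$$
\E|\imag\varepsilon_{3j}^{(\J)}|^p \le \frac{(Cp)^{p/2}}{n^p}\,\E\Bigl(\sum_{k}(\imag\RR_{kk}^{(\J,j)})^2\Bigr)^{p/2} + \frac{C^p p^p\,n^{\alpha(2p-4)}}{n^p}\sum_{k}\E\,\imag^p\RR_{kk}^{(\J,j)}.
$$
For the first term, since $p\ge 2$ the power-mean (Jensen) inequality $(\sum_{k=1}^n b_k)^{p/2}\le n^{p/2-1}\sum_{k=1}^n b_k^{p/2}$ with $b_k=(\imag\RR_{kk}^{(\J,j)})^2$ gives
$$
\frac{(Cp)^{p/2}}{n^p}\cdot n^{p/2-1}\cdot n\, F_{1,p} = \frac{(Cp)^{p/2}}{n^{p/2}}\,F_{1,p}.
$$
For the second term, $\sum_k\E\,\imag^p\RR_{kk}^{(\J,j)} \le n\,F_{1,p}$, which gives
$$
\frac{C^p p^p}{n^{p(1-2\alpha)+4\alpha-1}}\,F_{1,p}.
$$

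It remains to compare these two bounds with the target $C^p p^p n^{-p(1-2\alpha)}F_{1,p}$. Here one uses the standing assumption $\tfrac14\le\alpha<\tfrac12$: the inequality $4\alpha-1\ge 0$ absorbs the second bound into the target, and the inequality $p(1-2\alpha)\le p/2$ together with $(Cp)^{p/2}\le (Cp)^p$ absorbs the first bound. The argument mirrors the proof of Lemma~\ref{appendix lemma varepsilon_3} almost verbatim; the only nontrivial step is the reduction of $(\sum_k a_k^2)^{p/2}$ to $\sum_k a_k^p$, which is what replaces the bound $\E(\sum_k|\RR_{kk}^{(\J,j)}|^2)^{p/2}$ (present in Lemma~\ref{appendix lemma varepsilon_3}) by the cleaner expression $F_{1,p}$ in the present statement. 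There is no substantive obstacle; everything reduces to standard moment computations once the conditional independence is used.
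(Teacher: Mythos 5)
Your proposal is correct and follows essentially the same route as the paper: a conditional Rosenthal inequality, the truncation bound $\mu_{2p}\le\mu_4D^{2p-4}n^{\alpha(2p-4)}$, the power-mean reduction of $\bigl(\sum_k\imag^2\RR_{kk}^{(\J,j)}\bigr)^{p/2}$ to $n^{p/2-1}\sum_k\imag^p\RR_{kk}^{(\J,j)}$, and finally absorbing both resulting terms into $C^pp^pn^{-p(1-2\alpha)}F_{1,p}$ using $\tfrac14\le\alpha<\tfrac12$. You in fact spell out the power-mean step and the exponent arithmetic more explicitly than the paper does, which only states the final bound; no substantive difference otherwise.
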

\begin{proof}
Recall that
$$
\imag \varepsilon_{3j}^{(\J)} = \frac{1}{n} \sum_{l \in \T_{\J,j}} (X_{jl}^2 - 1) \imag \RR_{ll}^{(\J,j)}.
$$
Applying Rosenthal's inequality we obtain
$$
\E |\imag \varepsilon_{3j}^{(\J)}|^p \le \frac{C^p}{n^p} \left(p^{\frac{p}{2}} \E\left( \sum_{l \in \T_{\J,j}} \imag^2 \RR_{ll}^{(\J,j)} \right)^{\frac{p}{2}}  + p^p \mu_{2p} \sum_{l \in \T_{\J,j}} \E \imag^p \RR_{ll}^{(\J,j)}\right).
$$
Since
$$
\mu_{2p} \le \mu_4 D^{2p-4} n^{\alpha (2p - 4)}
$$
we get that
$$
\E |\imag \varepsilon_{3j}^{(\J)}|^p \le \frac{C^p}{n^p} \left(p^{\frac{p}{2}} \E \left( \sum_{l \in \T_{\J,j}} \imag^2 \RR_{ll}^{(\J,j)} \right)^{\frac{p}{2}}  + p^p  n^{\alpha (2p - 4)} \sum_{l \in \T_{\J, j}} \E \imag^p \RR_{ll}^{(\J,j)}\right).
$$
Thus we arrive at the following bound
\begin{align*}
\E |\imag \varepsilon_{3j}^{(\J)}|^p &\le C^p F_{\nu,p} \left(\frac{p^{\frac{p}{2}}  }{n^{\frac{p}{2}}}  + \frac{p^p}{n^{p(1-2\alpha)}}  \right ).
\end{align*}
\end{proof}

\section{Auxiliary lemmas I}
Recall the notations $\Lambda_n: = \Lambda_n(z): = m_n(z) - s(z), \Lambda_n^{(j)}: = m_n^{(j)}(z) - s(z)$  and
$$
T_n: = \frac{1}{n} \sum_{j=1}^n \varepsilon_j \RR_{jj},
$$
where $\varepsilon_j = \varepsilon_{1j} + \varepsilon_{2j} + \varepsilon_{3j} + \varepsilon_{4j}$ and $\varepsilon_{\alpha j}, \alpha = 1, 2, 3, 4$ are defined
in~\eqref{eq: R_jj representation}. Recall the identity
\begin{equation}\label{appendix: lambda and T}
T_n(z) =  (z + m_n(z) + s(z)) \Lambda_n(z),
\end{equation}
as well as the notations
$$
b(z) = z + 2 s(z) \text{ and } b_n(z) = b(z) + \Lambda_n(z).
$$
The following lemma plays a crucial role in the proof of Theorem~\ref{th:main}. It has been proved in ~\cite{Schlein2014}[Proposition~2.2].
For the readers convenience we include  its short proof below.
\begin{lemma}\label{appendix inequality for lambda}
For all $v > 0$ and $u \le 2+v$
\begin{equation}\label{eq: abs value lambda}
|\Lambda_n| \le C \min\left\{\frac{|T_n|}{|b(z)|}, \sqrt{|T_n|}\right\}.
\end{equation}
Moreover, for all $v>0$ and $u \in \R$
\begin{equation}\label{eq: abs imag lambda}
|\imag\Lambda_n| \le C \min\left\{\frac{|T_n|}{|b(z)|}, \sqrt{|T_n|}\right\}
\end{equation}
and
\begin{equation}\label{eq: min of abs values lambda}
\min(|\Lambda_n|, |b_n(z)|) \le C\sqrt{|T_n|}.
\end{equation}
\end{lemma}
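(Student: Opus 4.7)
The plan is to reduce all three inequalities to a single quadratic identity. Since $b_n(z) = z + m_n(z) + s(z) = b(z) + \Lambda_n(z)$, the definition~\eqref{definition of T} together with~\eqref{eq: R_jj representation} (sum over $j$ and divide by $n$) gives the scalar self-consistent equation
$$T_n \;=\; b_n(z)\,\Lambda_n \;=\; \Lambda_n^2 + b(z)\,\Lambda_n.$$
Inequality~\eqref{eq: min of abs values lambda} is now immediate: $|\Lambda_n|\cdot|b_n(z)| = |T_n|$ forces $\min(|\Lambda_n|,|b_n(z)|)^2 \le |\Lambda_n|\cdot|b_n(z)| = |T_n|$.

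For the first bound~\eqref{eq: abs value lambda} I would use the quadratic formula $\Lambda_n = \frac{1}{2}(-b(z) + \xi)$, where $\xi$ is the branch of $\sqrt{b(z)^2 + 4T_n}$ normalized by $\xi \to b(z)$ as $v\to\infty$; this is consistent with $m_n(z), s(z) = O(1/v)$ so that $\Lambda_n \to 0$ at infinity. I would then split on the ratio $|T_n|/|b(z)|^2$. When $|T_n| \le |b(z)|^2/16$, the quantity $w := 4T_n/b(z)^2$ has $|w|\le 1/4$, the principal branch of $\sqrt{1+w}$ is analytic on the disc $\{|w|\le 1/2\}$ and satisfies $|\sqrt{1+w}-1| \le |w|$; hence $|\xi - b(z)| = |b(z)|\,|\sqrt{1+w}-1| \le 4|T_n|/|b(z)|$, so $|\Lambda_n| \le 2|T_n|/|b(z)|$. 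In this same range $|T_n|/|b(z)| \le \sqrt{|T_n|}/4$, so the minimum in the statement equals $|T_n|/|b(z)|$. When $|T_n| > |b(z)|^2/16$, I bound brutally: $|b(z)| < 4\sqrt{|T_n|}$ and $|\xi| \le \sqrt{|b(z)|^2 + 4|T_n|} \le \sqrt{20|T_n|}$, whence $|\Lambda_n| \le \frac{1}{2}(|b(z)| + |\xi|) \le C\sqrt{|T_n|}$ independently of the choice of branch; in this range $\sqrt{|T_n|} < 4|T_n|/|b(z)|$, so the minimum equals $\sqrt{|T_n|}$. Combining the two regimes gives~\eqref{eq: abs value lambda}.

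The imaginary-part bound~\eqref{eq: abs imag lambda} is handled by the same two-case split with an added simplification: for every $u \in \R$ and $v > 0$ one has $\imag b_n(z) = v + \imag m_n(z) + \imag s(z) \ge v > 0$, which by itself pins down the correct sign of $\xi$, so branch selection is automatic. Applying $|\imag \Lambda_n| \le |\Lambda_n|$ and repeating the case analysis on $|T_n|/|b(z)|^2$ finishes the proof.

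The main obstacle is exactly this branch selection in~\eqref{eq: abs value lambda}: the estimate $|\xi - b| \le 4|T_n|/|b|$ is only valid on the branch continuously connected to $\xi = b$ at $v = \infty$. Justifying that we stay on this branch for $z$ with $u \le 2+v$ reduces to checking that $b(z)^2 + 4T_n$ never vanishes along the continuation path; this is automatic in the small-$|T_n|$ sub-regime (where $|b^2 + 4T_n| \ge \tfrac{3}{4}|b|^2 > 0$), and in the complementary sub-regime the branch-insensitive estimate $|\Lambda_n| \le C\sqrt{|T_n|}$ already suffices, so no obstruction arises inside the claimed domain.
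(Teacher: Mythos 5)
Your derivation of \eqref{eq: min of abs values lambda} from $T_n = b_n\Lambda_n$ is clean and correct, and your two-regime split on $|T_n|/|b(z)|^2$ is the natural move. However, the proposal has two genuine gaps.

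First, the branch-selection mechanism you use is the wrong one, and the repair you offer does not close it. You pin the branch of $\xi = \sqrt{b(z)^2+4T_n}$ by analytic continuation from $v=\infty$, and then assert that the small-$|T_n|$ regime makes this ``automatic'' because $|b^2+4T_n|\ge\tfrac34|b|^2$ \emph{at the endpoint}. That hypothesis says nothing about the radicand \emph{along the path}; since $T_n(u+iv')$ is a random function of $v'$, the radicand can vanish en route, flipping the branch before you arrive, in which case $|\xi-b|\approx 2|b|$ and your estimate $|\Lambda_n|\le 2|T_n|/|b|$ fails even though $|T_n|\le|b|^2/16$ at the endpoint. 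The correct and local branch pin, which the paper uses, is that both $m_n$ and $s$ are Stieltjes transforms, so $\imag\sqrt{z^2/4-1+T_n}=\imag(m_n+z/2)>0$ and $\imag\sqrt{z^2/4-1}>0$; no continuation is needed. Once the branch is pinned this way, the restriction $u\le 2+v$ has real work to do: it gives the paper the estimate $|\imag a|\ge c\,\re a$ for $a=z^2/4-1$, i.e.\ $\arg b(z)$ is bounded away from $0$ and $\pi$, which is exactly what guarantees that the positive-imaginary-part square root agrees with your $b(z)\sqrt{1+w}$ (principal branch). Your write-up invokes $u\le 2+v$ only as a label but never uses this geometric input, so even with the branch pinned correctly the ``small $w$'' step is unjustified as written.

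Second, and more seriously, the proof of \eqref{eq: abs imag lambda} cannot go via $|\imag\Lambda_n|\le|\Lambda_n|$ together with \eqref{eq: abs value lambda}. Inequality \eqref{eq: abs value lambda} is stated (and is proved) only for $u\le 2+v$, while \eqref{eq: abs imag lambda} is claimed for all $u\in\R$; near and outside the spectral edge ($u>2+v$) your reduction produces nothing. This is precisely why the paper's proof of \eqref{eq: abs imag lambda} is the longest part of the lemma: it tracks $\imag\sqrt{a+T_n}-\imag\sqrt{a}$ through a sign/quadrant case analysis that uses the upper-half-plane branch convention in an essential way and never relies on a bound for the full $\Lambda_n$. (Also, your asserted pin for \eqref{eq: abs imag lambda} via $\imag b_n>0$ targets the wrong quantity: the branch of $\xi$ is determined by $\imag\xi=\imag(z+2m_n)=v+2\imag m_n>0$, which is true but not a consequence of $\imag b_n>0$.) As things stand, the argument for the imaginary-part bound away from the bulk is missing.
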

\begin{proof}
We start the proof from the following identity, see representation~\eqref{eq: R_jj representation},
$$
m_n(z)= -\frac{1}{z + m_n(z)} + \frac{T_n}{z + m_n(z)}.
$$
Solving this quadratic equation we get that
$$
m_n(z) = -\frac{z}{2} + \sqrt{\frac{z^2}{4} - 1 + T_n}.
$$
The Stieltjes transform of the semicircle law may be written explicitly
$$
s(z) = -\frac{z}{2} + \sqrt{\frac{z^2}{4} - 1}.
$$
From the last two equation we conclude that
$$
\Lambda_n = \sqrt{\frac{z^2}{4} - 1 + T_n} - \sqrt{\frac{z^2}{4} - 1}.
$$
In what follows we will use the the following additional notation
$$
a: = \frac{z^2}{4} - 1.
$$
It is easy to see that $a = (s(z) + \frac{z}{2})^2 = b^2(z)/4$.

We start from the proof of~\eqref{eq: min of abs values lambda} since it is trivial.
If $|b_n(z)| \le C \sqrt{|T_n|}$ there is nothing to prove. In the opposite case we get
$$
|\Lambda_n| \le \frac{|T_n|}{|b_n(z)|} \le C \sqrt{|T_n|}.
$$

Now we establish inequality~\eqref{eq: abs imag lambda}. First we show that $|\imag \Lambda_n| \le C  \sqrt{|T_n|}$. Let us consider several cases:\\

\noindent
I) $|a| \le 2 |T_n|$. In this situation
\begin{equation}\label{eq: appendix inequality for difference of roots 0}
|\imag \sqrt{a+T_n} - \imag \sqrt{a}| \le  \sqrt{|a+T_n|}+ \sqrt{|a|} \le (\sqrt 3 + \sqrt 2) \sqrt{|T_n|}.
\end{equation}
\\
\noindent
II) $|a| > 2|T_n|$. We split this case into several sub cases\\
\\
\noindent
II) 1. $\re a < 0$. Since we always take the branch with the positive imaginary part we may write
\begin{equation}\label{eq: appendix inequality for sum of roots 0}
\imag \sqrt{a+T_n} + \imag \sqrt{a} \geq \imag \sqrt{a} \geq \frac{\sqrt 2}{2} |a|^{1/2} \geq \sqrt{|T_n|}.
\end{equation}
The last inequality implies that
\begin{equation}\label{eq: appendix inequality for difference of roots}
|\imag \sqrt{a+T_n} - \imag \sqrt{a}| \le \frac{|T_n|}{|\sqrt{a+T_n} + \sqrt{a}| } \le \sqrt{|T_n|}.
\end{equation}

\noindent II) 2. $\re a > 0$ and $\re(a+T_n) < 0$ . We have
$$
\imag \sqrt{a+T_n} \geq \frac{\sqrt 2}{2} \sqrt{|a+T_n|} \geq \frac{\sqrt 2}{2} (|a| - |T_n|)^{1/2} \geq \frac{\sqrt 2}{2} \sqrt{|T_n|}
$$
and similarly to~\eqref{eq: appendix inequality for difference of roots} we obtain
$$
|\imag \sqrt{a+T_n} - \imag \sqrt{a}| \le \sqrt 2 \sqrt{|T_n|}.
$$
\\
\noindent II) 3. $\re a > 0$ and $\re(a+T_n) >  0$ . We again consider two cases, but both are similar.\\

\noindent II) 3. 1. $\imag a \imag (a+T_n) > 0$. In this situation
\begin{equation}\label{eq: appendix inequality for sum of roots 1}
|\sqrt{a+T_n} + \sqrt{a}| \geq \sqrt{|a|} \geq \sqrt 2 \sqrt{|T_n|}.
\end{equation}
\\
\noindent II) 3. 2. $\imag a \imag (a+T_n) < 0$. Since $\imag \sqrt{a+T_n} = \imag \sqrt{\overline{a+T_n}}$ we have
$$
\imag a \imag (\overline{a+T_n}) < 0
$$
and
\begin{equation}\label{eq: appendix inequality for sum of roots 2}
|\sqrt{\overline{a+T_n}} + \sqrt{a}| \geq \sqrt{|a|} \geq \sqrt 2 \sqrt{|T_n|}.
\end{equation}
Similarly to~\eqref{eq: appendix inequality for difference of roots} we may conclude from~\eqref{eq: appendix inequality for sum of roots 1} and~\eqref{eq: appendix inequality for sum of roots 2} that
$$
|\imag \sqrt{a+T_n} - \imag \sqrt{a}| \le \sqrt 2 \sqrt{|T_n|}.
$$

To finish the proof of~\eqref{eq: abs imag lambda} we need to show that
$$
|\imag \Lambda_n| \le   C \frac{|T_n|}{\sqrt{|a|}}.
$$
The proof follows by similar arguments as in the proof of $|\imag \Lambda_n| \le   C |T_n|^\frac12$. We consider several cases:\\

\noindent I) $\re a < 0$. In this situation one need to repeat the inequalities~\eqref{eq: appendix inequality for sum of roots 0} and ~\eqref{eq: appendix inequality for difference of roots}. We get
$$
|\imag \sqrt{a+T_n} - \imag \sqrt{a}| \le \frac{|T_n|}{\imag \sqrt a} \le \sqrt 2 \frac{|T_n|}{\sqrt{|a|}}.
$$
\\
\noindent II) $\re a > 0$ and $|a| <  2|T_n|$. Then
$$
|\imag \sqrt{a+T_n} - \imag \sqrt{a}| \le C \sqrt{|T_n|} = C\frac{|T_n|}{\sqrt{|T_n|}} \le C' \frac{|T_n|}{\sqrt{|a|}}.
$$
\\
\noindent III) $\re a > 0$ and $|a| >  2|T_n|$. We consider two sub cases\\

\noindent III) 1. $\re(a+T_n) < 0$. Then
$$
\imag \sqrt{a+T_n} \geq \frac{\sqrt 2}{2} |a+T_n|^{1/2} \geq \frac{|a|^{1/2}}{2}
$$
and it follows that
$$
|\imag \sqrt{a+T_n} - \imag \sqrt{a}| \le  C \frac{|T_n|}{\sqrt{|a|}}.
$$
\\
\noindent III) 2. $\re(a+T_n) >  0$. Then without loss of generality we may assume that
$$
\imag a \imag (a+T_n) > 0.
$$
Then
$$
|\sqrt{a+T_n} + \sqrt{a}| \geq \sqrt{|a|}
$$
and we  conclude
$$
|\imag \sqrt{a+T_n} - \imag \sqrt{a}| \le  C \frac{|T_n|}{\sqrt{|a|}}.
$$

It remains to prove~\eqref{eq: abs value lambda} . Let us first suppose that $|\Lambda_n| \le c\sqrt{|T_n|}$. Then
\begin{align*}
\Lambda_n &= \frac{T_n}{z + m_n(z) + s(z)} = \frac{T_n}{z+2s(z)} + \frac{T_n \Lambda_n}{(z+2s(z))(z+m_n(z)+s(z))}\\
&=\frac{T_n}{z+2s(z)} + \frac{\Lambda_n^2}{z+2s(z)}
\end{align*}
and we immediately get that
$$
|\Lambda_n(z)| \le C \frac{|T_n|}{\sqrt{|z^2 - 4|}}.
$$
Finally it remains to prove the assumption $|\Lambda_n| \le c\sqrt{|T_n|}$. There is nothing to prove if $|a| \le 2 |T_n|$, one need to apply the same inequalities as in~\eqref{eq: appendix inequality for difference of roots 0} and get
$$
|\sqrt{a+T_n} - \sqrt{a}| \le \sqrt{|a+T_n|}+ \sqrt{|a|} \le (\sqrt 3+ \sqrt 2) |T_n|^\frac12.
$$
If $|a| \geq 2 |T_n|$ we apply the fact that for $|u| \le 2 + v$ there exists the constant $c > 0$ such that $|\imag a| \geq c \re a$. Applying this fact we get
$$
|\sqrt{a+T_n} - \sqrt{a}| \le \frac{|T_n|}{|\sqrt{a+T_n} - \sqrt{a}|} \le \frac{|T_n|}{\imag \sqrt{a}} \le c' \frac{|T_n|}{|a|^\frac12} \le c'' |T_n|^\frac12.
$$

\end{proof}

Recall that $\varphi(z) = \overline z |z|^{p-1}$. In the following lemma we  estimate the difference between  $\varphi(T_n)$ and $\varphi(\widetilde T_n^{(j)})$.
\begin{lemma}\label{appendix Taylor formula}
	For $p \geq 2$ and arbitrary $j \in \T$ we have
	$$
	|\varphi(T_n)-\varphi(\widetilde T_n^{(j)})| \le p \E_{\tau}|T_n - \widetilde T_n^{(j)}||\widetilde T_n^{(j)} + \tau (T_n -\widetilde T_n^{(j)})|^{p-2},
	$$
	where $\E_{\tau}$ is a mathematical expectation with respect to the uniformly distributed on $[0, 1]$ random variable $\tau$.
\end{lemma}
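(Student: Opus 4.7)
The plan is to apply the fundamental theorem of calculus (Newton--Leibniz formula) along the straight line segment from $\widetilde T_n^{(j)}$ to $T_n$ in $\C$. Set $a := \widetilde T_n^{(j)}$, $h := T_n - \widetilde T_n^{(j)}$, $z(\tau) := a + \tau h$, and $f(\tau) := \varphi(z(\tau))$ for $\tau \in [0,1]$. Then
$$
\varphi(T_n) - \varphi(\widetilde T_n^{(j)}) = \int_0^1 f'(\tau)\, d\tau,
$$
so rewriting $\int_0^1(\cdot)\, d\tau = \E_\tau(\cdot)$ for $\tau$ uniform on $[0,1]$, it suffices to show the pointwise bound $|f'(\tau)| \le p\, |h|\, |z(\tau)|^{p-2}$.

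First I would compute $f'(\tau)$ explicitly. Since $\varphi(z) = \bar z\, (z\bar z)^{(p-2)/2}$ is not holomorphic, one cannot apply a complex chain rule; instead I differentiate with respect to the real parameter $\tau$. Using $\frac{d}{d\tau} z(\tau) = h$, $\frac{d}{d\tau}\overline{z(\tau)} = \bar h$, and $\frac{d}{d\tau} |z(\tau)|^2 = 2\,\Real(h\,\overline{z(\tau)})$, the product rule yields
$$
f'(\tau) = \bar h\, |z(\tau)|^{p-2} + (p-2)\,|z(\tau)|^{p-4}\,\overline{z(\tau)}\,\Real\!\left(h\,\overline{z(\tau)}\right).
$$
For $p \geq 2$ the singular factor $|z(\tau)|^{p-4}$ is absorbed by $|z(\tau)|^2$ coming from $\overline{z(\tau)}\,\Real(h\,\overline{z(\tau)})$, so the expression is well defined (for $p = 2$ the second summand vanishes since $p-2 = 0$).

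Next I would take absolute values term by term. The first summand is bounded by $|h|\,|z(\tau)|^{p-2}$. For the second, $|\Real(h\,\overline{z(\tau)})| \le |h|\,|z(\tau)|$ and $|\overline{z(\tau)}| = |z(\tau)|$, so that summand is bounded by $(p-2)\,|h|\,|z(\tau)|^{p-2}$. Adding them we get
$$
|f'(\tau)| \le (p-1)\,|h|\,|z(\tau)|^{p-2} \le p\,|h|\,|z(\tau)|^{p-2}.
$$
Integrating in $\tau$ over $[0,1]$ and recognizing the right-hand side as $p\,\E_\tau\!\left[\,|T_n - \widetilde T_n^{(j)}|\,|\widetilde T_n^{(j)} + \tau(T_n - \widetilde T_n^{(j)})|^{p-2}\,\right]$ yields the claimed inequality.

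There is no real obstacle here: the only point of care is that $\varphi$ is not holomorphic, so one must differentiate the $|z|^{p-2}$ factor via the real chain rule rather than treating it as an analytic function of $z$. The $(p-1)$ that actually comes out is slightly sharper than the stated $p$, but the looser constant $p$ is used in the statement for uniformity with later applications.
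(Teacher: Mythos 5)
Your proof is correct and takes essentially the same route as the paper: both apply the Newton--Leibniz formula to $\hat\varphi(\tau)=\varphi(\widetilde T_n^{(j)}+\tau(T_n-\widetilde T_n^{(j)}))$ and bound $|\hat\varphi'(\tau)|$. You are more careful about the non-holomorphicity of $\varphi$ and in fact obtain the slightly sharper constant $p-1$; the paper's displayed derivative bound omits the factor $|T_n-\widetilde T_n^{(j)}|$ (an apparent typo), which your version correctly restores.
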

\begin{proof}
	The proof follows from the Newton-Leibniz formula applied to
	$$
	\hat \varphi(x) = \varphi( \widetilde T_n^{(j)} + x (T_n -\widetilde T_n^{(j)})), \quad x \in [0,1],
	$$
	and
	$$
	|\hat \varphi'(x)| \le p | \widetilde T_n^{(j)} + x (T_n -\widetilde T_n^{(j)}) |^{p-2}.
	$$
\end{proof}

The proofs of the following two lemmas are rather straightforward, but will be used many times in the proof of Theorem~\ref{th: general bound}.

\begin{lemma}\label{appendix l: inequality for x power p}
	Let us assume that for all $p > q \geq 1$ and $a,b > 0$ the following inequality holds
	\begin{equation}\label{appendix eq: inequality for x power p}
	x^p \le a + b x^{q}.
	\end{equation}
	Then
	$$
	x^p \le 2^{\frac{p}{p-q}} (a + b^{\frac{p}{p-q}}).
	$$
\end{lemma}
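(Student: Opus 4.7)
The plan is to proceed by a simple case split on which of the two summands on the right-hand side of~\eqref{appendix eq: inequality for x power p} is dominant. Specifically, I consider the cases $bx^q \le a$ and $bx^q > a$ separately, reducing each to a one-term bound and then recombining.

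First, if $bx^q \le a$, then from~\eqref{appendix eq: inequality for x power p} one has $x^p \le 2a$. Otherwise $a < bx^q$, and~\eqref{appendix eq: inequality for x power p} yields $x^p \le 2bx^q$. In the latter case, since $q < p$, dividing by $x^q$ (which may be assumed positive; the result is trivial if $x=0$) gives $x^{p-q} \le 2b$, and raising to the power $p/(p-q) \ge 1$ gives $x^p \le (2b)^{p/(p-q)} = 2^{p/(p-q)} b^{p/(p-q)}$. Combining the two cases,
$$
x^p \le 2a + 2^{p/(p-q)} b^{p/(p-q)}.
$$

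To obtain the symmetric form in the statement, I observe that $p/(p-q) \ge 1$ for all $p > q \ge 1$, so $2 \le 2^{p/(p-q)}$, and therefore
$$
x^p \le 2^{p/(p-q)} a + 2^{p/(p-q)} b^{p/(p-q)} = 2^{p/(p-q)}\bigl(a + b^{p/(p-q)}\bigr),
$$
which is precisely the claimed bound.

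There is no real obstacle here; the lemma is a purely elementary algebraic inequality, and the case split is the only ingredient needed. The only minor point to be careful about is the boundary case $x = 0$, which is trivial, and ensuring $p/(p-q) \ge 1$ in order to absorb the factor of $2$ in front of $a$, which holds under the hypothesis $p > q \ge 1$.
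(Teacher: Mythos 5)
Your proof is correct. The argument differs slightly from the paper's: the paper assumes $x>a^{1/p}$ (the complementary case being trivial), divides \eqref{appendix eq: inequality for x power p} by $x^q$ to get $x^{p-q}\le a^{(p-q)/p}+b$, and then raises to the power $p/(p-q)$, implicitly invoking $(u+v)^r\le 2^{r}(u^r+v^r)$ for $r\ge1$ to conclude. You instead split on which of the two terms $a$ or $bx^q$ dominates, bound $x^p$ by twice the dominant term in each case, and then combine; this sidesteps the need for any inequality of the form $(u+v)^r\le 2^{r}(u^r+v^r)$ and handles the $x=0$ boundary explicitly. Both routes are elementary and yield the stated constant $2^{p/(p-q)}$, so the distinction is purely one of presentation, though yours is arguably a touch more self-contained.
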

\begin{proof}
	The proof is easy. We may assume that $x > a^{\frac{1}{p}}$ since in the opposite case the inequality is trivial. Dividing  both parts of~\eqref{appendix eq: inequality for x power p} by
	$x^q$ we obtain
	$$
	x^{p-q} \le a^{\frac{p-q}{p}} + b.
	$$
	Finally we get
	$$
	x^p \le 2^{\frac{p}{p-q}} (a + b^{\frac{p}{p-q}}).
	$$
\end{proof}

\begin{lemma}\label{appendix eq: inequality for x power p 2}
	Let $0 < q_1 \le q_2 \le ... \le q_k < p$ and $c_j, j = 0, ... , k$ be positive numbers such that
	$$
	x^p \le c_0 + c_1 x^{q_1} + c_2 x^{q_2} + ... + c_k x^{q_k}.
	$$
	Then
	$$
	x^p \le \beta \left[ c_0 + c_1^{\frac{p}{p-q_1}} + c_2^{\frac{p}{p-q_2}} + ... + c_k^{\frac{p}{p-q_k}} \right],
	$$
	where
	$$
	\beta: = \prod_{\nu=1}^{k} 2^{\frac{p}{p-q_\nu}} \le 2^{\frac{kp}{p-q_k}}.
	$$
\end{lemma}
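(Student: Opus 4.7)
The plan is to argue by induction on the number $k$ of variable terms on the right-hand side, using Lemma~\ref{appendix l: inequality for x power p} as the base case $k=1$.

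For the inductive step, given $x^p \le c_0 + \sum_{\nu=1}^k c_\nu x^{q_\nu}$, I would isolate the term with the largest exponent: write the inequality as $x^p \le A + c_k x^{q_k}$ with $A := c_0 + \sum_{\nu=1}^{k-1} c_\nu x^{q_\nu}$, and apply the base case with $a=A$, $b=c_k$, $q=q_k$. This yields
$$
x^p \le 2^{p/(p-q_k)} \Bigl( c_0 + c_k^{p/(p-q_k)} + \sum_{\nu=1}^{k-1} c_\nu x^{q_\nu} \Bigr),
$$
which is again an inequality of the original form, but with only $k-1$ variable terms and modified coefficients $\tilde c_0 = 2^{p/(p-q_k)}(c_0 + c_k^{p/(p-q_k)})$ and $\tilde c_\nu = 2^{p/(p-q_k)} c_\nu$ for $\nu = 1,\ldots,k-1$. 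Applying the inductive hypothesis to this reduced inequality and then collecting constants produces an estimate of the claimed shape.

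An alternative, perhaps cleaner route is a direct dichotomy: for each $\nu$, either $c_\nu x^{q_\nu} \le x^p/(2k)$ or $c_\nu x^{q_\nu} > x^p/(2k)$. If the first alternative holds for every $\nu$, then $\sum_\nu c_\nu x^{q_\nu} \le x^p/2$, and the hypothesis gives $x^p \le 2c_0$. Otherwise, for some index $\nu_0$ one has $x^{p-q_{\nu_0}} < 2kc_{\nu_0}$, and hence $x^p < (2kc_{\nu_0})^{p/(p-q_{\nu_0})} \le \sum_{\nu=1}^{k} (2kc_\nu)^{p/(p-q_\nu)}$. Combining both cases produces an inequality of the form $x^p \le B\bigl[ c_0 + \sum_\nu c_\nu^{p/(p-q_\nu)} \bigr]$ with an explicit absolute constant $B$ depending on $p$, $k$ and the $q_\nu$'s.

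The step I expect to be the most delicate is the bookkeeping of the constant $\beta$: the iterative approach propagates a factor $2^{p/(p-q_k)}$ into each remaining coefficient, and this factor is subsequently raised to the higher power $p/(p-q_\nu)$ in later steps, so the constant emerging from a naive iteration is a priori larger than the product $\prod_\nu 2^{p/(p-q_\nu)}$. Using the monotonicity $p/(p-q_\nu) \le p/(p-q_k)$ (which holds since $q_\nu \le q_k$), however, one can collapse the accumulated prefactors to the uniform bound $\beta \le 2^{kp/(p-q_k)}$, which is the form that is actually needed in the downstream applications of the lemma in the proofs of Theorem~\ref{th:main} and Theorem~\ref{th: general bound}.
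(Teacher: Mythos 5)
Your iterative argument is essentially the paper's own route: the paper also isolates one term and invokes Lemma~\ref{appendix l: inequality for x power p} repeatedly, only peeling off the \emph{smallest} exponent $q_1$ first where you peel off the largest $q_k$; that ordering difference is cosmetic. You are right to worry about the compounding of constants, and you put your finger on a point that the paper's two-line proof glosses over. However, your proposed fix in the last paragraph does not work: once the prefactor $2^{p/(p-q_k)}$ has been absorbed into the remaining coefficients, the next application of the base lemma raises it to the power $p/(p-q_\nu)$, so the coefficient that finally multiplies $c_\nu^{p/(p-q_\nu)}$ carries an exponent of order $\frac{p}{p-q_\nu}\sum_{\mu}\frac{p}{p-q_\mu}$, i.e.\ \emph{quadratic} in $p/(p-q_k)$ in the worst case, not linear. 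The monotonicity $p/(p-q_\nu)\le p/(p-q_k)$ alone does not collapse this to $2^{kp/(p-q_k)}$; the iteration, taken literally, only yields something like $2^{k(p/(p-q_k))^2}$, which would not be of the $C^p$ form needed when $q_\nu$ is close to $p$ (as happens in the applications, e.g.\ $q_1=p-1$).

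Your dichotomy argument is the part that actually delivers the result, and it is a genuinely different, more robust route than the paper's. If $c_\nu x^{q_\nu}\le x^p/(2k)$ for every $\nu$ then $x^p\le 2c_0$; otherwise some $\nu_0$ gives $x^{p-q_{\nu_0}}<2kc_{\nu_0}$, hence $x^p<(2kc_{\nu_0})^{p/(p-q_{\nu_0})}\le (2k)^{p/(p-q_k)}\sum_\nu c_\nu^{p/(p-q_\nu)}$. Combining, $x^p\le (2k)^{p/(p-q_k)}\bigl[c_0+\sum_\nu c_\nu^{p/(p-q_\nu)}\bigr]$, and $(2k)^{p/(p-q_k)}\le 2^{kp/(p-q_k)}$ for every $k\ge1$. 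This does not reproduce the lemma's specific $\beta=\prod_\nu 2^{p/(p-q_\nu)}$, but it does give the derived bound $\beta\le 2^{kp/(p-q_k)}$, which is the only thing used in the proofs of Theorem~\ref{th: general bound} and Theorem~\ref{th:main}. In short: drop the attempt to justify the iterative route via monotonicity and lead with the dichotomy; that is the clean and complete proof.
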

\begin{proof}
	Let $a_1: = c_0 + c_2 x^{q_2} + ... + c_{k} x^{q_{k}}$ and $b_1 := c_1$. We may apply Lemma~\ref{appendix l: inequality for x power p} and get
	$$
	x^p \le 2^{\frac{p}{p-q_1}}(a_1 + b_1^{\frac{p}{p-q_1}}).
	$$
	Repeating this step  $k-1$ times we obtain
	$$
	x^p \le \beta \left[ c_0 + c_1^{\frac{p}{p-q_1}} + c_2^{\frac{p}{p-q_2}} + ... + c_k^{\frac{p}{p-q_k}} \right],
	$$
	where $\beta$ is defined above.
\end{proof}

\section{Auxiliary lemmas II}
In this section we collect all inequalities for the resolvent of the matrix $\W$.
\begin{lemma}\label{appendix lemma resolvent relations on different v}
For any $z = u + i v \in \C^{+}$ we have for any $s \geq 1$
\begin{equation}\label{appendix eq resolvent relations on different v 0}
|\RR_{jj}^{(\J)}(u + i v/s)| \le s |\RR_{jj}^{(\J)}(u + i v)|
\end{equation}
and
\begin{equation}\label{appendix eq resolvent relations on different v 1}
\frac{1}{|u+iv/s_0 + m_n^{(\J)}(u+iv/s_0)|} \le \frac{s_0}{|u+iv + m_n^{(\J)}(u+iv)|}.
\end{equation}
\end{lemma}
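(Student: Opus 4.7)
The plan is to reduce both inequalities to a single monotonicity property of Herglotz functions: if $g\colon\C^{+}\to\C^{+}$ is holomorphic, then $v\mapsto |g(u+iv)|/v$ is non-increasing on $(0,\infty)$. Given this, since $v/s\le v$ for $s\ge 1$, we have $|g(u+iv/s)|/(v/s)\ge |g(u+iv)|/v$, hence $s|g(u+iv/s)|\ge |g(u+iv)|$; taking reciprocals yields $1/|g(u+iv/s)|\le s/|g(u+iv)|$. The two claims of the lemma follow by applying this to the Herglotz functions $g_{1}(z):=-1/\RR_{jj}^{(\J)}(z)$ and $g_{2}(z):=z+m_{n}^{(\J)}(z)$, since $1/|g_{1}|=|\RR_{jj}^{(\J)}|$ and $1/|g_{2}|=1/|z+m_{n}^{(\J)}(z)|$.

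To prove the monotonicity, I would compute, using $dz/dv=i$ together with the Cauchy--Riemann equations,
$$
\frac{d}{dv}|g(u+iv)| \;=\; -\frac{\imag\bigl(g'(u+iv)\,\overline{g(u+iv)}\bigr)}{|g(u+iv)|},
$$
from which $\bigl|\tfrac{d|g|}{dv}\bigr|\le |g'(u+iv)|$. The Schwarz--Pick inequality for holomorphic self-maps of the upper half-plane (which contract the Poincar\'e metric $|dz|^{2}/(\imag z)^{2}$) gives $|g'(u+iv)|\le \imag g(u+iv)/v\le |g(u+iv)|/v$. Combining these bounds, $d|g|/dv\le |g|/v$, which is equivalent to $(d/dv)(|g|/v)\le 0$.

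Finally, I need to verify that $g_{1}$ and $g_{2}$ are Herglotz. By the spectral theorem applied to the Hermitian matrix $\W^{(\J)}$, $\RR_{jj}^{(\J)}(z)$ is the Stieltjes transform of the spectral probability measure associated with the $j$-th coordinate vector, so $\imag \RR_{jj}^{(\J)}(z)>0$ on $\C^{+}$; hence $\imag g_{1}(z)=\imag \RR_{jj}^{(\J)}(z)/|\RR_{jj}^{(\J)}(z)|^{2}>0$. For $g_{2}$, $\imag(z+m_{n}^{(\J)}(z))=v+\imag m_{n}^{(\J)}(z)\ge v>0$. Substituting $g_{1}$ and $g_{2}$ into the bound derived above then yields~\eqref{appendix eq resolvent relations on different v 0} and~\eqref{appendix eq resolvent relations on different v 1} respectively.

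The only non-trivial ingredient is the classical Schwarz--Pick lemma on $\C^{+}$; everything else is a short computation or a direct sign check, so I do not anticipate any substantive obstacle.
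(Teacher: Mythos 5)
Your argument is correct; it is a close relative of the paper's proof but packages it differently. The paper bounds the logarithmic derivative $\left|\tfrac{d}{dv}\log\RR_{jj}(v)\right|\le 1/v$ concretely: it computes $\tfrac{d}{dv}\RR_{jj}=[\RR^2]_{jj}$, invokes the spectral estimate $|[\RR^2]_{jj}|\le v^{-1}\imag\RR_{jj}\le v^{-1}|\RR_{jj}|$, then integrates over $[v/s,\,v]$; taking real parts gives the two-sided bound $s^{-1}|\RR_{jj}(v)|\le|\RR_{jj}(v/s)|\le s|\RR_{jj}(v)|$, of which only the upper half is needed. You instead observe that $|g'(u+iv)|\le\imag g(u+iv)/v\le|g(u+iv)|/v$ is exactly the Schwarz--Pick inequality for holomorphic self-maps of $\C^+$ combined with $\imag g\le|g|$, and you repackage the integration as monotonicity of $|g(u+iv)|/v$. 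The choice to apply this to $g_1=-1/\RR_{jj}^{(\J)}$ rather than to $\RR_{jj}^{(\J)}$ itself is the right one (applying it directly to $\RR_{jj}^{(\J)}$ would give the reverse inequality), and your verification that $g_1$ and $g_2=z+m_n^{(\J)}(z)$ are Herglotz is correct. The net effect: the paper re-derives the needed special case of Schwarz--Pick from the resolvent's spectral decomposition and is therefore more self-contained, while your version identifies the structural reason the lemma holds (it is a statement about arbitrary Herglotz functions) and is correspondingly more conceptual and general. Both are valid; yours is a minor clean-up rather than a genuinely different proof strategy.
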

\begin{proof}
The proof of~\eqref{appendix eq resolvent relations on different v 0} is given in~\cite{Schlein2014}, but for the readers convenience we will present it here.
To simplify all formulas we shall omit the index $\J$ from the notation of $\RR_{jj}$. Since
\begin{equation}\label{appendix eq log R_jj}
\left| \frac{d}{d v} \log \RR_{jj}(v) \right | \le \frac{1}{|\RR_{jj}(v)|} \left | \frac{d}{dv} \RR_{jj}(v) \right |.
\end{equation}
Furthermore,
$$
\frac{d}{dv} \RR_{jj}(v) = [\RR^2]_{jj}(v)
$$
and
$$
|[\RR^2]_{jj}(v)| \le v^{-1} \imag \RR_{jj}.
$$
Applying this inequality to~\eqref{appendix eq log R_jj} we get
$$
\left| \frac{d}{d v} \log \RR_{jj}(v) \right | \le \frac{1}{v}.
$$
This inequality implies that
$$
|\log \RR_{jj}(v) - \log \RR_{jj}(v/s)| \le \left| \int_{v/s}^v \frac{d}{d v} \log \RR_{jj}(\eta)\, d\eta \right | \le \log s.
$$
Since the last inequality holds for the real parts of the logarithm as well, we may conclude that
$$
|\RR_{jj}(u + i v/s)| \le s |\RR_{jj}(u + i v)|.
$$
The proof of~\eqref{appendix eq resolvent relations on different v 1} is similar and we omit it.
\end{proof}

\begin{lemma}\label{appendix lemma imag resolvent relations on different v}
Let $g(v): = g(u+iv)$ be the Stieltjes transform of some distribution function $G(x)$. Then for any $s \geq 1$
\begin{align}
\imag g(v/s) \le s \imag g(v) \, \text{ and } \, \imag g(v) \le s \imag g(v/s).
\end{align}
\end{lemma}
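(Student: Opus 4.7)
The plan is to reduce both inequalities to pointwise comparisons of Poisson-kernel integrands, which become elementary once we write the Stieltjes transform via its defining integral
\[
g(u+iv) \;=\; \int_{-\infty}^{\infty}\frac{dG(\lambda)}{\lambda-u-iv},\qquad \imag g(u+iv) \;=\; \int_{-\infty}^{\infty}\frac{v}{(\lambda-u)^{2}+v^{2}}\,dG(\lambda).
\]
So both quantities $\imag g(u+iv)$ and $\imag g(u+iv/s)$ are non-negative integrals of the Poisson kernel against the same measure $dG$, and the lemma will follow if I can verify the two kernel inequalities
\[
\frac{v/s}{(\lambda-u)^{2}+v^{2}/s^{2}}\;\le\; \frac{s\,v}{(\lambda-u)^{2}+v^{2}},
\qquad
\frac{v}{(\lambda-u)^{2}+v^{2}}\;\le\; \frac{v}{(\lambda-u)^{2}+v^{2}/s^{2}},
\]
for every $\lambda\in\R$ and every $s\ge 1$. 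Both reduce instantly to algebra: cross-multiplying the first gives $(\lambda-u)^{2}+v^{2}\le s^{2}(\lambda-u)^{2}+v^{2}$, which is just $1\le s^{2}$; and the second is immediate because shrinking the positive denominator makes the fraction larger. Integrating each inequality against $dG(\lambda)$ then yields $\imag g(v/s)\le s\,\imag g(v)$ and $\imag g(v)\le s\,\imag g(v/s)$ respectively.

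The only thing that requires a moment of thought is the conceptual reason behind the two bounds, and this gives a useful sanity check: the first inequality is equivalent to the monotonicity of $v\mapsto v\,\imag g(u+iv)=\int \frac{v^{2}}{(\lambda-u)^{2}+v^{2}}\,dG$, which is manifestly non-decreasing in $v$, while the second is equivalent to the monotonicity of $v\mapsto \imag g(u+iv)/v=\int\frac{dG}{(\lambda-u)^{2}+v^{2}}$, which is manifestly non-increasing in $v$. In other words, the lemma is just the statement that these two canonical harmonic quantities are respectively increasing and decreasing in $v$. There is no real obstacle; the computation is one line in each direction and requires only Fubini/monotonicity of the integral, together with $s\ge 1$.
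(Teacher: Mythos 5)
Your proof is correct, and it takes a genuinely different (and more elementary) route than the paper's. The paper differentiates: it computes $\frac{d}{dv}\imag g(v)$, bounds the log-derivative by $\left|\frac{d}{dv}\log\imag g(v)\right|\le 1/v$, and then integrates along the vertical segment from $v/s$ to $v$ to conclude $|\log\imag g(v)-\log\imag g(v/s)|\le\log s$, exactly mirroring the proof of the preceding lemma about $|\RR_{jj}|$. Your argument instead compares the Poisson kernels pointwise and integrates against $dG$, which avoids differentiation entirely and makes the monotonicity of $v\,\imag g(u+iv)$ and of $\imag g(u+iv)/v$ transparent. The trade-off is uniformity: the paper's log-derivative technique also works for $\RR_{jj}(v)$, which is \emph{not} the integral of a signed kernel against a positive measure on the diagonal (it has both a modulus and a phase), so the pointwise-kernel trick is unavailable there; by reusing the same argument the authors keep the two appendix lemmas parallel. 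For this particular lemma, however, your approach is cleaner and equally rigorous, and as a bonus it sidesteps a small glitch in the paper's write-up where $\left|\frac{d\,\imag g(v)}{dv}\right|\le\frac1v\,\imag g(v)$ is abbreviated to $\left|\frac{d\,\imag g(v)}{dv}\right|\le\frac1v$ (the intended statement is clearly the bound on the logarithmic derivative).
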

\begin{proof}
Recall that
$$
\imag g(v) = \int_{-\infty}^\infty \frac{v}{(x-u)^2 + v^2}\, dG(x).
$$
Hence,
$$
\left|\frac{d \imag g(v)}{d v}\right| = \int_{-\infty}^\infty \frac{|(x-u)^2 - v^2|}{((x-u)^2 + v^2)^2}\, dG(x) \le \frac{1}{v} \imag g(v).
$$
We may conclude that
$$
\left |\frac{d \imag g(v)}{d v}\right| \le \frac{1}{v}.
$$
We may repeat now the second part of the previous lemma and get the desired bounds.
\end{proof}

\begin{lemma}\label{appendix lemma inequality v le  imag s RR jj}
For any $z = u + i v \in \C^{+}$ there exists a constant $c=c(z) > 0$ such that
\begin{equation}\label{appendix inequality v le  imag s}
v \le c \imag s(z)
\end{equation}
and
\begin{equation}\label{appendix inequality v le  imag RR jj}
v \le \frac{\imag \RR_{jj}}{|\RR_{jj}|^2}.
\end{equation}
\end{lemma}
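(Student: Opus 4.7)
\medskip

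\noindent\textbf{Proof proposal.} Both inequalities are standard consequences of the integral representation of the Stieltjes transform, and the plan is to read them off directly from that representation.

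For the first inequality, I would write $\imag s(z)$ using the defining integral of the Stieltjes transform of the semicircle law,
$$
\imag s(z) = \int_{-2}^{2} \frac{v}{(\lambda-u)^2 + v^2}\, g_{sc}(\lambda)\,d\lambda.
$$
Since throughout the paper we are working in the region $|u|\le u_0$ and $0<v\le V$, the denominator is uniformly bounded above: $(\lambda-u)^2+v^2 \le (|u_0|+2)^2+V^2 =: c_0$. Pulling out $v$ and using $\int_{-2}^{2} g_{sc}(\lambda)\,d\lambda = 1$ yields $\imag s(z) \ge v/c_0$, which is the claim with $c=c_0$. (Any explicit constant depending only on $u_0$ and $V$ works.)

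For the second inequality I would use the spectral decomposition of the resolvent. Writing $\W = \sum_{k=1}^n \lambda_k \psi_k \psi_k^{*}$ with orthonormal eigenvectors $\psi_k$, we have
$$
\RR_{jj} = \sum_{k=1}^n \frac{|\psi_k(j)|^2}{\lambda_k-z}, \qquad \imag\RR_{jj} = \sum_{k=1}^n \frac{v\,|\psi_k(j)|^2}{(\lambda_k-u)^2+v^2}.
$$
Applying the Cauchy--Schwarz inequality together with the normalization $\sum_k |\psi_k(j)|^2 = 1$ gives
$$
|\RR_{jj}|^2 \;\le\; \sum_{k=1}^n \frac{|\psi_k(j)|^2}{(\lambda_k-u)^2+v^2}\;\cdot\; \sum_{k=1}^n |\psi_k(j)|^2 \;=\; \sum_{k=1}^n \frac{|\psi_k(j)|^2}{(\lambda_k-u)^2+v^2}.
$$
Comparing the last expression with the formula for $\imag \RR_{jj}$ and dividing through by $|\RR_{jj}|^2$ delivers $v \le \imag\RR_{jj}/|\RR_{jj}|^2$ at once.

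There is no real obstacle here; the statement is a soft quantitative lower bound, and each part follows from one application of a convexity/Cauchy--Schwarz argument applied to the spectral representation. The only mild point of care is to ensure that the constant $c$ in the first bound is seen to depend only on $u_0$ and $V$ (as declared in the paper's convention on constants), which is immediate from the estimate on $(\lambda-u)^2+v^2$ above.
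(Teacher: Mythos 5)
Your proposal is correct, and for inequality~\eqref{appendix inequality v le  imag RR jj} it is essentially the paper's own argument: the spectral representation of $\RR_{jj}$, Cauchy--Schwarz against the probability weights $|\psi_k(j)|^2$, and the identity $\sum_k v|\psi_k(j)|^2/|\lambda_k-z|^2 = \imag \RR_{jj}$ give $|\RR_{jj}|^2 \le v^{-1}\imag\RR_{jj}$ at once.

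For inequality~\eqref{appendix inequality v le  imag s} the paper takes a slightly different route that makes the two parts symmetric: it applies the very same Cauchy--Schwarz step to $s(z)=\int (\lambda-z)^{-1}g_{sc}(\lambda)\,d\lambda$ to get $|s(z)|^2\le v^{-1}\imag s(z)$, and then invokes the lower bound $|s(z)|^2\ge c(z)^{-1}$ (valid since $s$ is continuous and nonvanishing on $\C^+$, and uniformly bounded below on the bounded region $|u|\le u_0$, $v\le V$). Your argument instead bounds the denominator $(\lambda-u)^2+v^2 \le (u_0+2)^2+V^2$ directly, exploiting the compact support of $g_{sc}$ together with the boundedness of the region. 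Both are correct one-line estimates; yours is perhaps marginally more elementary for $s(z)$, but it does not carry over to $\RR_{jj}$ (whose spectral measure is not uniformly compactly supported), which is why you --- correctly, and just as the paper does --- switch to the Cauchy--Schwarz argument there. The paper's presentation simply uses the Cauchy--Schwarz route in both cases so the second inequality is obtained by literally repeating the first computation with $dF_{nj}$ in place of $g_{sc}\,d\lambda$.
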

\begin{proof}
It is easy to see that
$$
|s(z)|^2 \le \int_{-2}^2 \frac{g_{sc}(\lambda) }{|\lambda - z|^2}\, d\lambda  = \frac{1}{v} \imag s(z).
$$
Since $|s(z)|^2 \geq c^{-1}$ for some $c = c(z)$  the inequality~\eqref{appendix inequality v le  imag s} follows. In order to prove~\eqref{appendix inequality v le  imag RR jj} one should repeat the calculations above using  the following spectral representation of $\RR_{jj}$
$$
\RR_{jj} = \int_{-\infty}^{\infty} \frac{1}{z - \lambda} \, d F_{nj}(\lambda), \quad F_{nj}(\lambda) : = \sum_{k =1}^n |u_{jk}|^2 \one[\lambda_j \le \lambda].
$$
\end{proof}
We finish this section with two lemmas. These lemmas are proved in~\cite{GotTikh2015}[Lemma~7.10] and~\cite{GotzeTikh2014rateofconv}[Lemma~7.6] , but for the readers convenience we include them  here.
\begin{lemma}\label{appendix lemma resolvent inequalities 1}
For any $z = u + i v \in \C^{+}$ we have
\begin{equation}\label{appendix lemma resolvent inequality 1}
\frac{1}{n} \sum_{l,k \in \T_{\J}} |\RR_{kl}^{(\J)}|^2 \le \frac{1}{v} \imag m_n^{(\J)}(z).
\end{equation}
For any $l \in \T_{\J}$
\begin{equation}\label{appendix lemma resolvent inequality 2}
\sum_{k \in \T_{\J}} |\RR_{kl}^{(\J)}|^2 \le \frac{1}{v} \imag \RR_{ll}^{(\J)}.
\end{equation}
\end{lemma}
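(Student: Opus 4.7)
The plan is to prove both inequalities simultaneously by establishing the stronger identities
\[
\sum_{k \in \T_{\J}} |\RR_{kl}^{(\J)}|^2 = \frac{1}{v}\imag \RR_{ll}^{(\J)} \qquad \text{and} \qquad \frac{1}{n}\sum_{l,k \in \T_{\J}}|\RR_{kl}^{(\J)}|^2 = \frac{1}{v}\imag m_n^{(\J)}(z),
\]
which are classical Ward identities for the resolvent of a symmetric (hence self-adjoint) matrix. The second inequality follows from the first by summing over $l \in \T_{\J}$ and dividing by $n$, recalling that $m_n^{(\J)}(z) = n^{-1}\sum_{l \in \T_{\J}} \RR_{ll}^{(\J)}$ (up to the trivial convention that $m_n^{(\J)}$ is normalized by $n$ rather than $|\T_{\J}|$; this only improves the bound).

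First I would write the spectral decomposition $\W^{(\J)} = \sum_m \lambda_m^{(\J)} \uu_m^{(\J)}(\uu_m^{(\J)})^{*}$, which gives
\[
\RR_{kl}^{(\J)}(z) = \sum_m \frac{u_{mk}^{(\J)}\,\overline{u_{ml}^{(\J)}}}{\lambda_m^{(\J)} - z}.
\]
Then I would compute $\sum_{k \in \T_{\J}}|\RR_{kl}^{(\J)}|^2 = [\RR^{(\J)}(\RR^{(\J)})^{*}]_{ll}$ using orthonormality of the eigenvectors, obtaining
\[
\sum_{k \in \T_{\J}}|\RR_{kl}^{(\J)}|^2 = \sum_m \frac{|u_{ml}^{(\J)}|^2}{|\lambda_m^{(\J)} - z|^2}.
\]

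The key scalar identity is
\[
\frac{1}{|\lambda - z|^2} = \frac{1}{v}\cdot \imag\frac{1}{\lambda - z},
\]
which follows at once from $(\lambda - z)^{-1} - (\lambda - \bar z)^{-1} = (z - \bar z)|\lambda - z|^{-2} = 2iv|\lambda - z|^{-2}$. Substituting this into the previous display and recognizing $\imag \RR_{ll}^{(\J)} = \sum_m |u_{ml}^{(\J)}|^2\, \imag (\lambda_m^{(\J)} - z)^{-1}$ yields the first identity, and hence inequality~\eqref{appendix lemma resolvent inequality 2}. Summing over $l$ and dividing by $n$ yields the second.

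There is no significant obstacle here; the only thing to be careful about is to sum the correct index (the row/column sum of $|\RR_{kl}^{(\J)}|^2$ corresponds to a diagonal entry of $\RR^{(\J)}(\RR^{(\J)})^{*}$, not of $(\RR^{(\J)})^2$), and to apply the scalar Poisson-kernel identity $|\lambda - z|^{-2} = v^{-1}\imag(\lambda - z)^{-1}$ rather than trying to expand $\imag \RR$ by any other route.
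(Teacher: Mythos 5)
Your proof is correct and uses essentially the same argument as the paper: spectral decomposition of $\W^{(\J)}$, orthonormality of the eigenvectors, and the scalar identity $|\lambda - z|^{-2} = v^{-1}\imag(\lambda-z)^{-1}$. The only cosmetic difference is that you correctly observe both bounds are in fact equalities (with the paper's convention $m_n^{(\J)} = n^{-1}\Tr\RR^{(\J)}$), whereas the paper simply writes them as inequalities.
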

\begin{proof}
We denote by $u_k^{(\J)}: = (u_{lk}^{(\J)})_{l \in \T_{\J}}$ the eigenvector of $\W^{(\J)}$ corresponding to the eigenvalue $\lambda_k^{(\J)}$. It follows from the eigenvector decomposition that
\begin{equation}\label{appendix lemma eigenvector decomposition}
\RR_{kl}^{(\J)} = \sum_{s \in \T_{\J}} \frac{1}{\lambda_s^{(\J)} - z} u_{ks}^{(\J)} u_{ls}^{(\J)}.
\end{equation}
Since $\U: = [u_{lk}^{(\J)}]_{l,k \in \T_{\J}}$ is a unitary matrix we get
$$
\frac{1}{n} \sum_{l,k \in \T_{\J}} |\RR_{kl}^{(\J)}|^2 \le \frac{1}{n} \sum_{s \in \T_{\J}} \frac{1}{|\lambda_s^{(\J)} - z|^2} \le \frac{1}{v} \imag m_n^{(\J)}(z).
$$
To prove~\eqref{appendix lemma resolvent inequality 2} we may conclude from~\eqref{appendix lemma eigenvector decomposition}
that
$$
\sum_{k \in \T_{\J}} |\RR_{kl}^{(\J)}|^2 \le \sum_{s \in \T_{\J}} \frac{|u_{ls}^{(\J)}|^2}{|\lambda_s^{(\J)} - z|^2}  =
\frac{1}{v} \imag \left(\sum_{s \in \T_{\J}} \frac{|u_{ls}^{(\J)}|^2}{\lambda_s^{(\J)} - z} \right) = \frac{1}{v}\imag \RR_{ll}^{(\J)}.
$$
\end{proof}

\begin{lemma}\label{appendix lemma resolvent square inequalities}
For any $z = u + i v \in \C^{+}$ we have
\begin{align}
\label{appendix lemma resolvent square inequality 1}
&\frac{1}{n} \big |\Tr (\RR^{(\J)})^2 \big | \le \frac{1}{v} \imag m_n^{(\J)}(z),\\
\label{eq: appendix lemma resolvent square inequality 2}
&\frac{1}{n}\sum_{k, l \in \T_{\J}} |[(\RR^{(\J)})^2]_{lk}|^2 \le \frac{1}{v^3} \imag m_n^{(\J)}(z), \\
\label{eq: appendix lemma resolvent square inequality 3}
&\frac{1}{n}\sum_{k \in \T_{\J}} |[(\RR^{(\J)})^2]_{kk}|^2 \le \frac{1}{v^3} \imag m_n^{(\J)}(z),\\
\label{eq: appendix lemma resolvent square inequality 4}
&\frac{1}{n}\sum_{k \in \T_{\J}} |[(\RR^{(\J)})^2]_{kk}|^p \le \frac{1}{nv^p} \sum_{ k \in \T_{\J}} \imag^p \RR_{kk}^{(\J)}(z) \text{ for any } p \geq 1.
\end{align}
For any $l \in \T_{\J}$
\begin{align}
\label{eq: appendix lemma resolvent square inequality 5}
\sum_{k \in \T_{\J}} |[(\RR^{(\J)})^2]_{lk}|^2 \le \frac{1}{v^3}\imag \RR_{ll}^{(\J)}.
\end{align}
\end{lemma}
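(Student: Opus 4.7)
The plan is to prove all five inequalities uniformly via the spectral decomposition $\RR^{(\J)} = \sum_s (\lambda_s^{(\J)} - z)^{-1} u_s^{(\J)} (u_s^{(\J)})^{*}$, which immediately yields
$$
[(\RR^{(\J)})^2]_{kl} = \sum_{s \in \T_{\J}} \frac{u_{ks}^{(\J)} u_{ls}^{(\J)}}{(\lambda_s^{(\J)} - z)^2},
$$
together with the identity $|\lambda - z|^{-2} = v^{-1}\imag(\lambda - z)^{-1}$. Everything else is a matter of exploiting orthonormality of the eigenvectors (so that $\sum_k |u_{ks}|^2 = 1$ and $\sum_s u_{ks}\overline{u_{ls}} = \delta_{kl}$) and Jensen's inequality for raising to the $p$-th power.

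Specifically, for \eqref{appendix lemma resolvent square inequality 1} I would take the trace directly, obtaining $|\Tr (\RR^{(\J)})^2| \le \sum_s |\lambda_s^{(\J)} - z|^{-2} = v^{-1}\imag \Tr \RR^{(\J)}$. For \eqref{eq: appendix lemma resolvent square inequality 2} I would identify the double sum as the Hilbert--Schmidt norm $\|(\RR^{(\J)})^2\|_2^2 = \sum_s |\lambda_s^{(\J)} - z|^{-4}$, and then bound one of the two factors of $|\lambda_s^{(\J)}-z|^{-2}$ by $v^{-2}$ and use the identity above on the remaining factor. Inequality \eqref{eq: appendix lemma resolvent square inequality 3} follows from \eqref{eq: appendix lemma resolvent square inequality 2} since $\sum_k |[(\RR^{(\J)})^2]_{kk}|^2 \le \sum_{k,l}|[(\RR^{(\J)})^2]_{kl}|^2$. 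For \eqref{eq: appendix lemma resolvent square inequality 5} I would compute
$$
\sum_{k \in \T_{\J}} |[(\RR^{(\J)})^2]_{lk}|^2 = [(\RR^{(\J)})^2 ((\RR^{(\J)})^{*})^2]_{ll} = \sum_{s \in \T_{\J}} \frac{|u_{ls}^{(\J)}|^2}{|\lambda_s^{(\J)}-z|^4},
$$
then extract a factor $v^{-2}$ and recognize the remaining sum as $v^{-1}\imag \RR_{ll}^{(\J)}$ exactly as in the proof of \eqref{appendix lemma resolvent inequality 2} in Lemma~\ref{appendix lemma resolvent inequalities 1}.

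The only bound that requires a slightly different move is \eqref{eq: appendix lemma resolvent square inequality 4}: here I would start from the pointwise estimate
$$
|[(\RR^{(\J)})^2]_{kk}| \le \sum_{s \in \T_{\J}} \frac{|u_{ks}^{(\J)}|^2}{|\lambda_s^{(\J)} - z|^2} = \frac{1}{v} \imag \RR_{kk}^{(\J)},
$$
which is valid for every diagonal index $k$, then raise both sides to the $p$-th power and sum over $k \in \T_{\J}$, dividing by $n$. None of the steps should present a genuine obstacle; the whole lemma is essentially a catalogue of consequences of the spectral decomposition plus the trivial inequality $|\lambda-z|^{-2}\le v^{-1}\imag(\lambda-z)^{-1}$, and the main point is simply to organize the five statements so that each one reuses a computation from its predecessors.
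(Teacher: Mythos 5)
Your proposal is correct and follows essentially the same route as the paper: spectral decomposition of $\RR^{(\J)}$, the identity $|\lambda-z|^{-2}=v^{-1}\imag(\lambda-z)^{-1}$, unitarity of the eigenvector matrix for~\eqref{eq: appendix lemma resolvent square inequality 2}, and the pointwise diagonal bound $|[(\RR^{(\J)})^2]_{kk}|\le v^{-1}\imag\RR_{kk}^{(\J)}$ for~\eqref{eq: appendix lemma resolvent square inequality 4}. The only (cosmetic) deviation is that you obtain~\eqref{eq: appendix lemma resolvent square inequality 3} by discarding off-diagonal terms in~\eqref{eq: appendix lemma resolvent square inequality 2}, whereas the paper derives it from $[(\RR^{(\J)})^2]_{kk}=\sum_l(\RR^{(\J)}_{lk})^2$ together with Lemma~\ref{appendix lemma resolvent inequalities 1}; both give the same bound.
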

\begin{proof}
The proof of~\eqref{appendix lemma resolvent square inequality 1} follows from
$$
\frac{1}{n} \big |\Tr (\RR^{(\J)})^2 \big | \le \frac{1}{n} \sum_{j \in \T_{\J}} \frac{1}{|\lambda_j^{(\J)} - z|^2} \le \frac{1}{v} \imag m_n^{(\J)}(z).
$$
We denote by $u_k^{(\J)}: = (u_{lk}^{(\J)})_{l \in \T_{\J}}$ the eigenvector of $\W^{(\J)}$ corresponding to the eigenvalue $\lambda_k^{(\J)}$. It follows from the eigenvector decomposition that
\begin{equation}\label{appendix lemma eigenvector decomposition 2}
[(\RR^{(\J)})^2]_{kl} = \sum_{s \in \T_{\J}} \frac{1}{(\lambda_s^{(\J)} - z)^2} u_{ks}^{(\J)} u_{ls}^{(\J)}.
\end{equation}
Since $\U: = [u_{lk}^{(\J)}]_{l,k \in \T_{\J}}$ is a unitary matrix we get
$$
\frac{1}{n} \sum_{l,k \in \T_{\J}} |[(\RR^{(\J)})^2]_{lk}|^2 \le \frac{1}{n} \sum_{s \in \T_{\J}} \frac{1}{|\lambda_s^{(\J)} - z|^4} \le
\frac{1}{v^3} \imag \left(\frac{1}{n} \sum_{s \in \T_{\J}} \frac{1}{\lambda_s^{(\J)} - z} \right) = \frac{1}{v^3} \imag m_n^{(\J)}(z).
$$
This proves~\eqref{eq: appendix lemma resolvent square inequality 2}.
Inequality~\eqref{eq: appendix lemma resolvent square inequality 3} follows from~\eqref{appendix lemma resolvent inequality 2} and observation that
$$
[(\RR^{(\J)})^2]_{kk} = \sum_{l \in \T_{\J}} (\RR_{lk}^{(\J)})^2.
$$
The proof of~\eqref{eq: appendix lemma resolvent square inequality 4} is similar.
To prove~\eqref{eq: appendix lemma resolvent square inequality 5}
we may conclude from~\eqref{appendix lemma eigenvector decomposition 2}
that
$$
\sum_{k \in \T_{\J}} |[(\RR^{(\J)})^2]_{lk}|^2 \le \sum_{s \in \T_{\J}} \frac{|u_{ls}^{(\J)}|^2}{|\lambda_s^{(\J)} - z|^4} \le \frac{1}{v^3}\imag \RR_{ll}^{(\J)}.
$$
\end{proof}

\section{Truncation of matrix entries}

In this section we will show that the  conditions $\Cond$ allows us to assume that  for all $1 \le j,k \le n$ we have $|X_{jk}| \le D n^{\alpha}$, where $D$ is some positive constant and
$$
\alpha = \frac{2}{4+\delta}.
$$
Let $\hat X_{jk}: = X_{jk} \one[|X_{jk}| \leq D n^\alpha]$, $\tilde X_{jk}: = X_{jk} \one[|X_{jk}| \geq D n^\alpha] - \E X_{jk} \one[|X_{jk}| \geq D n^\alpha]$ and finally
$\breve X_{jk}: = \tilde X_{jk} \sigma^{-1}$, where $\sigma^2: = \E |\tilde X_{11}|^2$. We denote symmetric random matrices by $\hat \X, \tilde \X$ and $\breve \X$ formed from $\hat X_{jk}, \tilde X_{jk}$ and $\breve X_{jk}$ respectively. Similar notations are used for the resolvent matrices and corresponding Stieltjes transforms.
\begin{lemma}\label{appendix: lemma trunc 1}
Assuming the conditions of Theorem~\ref{th:main} we have
$$
\E|m_n(z) - \hat m_n(z)|^p \le \left(\frac{Cp}{nv}\right)^p.
$$
\end{lemma}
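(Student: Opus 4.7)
The plan is to exploit that $\X$ and $\hat\X$ typically differ in only $O(1)$ entries, so that $\X - \hat\X$ has very small rank, and then to invoke the classical interlacing bound for traces of resolvents under low--rank Hermitian perturbations.

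The first step is to introduce the counting random variable
$$
r := \sum_{1 \le j \le k \le n} \one\bigl[|X_{jk}| > D n^\alpha\bigr],
$$
i.e.\ the number of entries above or on the diagonal at which $\X$ and $\hat\X$ disagree. Each off--diagonal discrepancy contributes a rank--$2$ symmetric update of the form $X_{jk}(e_j e_k^* + e_k e_j^*)/\sqrt n$ to $\W - \hat\W$, and each diagonal discrepancy a rank--$1$ update, so that $\Rank(\W - \hat\W) \le 2 r$.

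The second step is a rank inequality for Stieltjes transforms. For Hermitian $n \times n$ matrices $A, B$ the Cauchy interlacing theorem yields $|N_A(t) - N_B(t)| \le \Rank(A - B)$ for every $t \in \R$, where $N_A$ and $N_B$ denote the corresponding eigenvalue counting functions. Integration by parts in $n \, m_n(A; z) = \int_{\R} (t-z)^{-1} \, d N_A(t)$ therefore gives
$$
|m_n(z) - \hat m_n(z)| \le \frac{\Rank(\W - \hat\W)}{n} \int_{\R} \frac{dt}{(t-u)^2 + v^2} \le \frac{2\pi \, r}{nv}.
$$

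The third step is to control $\E r^p$. The indicators $\xi_{jk} := \one[|X_{jk}| > D n^\alpha]$ are independent, and by Markov's inequality together with the identity $\alpha (4 + \delta) = 2$ their means are bounded by $\mu_{4+\delta}/(D^{4+\delta} n^2)$, so that $\lambda := \E r \le \mu_{4+\delta}/D^{4+\delta}$ is of order one. Applying Rosenthal's inequality (Theorem~\ref{th: Rosenthal}) to $r - \E r$ and exploiting $\E \xi_{jk}^s = \E \xi_{jk}$ for every $s \ge 1$ yields $\E r^p \le (Cp)^p$ uniformly for $p \ge 1$. Combining the three steps produces the claim
$$
\E |m_n(z) - \hat m_n(z)|^p \le \Bigl(\frac{2\pi}{nv}\Bigr)^p \E r^p \le \Bigl(\frac{Cp}{nv}\Bigr)^p.
$$
The only conceptual, rather than routine, step is the rank inequality of step~2; steps~1 and~3 amount to standard bookkeeping, and no obstacle of real substance is expected.
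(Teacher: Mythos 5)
Your proposal is correct and follows essentially the same route as the paper's proof: both bound $|m_n-\hat m_n|$ by $(nv)^{-1}$ times the rank of $\W-\hat\W$ (you via Cauchy interlacing, the paper via Bai's rank inequality, which is the same statement), then bound the number of truncated entries, and control its $p$-th moment by Rosenthal's inequality using that the indicators have mean and all moments of size $O(n^{-2})$ by the $(4+\delta)$-moment hypothesis and the choice $\alpha(4+\delta)=2$. The only cosmetic difference is that the paper sums indicators over all ordered pairs $(j,k)$ while you sum over $j\le k$ with a factor $2$ in the rank bound; this does not affect the argument.
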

\begin{proof}
From Bai's rank inequality (see~\cite{BaiSilv2010}[Theorem~A.43])  we conclude that
$$
\sup |\mathcal F_n(x) - \hat {\mathcal F}_n(x)| \le \frac{1}{n} \Rank(\X - \hat \X) \le \frac{1}{n} \sum_{j,k=1}^n \one[|X_{jk}| \geq D n^\alpha].
$$
Integrating by parts we get
$$
\E|m_n(z) - \hat m_n(z)|^p \le \frac{1}{(nv)^p} \E \left(\sum_{j,k=1}^n \one[|X_{jk}| \geq D n^\alpha] \right)^p.
$$
It is easy to see that
$$
\left(\sum_{j,k=1}^n \E\one[|X_{jk}| \geq D n^\alpha] \right)^p \le C^p.
$$
Applying Rosenthal's inequality (Theorem~\ref{th: Rosenthal}) we get that
\begin{align*}
&\E \left(\sum_{j,k=1}^n [\one[|X_{jk}| \geq D n^\alpha] - \E\one[|X_{jk}| \geq D n^\alpha]] \right)^p \\
&\qquad\qquad\qquad\le C^p p^p \left( \left(\frac{1}{n^2} \sum_{j,k=1}^n E |X_{jk}|^{4 + \delta} \right)^\frac{p}{2}  + \frac{1}{n^2} \sum_{j,k=1}^n E |X_{jk}|^{4 + \delta}  \right ) \le C^p p^p.
\end{align*}
From these inequalities we may conclude the statement of Lemma.
\end{proof}

\begin{lemma}\label{appendix: lemma trunc 0}
Assuming the conditions of Theorem~\ref{th:main} we have
$$
\E|\tilde m_n(z) - \breve m_n(z)|^p \le \frac{C^p p^p \mathcal A^p(2p)}{(nv)^p}.
$$
\end{lemma}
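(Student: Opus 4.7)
The plan is to exploit the scalar relation $\breve\W = \sigma^{-1}\tilde\W$, which follows directly from $\breve X_{jk} = \sigma^{-1}\tilde X_{jk}$. The resolvent identity then gives
\begin{equation*}
\tilde\RR(z) - \breve\RR(z) = (\sigma^{-1}-1)\,\tilde\RR(z)\,\tilde\W\,\breve\RR(z),
\end{equation*}
and using $\tilde\W = \tilde\RR(z)^{-1} + z\I$, so that $\tilde\RR\tilde\W = \I + z\tilde\RR$, this will reduce to
\begin{equation*}
\tilde m_n(z) - \breve m_n(z) = (\sigma^{-1}-1)\Big(\breve m_n(z) + \frac{z}{n}\Tr\big[\tilde\RR(z)\breve\RR(z)\big]\Big).
\end{equation*}

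First I would control the scalar $\sigma^{-1}-1$. Writing $\sigma^2 = \E|\hat X_{11}|^2 - (\E\hat X_{11})^2$, Markov's inequality combined with $\mu_{4+\delta}<\infty$ will give $\E|X_{11}|^2\one[|X_{11}|>Dn^\alpha] \leq C n^{-\alpha(2+\delta)}$ and $(\E\hat X_{11})^2 \leq C n^{-2\alpha(3+\delta)}$, hence $|\sigma^2-1|\leq C n^{-\alpha(2+\delta)}$. Since $\alpha(2+\delta) = (4+2\delta)/(4+\delta) \geq 1$ for $\delta \in (0,4]$, this yields $|\sigma^{-1}-1| \leq C/n$, which is precisely the $n^{-1}$ factor needed to match the $(nv)^{-p}$ in the target bound.

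Next I would bound the two terms in the bracket in $L^p$. For the scalar term, the elementary inequality $|m_n(z)|^2 \leq v^{-1}\imag m_n(z)$, which follows from Cauchy--Schwarz applied to the spectral representation of the resolvent, will give $\E|\breve m_n(z)|^p \leq v^{-p/2}\E(\imag\breve m_n)^{p/2} \leq v^{-p/2}\mathcal A^{p/2}(p/2)$ after a Jensen step on $\imag\breve m_n = n^{-1}\sum_j\imag\breve\RR_{jj}$. For the trace term I plan to use the matrix Cauchy--Schwarz
\begin{equation*}
\Big|\tfrac{1}{n}\Tr[\tilde\RR\breve\RR]\Big| \leq \sqrt{\tfrac{1}{n}\Tr|\tilde\RR|^2}\sqrt{\tfrac{1}{n}\Tr|\breve\RR|^2} = v^{-1}\sqrt{\imag\tilde m_n\,\imag\breve m_n},
\end{equation*}
followed by H\"older, yielding a $p$-th moment bound of order $v^{-p}\mathcal A^p(p)$.

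Assembling everything, I would invoke the monotonicity $\mathcal A(p)\leq \mathcal A(2p)$ together with the bound $v\leq C\mathcal A(2p)$ from Lemma~\ref{appendix lemma inequality v le  imag s RR jj} to absorb the residual factor $v^{p/2}$ and arrive at the claimed estimate $C^p p^p\mathcal A^p(2p)/(nv)^p$ (the $p^p$ factor is not tight in my analysis but is harmlessly present for uniformity with other bounds in the paper). The main, though mild, obstacle will be the bookkeeping: one must verify that $\mathcal A$ may be applied uniformly to the resolvents of both $\tilde\W$ and $\breve\W$, which I expect to do via the identity $\breve\RR(z) = \sigma\,\tilde\RR(\sigma z)$ combined with Lemma~\ref{appendix lemma resolvent relations on different v}, using $\sigma = 1+O(n^{-1})$.
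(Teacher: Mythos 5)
Your proposal is correct and follows essentially the same route as the paper's own proof. Both arguments hinge on (i) the scalar relation $\breve{\mathbf W}=\sigma^{-1}\tilde{\mathbf W}$, (ii) a resolvent identity that turns the difference of Stieltjes transforms into $(\sigma^{-1}-1)$ times a normalized-trace term plus a $\Tr[\RR\RR]$ term, (iii) the moment bound $\sigma^{-1}-1\le C/n$ derived from $\E|X_{11}|^{4+\delta}<\infty$, and (iv) Hilbert--Schmidt/Cauchy--Schwarz to control the $\Tr[\RR\RR]$ term by $v^{-1}\sqrt{\imag m\,\imag m}$; the only cosmetic difference is that the paper writes $\tilde\RR(z)=\sigma^{-1}\breve\RR(\sigma^{-1}z)$ and applies the resolvent identity to $\breve\RR$ at the two arguments $z$ and $\sigma^{-1}z$, whereas you apply the resolvent identity directly to $\tilde\RR(z)-\breve\RR(z)$ and simplify via $\tilde\RR\tilde{\mathbf W}=\I+z\tilde\RR$, which keeps everything at the single argument $z$ but requires a small observation (via $\breve\RR(z)=\sigma\tilde\RR(\sigma z)$ and Lemma~\ref{appendix lemma resolvent relations on different v}) to transfer moment control from $\breve\RR$ to $\tilde\RR$, exactly as you flag at the end.
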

\begin{proof}
It is easy to see that
\begin{equation}\label{eq: tilde R representation}
\tilde \RR(z) = (\tilde \W - z\I)^{-1} = \sigma^{-1} (\breve \W - z \sigma^{-1}\I)^{-1}  = \sigma^{-1}\breve \RR(\sigma^{-1}z).
\end{equation}
Applying the resolvent equality we get
\begin{equation}\label{eq: overline R representation}
\breve \RR(z) - \breve \RR(\sigma^{-1}z) = (z - \sigma^{-1}z) \breve \RR(z) \breve \RR(\sigma^{-1}z).
\end{equation}
From~\eqref{eq: tilde R representation} and~\eqref{eq: overline R representation} we may conclude
\begin{align*}
|\tilde m_n(z) - \breve m_n(z)| &= \frac{1}{n} | \Tr \tilde \RR(z) - \Tr \breve \RR(z)| = \frac{1}{n} | \sigma^{-1}\Tr \breve \RR(\sigma^{-1}z) - \Tr \breve \RR(z)| \\
&= \frac{1}{n} | \sigma^{-1}\Tr \breve \RR(z) - \Tr \breve \RR(z)- (z - \sigma^{-1}z) \Tr \breve \RR(z) \breve \RR(\sigma^{-1}z)|\\
&\le \frac{1}{n}(\sigma^{-1} - 1) |\Tr \breve \RR(z)| + (\sigma^{-1} - 1)  \frac{|z|}{n}|\Tr \breve \RR(z) \breve \RR(\sigma^{-1}z)|.
\end{align*}
Taking the $p$-th power and mathematical expectation we get
\begin{align*}
\E|\tilde m_n(z) - \breve m_n(z)|^p &\le  \frac{1}{n^p}(\sigma^{-1} - 1)^p \E|\Tr \breve \RR(z)|^p + (\sigma^{-1} - 1)^p  \frac{C^p}{n^p}\E|\Tr \breve \RR(z) \breve \RR(\sigma^{-1}z)|^p.
\end{align*}
Since $\breve \X$ satisfies conditions $\CondTwo$ we may apply  Lemma~\ref{main lemma} and conclude
$$
\frac{1}{n^p} \E|\Tr \breve \RR(z)|^p \le C_0^p.
$$
We also have
\begin{equation}\label{appendix variance truncation}
\sigma^{-1} - 1 \le \sigma^{-1} (1  - \sigma) \le \sigma^{-1} (1  - \sigma^2) \le \sigma^{-1} \E |X_{jk}|^2 \one[|X_{jk}| \geq D n^\alpha]   \le \frac{C}{n}.
\end{equation}
To finish the proof it remains to estimate the term
$$
\frac{1}{n^p}\E|\Tr \breve \RR(z) \breve \RR(\sigma^{-1}z)|^p.
$$
Applying the obvious inequality $|\Tr \A \B| \le \|\A\|_2 \| \B\|_2$ we get
\begin{align*}
\frac{1}{n^p}\E|\Tr \breve \RR(z) \breve \RR(\sigma^{-1}z)|^p &\le \frac{1}{n^p} \E^\frac12\|\breve \RR(z)\|_2^{2p} \E^\frac{1}{2}\| \breve \RR(\sigma^{-1}z) \|_2^{2p}\\
&\le \frac{\E^\frac12 \imag^p \breve m_n(z) \E^\frac12 \imag^p \breve m_n(\sigma^{-1} z)}{v^p}.
\end{align*}
From this inequality and~\eqref{appendix variance truncation} we conclude the statement of the lemma.
\end{proof}

\begin{lemma}\label{appendix: lemma trunc 2}
Assuming the conditions of Theorem~\ref{th:main} we have
$$
\E|\tilde m_n(z) - \hat m_n(z)|^p \le \left(\frac{C}{nv}\right)^\frac{3p}{2}.
$$
\end{lemma}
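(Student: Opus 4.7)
The plan is to observe that $\hat{\W}-\tilde{\W}$ reduces to a rank-one perturbation of very small operator norm. Reading the definitions so that $\tilde X_{jk}=\hat X_{jk}-\E\hat X_{11}$ (which is the reading needed for $\breve{\X}$ to satisfy $\CondTwo$ in Lemma~\ref{appendix: lemma trunc 0}), every entry of $\hat{\X}-\tilde{\X}$ equals the deterministic constant $c:=\E\hat X_{11}$, so $\hat{\X}-\tilde{\X}=c\,J$ with $J=\mathbf{1}\mathbf{1}^{T}$. Since $\E X_{11}=0$, one has $c=-\E X_{11}\one[|X_{11}|>Dn^{\alpha}]$; H\"older's inequality together with the identity $\alpha(4+\delta)=2$ yields the key size bound $|c|\le \mu_{4+\delta}/(Dn^{\alpha})^{3+\delta}\le Cn^{\alpha-2}$.

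I would then apply the resolvent identity
\[\tilde{\RR}-\hat{\RR}=-\hat{\RR}(\tilde{\W}-\hat{\W})\tilde{\RR}=\frac{c}{\sqrt{n}}\,\hat{\RR}\,J\,\tilde{\RR},\]
take normalised traces, and use $J=\mathbf{1}\mathbf{1}^{T}$ together with the cyclicity of the trace to arrive at the clean identity
\[\tilde{m}_{n}(z)-\hat{m}_{n}(z)=\frac{c}{n^{3/2}}\,\mathbf{1}^{T}\tilde{\RR}(z)\hat{\RR}(z)\mathbf{1}.\]
Next, a purely deterministic Cauchy--Schwarz plus the elementary bound $\|\RR(z)\|\le v^{-1}$ (applied to both $\hat{\RR}$ and $\tilde{\RR}$) gives $|\mathbf{1}^{T}\tilde{\RR}\hat{\RR}\mathbf{1}|\le \|\mathbf{1}\|^{2}/v^{2}=n/v^{2}$, hence the pointwise estimate $|\tilde{m}_{n}-\hat{m}_{n}|\le |c|/(\sqrt{n}v^{2})\le C/(n^{5/2-\alpha}v^{2})$.

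Finally, to match the target $C/(nv)^{3/2}$ I would verify that $n^{5/2-\alpha}v^{2}\ge (nv)^{3/2}$, equivalently $n^{1-\alpha}v^{1/2}\ge 1$, which for $\alpha<1/2$ holds whenever $v\ge A_{0}/n$ provided $A_{0}$ is chosen large enough. Because the estimate is entirely deterministic, the $p$-th moment bound follows just by raising it to the $p$-th power; there is no probabilistic or combinatorial obstacle. The only delicate point is the bookkeeping that converts the $(4+\delta)$-moment assumption into the sharp size $|c|\le Cn^{\alpha-2}$, providing exactly the extra factor of $\sqrt{nv}$ needed to upgrade the naive rank-one estimate of order $1/(nv)$ to the claimed $1/(nv)^{3/2}$.
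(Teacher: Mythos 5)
Your argument is correct, and it takes a genuinely different route than the paper. Both proofs start from the resolvent identity and the same H\"older-type bound $|c|=|\E\hat X_{11}|\le C n^{\alpha-2}$ (the paper writes it as $n^{-\frac{2(3+\delta)}{4+\delta}}$, which is the same exponent), and both exploit that $\hat\W-\tilde\W=\frac{c}{\sqrt n}\,\mathbf 1\mathbf 1^T$. From there, however, the paper applies the trace Cauchy--Schwarz inequality $|\Tr\A\B|\le\|\A\|_2\|\B\|_2$ together with $\|\A\B\|_2\le\|\A\|\,\|\B\|_2$, bounds $\|\E\hat\W\|_2=\sqrt n\,|c|$, keeps $\|\hat\RR\|\le v^{-1}$, and controls $\E\|\tilde\RR\|_2^p$ via the stochastic estimate $\frac{1}{n^{p/2}}\E\|\tilde\RR\|_2^p\le C^p v^{-p/2}$, which in turn rests on $\E|\tilde m_n(z)|^p\le C^p$ obtained from Lemma~\ref{appendix: lemma trunc 0} and Lemma~\ref{main lemma}. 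This yields the intermediate bound $C^p n^{-p(2-\alpha)}v^{-3p/2}$, which reduces to $(C/nv)^{3p/2}$ since $\alpha\le 1/2$. You instead use the rank-one structure head-on: writing $\tilde m_n-\hat m_n=\frac{c}{n^{3/2}}\mathbf 1^T\tilde\RR\hat\RR\mathbf 1$ and bounding the quadratic form deterministically by $n/v^2$ via $\|\RR\|\le v^{-1}$ gives the pointwise estimate $C\,n^{-(5/2-\alpha)}v^{-2}$, which is even slightly sharper on $\mathbb D$ (for $nv\ge 1$ it is dominated by the paper's bound) and needs $n^{1-\alpha}v^{1/2}\ge 1$, automatic for $v\ge A_0n^{-1}$ and $\alpha<1/2$. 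The upshot is that your proof is purely deterministic and entirely self-contained, not relying on Lemmas~\ref{appendix: lemma trunc 0} and~\ref{main lemma}, whereas the paper trades that self-containment for a probabilistic Hilbert--Schmidt bound; both arrive at the claimed $(C/nv)^{3p/2}$.
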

\begin{proof}
It is easy to see that
$$
\tilde m_n(z) - \hat m_n(z)  = \frac{1}{n} \Tr (\tilde \W - \hat \W) \hat \RR \tilde \RR.
$$
Applying the obvious inequalities $|\Tr \A \B| \le \|\A\|_2 \| \B\|_2$ and $\|\A \B\|_2 \le \|\A\|\|\B\|_2$ we get
$$
|\tilde m_n(z) - \hat m_n(z)|  \le \|\tilde \W - \hat \W\|_2 \|\hat \RR\|_2 \|\tilde \RR\| = \|\E \hat \W\|_2 \|\tilde \RR\|_2 \|\hat \RR\|.
$$
From
$$
|\E \hat X_{jk}| = |\E X_{jk} \one[|X_{jk}| \geq D n^\alpha]| \le \frac{C}{n^\frac{2(3+\delta)}{4+\delta}}
$$
we obtain
$$
\|\E \hat \W\|_2 \le \frac{C}{n^\frac{8+3 \delta}{2(4+\delta)}}.
$$
By Lemma~\ref{appendix: lemma trunc 0} we know $\E |\tilde m_n(z)|^p \le C^p$. This implies that
$$
\frac{1}{n^\frac{p}{2}} \E\|\tilde \RR\|_2^p \le \frac{C^p}{v^\frac{p}{2}}.
$$
Finally
$$
\E|\tilde m_n(z) - \hat m_n(z)|^p \le\frac{C^p }{v^\frac{3p}{2} n^\frac{p}{2} n^\frac{(8+3 \delta)p}{2(4+\delta)}} \le  \left(\frac{C}{nv}\right)^\frac{3p}{2}.
$$
\end{proof}

\bibliographystyle{plain}
\bibliography{literatur}

\end{document}